\documentclass[11pt,nosumlimits,fleqn,reqno]{amsart}

\usepackage{amsthm}
\usepackage{amssymb}
\usepackage{amsfonts}
\usepackage{mathtools}
\usepackage{thmtools}
\usepackage{stmaryrd}
\usepackage{mathrsfs}
\usepackage{fullpage}
\usepackage{bbm}
\usepackage{color}
\usepackage{enumitem}
\usepackage{xifthen}
\usepackage{cancel}
\usepackage{hyperref}
\hypersetup{
	colorlinks=true,
	allcolors=niceblue
}
\usepackage[english,noabbrev,capitalise]{cleveref}
\usepackage{tikz}

\renewcommand{\paragraph}[1]{
	
	\noindent{\mbox{\normalfont\normalsize\bfseries #1.}\quad}}
\renewcommand{\subparagraph}[1]{
	
	\noindent{\mbox{\normalfont\normalsize\itshape #1.}\quad}}

\definecolor{nicegreen}{RGB}{2,161,114}
\definecolor{nicered}{RGB}{213,91,0}
\definecolor{niceblue}{RGB}{0,110,180}
\definecolor{niceorange}{RGB}{255,180,0}
\definecolor{nicepink}{RGB}{204,115,168}
\definecolor{nicesky}{RGB}{85,178,235}

\usetikzlibrary{decorations.pathmorphing}

\usepackage[
defernumbers=true,
maxbibnames=6,
backend=biber,
style=alphabetic,
giveninits=true,
sortcites=true,
maxalphanames=4,
minalphanames=3,
doi=true,
isbn=false,
url=false,
safeinputenc,
natbib
]{biblatex}
\theoremstyle{definition}
\newtheorem{thm}{Theorem}[section]
\newtheorem{lmm}[thm]{Lemma}
\newtheorem{crl}[thm]{Corollary}
\newtheorem{prp}[thm]{Proposition}
\newtheorem{fct}[thm]{Fact}

\newtheorem{dfn}[thm]{Definition}

\Crefname{lmm}{Lemma}{Lemmas}
\Crefname{thm}{Theorem}{Theorems}
\Crefname{crl}{Corollary}{Corollaries}
\Crefname{dfn}{Definition}{Definitions}
\Crefname{qst}{Question}{Questions}

\newcommand{\ts}{\textstyle}

\let\phi\varphi
\let\epsilon\varepsilon
\let\emptyset\varnothing

\let\subset\subseteq
\let\supset\supseteq
\let\nsubset\nsubseteq

\let\bar\overline

\newcommand{\cal}{\mathcal}
\newcommand{\bb}{\mathbb} 
 
\renewcommand{\bf}{\mathbf}
\renewcommand{\sf}{\mathsf}

\renewcommand{\rm}{\mathrm}
\newcommand{\fr}{\mathfrak}

\newcommand{\fforall}{\,\forall}
\newcommand{\eexists}{\,\exists}
\newcommand{\emp}{\emptyset}		
\renewcommand{\Cap}{\bigcap}		
\renewcommand{\Cup}{\bigcup}		
	
\newcommand{\md}{\vDash}                    
\newcommand{\fc}{\Vdash}                    
\newcommand{\imp}{\Rightarrow}				
\newcommand{\bimp}{\Leftrightarrow}			

\newcommand{\leftrightdelim}[4][]{%
	\def\leftrightdelimparam{#1}%
	\def\leftrightdelimbig{big}%
	\def\leftrightdelimBig{Big}%
	\ifx\leftrightdelimparam\empty%
	\mathopen{}#2{#4}#3\mathclose{}%
	\else\ifx\leftrightdelimparam\leftrightdelimbig%
	\mathopen{}\big#2{#4}\big#3\mathclose{}%
	\else\ifx\leftrightdelimparam\leftrightdelimBig%
	\mathopen{}\Big#2{#4}\Big#3\mathclose{}%
	\else%
	\mathopen{}\left#2#4\right#3\mathclose{}%
	\fi\fi\fi%
}
\newcommand{\leftrightseparator}[1][]{%
	\def\leftrightseparatorparam{#1}%
	\def\leftrightseparatorbig{big}%
	\def\leftrightseparatorBig{Big}%
	\ifx\leftrightseparatorparam\empty%
	~|~%
	\else\ifx\leftrightseparatorparam\leftrightseparatorbig%
	~\big|~%
	\else\ifx\leftrightseparatorparam\leftrightseparatorBig%
	~\Big|~%
	\else%
	~\middle|~%
	\fi\fi\fi%
}

\newcommand{\rb}[2][]{\leftrightdelim[#1]{(}{)}{#2}}
\newcommand{\ac}[2][]{\leftrightdelim[#1]{\{}{\}}{#2}}
\newcommand{\sqb}[2][]{\leftrightdelim[#1]{[}{]}{#2}}
\newcommand{\card}[2][]{\leftrightdelim[#1]{\lvert}{\rvert}{#2}}

\newcommand{\ab}[2][]{\leftrightdelim[#1]{\langle}{\rangle}{#2}}
\newcommand{\dab}[2][]{\leftrightdelim[#1]{\langle}{\rangle}{\!\ab[#1]{#2}\!}}

\newcommand{\set}[3][]{\ac[#1]{#2\leftrightseparator[#1]#3}}

\newcommand{\abset}[3][]{\ab[#1]{#2\leftrightseparator[#1]#3}}

\newcommand{\iv}[3][]{\leftrightdelim[#1]{[}{)}{#2,\,#3}}
\newcommand{\opiv}[3][]{\leftrightdelim[#1]{(}{)}{#2,\,#3}}

\newcommand{\ap}[1]{\text{``}\,#1\,\text{''}}

\newcommand{\dom}{\operatorname{dom}}
\newcommand{\ran}{\operatorname{ran}}
\newcommand{\ot}{\operatorname{ot}}
\newcommand{\cf}{\operatorname{cf}}
\newcommand{\cov}{\operatorname{cov}}
\newcommand{\non}{\operatorname{non}}
\newcommand{\cof}{\operatorname{cof}}
\newcommand{\add}{\operatorname{add}}

\newcommand{\supp}{\operatorname{supp}}

\newcommand{\Ord}{\bf{Ord}}

\newcommand{\omcs}{{}^{\omega}2}
\newcommand{\fomcs}{{}^{<\omega}2}
\newcommand{\omom}{{}^{\omega}\omega}

\newcommand{\kacs}{{}^{\kappa}2}
\newcommand{\fkacs}{{}^{<\kappa}2}

\newcommand{\kaka}{{}^{\kappa}\kappa}

\newcommand{\sqkaka}{\sqb{\kappa}^\kappa}

\newcommand{\ins}{\in^*}

\newcommand{\eqs}{=^*}

\newcommand{\neqi}{\mathrel{\cancel{=^\infty}}}
\newcommand{\leqs}{\leq^*}

\newcommand{\nleqs}{\mathrel{\cancel{\leq^*}}}

\newcommand{\IPk}{\rm{IP}_\kappa}
\newcommand{\BPk}{\rm{BP}_\kappa}
\newcommand{\IPklim}{\rm{IP}_\kappa^\rm{lim}}
\newcommand{\BPklim}{\rm{BP}_\kappa^\rm{lim}}
\newcommand{\SkI}{\Sigma_\kappa\rb I}
\newcommand{\SkB}{\Sigma_\kappa\rb B}
\newcommand{\NWD}[1][]{
	\def\NWDparam{#1}%
	\ifx\NWDparam\empty%
	\cal{NWD}%
	\else%
	\cal{NWD}\rb[m]{#1}%
	\fi%
}
\newcommand{\Mk}[1][]{%
	\def\Mkparam{#1}%
	\ifx\Mkparam\empty%
	\cal{M}_\kappa%
	\else%
	\cal{M}_\kappa\rb[m]{#1}%
	\fi%
}
\newcommand{\CMk}{\cal{CM}_\kappa}
\newcommand{\SNk}{\cal{SN}_\kappa}
\newcommand{\NAk}{\cal{NA}_\kappa}
\newcommand{\MAk}{\cal{MA}_\kappa}
\newcommand{\Ek}{\cal{E}_\kappa}
\newcommand{\EAk}{\cal{EA}_\kappa}
\newcommand{\EkE}{\cal{E}_\kappa^\exists}
\newcommand{\EkA}{\cal{E}_\kappa^\forall}

\newcommand{\BEk}{\cal{BE}_\kappa}

\newcommand{\BEkE}{\cal{BE}_\kappa^\exists}
\newcommand{\BEkA}{\cal{BE}_\kappa^\forall}

\newcommand{\halffrac}{{}^1\kern-.2em/\!{}_2}

\newcommand{\ft}{\mathbbm 1}

\newcommand{\dins}[1]{\fr d_\kappa^{#1}({\ins})}
\newcommand{\bins}[1]{\fr b_\kappa^{#1}({\ins})}
\newcommand{\Loc}{\bb L\mathbbm{oc}}
\newcommand{\LockB}{\Loc_\kappa(B)}
\newcommand{\Lockn}{\Loc_\kappa^\rm{new}}

\title{The Higher Closed Null Ideal(s)}

\subjclass[2020]{03E05, 03E17, 03E35, 03E55}
\keywords{closed null ideal, F$_\sigma$-null ideal, higher Baire space, generalised Baire space, additive ideal, localisation forcing}

\author[A. Marton]{Adam Marton}
\address{Department of Applied Mathematics and Business Informatics, Faculty of Economics, Technical University of Košice, B. Němcovej 32, 040 01 Košice, Slovakia}
\email{adam.marton@tuke.sk}
\author[J. \v{S}upina]{Jaroslav \v{S}upina}
\address{Institute of Mathematics, Pavol Jozef Šafárik University in Košice, Jesenná 5, 040 01 Košice, Slovakia}
\email{jaroslav.supina@upjs.sk}
\author[M. Repick\'y]{Miroslav Repick\'y}
\address{Mathematical Institute, Slovak Academy of Sciences, Grešákova 6, 040 01 Košice, Slovakia}
\email{repicky@saske.sk}
\author[T. van~der~Vlugt]{Tristan van~der~Vlugt}
\address{Institut für Diskrete Mathematik und Geometrie, Technische Universität Wien, Wiedner Hauptstrasse 8-10/104, 1040 Vienna, Austria}
\curraddr{Kurt Gödel Research Center, Institute of Mathematics, University of Vienna, Kolingasse 14–16, 1090 Vienna, Austria}
\email{tristan@tvdvlugt.nl}

\thanks{J.~\v{S}upina was supported by the Slovak Research and Development Agency under the contract no.\ APVV-25-0144. M.~Repick\'y was supported by Slovak Grant Agency VEGA 2/0104/24. T.~van der Vlugt was funded by the Austrian Science Fund (FWF) [grants PAT8484324 and P33895] during the research and writing of this article.}

\begin{document}

	\begin{abstract}
		We generalise the $\sigma$-ideal $\mathcal E$ generated by the closed Lebesgue null subsets of the Cantor space ${}^\omega 2$ to the context of higher Cantor spaces ${}^\kappa 2$ for an uncountable cardinal $\kappa$ satisfying $\kappa=\kappa^{<\kappa}$. We introduce candidates for a higher closed null ideal based on slaloms on partitions of $\kappa$ into sets of size ${<}\kappa$, and study their combinatorial properties. We also give some relations between the cardinal characteristics of the higher closed null ideals and other cardinal characteristics.
	\end{abstract}
	
	\maketitle

	\section{Introduction}\label{sec:introduction}
	
	In the last two decades, a considerable effort has gone into the generalisation of combinatorics of the Baire space $\omom$ to the context of the \emph{higher} Baire space $\kaka$, where $\kappa$ is regular uncountable. Although the higher context resembles the classical context closely in many cases, it can behave very differently in others: statements that are consistent in the classical context (e.g., $\fr b<\fr s$), turn out to have inconsistent higher analogues ($\fr s_\kappa\leq \fr b_\kappa$ is a theorem, \cite{RaghavanShelah2017}), or vice versa. 
	
	In some cases, the classically defined object does not have a clear generalisation.  This is the case for Lebesgue measure, which is, due to its $\sigma$-additivity, heavily dependent on the separability of the reals, cf. \cite{AgostiniBarreraDimonte}. For this reason, it is hard, or even impossible, to say what the generalisation of a Lebesgue null set should be (although several attempts have been made, see \cite{ShelahNull,FriedmanLaguzzi,CohenShelah}). 
	Nevertheless, through classically equivalent combinatorial notions, it is possible to generalise stronger concepts. For instance, even without Lebesgue measure, it is not hard to generalise the notion of strongly null, also known as strong measure zero, cf. \cite{Halko1996,HalkoShelah2001}. 
	
	In this article we will take the classical $\sigma$-ideal $\cal E$ that is generated by the \emph{closed Lebesgue null} sets of the Cantor space $\omcs$, and generalise it to the context of the higher Cantor space $\kacs$. We do so by carefully avoiding Lebesgue measure\,---\,instead, we use a characterisation of closed null sets due to \citet{BartoszynskiShelah}, which is based on slaloms and interval partitions. This allows us to introduce two $\kappa$-complete ideals, $\Ek$ and $\BEk$, that are equivalent in the classical context, but distinct in the higher context.

	\subsection{Overview}\label{sec:overview}
	
	In \cref{sec:generalising the closed null ideal}, we introduce candidates for a higher closed null ideal and study their combinatorial properties. We summarise the classical definition and combinatorial characterisation of the closed null ideal $\cal E$ on the Cantor space (\cref{sec:classical closed null}), before we define the higher analogues $\Ek$ and $\BEk$ and study their basic properties (\cref{sec:higher closed null}). We use a family of parametrised generating ideals $\Ek(B)$ for this definiton, which also provides us with the families $\EkE$, $\EkA$, $\BEkE$ and $\BEkA$ consisting of unions or intersections of such generating ideals (\cref{dfn:families}). We compare $\Ek$ and $\BEk$ to the $\kappa$-meagre ideal $\Mk$ (\cref{sec:relations with higher meagre}), and we investigate the relations between individual generating ideals based on their parameters (\cref{sec:glueing}).
	
	\cref{sec:additive ideals} concerns the closure of the higher closed null ideals under binary addition. After we provide the necessary definitions and introduce the additive ideals $\NAk$, $\MAk$ and the strongly null ideal $\SNk$ (\cref{sec:additive ideals definitions}), we study the relation between these ideals, the higher closed null additive ideals and the ideals $\EkA$ and $\BEkA$ (\cref{sec:higher closed null additive}). We finish the section by considering failures of additivity, where adding together two small sets results in a large sum (\cref{sec:decomposing})
	
	Finally, \cref{sec:cardinal characteristics} analyses the cardinal characteristics of these ideals, and compares them with each other (\cref{sec:comparison within closed null}) and with some other cardinal characteristics (\cref{sec:comparison with other cardinals}), and \cref{sec:consistency} looks at the effect of some forcing notions on the values of these cardinal characteristics.
	
	\subsection{Preliminaries}\label{sec:preliminaries}
	
	We will always assume that $\kappa$ is a regular uncountable cardinal with $\kappa=\kappa^{<\kappa}$, but often we have to make stronger assumptions, e.g.\ that $\kappa$ is regular limit or strongly inaccessible. Let $\abset{X_\alpha}{\alpha\in\kappa}$ be a sequence of sets such that $2\leq \card{X_\alpha}\leq\kappa$ for all $\alpha$, and let $\cal X=\prod_{\alpha\in\kappa}X_\alpha$. We give $\cal X$ a topology generated by the base $\set[big]{[s]}{s\in\Cup_{\gamma<\kappa}\prod_{\alpha\in\gamma}X_\alpha}$, where we define $\sqb s=\set{f\in\cal X}{s\subset f}$. We will refer to such spaces $\cal X$ simply as \emph{higher product spaces}. Among the most well-studied such spaces are the \emph{higher Baire space} $\kaka$ and the \emph{higher Cantor space} $\kacs$, in analogy with the \emph{classical Baire space} $\omom$ and \emph{classical Cantor space} $\omcs$. 
	
	We say a set $X\subset\cal X$ is \emph{$\kappa$-meagre} if it is a union of $\kappa$-many nowhere dense sets, where a set $Y$ is \emph{nowhere dense} if every basic set $\sqb s$ contains some basic set $\sqb t$ with $Y\cap \sqb t=\emp$. We let $\NWD[\cal X]$ be the family of all nowhere dense subsets of $\cal X$ and $\Mk[\cal X]$ be the family of all $\kappa$-meagre subsets of $\cal X$. It is not hard to see that $\NWD[\cal X]$ is a ${<}\kappa$-complete proper ideal on $\cal X$ and $\Mk[\cal X]$ is a ${\leq}\kappa$-complete proper ideal on $\cal X$. Here we call an ideal \emph{${<}\kappa$-complete}, if it is closed under unions of size ${<}\kappa$, and \emph{${\leq}\kappa$-complete} if it is ${<}\kappa^+$-complete. We will write $\NWD$ and $\Mk$ if the space $\cal X$ is clear from context.
	
	A property holds for \emph{almost all} $\alpha\in\kappa$ if there is  $\alpha_0\in\kappa$ such that it holds for all $\alpha\geq\alpha_0$. Similarly, a property holds for \emph{cofinally many} $\alpha\in\kappa$ if for each $\alpha_0\in\kappa$ there is $\alpha\geq\alpha_0$ for which it holds. We abbreviate these as the quantifiers $\forall^\infty\alpha\in\kappa$ and $\exists^\infty\alpha\in\kappa$, respectively. We write $[X]^\kappa$ for the set of subsets of $X$ of size $\kappa$; similarly we write $[X]^{<\kappa}$ for $\Cup_{\lambda<\kappa}[X]^\lambda$ and $[X]^{\leq\kappa}$ for $[X]^{<\kappa^+}$. Two sets $X,Y\in\sqkaka$ are called \emph{almost disjoint} if $\card{X\cap Y}<\kappa$, and $X$ is called an \emph{almost subset} of $Y$, denoted $X\subset^* Y$ if $X$ and $\kappa\setminus Y$ are almost disjoint, or equivalently if $\card{X\setminus Y}<\kappa$.
	
	\subsection{Acknowledgements}
	
	No LLMs were used in the writing and research of this article, except for \cref{lmm:BE additive failure,lmm:E additive failure}, which were strengthened from an earlier form to their current statement while testing ChatGPT 5.6. All proofs are our own.

	\section{Generalising the closed null ideal}\label{sec:generalising the closed null ideal}
	
	The $\sigma$-ideal $\cal E$ generated by closed sets of Lebesgue measure zero (in the space $\omcs$)  was given a combinatorial characterisation in terms of slaloms by \citet[Theorem~4.3]{BartoszynskiShelah}. We will repeat this characterisation in \cref{sec:classical closed null}, providing the classical framework with which the generalised context could be compared. For detailed information about the classical ideal $\cal E$ we refer to \cite[\S~2.6]{BartoszynskiJudah}, or recently  \cite{CardonaMartonSupina2025}. 
	
	Lebesgue measure does not appear in the abovementioned combinatorial characterisation of $\cal E$. This allows us to define a higher closed null ideal for uncountable regular cardinals $\kappa$. We provide two different approaches in \cref{sec:higher closed null} and compare the resulting ideals $\Ek$ and $\BEk$ to each other and to the $\kappa$-meagre ideal $\Mk$. These three ideals turn out to be comparable yet distinct, or to be precise $\Ek\subsetneq\BEk\subsetneq\Mk$.
	
	\subsection{The classical closed null ideal}\label{sec:classical closed null}
	The ideal $\cal E$ is the $\sigma$-ideal generated by the family of all closed subsets of the Cantor space $\omcs$ that have Lebesgue measure zero, where we recall that the Lebesgue measure $\mu$ on $\omcs$ is the standard product measure, where we endow $2=\ac{0,1}$ with the \emph{coin-flip measure} $\mu_{\halffrac}$, that is, $\mu_{\halffrac}\rb{\ac{0,1}}=1$ and $\mu_{\halffrac}\rb{\ac 0}=\mu_{\halffrac}\rb{\ac 1}=\frac12$. It is well-known that the theory of Lebesgue measure cannot be easily generalised to the higher context, and recent work (cf.\ \cite{AgostiniBarreraDimonte,Wontner}) has shown that any such generalisation has to be severely limited. We will therefore use a different definition of $\cal E$ that is classically equivalent. Our presentation of this characterisation is the one provided in \cite[Theorem~2.3]{CardonaMartonSupina2025}, which itself is a structural refinement of the characterisation provided in \cite[Theorem~4.3]{BartoszynskiShelah}.
	
	Let $\ell^1_+$ be the space of sequences $\epsilon\colon \omega\to\opiv{0}{\infty}$ of positive real numbers, such that $\sum_{n\in\omega}\epsilon_n$ converges.  Given such $\epsilon\in\ell^1_+$, let us for convenience define the multiplicative inverse $h_\epsilon\colon n\mapsto \frac1{\epsilon_n}$. An \emph{interval partition} of $\omega$ is a sequence $I=\abset{I_n}{n\in\omega}$ of mutually disjoint nonempty intervals of natural numbers such that $\omega=\Cup_{n\in\omega}I_n$. We will adopt the convention that $\min\rb{I_{n+1}}=\max\rb{I_n}+1$ for all $n\in\omega$, which also implies $\min(I_0)=0$. We denote by $\rm{IP}$ the family of all such interval partitions on $\omega$. We define a family of slaloms parametrised by $\epsilon\in\ell^1_+$ and $I\in\rm{IP}$ (cf.\ \cite[Definition~2.1]{CardonaMartonSupina2025}), and rewrite it in order to make it more easily comparable to our definition for the uncountable case:
	\begin{align*}
		\Sigma\rb{I,\epsilon}&=\set[m]{\phi\in\ts\prod_{n\in\omega}\cal P\rb[big]{{}^{I_n}2}}{ \lim_{n\to\infty}\textstyle\frac{\card{\phi(n)}}{2^{\card{I_n}}\cdot \epsilon_n}=0}\\
		&=\set[m]{\phi\in\ts\prod_{n\in\omega}\cal P\rb[big]{{}^{I_n}2}}{ \forall N\in\omega\fforall^\infty n\in\omega\,\rb[m]{\textstyle\frac{h_\epsilon(n)\cdot\card{\phi(n)}}{2^{\card{I_n}}}<\frac{1}N}}\\
		&=\set[m]{\phi\in\ts\prod_{n\in\omega}\cal P\rb[big]{{}^{I_n}2}}{ \forall N\in\omega\fforall^\infty n\in\omega\,\rb[m]{N\cdot h_\epsilon(n)\cdot |\phi(n)|<2^{\card{I_n}}}}.\tag{$*$}\label{eqn:classical sigma}
	\end{align*}
	We may now define for a given slalom $\phi\in\Sigma\rb{I,\epsilon}$ the following families:
	\begin{align*}
		\sqb{\phi}_*&=\set{x\in\omcs}{\forall^\infty n\in\omega\,\rb{x\restriction I_n\in\phi\rb n}}, \\
		\cal E\rb{I,\epsilon}&=\set{X\subset\omcs}{\exists \phi\in\Sigma\,\rb{I,\epsilon}\rb{X\subset\sqb \phi_*}}.
	\end{align*}
	We are now ready to give the combinatorial characterisation of $\cal E$.
	\begin{thm}[{\cite[Theorem~2.3]{CardonaMartonSupina2025}}]\label{thm:classical E}
		$\cal E=\Cup_{I\in\rm{IP},\,\epsilon\in\ell^1_+}\cal E(I,\epsilon)$.
		\qed
	\end{thm}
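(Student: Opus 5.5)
We prove the two inclusions separately. Throughout we write $\mu$ for the product measure and use that $\cal E$ is the $\sigma$-ideal generated by closed null sets.

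For $\supseteq$, fix $I\in\rm{IP}$, $\epsilon\in\ell^1_+$ and $\phi\in\Sigma(I,\epsilon)$. We write
\[\sqb\phi_*=\Cup_{m\in\omega}C_m,\qquad C_m=\set{x\in\omcs}{\forall n\geq m\,\rb{x\restriction I_n\in\phi(n)}}.\]
Each $C_m$ is closed, being an intersection of clopen sets. Moreover, for every $k\geq m$,
\[\mu(C_m)\leq\prod_{n=m}^{k}\frac{\card{\phi(n)}}{2^{\card{I_n}}}.\]
By the definition of $\Sigma(I,\epsilon)$, for almost all $n$ we have $\card{\phi(n)}/2^{\card{I_n}}<\epsilon_n$. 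Since $\sum_n\epsilon_n<\infty$, we get $\epsilon_n\to0$, so infinitely many factors are below $\frac12$. Hence the product tends to $0$ and $\mu(C_m)=0$. Therefore $\sqb\phi_*\in\cal E$, and so does every subset of it.

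For $\subseteq$, let $X\in\cal E$, so $X\subset\Cup_k F_k$ with each $F_k$ closed and null. The plan has three steps.

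First, we handle a single closed null set $F$. Let $T$ be its tree. Compactness together with $\mu(F)=0$ gives
\[\frac{\card{T\cap{}^{j}2}}{2^{j}}\to0.\]
So for any target $\delta>0$ we can choose $j$ with the proportion of level-$j$ nodes of $T$ below $\delta$.

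Second, we choose the partition and summable sequence. We build $I$ and $\epsilon$ by a recursion over $n$, interleaving the countably many $F_k$. At stage $n$ we pick $\epsilon_n=2^{-n}$. We then pick $I_n$ long enough that, for each $k\leq n$, the set
\[\phi_k(n)=\set{s\in{}^{I_n}2}{s\text{ extends some node of }T_k\text{ restricted to }I_n}\]
has proportion below $\epsilon_n/(n^2 2^n)$. This is where I expect the main obstacle. Restricting $F_k$ to a block $I_n$ that does not start at $0$ gives a projection of $F_k$, not a section, and a projection of a null set need not be small.

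Third, to fix this I would use the classical fact (\cite[Theorem~4.3]{BartoszynskiShelah}, which we are allowed to invoke as a black box) that every closed null set is covered by some $\sqb{\psi}_*$ with $\card{\psi(n)}/2^{\card{I_n}}$ summable along some interval partition. Given that fact, the remaining work is the refinement of \cite[Theorem~2.3]{CardonaMartonSupina2025}. This refinement has two parts. The first is to merge countably many pairs $(I^k,\psi_k)$ into a single common coarsening $I$, using unions of the slaloms on the merged blocks. The second is to choose $\epsilon$ so that each merged slalom lies in $\Sigma(I,\epsilon)$: if the ratios $r_n$ are summable, then taking $\epsilon_n=\sqrt{r_n}$ (or $\max(\sqrt{r_n},2^{-n})$ to keep $\epsilon_n$ positive) gives $r_n/\epsilon_n\to0$ with $\epsilon$ still summable.

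Finally, we unify into a single $\phi$ by setting $\phi(n)=\Cup_{k\leq n}\phi_k(n)$. The condition defining $\Sigma(I,\epsilon)$ is preserved by this union because of the diagonal construction. Then $\Cup_k F_k\subset\sqb\phi_*$, which gives $X\in\cal E(I,\epsilon)$.
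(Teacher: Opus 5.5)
The paper gives no proof of this theorem; it only cites it. Your $\supseteq$ direction is correct, but the $\subseteq$ direction does not go through as written. The black box you invoke in the third step is already the single-closed-set case of the statement, so everything rests on your merging step and your choice of $\epsilon$, and both claims there are false in general.

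\emph{Merging.} A common coarsening of countably many interval partitions need not exist. If $I^0$ has left endpoints $0,2,4,\dots$ and $I^1$ has left endpoints $0,1,3,5,\dots$, the only common left endpoint is $0$. What you need instead is that each $I_n$ contains an entire block $I^k_m$ of $I^k$ for every $k\le n$, with $m\to\infty$. You then pull back $\psi_k$ as $\{s\in{}^{I_n}2 : s\restriction I^k_m\in\psi_k(m)\}$, which has the same relative size as $\psi_k(m)$.

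\emph{Choice of $\epsilon$.} Summability of $r_n$ does not give summability of $\sqrt{r_n}$: take $r_n\approx n^{-2}$. You would need something like $\epsilon_n=r_n/\sqrt{R_n}$ with $R_n=\sum_{m\ge n}r_m$. More simply, arrange $r_n\le 4^{-n}$ and take $\epsilon_n=2^{-n}$.

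The detour is unnecessary, because the obstacle you flag in your second step is resolved by your own first step. With $I_n=[k_n,k_{n+1})$, every element of $\phi_k(n)$ is the restriction to $I_n$ of some node in $T_k\cap{}^{k_{n+1}}2$. Hence
\[\frac{|\phi_k(n)|}{2^{|I_n|}}\le 2^{k_n}\cdot\frac{|T_k\cap{}^{k_{n+1}}2|}{2^{k_{n+1}}}.\]
Since $k_n$ is fixed before $k_{n+1}$ is chosen and the last fraction tends to $0$, you can choose $k_{n+1}$ so that this bound is below $4^{-n}/(n+1)$ for all $k\le n$ simultaneously.

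Then $\phi(n)=\bigcup_{k\le n}\phi_k(n)$ has relative size at most $4^{-n}$, so $\phi\in\Sigma(I,\epsilon)$ for $\epsilon_n=2^{-n}$. Every $x\in F_k$ satisfies $x\restriction I_n\in\phi(n)$ for all $n\ge k$, which gives $X\subseteq\bigcup_k F_k\subseteq[\phi]_*$.

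A projection of a null set onto a fixed block can indeed be large. What makes the count work here is that each block's length is chosen after its left endpoint is fixed.
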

	
	Let us take a moment to see how we may generalise this characterisation to uncountable $\kappa$. The use of converging series of real numbers has no clear analogue, but our inverse function~$h_\epsilon$ provides a solution to this problem; the intuitive idea is that a series $\sum \epsilon_n$ is convergent if $h_\epsilon$ grows fast enough. For instance, if $\epsilon_n=n^{-r}$ for some $r>1$, then $\sum\epsilon_n$ will converge, and hence $h_\epsilon\colon n\mapsto n^r$ grows fast enough. We will see later that the role of the function $h$ is rather unrestrictive in the higher analogue, since, in fact, any choice of $h$ that is eventually nonzero will result in the same family of slaloms (see \cref{lmm:compatible}).
	
	We propose two alternative characterisations of $\cal E$. Let us call a sequence $B=\abset{B_n}{n\in\omega}$ a \emph{bounded partition} of $\omega$, if the sets $B_n$ are mutually disjoint finite sets such that $\omega=\Cup_{n\in\omega}B_n$. We define $\Sigma(B,\epsilon)$, $[\phi]_*$ for $\phi\in\Sigma(B,\epsilon)$, and $\cal E(B,\epsilon)$ exactly as before, replacing the role of the sets $I_n$ by $B_n$. This allows us to define $\cal{BE}=\Cup_{B\in\rm{BP},\,\epsilon\in\ell^1_+}\cal E(B,\epsilon)$, where $\rm{BP}$ is the family of all bounded partitions of $\omega$.
	
	\begin{thm}\label{thm:classical E and BE}
		$\cal E=\cal{BE}$.
	\end{thm}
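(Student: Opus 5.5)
The inclusion $\cal E\subset\cal{BE}$ is immediate from \cref{thm:classical E}: every interval partition $I\in\rm{IP}$ is in particular a bounded partition, so $\cal E(I,\epsilon)\subset\cal{BE}$ for all $I,\epsilon$. The work lies in the converse. Given a bounded partition $B$, $\epsilon\in\ell^1_+$ and $\phi\in\Sigma(B,\epsilon)$, I will show directly that $[\phi]_*\in\cal E$. The cleanest way is to check that $[\phi]_*$ is a countable union of closed null sets, rather than converting to an interval-partition slalom.

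\textbf{Step 1 (decomposition).} Write $[\phi]_*=\Cup_{m\in\omega}F_m$, where
\[
F_m=\set{x\in\omcs}{\forall n\geq m\,(x\restriction B_n\in\phi(n))}.
\]
Each $F_m$ is closed. Indeed, each condition $x\restriction B_n\in\phi(n)$ depends only on the finitely many coordinates in $B_n$, so it defines a clopen set, and $F_m$ is an intersection of clopen sets.

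\textbf{Step 2 (null).} The sets $B_n$ are pairwise disjoint, so the events $x\restriction B_n\in\phi(n)$ are independent under $\mu$. Each has probability $\card{\phi(n)}/2^{\card{B_n}}$. Hence
\[
\mu(F_m)=\prod_{n\geq m}\frac{\card{\phi(n)}}{2^{\card{B_n}}}.
\]
By (\ref{eqn:classical sigma}) with $N=1$, for almost all $n$ we have $\card{\phi(n)}/2^{\card{B_n}}<\epsilon_n$. Since $\sum\epsilon_n$ converges, $\epsilon_n\to0$, so infinitely many (indeed almost all) factors are below $\frac12$. The product is therefore $0$. Thus every $F_m$ is a closed null set, and $[\phi]_*\in\cal E$. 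Since $\cal E$ is an ideal, every subset of $[\phi]_*$ also lies in $\cal E$, which proves $\cal{BE}\subset\cal E$.

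\textbf{Main obstacle.} The only delicate point is the measure computation in Step 2, specifically the independence claim for non-interval blocks. It holds because $\mu$ is a product measure and the $B_n$ are disjoint finite sets. A finite intersection $\Cap_{m\leq n\leq k}$ has measure equal to the finite product, and continuity from above then gives the infinite product. Note that convergence of $\sum\epsilon_n$ is barely used here; only $\epsilon_n\to0$ matters. This is consistent with the paper's later remark that the role of $h$ is weak.
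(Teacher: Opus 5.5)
Your proof is correct and matches the paper's argument. The paper likewise gets $\cal E\subset\cal{BE}$ from the fact that interval partitions are bounded partitions, and for the converse writes $[\phi]_*=\Cup_{m\in\omega}[\phi]_m$ with each $[\phi]_m$ closed and null; you additionally spell out the product-measure computation (independence over the disjoint blocks $B_n$, and $|\phi(n)|/2^{|B_n|}<\epsilon_n\to 0$), which the paper leaves as ``easy to see''.
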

	\begin{proof}
		Clearly every interval partition of $\omega$ is a bounded partition as well, hence $\cal E\subset\cal{BE}$.
		Reversely, it is easy to see that for every $B\in\rm{BP}$, $\epsilon\in\ell^1_+$ and $\phi\in\Sigma(B,\epsilon)$ we have  $[\phi]_*=\Cup_{m\in\omega}[\phi]_m$, where we define $[\phi]_m=\set{x\in\omcs}{\forall n\geq m\rb{x\restriction B_n\in\phi\rb n}}$. Now, as each $[\phi]_m$ is a closed set of measure zero, we have $\cal{BE}\subset\cal E$.
	\end{proof}
	
	Alternatively, we may consider the following tree-based characterisation of $\cal E$, which also avoids mention of Lebesgue measure. Let $T\subset\fomcs$ be a tree and let $\sqb T\subset\omcs$ denote the set of all branches through $T$. We say that $T$ is a \emph{halving tree} if for every $s\in T$ there is $n_s\in\omega$ such that 
	\[\card[m]{\set[big]{t\in{}^{\dom(s)+n_s}2}{s\subset t\in T}}\leq\card[m]{ \set[big]{t\in{}^{\dom(s)+n_s}2}{s\subset t\notin T}}.\]
	That is, for each $s\in T$, there is a level of the tree in which at most half of the extensions of $s$ at that level belong to the tree. Let us define $\cal H$ as the least $\sigma$-ideal containing $\sqb T$ for every halving tree $T\subset\fomcs$.
	
	\begin{thm}\label{thm:classical E and H}
		$\cal E=\cal H$.
	\end{thm}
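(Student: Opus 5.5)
We prove both inclusions separately. For $\cal H\subset\cal E$ it suffices, since $\cal E$ is a $\sigma$-ideal, to show that $\sqb T\in\cal E$ for every halving tree $T$. For $\cal E\subset\cal H$ it suffices, by \cref{thm:classical E}, to cover every $\sqb\phi_*$ with $\phi\in\Sigma(I,\epsilon)$ by countably many sets of the form $\sqb T$ with $T$ halving.

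\textbf{$\cal H\subset\cal E$.} Let $T$ be halving. The set $\sqb T$ is closed, so it remains to show $\mu(\sqb T)=0$. Define $\mu_T(s)=\mu(\sqb T\cap\sqb s)$. Suppose $\mu(\sqb T)>0$. By the Lebesgue density theorem there is a basic set $\sqb s$ with $\mu_T(s)>\frac12\mu(\sqb s)$, and necessarily $s\in T$. Split $\sqb s$ along the level $\dom(s)+n_s$. All of the mass of $\sqb T\cap\sqb s$ lies in the cylinders $\sqb t$ with $s\subset t\in T$. There are at most half as many of these as there are extensions $t$ of $s$ at that level, and each satisfies $\mu_T(t)\le\mu(\sqb t)=2^{-n_s}\mu(\sqb s)$. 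Hence $\mu_T(s)\le\frac12\mu(\sqb s)$, a contradiction. So $\sqb T$ is a closed null set and $\sqb T\in\cal E$. An alternative that avoids density is to iterate the halving step below every node, which gives $\mu(\sqb T)\le 2^{-k}$ for all $k$.

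\textbf{$\cal E\subset\cal H$.} Fix $\phi\in\Sigma(I,\epsilon)$. As in the proof of \cref{thm:classical E and BE}, write $\sqb\phi_*=\Cup_m\sqb\phi_m$, where $\sqb\phi_m=\set{x}{\forall n\ge m\,(x\restriction I_n\in\phi(n))}$. Each $\sqb\phi_m$ is closed, so $\sqb\phi_m=\sqb{T_m}$ for the tree
\[T_m=\set{s\in\fomcs}{\forall n\ge m\,(I_n\subset\dom(s)\Rightarrow s\restriction I_n\in\phi(n))}.\]
(Nodes ending partway through an interval are allowed freely.) We check that $T_m$ is halving. Given $s\in T_m$, pick $n\ge m$ with $\min I_n\ge\dom(s)$. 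By \eqref{eqn:classical sigma} with $N=1$, we may also take $n$ large enough that $h_\epsilon(n)\ge1$ and $|\phi(n)|<2^{|I_n|}/h_\epsilon(n)$. We want $|\phi(n)|\le\frac12 2^{|I_n|}$, so we instead apply \eqref{eqn:classical sigma} with $N=2$, which gives $2|\phi(n)|\le 2h_\epsilon(n)|\phi(n)|<2^{|I_n|}$ for large $n$. Take $n_s=\max I_n+1-\dom(s)$. An extension $t$ of $s$ at this level lies in $T_m$ only if $t\restriction I_n\in\phi(n)$. The bits of $t$ on $I_n$ range independently of the bits between $\dom(s)$ and $\min I_n$. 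Therefore at most a $|\phi(n)|/2^{|I_n|}\le\frac12$ fraction of the extensions lie in $T_m$, and $T_m$ is halving.

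\textbf{Main obstacle.} The one place needing care is that \eqref{eqn:classical sigma} requires $h_\epsilon(n)\ge1$ eventually. This holds because $\epsilon_n\to0$, as $\sum\epsilon_n$ converges. Everything else is a routine counting argument.
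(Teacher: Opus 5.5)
Your proof is correct, but both halves take a different route from the paper.

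For $\cal H\subset\cal E$ you argue by contradiction with Lebesgue density: a cylinder $[s]$ in which $[T]$ has relative measure $>\frac12$ contradicts the halving property at level $\dom(s)+n_s$. The paper instead carries out the alternative you only sketch. It chooses levels $0=m_0<m_1<\dots<m_k$ so that every $s\in T\cap{}^{m_i}2$ has $\dom(s)+n_s\le m_{i+1}$. This gives $|T\cap{}^{m_k}2|\le 2^{m_k-k}$, hence $\mu([T])\le 2^{-k}$. That argument is pure finite counting with no density theorem, though yours is equally valid.

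For $\cal E\subset\cal H$ the difference is more substantial. The paper never uses \cref{thm:classical E}. It takes an arbitrary closed null set $X=[T]$, with $T$ the tree of $X$, and shows directly that $T$ itself is halving. The reason is that $[s]\setminus[T]$ is open of full measure in $[s]$, so at some finite level it contains more than half of the cylinders extending $s$. You instead invoke the Bartoszy\'nski--Shelah characterisation to reduce to the closed sets $[\phi]_m$. You then verify that the explicit trees $T_m$ are halving by counting along a single interval $I_n$ lying above $\dom(s)$.

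The paper's route is self-contained and measure-theoretic. It also yields the sharper fact that every closed null set is the body of a halving tree, whereas you only cover it by countably many such bodies. Your route rests on the nontrivial \cref{thm:classical E}, but once that is granted it is entirely measure-free. It also makes explicit why slaloms with $|\phi(n)|/2^{|I_n|}\to0$ produce halving trees. (Your $N=1$ step is a false start that the $N=2$ step supersedes; it does no harm.)
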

	\begin{proof}
		If $T$ is a halving tree and $k\in\omega$, we build a sequence $\abset{m_i\in\omega}{i\leq k}$ with $m_0=0$, such that for each $s\in T\cap {}^{m_i}2$ we have $\dom(s)+n_s\leq m_{i+1}$, where $n_s$ is defined as in the halving property. It then follows that $2^{m_k}\geq 2^k\cdot\card{T\cap {}^{m_k}2}$, thus $\mu(\sqb T)\leq 2^{-k}$. This shows that $\sqb T\in\cal E$.	
		
		On the other hand, if $X\subset\omcs$ is a closed null set, then there is a tree $T\subset\fomcs$ with $X= \sqb T$. Let $s\in T$, then there is an open set $U\subset\sqb s$ such that $\mu \rb{U}>\frac12\mu\rb{\sqb s}$ and $U\cap \sqb T=\emp$. Then we may find $n_s\in\omega$ such that $\mu(\sqb s)\leq 2\cdot \mu\rb{\Cup\set{t\in {}^{\dom(s)+n_s}2}{[t]\subset U\text{ and }s\subset t}}$, from which it follows that $T$ is halving.
	\end{proof}
	
	We will see that neither \cref{thm:classical E and BE} nor \cref{thm:classical E and H} applies to the higher context.

	\subsection{The higher closed null ideals}\label{sec:higher closed null}
	
	An \emph{interval partition} of $\kappa$ is a sequence $I=\abset{I_\alpha}{ \alpha\in\kappa}$ of mutually disjoint nonempty intervals of $\kappa$ such that $\kappa=\Cup_{\alpha\in\kappa}I_\alpha$. Once again we adopt the convention that $\min\rb{I_\alpha}=\min\rb{\kappa\setminus\Cup_{\xi\in\alpha} I_\xi}$ for each $\alpha\in\kappa$. It is good to note that $\set{\min\rb{ I_\alpha}}{ \alpha\in\kappa}$ is a club subset of $\kappa$, and equivalently that the function $f\colon\alpha\mapsto \min(I_\alpha)$ is a continuous strictly increasing function. We let $\IPk$ be the set of all interval partitions of $\kappa$. 
	A \emph{bounded partition} of $\kappa$ is a sequence $B=\abset{B_\alpha}{\alpha\in\kappa}$ of mutually disjoint nonempty  bounded subsets of $\kappa$ such that $\kappa=\Cup_{\alpha\in\kappa}B_\alpha$. We let $\BPk$ denote the set of all bounded partitions of $\kappa$. Note that $\IPk\subset\BPk$. We will fix the notation that if $I\in\IPk$ or $B\in\BPk$, then $I_\alpha$ and $B_\alpha$ are the $\alpha$-th element of $I$ and $B$, respectively. Moreover, we write $i_\alpha=\min(I_\alpha)$, whence $I_\alpha=\iv{i_\alpha}{i_{\alpha+1}}$ for each $\alpha\in\kappa$.
	
	Often, it will be useful to consider only partitions in which the parts become as large as possible. Recall that $\lim_{\alpha<\kappa}\lambda_\alpha=\lambda$ if for every $\mu<\lambda$ we have $\forall^\infty\alpha\in\kappa\,\rb{\mu<\lambda_\alpha}$. Let us define 
	\begin{align*}
		\BPklim&=\begin{cases}
			\set{B\in\BPk}{\lim_{\alpha<\kappa}\card{B_\alpha}=\kappa} & \text{if }\kappa\text{ is a limit cardinal,}\\
			\set{B\in\BPk}{\forall^\infty{\alpha\in\kappa}\,\rb{\card{B_\alpha}=\lambda}} & \text{if }\kappa\text{ is the successor of }\lambda,
		\end{cases}\\
		\IPklim&=\BPklim\cap\IPk.
	\end{align*}

	Given $h\in{}^\kappa(\kappa\setminus\ac 0)$ and $B=\abset{B_\alpha}{\alpha\in\kappa}\in\BPk$, we generalise \cref{eqn:classical sigma} as:
	\begin{align*}
		\Sigma_\kappa^*\rb{B,h}=\set[m]{\phi\in\ts\prod_{\alpha\in\kappa}\cal P\rb[big]{{}^{B_\alpha}2}}{ \forall \lambda<\kappa\fforall^\infty \alpha\in\kappa\,\rb[big]{\lambda\cdot \card{h\rb{\alpha}}\cdot\card{\phi\rb{\alpha}}<2^{\card{B_\alpha}}}}.
	\end{align*}
	We analogously define, for $\phi\in\Sigma_\kappa^*\rb{B,h}$ and $\beta\in\kappa$, the families
	\begin{align*}
		\sqb\phi_\beta& = \set{x\in\kacs}{ \forall\alpha\in\kappa\setminus \beta\, \rb{x\restriction B_\alpha\in\phi\rb{\alpha}}},\\
		\sqb\phi_*& = \ts\Cup_{\beta\in\kappa}[\phi]_\beta= \set{x\in\kacs}{ \forall^\infty\alpha\in\kappa\, \rb{x\restriction B_\alpha\in\phi\rb{\alpha}}}.
	\end{align*}
	
	It is good, at this point, to reflect on our definition for a moment and show that it can be simplified. Specifically, neither $\lambda$ nor $h$ is relevant for the definition of $\Sigma^*_\kappa\rb{B,h}$, as long as we assume that $B\in\BPklim$. 
	
	\begin{dfn}
		For $B\in\BPk$ let us define  
		\begin{align*}
			\Sigma_\kappa^*\rb B&=\set[m]{\phi\in\ts\prod_{\alpha\in\kappa}\cal P\rb[big]{{}^{B_\alpha}2}}{ \forall^\infty \alpha\in\kappa\,\rb[big]{\card{\phi\rb{\alpha}}<2^{\card{B_\alpha}}}},\\
			\Sigma_\kappa\rb B&=\set[m]{\phi\in\ts\prod_{\alpha\in\kappa}\cal P\rb[big]{{}^{B_\alpha}2}}{ \forall \alpha\in\kappa\,\rb[big]{\card{\phi\rb{\alpha}}<2^{\card{B_\alpha}}}}.
		\end{align*}
	\end{dfn}

	\begin{lmm}\label{lmm:compatible}
		Let $h\in{}^\kappa(\kappa\setminus\ac 0)$ and let $B\in\BPklim$, then the following are equivalent:
		\begin{enumerate}[label=(\arabic*)]
			\item\label{lmm:compatible1} $\exists \phi\in\Sigma^*_\kappa\rb{B,h}\,\rb{\sqb\phi_*\neq\emp}$.
			\item\label{lmm:compatible2} $\forall^\infty\alpha\in\kappa\,\rb{\card{h\rb{\alpha}}<2^{\card{B_\alpha}}}$.
			\item\label{lmm:compatible3} $\Sigma^*_\kappa\rb{B,h}=\Sigma_\kappa^*\rb B$.
		\end{enumerate}
	\end{lmm}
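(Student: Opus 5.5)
The plan is to prove the cycle \ref{lmm:compatible1}$\Rightarrow$\ref{lmm:compatible2}$\Rightarrow$\ref{lmm:compatible3}$\Rightarrow$\ref{lmm:compatible1}. Everything rests on elementary cardinal arithmetic together with one observation about $B\in\BPklim$:
\[
\forall\lambda<\kappa\ \forall^\infty\alpha\in\kappa\ \rb{\omega\leq\card{B_\alpha}\text{ and }\lambda<2^{\card{B_\alpha}}}.\tag{$\dagger$}
\]
I would establish $(\dagger)$ first, by cases on $\kappa$.
\begin{itemize}
\item If $\kappa$ is a limit cardinal, then $\lim_{\alpha<\kappa}\card{B_\alpha}=\kappa$. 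Hence for any $\lambda<\kappa$ we eventually have $\card{B_\alpha}>\max(\lambda,\omega)$, which gives $\lambda<\card{B_\alpha}<2^{\card{B_\alpha}}$.
\item If $\kappa=\mu^+$, then eventually $\card{B_\alpha}=\mu$. Here $\mu\geq\omega$ because $\kappa$ is uncountable, and any $\lambda<\kappa$ satisfies $\lambda\leq\mu<2^\mu$.
\end{itemize}

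\textbf{\ref{lmm:compatible1}$\Rightarrow$\ref{lmm:compatible2}.} Fix $\phi\in\Sigma^*_\kappa\rb{B,h}$ and some $x\in\sqb\phi_*$. Then $x\restriction B_\alpha\in\phi(\alpha)$ for almost all $\alpha$, so $\card{\phi(\alpha)}\geq1$ for almost all $\alpha$. Apply the defining condition of $\Sigma^*_\kappa\rb{B,h}$ with $\lambda=1$. For almost all $\alpha$ this yields
\[
\card{h(\alpha)}\leq 1\cdot\card{h(\alpha)}\cdot\card{\phi(\alpha)}<2^{\card{B_\alpha}}.
\]

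\textbf{\ref{lmm:compatible2}$\Rightarrow$\ref{lmm:compatible3}.} The inclusion $\Sigma^*_\kappa\rb{B,h}\subset\Sigma^*_\kappa\rb B$ always holds: take $\lambda=1$ and use $\card{h(\alpha)}\geq1$, which is true since $h(\alpha)\neq0$. For the converse, let $\phi\in\Sigma^*_\kappa\rb B$ and $\lambda<\kappa$. By $(\dagger)$, \ref{lmm:compatible2} and the definition of $\Sigma^*_\kappa\rb B$, for almost all $\alpha$ the following hold at once:
\begin{itemize}
\item $2^{\card{B_\alpha}}$ is an infinite cardinal;
\item the three cardinals $\lambda$, $\card{h(\alpha)}$ and $\card{\phi(\alpha)}$ are all below $2^{\card{B_\alpha}}$.
\end{itemize}
Their product is either finite or equal to their maximum, and in both cases it is strictly below the infinite cardinal $2^{\card{B_\alpha}}$. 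Hence $\phi\in\Sigma^*_\kappa\rb{B,h}$.

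\textbf{\ref{lmm:compatible3}$\Rightarrow$\ref{lmm:compatible1}.} Pick any $x\in\kacs$ and set $\phi(\alpha)=\ac{x\restriction B_\alpha}$. Each $B_\alpha$ is nonempty, so $\card{\phi(\alpha)}=1<2\leq2^{\card{B_\alpha}}$. Thus $\phi\in\Sigma^*_\kappa\rb B=\Sigma^*_\kappa\rb{B,h}$, and clearly $x\in\sqb\phi_0\subset\sqb\phi_*$.

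The only step requiring any care is $(\dagger)$, specifically the successor case. There one needs $\lambda<2^{\card{B_\alpha}}$ even though $\card{B_\alpha}$ is eventually fixed at $\mu$ and so does not grow past $\lambda$. This is resolved by $\lambda\leq\mu<2^\mu$. The hypothesis $B\in\BPklim$ is used exactly here; without it both the infinitude of $\card{B_\alpha}$ and the bound $\lambda<2^{\card{B_\alpha}}$ could fail.
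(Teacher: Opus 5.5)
Your proof is correct and follows essentially the same route as the paper: the same cycle (1)$\Rightarrow$(2)$\Rightarrow$(3)$\Rightarrow$(1), with $\lambda=1$ giving the first implication and the trivial inclusion, and the eventual bound $\lambda<2^{\card{B_\alpha}}$ from $B\in\BPklim$, together with infinite cardinal arithmetic, giving the reverse inclusion. Your explicit handling of the successor case in $(\dagger)$ and your singleton slalom in the last step are slightly cleaner than the paper's appeal to $\lim_{\alpha<\kappa}\card{B_\alpha}=\kappa$, which literally holds only for limit $\kappa$.
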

	\begin{proof}
		
		\ref{lmm:compatible1} $\imp$ \ref{lmm:compatible2}.\quad Let $\phi\in\Sigma_\kappa^*\rb{B,h}$ have $\sqb\phi_*\neq\emp$, then there is $x\in\kacs$ and $\beta\in\kappa$ with $x\in\sqb \phi_\beta$. Particularly,  $0<\card{h\rb\alpha}\cdot\card{\phi\rb\alpha}<2^{\card{B_\alpha}}$ for almost all $\alpha\in\kappa$, thus $\card{h\rb{\alpha}}<2^{\card{B_\alpha}}$ for almost all $\alpha\in\kappa$.
		
		\noindent\ref{lmm:compatible2} $\imp$ \ref{lmm:compatible3}.\quad Let $\phi\in\Sigma^*_\kappa(B,h)$ and let $\lambda=1$, then $\card{\phi\rb\alpha}\leq \card{h(\alpha)}\cdot\card{\phi\rb\alpha}$ for all $\alpha\in\kappa$, hence $\phi\in\Sigma_\kappa^*\rb B$. On the other hand, since $B\in\BPklim$, for any $\lambda<\kappa$ we have $\forall^\infty\alpha\in\kappa\,(|B_\alpha|\geq\lambda)$, hence $\forall^\infty\alpha\in\kappa\,\rb{\lambda<2^{\card{B_\alpha}}}$, and therefore \ref{lmm:compatible2} implies that $\Sigma^*_\kappa(B)\subset\Sigma^*_\kappa(B,h)$.
		
		\noindent\ref{lmm:compatible3} $\imp$ \ref{lmm:compatible1}.\quad Since $\lim_{\alpha\in\kappa}{\card{B_\alpha}}=\kappa$ implies that there exist $\beta\in\kappa$ and $\phi\in\Sigma_\kappa^*\rb B$ such that $\forall\alpha\geq\beta\rb{\phi\rb\alpha\neq\emp}$, it follows that $\emp\neq\sqb \phi_\beta\subset \sqb\phi_*$. By \ref{lmm:compatible3} we see that $\phi\in\Sigma_\kappa^*\rb{B,h}$.
	\end{proof}
	\begin{lmm}\label{lmm:almost to everywhere}
		Let $B\in\BPk$ and $\phi\in\Sigma_\kappa^*\rb B$, then there exists $\phi'\in\SkB$ such that $\sqb\phi_*=\sqb{\phi'}_*$.
	\end{lmm}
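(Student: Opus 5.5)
Fix $\beta\in\kappa$ such that $\card{\phi(\alpha)}<2^{\card{B_\alpha}}$ for all $\alpha\geq\beta$; such a $\beta$ exists because $\phi\in\Sigma_\kappa^*\rb B$. The plan is to leave $\phi$ unchanged on $\alpha\geq\beta$ and to repair the finitely-or-boundedly many coordinates $\alpha<\beta$, where $\phi(\alpha)$ may be all of ${}^{B_\alpha}2$. Simply shrinking $\phi(\alpha)$ for $\alpha<\beta$ changes nothing about $\sqb\phi_*$, since membership in $\sqb\phi_*$ only depends on a tail of coordinates. The issue is that every $\phi(\alpha)$ must have size strictly less than $2^{\card{B_\alpha}}$, which makes the set $\sqb{\phi'}_\gamma$ smaller for small $\gamma$. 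This is harmless, because $\sqb{\phi'}_*=\Cup_\gamma\sqb{\phi'}_\gamma$ and only tail behaviour matters.

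Concretely, define $\phi'(\alpha)=\phi(\alpha)$ for $\alpha\geq\beta$. For $\alpha<\beta$, let $\phi'(\alpha)={}^{B_\alpha}2\setminus\ac{t_\alpha}$ for some arbitrary $t_\alpha\in{}^{B_\alpha}2$, or even $\phi'(\alpha)=\emp$. Then we need $\card{\phi'(\alpha)}<2^{\card{B_\alpha}}$ for $\alpha<\beta$.

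\begin{itemize}
\item Taking $\phi'(\alpha)=\emp$ works trivially, since $B_\alpha\neq\emp$ gives $2^{\card{B_\alpha}}\geq 2>0$.
\item Taking the complement of a singleton works too: if $B_\alpha$ is finite, then $2^{\card{B_\alpha}}-1<2^{\card{B_\alpha}}$. If $B_\alpha$ is infinite, removing one point does not reduce the cardinality, so it fails.
\end{itemize}

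So $\phi'(\alpha)=\emp$ for $\alpha<\beta$ is the clean choice. This gives $\phi'\in\SkB$.

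It remains to check $\sqb\phi_*=\sqb{\phi'}_*$. Both conditions $\forall^\infty\alpha\,(x\restriction B_\alpha\in\phi(\alpha))$ and $\forall^\infty\alpha\,(x\restriction B_\alpha\in\phi'(\alpha))$ are equivalent to the existence of $\gamma\geq\beta$ with $x\restriction B_\alpha\in\phi(\alpha)$ for all $\alpha\geq\gamma$. In other words, $\sqb{\phi'}_\gamma=\sqb\phi_\gamma$ for all $\gamma\geq\beta$, and the unions over $\gamma$ agree because the sequences $\sqb{\cdot}_\gamma$ are increasing in $\gamma$.

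The only potential obstacle is the cardinality subtlety for infinite $B_\alpha$, which the choice $\phi'(\alpha)=\emp$ sidesteps entirely. Beyond that the argument is routine bookkeeping.
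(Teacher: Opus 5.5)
Your proof is correct and takes essentially the same approach as the paper's. The paper sets $\phi'(\alpha)=\emptyset$ only at the boundedly many coordinates where $|\phi(\alpha)|=2^{|B_\alpha|}$, whereas you empty the whole initial segment below $\beta$; in both cases $\phi'$ agrees with $\phi$ on a tail, so $[\phi]_*=[\phi']_*$.
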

	\begin{proof}
		It is easy to see that the following $\phi'$ suffices:
		\begin{align*}
			\phi'&\colon\alpha\mapsto\begin{cases}
				\phi\rb\alpha &\text{if }\card{\phi\rb\alpha}<2^{\card{B_\alpha}},\\
				\emp&\text{otherwise}.
			\end{cases}\qedhere
		\end{align*}
	\end{proof}
	
	In light of \cref{lmm:compatible,lmm:almost to everywhere}, we will simply work with $\SkB$ from now on. 
	
	\begin{dfn}\label{dfn:families}
		Let $B\in\BPk$, then we define the following families:
		\begin{align*}
			\Ek\rb B&=\mathrlap{\set{X\subset\kacs}{\exists\phi\in\SkB\rb{X\subset\sqb\phi_*}},}\\
			\EkE&=\ts\Cup_{I\in\IPklim}\cal E_\kappa(I),&
			\BEkE&=\ts\Cup_{B\in\BPklim}\cal E_\kappa(B),\\
			\EkA&=\ts\Cap_{I\in\IPklim}\cal E_\kappa(I),&
			\BEkA&=\ts\Cap_{B\in\BPklim}\cal E_\kappa(B),\\
			\Ek&=\set[big]{\ts\Cup E}{E\in\sqb{\EkE}^{\leq\kappa}},&
			\BEk&=\set[big]{\ts\Cup E}{E\in\sqb{\BEkE}^{\leq\kappa}}.
		\end{align*}
	\end{dfn}
	
	The following properties are easily verified from the above definition.
	
	\begin{prp}\label{prp:simple relations}
		The following hold  for all $I\in\IPklim$ and $B\in\BPklim$.
		\begin{enumerate}
			\item\label{prp:simple relations:1} $\EkA\subset\Ek(I)\subset\EkE\subset\Ek$,
			\item\label{prp:simple relations:2} $\BEkA\subset\Ek(B)\subset\BEkE\subset\BEk$,
			\item\label{prp:simple relations:3} $\BEkA\subset\EkA$ and $\EkE\subset\BEkE$ and $\Ek\subset\BEk$,
			\item\label{prp:simple relations:4} $\Ek$ and $\BEk$ are ${\leq}\kappa$-complete ideals.\qed
		\end{enumerate}
	\end{prp}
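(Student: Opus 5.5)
Items (1)–(3) are direct unfoldings of \cref{dfn:families}, and item (4) needs a short argument that each $\Ek(B)$ is an ideal.

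For \eqref{prp:simple relations:1}, fix $I\in\IPklim$.
\begin{itemize}
\item $\EkA\subset\Ek(I)$ holds because an intersection over a family is contained in each member.
\item $\Ek(I)\subset\EkE$ holds because $\EkE$ is a union containing $\Ek(I)$ as a term.
\item $\EkE\subset\Ek$ holds because any $X\in\EkE$ equals $\Cup\ac{X}$ with $\ac X\in\sqb{\EkE}^{\leq\kappa}$.
\end{itemize}
Item \eqref{prp:simple relations:2} is verbatim the same argument with $\BPklim$ in place of $\IPklim$.

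For \eqref{prp:simple relations:3}, the key observation is $\IPklim=\BPklim\cap\IPk\subset\BPklim$.
\begin{itemize}
\item $\BEkA\subset\EkA$: the intersection over the larger index set $\BPklim$ is smaller.
\item $\EkE\subset\BEkE$: the union over the larger index set is larger.
\item $\Ek\subset\BEk$: every family $E\in\sqb{\EkE}^{\leq\kappa}$ is also in $\sqb{\BEkE}^{\leq\kappa}$.
\end{itemize}

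For \eqref{prp:simple relations:4}, there are three things to check.

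\emph{Downward closure.} Each $\Ek(B)$ is closed under subsets, so $\EkE$ and $\BEkE$ are too. Hence if $Y\subset\Cup E$, then $Y=\Cup\set{Y\cap X}{X\in E}$ is again a union of ${\leq}\kappa$ members.

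\emph{${\leq}\kappa$-completeness.} A union of ${\leq}\kappa$ sets, each a union of ${\leq}\kappa$ generators, is a union of ${\leq}\kappa\cdot\kappa=\kappa$ generators.

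\emph{Properness.} This is the one nontrivial point: we need $\kacs\notin\BEk$, which also gives $\kacs\notin\Ek$ by \eqref{prp:simple relations:3}. The plan is a diagonalisation. Suppose $\kacs=\Cup_{\xi<\kappa}\sqb{\phi^\xi}_*$ with $\phi^\xi\in\Sigma_\kappa(B^\xi)$ and $B^\xi\in\BPklim$. Since each $\sqb{\phi^\xi}_*=\Cup_\beta\sqb{\phi^\xi}_\beta$, it suffices to build $x\in\kacs$ avoiding every set $\sqb{\phi^\xi}_\beta$, for all $\xi,\beta<\kappa$. These are $\kappa$ many sets. Each is closed, and each is nowhere dense: given $s\in\fkacs$, pick $\alpha\ge\beta$ with $B^\xi_\alpha$ disjoint from $\dom(s)$, which is possible since the $B^\xi_\alpha$ are disjoint and bounded. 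Then extend $s$ on $B^\xi_\alpha$ by some $t\notin\phi^\xi(\alpha)$, which exists since $\card{\phi^\xi(\alpha)}<2^{\card{B^\xi_\alpha}}$. So $\kacs$ would be $\kappa$-meagre. This contradicts the higher Baire category theorem for $\kacs$, which is implicit in the preliminaries' remark that $\Mk$ is a proper ideal, and which is proved by a length-$\kappa$ construction using ${<}\kappa$-closure of $\fkacs$.

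The main obstacle is this properness step, specifically the nowhere-denseness argument. Everything else is bookkeeping.
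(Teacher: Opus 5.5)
Your proposal is correct and is essentially the paper's own (omitted) verification: items (1)--(3), downward closure, and ${\leq}\kappa$-completeness in (4) are exactly the direct unfoldings of \cref{dfn:families} that the paper calls easily verified. Your extra properness argument is also sound. The paper does not address properness at this point and only gets $\kacs\notin\BEk$ later, from $\BEk\subsetneq\CMk\subseteq\Mk$ in \cref{thm:BE < CM}; your direct nowhere-density argument effectively short-circuits that route.
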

	
	We will show in \cref{prp:completeness of Ek(B)} that when $B\in\BPklim$ also $\Ek(B)$, $\EkA$ and $\BEkA$ are ${\leq}\kappa$-complete ideals. In \cref{sec:glueing}, we study the relations between these $\Ek(B)$ for different partitions $B\in\BPk$; particularly we will see  in \cref{thm:Ek(B) minmax} that no $\Ek(B)$ is $\subseteq$-least with respect to all ideals of the form $\Ek(B)$, which implies that $\EkA\neq\Ek(I)$ and $\BEkA\neq\Ek(B)$ for any $I\in\IPklim$ and $B\in\BPklim$. We also mention that not all of the families from \cref{dfn:families} are ideals, as we will see in \cref{thm:BEkm is no ideal} that $\BEkE$ is not closed under finite unions. For strongly inaccessible $\kappa$, also $\EkE$ is not an ideal, as shown in \cref{thm:eveq equi}, but $\EkE=\Ek$ if $\kappa$ is successor by \cref{crl:Ekm is ideal}. Consequently, all inclusions in \eqref{prp:simple relations:1} and \eqref{prp:simple relations:2} are strict for certain $\kappa$. Finally, the last two inclusions in \eqref{prp:simple relations:3} will be shown to be strict as well, in \cref{thm:BEk is not Ek}. The first inclusion in \eqref{prp:simple relations:3} is an equality if $\kappa$ is successor, as we will see in \cref{thm:successor BEkA=EkA}. See also \cref{fig:ideals}.

	\begin{figure}[h]
		\begin{tikzpicture}[xscale=2,yscale=1.53]
			\node (NAk) at (-2.9,0) {$\NAk$};
			\node (MAk) at (-2.9,1) {$\MAk$};
			\node (SNk) at (-1.8,1) {$\SNk$};
			\node (BEkA) at (-1.8,0) {$\BEkA$};
			\node (EkA) at (-.8,0) {$\EkA$};
			\node (EAk) at (-.8,1) {$\EAk$};
			\node (EkE) at (0.1,0) {$\EkE$};
			\node (Ek) at (0.1,1) {$\Ek$};
			\node (BEkE) at (1,0) {$\BEkE$};
			\node (BEk) at (1,1) {$\BEk$};
			\node (CMk) at (2,1) {$\CMk$};
			\node (nwd) at (3.2,0) {$\NWD[\kacs]$};
			\node (Mk) at (3.2,1) {$\Mk[\kacs]\rlap{${}=\cal H_\kappa$}$};
			
			\draw 	(NAk) 
						edge[->,line width=.7pt] 
					(BEkA);
					
			\draw 	(NAk) 
						edge[->,line width=.7pt] 
					(MAk);
					
			\draw 	(MAk) 
						edge[->,line width=.7pt] 
					(SNk);
					
			\draw 	(EkA) 
						edge[->,line width=.7pt,dotted] 
						node[midway, fill=white] {\tiny$\sf{GCH}$} 
					(SNk);
					
			\draw 	(EkA) 
						edge[->,line width=.7pt] 
					(EAk);
					
			\draw 	([yshift=2pt]EAk.east) 
						edge[->,line width=.7pt] 
					([yshift=2pt]Ek.west);
					
			\draw 	([yshift=-2pt]Ek.west) 
						edge[->,line width=.7pt,dashed] 
					([yshift=-2pt]EAk.east);
			
			\draw 	([yshift=2pt]BEkA.east) 
						edge[->,line width=.7pt] 
					([yshift=2pt]EkA.west);
					
			\draw 	([yshift=-2pt]EkA.west) 
						edge[->,line width=.7pt,dashed] 
					([yshift=-2pt]BEkA.east);
			
			\draw 	(EkA) 
						edge[->,line width=.7pt] 
						node[midway,yshift=-5pt,rotate=180] {\tiny$\not\to$}
					(EkE);
					
			\draw 	(BEk) 
						edge[->,line width=.7pt] 
						node[midway,xshift=-5pt,rotate=-90] {\tiny$\not\to$}
					(BEkE);
					
			\draw 	([xshift=2pt]EkE.north) 
						edge[->,line width=.7pt] 
					([xshift=2pt]Ek.south);
					
			\draw 	([xshift=-2pt]Ek.south) 
						edge[->,line width=.7pt,dashed] 
					([xshift=-2pt]EkE.north);
					
			\draw 	(EAk) 
						edge[<->,line width=.7pt,dashed] 
					(EkE);
					
			\draw 	(EkE) 
						edge[->,line width=.7pt] 
						node[midway,yshift=-5pt,rotate=180] {\tiny$\not\to$}
					(BEkE);
					
			\draw 	(BEkE) 
						edge[->,line width=.7pt] 
					(BEk);
					
			\draw 	(Ek) 
						edge[->,line width=.7pt] 
						node[midway,yshift=-5pt,rotate=180] {\tiny$\not\to$}
					(BEk);
					
			\draw 	(BEk) 
						edge[->,line width=.7pt] 
						node[midway,yshift=-5pt,rotate=180] {\tiny$\not\to$}
					(CMk);
					
			\draw 	([yshift=-2pt]Mk.west) 
						edge[->,line width=.7pt,dotted] 
					([yshift=-2pt]CMk.east);
					
			\draw 	([yshift=2pt]CMk.east) 
						edge[->,line width=.7pt] 
					([yshift=2pt]Mk.west);
					
			\draw 	(nwd) 
						edge[->,line width=.7pt]
						node[midway,xshift=-5pt,rotate=-90] {\tiny$\not\to$} 
					(Mk);
		\end{tikzpicture}
		\caption{Comparison of families of subsets of $\kacs$, where an arrow indicates inclusion. An inclusion is proper if the corresponding arrow is labelled with a small negated arrow in the other direction. Dotted arrows are provable if $\kappa$ is inaccessible, dashed arrows are provable if $\kappa$ is successor. For definitions, see \cref{dfn:families,dfn:combinatorial meagre,dfn:halving ideal,sec:additive ideals definitions}.}\label{fig:ideals}
	\end{figure}

	\begin{prp}\label{prp:completeness of Ek(B)}\leavevmode
		\begin{enumerate}[label=(\arabic*)]
			\item\label{prp:completeness:1} If $B\in\BPk$ and $\lambda:=\liminf_{\alpha\in\kappa}{\card{B_\alpha}}$ is infinite, then $\Ek\rb B$ is a ${\leq}\lambda$-complete ideal. 
			\item\label{prp:completeness:2} If $B\in\BPklim$, then $\Ek(B)$ is a ${\leq}\kappa$-complete ideal. 
			\item\label{prp:completeness:3} $\EkA$ and $\BEkA$ are ${\leq}\kappa$-complete ideals.
		\end{enumerate}
	\end{prp}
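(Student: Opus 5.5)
Part \ref{prp:completeness:1} carries all the work. Closure under subsets is immediate from the definition of $\Ek\rb B$, so it suffices to show the following: given $\mu\leq\lambda$ and slaloms $\phi_\xi\in\SkB$ for $\xi<\mu$, there is $\psi\in\SkB$ with $\Cup_{\xi<\mu}\sqb{\phi_\xi}_*\subset\sqb\psi_*$.

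The naive choice $\psi(\alpha)=\Cup_{\xi<\mu}\phi_\xi(\alpha)$ has two problems. First, a point $x$ lies in each $\sqb{\phi_\xi}_*$ only from some $\beta_\xi$ onwards, and these thresholds may be unbounded when $\mu=\kappa$. Second, a union of $\mu$ sets of size ${<}2^{\card{B_\alpha}}$ can have full size $2^{\card{B_\alpha}}$. I would handle both by diagonalising. Since $\liminf\card{B_\alpha}=\lambda$ is infinite, almost every $B_\alpha$ is infinite. For each $\alpha$, admit only those $\xi$ from a set $S_\alpha\subset\mu$ of size ${<}\lambda$, and ensure that every $\xi$ eventually belongs to $S_\alpha$. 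Concretely:
\begin{itemize}
\item if $\mu<\lambda$, take $S_\alpha=\mu$;
\item if $\mu=\lambda$, take $S_\alpha=\card{B_\alpha}\cap\mu$ when this is a proper initial segment, or something like $\min\rb{\alpha,\card{B_\alpha}}$, so that $S_\alpha$ grows to $\mu$.
\end{itemize}
Then define $\psi(\alpha)=\Cup_{\xi\in S_\alpha}\phi_\xi(\alpha)$. For $x\in\sqb{\phi_\xi}_*$ we have $x\restriction B_\alpha\in\phi_\xi(\alpha)\subset\psi(\alpha)$ for all large $\alpha$, because eventually $\xi\in S_\alpha$; hence $x\in\sqb\psi_*$.

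The main obstacle is the cardinality requirement $\card{\psi(\alpha)}<2^{\card{B_\alpha}}$. A union of $\nu=\card{S_\alpha}$ sets, each of size ${<}2^{\card{B_\alpha}}$, has size less than $2^{\card{B_\alpha}}$ whenever $\nu<\cf\rb{2^{\card{B_\alpha}}}$, or when the sizes are bounded by some cardinal below $2^{\card{B_\alpha}}$. König's theorem gives $\cf\rb{2^{\card{B_\alpha}}}>\card{B_\alpha}$, so it suffices to have $\card{S_\alpha}\leq\card{B_\alpha}$. This holds in both cases above because $\card{S_\alpha}\leq\card{B_\alpha}$ is built into the choice. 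The only remaining case is the finitely many, or boundedly many, $\alpha$ with $\card{B_\alpha}$ small. On that bounded set I would put $\psi(\alpha)=\emp$, which only affects a bounded set of coordinates and so does not change membership in $\sqb\cdot_*$. For $\mu=\lambda$, making $S_\alpha$ grow to all of $\mu$ while keeping $\card{S_\alpha}\leq\card{B_\alpha}$ uses $\liminf_\alpha\card{B_\alpha}=\lambda$, so eventually $\card{B_\alpha}\geq\nu$ for each $\nu<\lambda$. This covers $\lambda=\kappa$ as well, since $S_\alpha=\alpha\cap\card{B_\alpha}$ eventually contains every $\xi<\kappa$.

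Part \ref{prp:completeness:2} follows directly. For $B\in\BPklim$ with $\kappa$ a limit cardinal, $\liminf\card{B_\alpha}=\kappa$, and part \ref{prp:completeness:1} with $\lambda=\kappa$ gives ${\leq}\kappa$-completeness. When $\kappa=\nu^+$ we have eventually $\card{B_\alpha}=\nu$, so $\lambda=\nu$. In that case I would rerun the argument with $S_\alpha=\alpha$, where $\card\alpha\leq\nu=\card{B_\alpha}<\cf\rb{2^\nu}$, which again gives unions of $\kappa$ many slaloms. Part \ref{prp:completeness:3} then follows because intersections of ${\leq}\kappa$-complete ideals are ${\leq}\kappa$-complete, and $\EkA$ and $\BEkA$ are intersections of the ideals $\Ek(I)$ and $\Ek(B)$ covered by part \ref{prp:completeness:2}.
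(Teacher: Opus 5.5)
Your proposal is correct and is essentially the paper's proof. For part (1) the paper likewise sets $\phi(\alpha)=\bigcup\{\phi_\xi(\alpha) : \xi<\min(\lambda,\card{B_\alpha})\}$ and uses $\card{B_\alpha}<\cf(2^{\card{B_\alpha}})$ on the almost all $\alpha$ with $B_\alpha$ infinite; it takes $\phi(\alpha)=\bigcup_{\xi\in\alpha}\phi_\xi(\alpha)$ in the successor case of part (2) and gets part (3) by intersecting. The only cosmetic difference is that for $\kappa=\nu^+$ the paper bounds $\card{\phi(\alpha)}$ via $2^\nu=2^{<\kappa}=\kappa$ (so each $\card{\phi_\xi(\alpha)}\leq\nu$), whereas your use of König's theorem does not need $2^\nu=\kappa$.
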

	\begin{proof}
		\ref{prp:completeness:1}.\quad Let $B\in\BPk$, let $\abset{\phi_\xi}{\xi\in\lambda}$ be a sequence of slaloms in $\SkB$, and define 
		\[\phi\colon\alpha\mapsto\ts\Cup\set[m]{\phi_\xi\rb\alpha}{\xi<\min(\lambda,\card{B_\alpha})\text{ and }\card{\phi_\xi\rb\alpha}<2^{\card{B_\alpha}}}.\]
		If $\card{B_\alpha}$ is infinite, then  $\card{B_\alpha}<\cf\rb{2^{\card{B_\alpha}}}$ implies that $\card{\phi\rb\alpha}<2^{\card{B_\alpha}}$. But this applies to almost all $\alpha\in\kappa$, since $\liminf_{\alpha\in\kappa}\card{B_\alpha}=\lambda$ is infinite and $\kappa$ is regular. Moreover, for almost all $\alpha\in\kappa$ we see that $\phi_\xi(\alpha)\subset\phi(\alpha)$, and thus it follows that $\Cup_{\xi\in\lambda}\sqb{\phi_\xi}_*\subset\sqb\phi_*$.
		
		\noindent\ref{prp:completeness:2}. If $\kappa$ is regular limit, this follows from \ref{prp:completeness:1}. Suppose $\kappa=\lambda^+$ and let $B\in\BPklim$, then we may assume that $\card{B_\alpha}=\lambda$ for all $\alpha\in\kappa$. Let $\phi_\xi\in\SkB$ for each $\xi\in\kappa$, and note that  $\kappa=2^{<\kappa}=2^\lambda$ implies $\card{\phi_\xi(\alpha)}\leq\lambda$ for each $\alpha\in\kappa$. Then, particularly, if we define $\phi\colon \alpha\mapsto \Cup_{\xi\in\alpha}\phi_\xi(\alpha)$, we see that $\card{\phi(\alpha)}\leq\lambda$ for each $\alpha\in\kappa$, and thus $\phi\in\SkB$, and $\Cup_{\xi\in\kappa}\sqb{\phi_\xi}_*\subset\sqb\phi_*$.
		
		\noindent\ref{prp:completeness:3}. This is clear, since both $\BEkA$ and $\EkA$ are the intersection of a family of ${\leq}\kappa$-complete ideals.
	\end{proof}
	
	\begin{thm}\label{thm:BEk is not Ek}
		Let $\kappa$ be regular uncountable, then $\BEkE\nsubseteq\Ek$. Hence, $\EkE\subsetneq\BEkE$ and $\Ek\subsetneq\BEk$. 
	\end{thm}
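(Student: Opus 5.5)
The plan is to build one bounded partition $B\in\BPklim$ and one slalom $\phi\in\SkB$, and then to show that $\sqb\phi_*\notin\Ek$ by a diagonalisation of length $\kappa$. Once $\BEkE\nsubseteq\Ek$ is established, both strict inclusions follow immediately. Indeed, \cref{prp:simple relations} gives $\EkE\subset\BEkE$ and $\Ek\subset\BEk$. Moreover, $\EkE\subset\Ek$, so the witness $\sqb\phi_*\in\BEkE\setminus\Ek$ also lies in $\BEkE\setminus\EkE$ and in $\BEk\setminus\Ek$.

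Since $\Ek$ consists of unions of at most $\kappa$ members of $\EkE$, and each $\sqb\psi_*=\Cup_{\beta}\sqb\psi_\beta$, any $X\in\Ek$ is covered by $\kappa$ many sets $\sqb{\psi_\xi}_{\beta_\xi}$ with $\psi_\xi\in\Sigma_\kappa(I^\xi)$ and $I^\xi\in\IPklim$. I enumerate these requirements as $\abset{(I^\xi,\psi_\xi,\beta_\xi)}{\xi\in\kappa}$. I then build $x\in\sqb\phi_*$ as the union of an increasing continuous chain $\abset{x_\xi}{\xi\in\kappa}$ of partial functions with domain a bounded subset of $\kappa$. At stage $\xi$ I choose $\gamma\geq\beta_\xi$ with $I^\xi_\gamma$ lying above $\dom(x_\xi)$. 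I then fix $x\restriction I^\xi_\gamma$ to be some $t\notin\psi_\xi(\gamma)$, which exists because $\card{\psi_\xi(\gamma)}<2^{\card{I^\xi_\gamma}}$. Afterwards I extend the domain to be a union of whole blocks $B_\alpha$. In doing so, every block $B_\alpha$ that is touched must receive a restriction lying in $\phi(\alpha)$. This guarantees $x\notin\sqb{\psi_\xi}_{\beta_\xi}$ for every $\xi$. It also guarantees $x\in\sqb\phi_0\subset\sqb\phi_*$, as long as no block is ever forced out of $\phi(\alpha)$.

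Everything therefore hinges on choosing $B$ and $\phi$ so that the choice of $t$ on an arbitrary interval $I^\xi_\gamma$ never decides an entire block $B_\alpha$ badly. The natural design is to make the blocks \emph{non-convex}. Each $B_\alpha$ gets a small "tail" part $E_\alpha$ and a large "body" part $B_\alpha\setminus E_\alpha$. I let $\phi(\alpha)$ be the set of $s\in{}^{B_\alpha}2$ that are constant $0$ on $E_\alpha$, together with possibly some further freedom. I then need to verify three things: that $\card{\phi(\alpha)}<2^{\card{B_\alpha}}$, that $\card{B_\alpha}\to\kappa$ (respectively $\card{B_\alpha}=\lambda$ when $\kappa=\lambda^+$) so that $B\in\BPklim$, and that the body of each block is spread so widely that for cofinally many $\gamma$ no block is contained in $I^\xi_\gamma$. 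For the last point, the blocks that meet $I^\xi_\gamma$ only partially can be completed later in a way that lands in $\phi(\alpha)$.

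The main obstacle is precisely that last step. An interval partition $I\in\IPklim$ can have intervals that grow arbitrarily fast, so some $I_\gamma$ will swallow entire bounded blocks $B_\alpha$. On those blocks the value of $t$ is dictated only by $t\notin\psi_\xi(\gamma)$, which gives no control. To overcome this, I would first try to exploit the freedom in $t$ by counting. The set of $t\in{}^{I_\gamma}2$ whose restriction to some block $B_\alpha\subset I_\gamma$ falls outside $\phi(\alpha)$ should be arranged to have size less than $2^{\card{I_\gamma}}$. For this I would take $\phi(\alpha)$ to be co-small in ${}^{B_\alpha}2$ in a product-structured way, for example excluding only those $s$ that are identically $1$ on a designated subset, and use that $\card{\psi_\xi(\gamma)}<2^{\card{I_\gamma}}$ leaves a set of candidates of full size $2^{\card{I_\gamma}}$. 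If this counting fails, the fallback is to handle each requirement not at one $\gamma$ but by a fusion-style argument. That fallback would choose $\gamma$ among cofinally many candidates and use the regularity of $\kappa$ together with $\kappa^{<\kappa}=\kappa$ to find an interval $I^\xi_\gamma$ whose interaction with $B$ is harmless.
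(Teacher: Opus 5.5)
Your derivation of the two strict inclusions from $\BEkE\nsubseteq\Ek$ is correct. Your diagonalisation frame is also essentially the one in the paper: you enumerate the $\kappa$ many requirements $(I^\xi,\psi_\xi,\beta_\xi)$, and at stage $\xi$ you fix $x$ on a fresh interval $I^\xi_\gamma$ with $\gamma\geq\beta_\xi$ so as to avoid $\psi_\xi(\gamma)$.

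\textbf{The gap.} The whole content of the theorem lies in the choice of $B$ and $\phi$, which you leave open. Both concrete designs you suggest violate $\phi\in\SkB$. If $\phi(\alpha)$ is free on a body $B_\alpha\setminus E_\alpha$ of size $\card{B_\alpha}$ and only forced to vanish on a small part $E_\alpha$, then $\card{\phi(\alpha)}\geq 2^{\card{B_\alpha\setminus E_\alpha}}=2^{\card{B_\alpha}}$. A co-small $\phi(\alpha)$ has size $2^{\card{B_\alpha}}$ for the same reason.

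Your counting target is also the wrong one. Suppose an interval $I_\gamma$ swallows a block $B_\alpha$ with $\card{I_\gamma\setminus B_\alpha}=\card{I_\gamma}$; some $I\in\IPklim$ you must handle has all late intervals like this. Then the $t\in{}^{I_\gamma}2$ with $t\restriction B_\alpha\notin\phi(\alpha)$ already number $2^{\card{I_\gamma}}$, so the bad set is never small, however $\phi$ is chosen. What must hold instead is that the \emph{admissible} $t\in{}^{I_\gamma}2$ number $2^{\card{I_\gamma}}$ for all sufficiently late infinite $I_\gamma$. Here admissible means extendable to an element of $\sqb\phi_0$. Otherwise $\psi_\xi(\gamma)$ could consist of exactly these. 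Your fusion-style fallback is too vague to supply this.

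\textbf{The missing idea.} Full size should come from a product over many blocks, not from any single block. Make each $\phi(\alpha)$ tiny, free on very few coordinates of $B_\alpha$, and spread these free coordinates so that every infinite interval meets a set of them of full cardinality.

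The paper does this as follows. Let $E$ be the set of even ordinals, choose $B\in\BPklim$ with $\card{B_\alpha\cap E}=1$, say $B_\alpha\cap E=\ac{\xi_\alpha}$, and let $\phi(\alpha)$ consist of the $s\in{}^{B_\alpha}2$ vanishing off $\xi_\alpha$. Then $\card{\phi(\alpha)}=2$, and $\sqb\phi_0=X:=\set{x\in\kacs}{x^{-1}(1)\subseteq E}$.

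Since $X$ is defined coordinatewise, the block structure becomes irrelevant. For every infinite interval $I$ one has $\card{X\restriction I}=2^{\card{I\cap E}}=2^{\card I}$. Take pairwise disjoint intervals $I_\alpha:=I^{\zeta_\alpha}_{\beta_\alpha}$, with each $\zeta$ used $\kappa$ times, and choose $s_\alpha\in(X\restriction I_\alpha)\setminus\phi^{\zeta_\alpha}(\beta_\alpha)$. These glue, padding with $0$, to some $x\in X$ lying outside every $\sqb{\phi^\zeta}_*$.

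In particular, the swallowed blocks you identified as the main obstacle cost nothing. The even ordinals inside $I$ contribute independently, and no completion of partially met blocks is ever needed.
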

	\begin{proof}
		The inclusions easily follow from $\IPklim\subset\BPklim$. 
		
		Let $E=\set{2\cdot\alpha}{\alpha\in\kappa}$ and $X=\set{x\in\kacs}{x^{-1}(1)\subset E}$. That is, $E$ is the set of even ordinals below $\kappa$, and $X$ is the set of $x\in\kacs$ that map all odd ordinals to $0$. We claim that $X\in\BEkE\setminus\Ek$.
		
		On the one hand, we may choose $B\in\BPklim$ such that for each $\alpha\in\kappa$ there exists $\xi_\alpha$ with $E\cap B_\alpha=\ac{\xi_\alpha}$, i.e., each $B_\alpha$ contains exactly one even ordinal. Let us define 
		\[\phi\colon\alpha\mapsto\set[m]{s\in{}^{B_\alpha}2}{\forall \xi\in B_\alpha\,\rb{\xi\neq\xi_\alpha\to s\rb\xi=0}},
		\]
		then $\card{\phi(\alpha)}=2$ and $X\subset\sqb\phi_*\in\Ek(B)\subset\BEkE$. 
		
		On the other hand, suppose $Y\in\Ek$, then there are $I^\zeta\in\IPklim$ and $\phi^\zeta\in\Sigma_\kappa(I^\zeta)$ for each $\zeta\in\kappa$ such that $Y\subset\Cup_{\zeta\in\kappa}\sqb{\phi^\zeta}_*$. For each $\alpha\in\kappa$ choose $\beta_\alpha$, $\zeta_\alpha$ such that
		\begin{enumerate}
			\item  the intervals $I_\alpha:=I^{\zeta_\alpha}_{\beta_\alpha}$ are  infinite and pairwise disjoint, 
			\item  $\card[big]{\set{\alpha\in\kappa}{\zeta=\zeta_\alpha}}=\kappa$ for every $\zeta\in\kappa$. 
		\end{enumerate}
		Since $I_\alpha$ is an infinite interval, $\card{X\restriction I_\alpha}=2^{\card{I_\alpha}}$, thus we may choose some $s_\alpha\in (X\restriction I_\alpha)\setminus \phi^{\zeta_\alpha}(\beta_\alpha)$. Now note that there exists $x\in X$ that is an extension of $\Cup_{\alpha\in\kappa}s_\alpha$ and clearly  $x\in X\setminus Y$.
	\end{proof}
	
	\subsection{Relations with the higher meagre ideal}\label{sec:relations with higher meagre}
	
	We will now compare our ideals $\Ek$ and $\BEk$ to the $\kappa$-meagre ideal and show that $\BEk\subsetneq\Mk[\kacs]$. In fact, we will show a slightly stronger result. For this, we will introduce the ideal of \emph{combinatorially $\kappa$-meagre} sets, due to \citet{BlassHyttinenZhang}.
	
	\begin{dfn}\label{dfn:combinatorial meagre}
		Let $x\in\kacs$ and $B\in\BPk$, then we define the sets
		\begin{align*}
			\dab{x,B}&=\set{y\in\kacs}{\forall\alpha\in\kappa\,\rb{y\restriction B_\alpha\neq x\restriction B_\alpha}},\\
			\dab{x,B}_*&=\set{y\in\kacs}{\forall^\infty\alpha\in\kappa\,\rb{y\restriction B_\alpha\neq x\restriction B_\alpha}}.
		\end{align*}
		A set $Y\subset\kacs$ is \emph{combinatorially $\kappa$-meagre} if there exist $x\in\kacs$ and $I\in\IPk$ such that $Y\subset\dab{x,I}_*$. Let $\CMk$ denote the family of combinatorially $\kappa$-meagre sets.
	\end{dfn}

	\begin{prp}[{\citet[\S~4]{BlassHyttinenZhang}}]\label{lmm:CM properties}
		Let $\kappa$ be regular uncountable, then
		\begin{enumerate}[label=(\arabic*)]
			\item\label{lmm:CM properties:1} $\CMk$ is a ${\leq}\kappa$-complete ideal,
			\item\label{lmm:CM properties:2} $\CMk\subset\Mk[\kacs]$,
			\item\label{lmm:CM properties:3} $\CMk=\Mk[\kacs]$ if and only if $\kappa$ is strongly inaccessible.\qed
		\end{enumerate}
	\end{prp}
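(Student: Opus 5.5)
The plan is to prove the three items separately, working directly with \cref{dfn:combinatorial meagre}.

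For \ref{lmm:CM properties:1}, $\CMk$ is clearly closed under subsets. It is proper, since $x\notin\dab{x,I}_*$. So the work is to merge $\kappa$ many pairs $(x_\xi,I^\xi)$, $\xi<\kappa$, into a single pair $(x,J)$.

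I would build $J\in\IPk$ recursively. Given $j_\alpha=\min(J_\alpha)$, choose, one after another for $\xi<\alpha$, pairwise disjoint intervals $K^\alpha_\xi$ of $I^\xi$ lying above $j_\alpha$. There are only $\card\alpha<\kappa$ steps, so by regularity of $\kappa$ all of them lie below some $j_{\alpha+1}$. Let $x$ agree with $x_\xi$ on each $K^\alpha_\xi$ and be arbitrary elsewhere.

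Now let $y\in\Cup_\xi\dab{x_\xi,I^\xi}_*$, witnessed by $\xi$. Then $y\restriction K\neq x_\xi\restriction K$ for almost all intervals $K$ of $I^\xi$. For all large $\alpha>\xi$ we have $K^\alpha_\xi\subset J_\alpha$ and $y\restriction K^\alpha_\xi\neq x\restriction K^\alpha_\xi$, hence $y\restriction J_\alpha\neq x\restriction J_\alpha$. So $y\in\dab{x,J}_*$.

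For \ref{lmm:CM properties:2}, write $\dab{x,I}_*=\Cup_{\beta}D_\beta$, where $D_\beta=\set{y}{\forall\alpha\geq\beta\,(y\restriction I_\alpha\neq x\restriction I_\alpha)}$. Each $D_\beta$ is closed, because its complement is a union of basic open sets. Each $D_\beta$ is nowhere dense: any $s$ with $\dom(s)<\kappa$ can be extended to $t$ that copies $x$ on some $I_\alpha$ with $\alpha\geq\beta$ above $\dom(s)$, and then $\sqb t\cap D_\beta=\emp$.

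For the forward direction of \ref{lmm:CM properties:3}, assume $\kappa$ is strongly inaccessible. Let $\abset{U_\xi}{\xi<\kappa}$ be dense open sets; it suffices to find $(x,I)$ such that every $y$ with $y\restriction I_\alpha=x\restriction I_\alpha$ for cofinally many $\alpha$ lies in $\Cap_\xi U_\xi$. Then any meagre set $\Cup_\xi(\kacs\setminus U_\xi)$ is contained in $\dab{x,I}_*$.

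I would construct $i_\alpha$ and $x\restriction I_\alpha$ by recursion. Inaccessibility gives $\card{{}^{i_\alpha}2}<\kappa$. Enumerate all pairs $(s,\xi)$ with $s\in{}^{i_\alpha}2$ and $\xi<\alpha$; there are fewer than $\kappa$ of them. Build a single $t$ with $\dom(t)\subset\kappa\setminus i_\alpha$ by successive extension, taking unions at limit stages, so that $\sqb{s\cup t'}\subset U_\xi$ for the current extension $t'$; this uses density and openness of $U_\xi$. Then put $x\restriction I_\alpha=t$. If $y$ agrees with $x$ on cofinally many $I_\alpha$, then for any $\xi$ pick such an $\alpha>\xi$. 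With $s=y\restriction i_\alpha$ we have $y\in\sqb{s\cup t}\subset U_\xi$.

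For the converse of \ref{lmm:CM properties:3}, suppose $\kappa$ is not strongly inaccessible. Since $\kappa^{<\kappa}=\kappa$, there is $\lambda<\kappa$ with $2^\lambda=\kappa$. Fix blocks $P_\gamma=\iv{\lambda\gamma}{\lambda\gamma+\lambda}$ and a bijection $c\colon{}^{\lambda}2\to\kappa$, which lets each block of $y$ code an ordinal. I would aim for a meagre set $M$ defined by a tail condition on these codes. A first candidate is $M=\set{y}{\forall^\infty\gamma\,(c(y\restriction P_\gamma)\geq\gamma)}$, which is meagre because its complement contains the dense $G_\delta$ set $\set{y}{\exists^\infty\gamma\,(c(y\restriction P_\gamma)<\gamma)}$. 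The goal is then, for each given $(x,I)$, to construct $y\in M$ that copies $x$ on cofinally many $I_\alpha$, so that $M\not\subset\dab{x,I}_*$.

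This last step is the main obstacle. Blocks inside a copied $I_\alpha$ carry $x$'s codes, which may be small, so the naive $M$ fails. I would first try to modify the coding so that the condition only constrains blocks chosen by $y$ itself: for instance, the code of one block points to the position of the next relevant block. That way $y$ can copy $x$ on sparse intervals $I_\alpha$ while putting the controlling codes on blocks outside them. Alternatively, I would look up the explicit counterexample in \citet[\S~4]{BlassHyttinenZhang}.
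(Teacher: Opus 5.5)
\textbf{Gap in the converse of (3).} Your arguments for \ref{lmm:CM properties:1}, \ref{lmm:CM properties:2} and the ``if'' direction of \ref{lmm:CM properties:3} are correct. The paper gives no proof of its own here; it only cites \citet[\S~4]{BlassHyttinenZhang}. The gap is the ``only if'' direction of \ref{lmm:CM properties:3}. This is the only place where the failure of strong inaccessibility is used, and your proposal leaves it unproved. You rightly discard your first candidate $M$: indeed $M\subset\dab{\bar 0,P}_*$ for the block partition $P=\abset{P_\gamma}{\gamma\in\kappa}$ itself, because $c(\bar 0)$ is a fixed ordinal. After that you only describe a heuristic. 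You never exhibit a set in $\Mk[\kacs]\setminus\CMk$, nor verify that some set is meagre and escapes every $\dab{x,I}_*$.

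\textbf{How to complete it.} Your pointer idea does work, and it even yields a closed nowhere dense set.
\begin{itemize}
\item Choose $c\colon{}^\lambda 2\to\kappa$ so that every ordinal has a code different from $\bar 0$. This is possible since $2^\lambda\geq\kappa$.
\item For $y\in\kacs$ define the designated blocks by $\gamma_0(y)=0$, $\gamma_{\eta+1}(y)=\gamma_\eta(y)+1+c\rb{y\restriction P_{\gamma_\eta(y)}}$, and $\gamma_\eta(y)=\sup_{\zeta<\eta}\gamma_\zeta(y)$ for limit $\eta$.
\item Put $N=\set{y\in\kacs}{\forall\eta\in\kappa\,\rb{y\restriction P_{\gamma_\eta(y)}\neq\bar 0}}$.
\end{itemize}
Since $\gamma_\eta(y)$ depends only on $y\restriction\lambda\gamma_\eta(y)$, the set $N$ is closed. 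It is nowhere dense: extend any $s$ to some length $\lambda\gamma$. Then the first designated block $\gamma_\eta\geq\gamma$ is already determined, and you can fill it with zeros.

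Given $x$ and $I$, build $y\in N$ by recursion on the designated blocks. At block $\gamma_\eta$:
\begin{itemize}
\item pick some $I_{\alpha_\eta}$ lying above $P_{\gamma_\eta}$, and let $y$ copy $x$ on $I_{\alpha_\eta}$;
\item write into $P_{\gamma_\eta}$ a nonzero code of an ordinal large enough that $P_{\gamma_{\eta+1}}$ lies above $I_{\alpha_\eta}$;
\item let $y$ be $0$ everywhere else.
\end{itemize}
Your sketch does not address the limit stages, where the designated block is forced to be the supremum rather than chosen. This causes no conflict, because every earlier copied interval lies below that supremum and the next one is chosen above it. Then $y$ copies $x$ on $\kappa$ many intervals $I_{\alpha_\eta}$, so $y\in N\setminus\dab{x,I}_*$. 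Since $x$ and $I$ were arbitrary, $N\in\NWD[\kacs]\setminus\CMk$.
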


	Note that we defined combinatorially $\kappa$-meagre sets only using interval partitions, but that we may as well have used bounded partitions instead, as a corollary of the following observation.

	\begin{lmm}\label{capturing BP with IP}
		If $B\in\BPk$, then there exist $I\in\IPk$ and a sequence of sets $\abset{A_\alpha}{\alpha\in\kappa}$ such that $I_\alpha=\Cup_{\beta\in A_\alpha}B_\beta$ for each $\alpha\in\kappa$. 
	\end{lmm}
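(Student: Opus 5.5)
The plan is to build the interval partition $I$ by a recursion of length $\kappa$, producing a strictly increasing continuous sequence of ordinals $\abset{i_\alpha}{\alpha\in\kappa}$ with $i_0=0$ and $I_\alpha=\iv{i_\alpha}{i_{\alpha+1}}$. The requirement is that each interval $I_\alpha$ is exactly a union of blocks of $B$. Equivalently, every $i_\alpha$ should be a \emph{closure point} of $B$: an ordinal $\gamma$ such that every $B_\beta$ meeting $\gamma$ is contained in $\gamma$.

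\textbf{Step 1: closure points are unbounded.} Given $\gamma<\kappa$, I will find a closure point above it by a countable iteration. Set $\gamma_0=\gamma+1$. Given $\gamma_n$, let $\gamma_{n+1}$ be the least ordinal above $\gamma_n$ with $\sup\Cup\set{B_\beta}{B_\beta\cap\gamma_n\neq\emp}<\gamma_{n+1}$.

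This is possible because the blocks are disjoint and nonempty. Hence at most $\card{\gamma_n}<\kappa$ blocks meet $\gamma_n$, since each such block contains an element of $\gamma_n$ and distinct blocks contain distinct elements. Each block is bounded, so the regularity of $\kappa$ makes the supremum of their union bounded below $\kappa$.

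Let $\gamma_\omega=\sup_n\gamma_n$, which is below $\kappa$ since $\cf(\kappa)>\omega$. If $B_\beta$ meets $\gamma_\omega$, it meets some $\gamma_n$, so $B_\beta\subset\gamma_{n+1}\subset\gamma_\omega$.

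\textbf{Step 2: closure points form a closed set.} A supremum of closure points is again a closure point. Indeed, a block meeting the supremum meets one of the closure points below it, hence is contained in that point, and so in the supremum. Also $0$ is trivially a closure point.

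\textbf{Step 3: the recursion.} Put $i_0=0$. Let $i_{\alpha+1}$ be some closure point strictly above $i_\alpha$, using Step 1. At limit stages let $i_\alpha=\sup_{\xi<\alpha}i_\xi$, which is a closure point by Step 2 and is below $\kappa$ by regularity.

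\textbf{Step 4: the partition.} The intervals $I_\alpha=\iv{i_\alpha}{i_{\alpha+1}}$ are nonempty and pairwise disjoint. They cover $\kappa$ because the $i_\alpha$ are continuous and strictly increasing, hence cofinal in $\kappa$. The paper's minimum convention holds by continuity at limits, so $I\in\IPk$.

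Let $A_\alpha=\set{\beta\in\kappa}{B_\beta\cap I_\alpha\neq\emp}$. For $\beta\in A_\alpha$, the block $B_\beta$ meets $i_{\alpha+1}$, so $B_\beta\subset i_{\alpha+1}$. It does not meet $i_\alpha$: otherwise $B_\beta\subset i_\alpha$, contradicting that it meets $I_\alpha$. Hence $B_\beta\subset I_\alpha$. Since $B$ covers $\kappa$, this gives $I_\alpha=\Cup_{\beta\in A_\alpha}B_\beta$.

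The only real content is Step 1: the $\omega$-length closure argument, which uses regularity twice and uncountable cofinality once. The rest is bookkeeping.
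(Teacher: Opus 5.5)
Your proof is correct and takes essentially the same approach as the paper. Both use an $\omega$-length closing-off iteration (possible by regularity of $\kappa$ and $\cf(\kappa)>\omega$) to obtain endpoints that no block $B_\beta$ straddles, inside a continuous $\kappa$-length recursion. The only difference is packaging: you first isolate the club of closure points, while the paper closes off directly from the previous endpoint $\gamma_0=\Cup_{\xi<\alpha}I_\xi$ at each stage.
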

	\begin{proof}
		One constructs $I_\alpha$, $A_\alpha$ simultaneously by recursion. Given that we have constructed $I_\xi$, $A_\xi$ for each $\xi<\alpha$, let $\gamma_0=\Cup_{\xi<\alpha}I_\xi$ (which is an ordinal) and let $A_\alpha^0=\ac\beta$ where $\beta$ is such that $\gamma_0\in B_\beta$. For each $n\in\omega$, let $\gamma_{n+1}$ be the least ordinal such that $\forall \gamma\in\Cup_{\beta\in A_\alpha^n}B_\beta\,(\gamma_{n+1}> \gamma)$, and define 
		\[A_\alpha^{n+1}=\set{\beta\in\kappa}{\exists\gamma\in\iv{\gamma_0}{\gamma_{n+1}}\rb{\gamma\in B_\beta}}.\]
		Finally, $A_\alpha=\Cup_{n\in\omega}A_\alpha^n$ and $I_\alpha=\iv{\gamma_0}{\Cup_{n\in\omega}\gamma_n}$.
	\end{proof}
	\begin{crl}
		$\CMk=\set{Y\subset\kacs}{\exists x\in\kacs\eexists B\in\BPk\,(Y\subset\dab{x,B}_*)}$.\qed
	\end{crl}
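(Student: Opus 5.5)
The corollary asserts equality of two families, so I will prove both inclusions. The inclusion from left to right is immediate: $\IPk\subset\BPk$, so any witness pair $(x,I)$ for $Y\in\CMk$ is also a witness pair $(x,B)$ with $B:=I\in\BPk$.

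For the reverse inclusion, let $Y\subset\dab{x,B}_*$ with $x\in\kacs$ and $B\in\BPk$.
\begin{enumerate}
	\item Apply \cref{capturing BP with IP} to $B$. This gives $I\in\IPk$ and sets $A_\alpha$ with $I_\alpha=\Cup_{\beta\in A_\alpha}B_\beta$.
	\item Note that the $A_\alpha$ are pairwise disjoint and together cover $\kappa$. This holds because the $I_\alpha$ partition $\kappa$ and each nonempty $B_\beta$ lies inside exactly one $I_\alpha$.
	\item Take $y\in\dab{x,B}_*$ and fix $\beta_0$ with $y\restriction B_\beta\neq x\restriction B_\beta$ for all $\beta\geq\beta_0$.
	\item Let $\alpha_0$ be such that $\beta_0\subset\Cup_{\alpha<\alpha_0}A_\alpha$. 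Such an $\alpha_0$ exists by regularity of $\kappa$: each $\beta<\beta_0$ lies in some $A_{\alpha(\beta)}$, and $\beta_0<\kappa$.
	\item For $\alpha\geq\alpha_0$, the set $A_\alpha$ is nonempty (since $I_\alpha\neq\emp$) and every element of it is $\geq\beta_0$. Picking any $\beta\in A_\alpha$ gives $y\restriction B_\beta\neq x\restriction B_\beta$.
	\item Since $B_\beta\subset I_\alpha$, it follows that $y\restriction I_\alpha\neq x\restriction I_\alpha$.
\end{enumerate}
Hence $y\in\dab{x,I}_*$, so $\dab{x,B}_*\subset\dab{x,I}_*$ and $Y\in\CMk$.

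The only point needing care is step 4, the passage from ``almost all $\beta$'' to ``almost all $\alpha$''. It uses that fewer than $\kappa$ indices $\beta<\beta_0$ are spread over the blocks $A_\alpha$, together with the regularity of $\kappa$. The rest is routine.
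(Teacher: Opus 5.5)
Your proof is correct and is the argument the paper intends: it marks the corollary as immediate from \cref{capturing BP with IP}, which coarsens $B$ into an interval partition $I$ whose intervals are unions of blocks of $B$, so that $\dab{x,B}_*\subset\dab{x,I}_*$. Your steps 2--5, including the regularity argument moving from almost all $\beta$ to almost all $\alpha$, just spell out the verification the paper leaves implicit.
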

	
	In order to compare $\BEk$ to $\CMk$, we will be considering $\dab{x,I}$ for $x\in\kacs$ and $I\in\IPk$ as a subspace of the higher Cantor space $\kacs$. Particularly, we will use the $\kappa$-meagre ideal $\Mk[\dab{x,I}]$ of the space $\dab{x,I}$. First we note that the $\kappa$-Baire category theorem holds in $\dab{x,I}$:
	
	\begin{lmm}\label{lmm:proper CM ideal}
		Let $x\in\kacs$ and let $I\in\IPk$ be such that $\card{I_\alpha}\geq 2$ for all $\alpha\in\kappa$, then $\Mk[\dab{x,I}]$ is a proper ideal on $\dab{x,I}$.
	\end{lmm}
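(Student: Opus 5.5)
We need to show: a union of $\kappa$ many nowhere dense subsets of the subspace $\dab{x,I}$ is not all of $\dab{x,I}$. The plan is the standard Baire category argument by a length-$\kappa$ recursion, using closedness of the subspace under unions of ${<}\kappa$ increasing initial segments.

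First I describe the basic open sets of $\dab{x,I}$. They are the sets $\sqb s\cap\dab{x,I}$. It suffices to consider those $s$ with $\dom(s)=i_\alpha$ for some $\alpha$ such that $s\restriction I_\xi\neq x\restriction I_\xi$ for all $\xi<\alpha$. Call these \emph{good} $s$. For such $s$, $\sqb s\cap\dab{x,I}\neq\emp$: since $\card{I_\beta}\geq 1$, for every $\beta\geq\alpha$ we can pick a value on $I_\beta$ differing from $x\restriction I_\beta$. Indeed $\card{I_\beta}\ge 2>1$ suffices, and the hypothesis $\card{I_\alpha}\geq2$ is only needed so that each block allows a choice different from $x$ on it (even $\ge1$ would do). Conversely, every nonempty relatively open set contains some $\sqb s\cap\dab{x,I}$ with $s$ good: extend any $t$ with $\sqb t\cap\dab{x,I}\ne\emp$ to the next $i_\alpha$ using a point of the intersection.

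Next, the key closure property. If $\langle s_\xi:\xi<\delta\rangle$ is a $\subset$-increasing sequence of good sequences with $\delta<\kappa$ a limit, then $s=\Cup_\xi s_\xi$ has domain $\sup_\xi i_{\alpha_\xi}=i_{\sup\alpha_\xi}$, by continuity of $\alpha\mapsto i_\alpha$. This domain is $<\kappa$ by regularity. Also $s$ differs from $x$ on every block $I_\xi$ below its domain, so $s$ is good. This is exactly where using interval partitions (with the club of minima) matters.

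Now suppose $N_\gamma$, $\gamma<\kappa$, are nowhere dense in $\dab{x,I}$, and let $s_0$ be any good sequence. I recursively build good $s_\gamma$, increasing in $\gamma$. At successor steps, nowhere density of $N_\gamma$ gives a good $s_{\gamma+1}\supseteq s_\gamma$ with $\sqb{s_{\gamma+1}}\cap N_\gamma=\emp$. I also ensure the domain strictly grows, extending by one more block if necessary. At limits I take unions, which are good by the closure step. Then $y=\Cup_\gamma s_\gamma$ is a total function on $\kappa$, differs from $x$ on every $I_\alpha$, hence $y\in\dab{x,I}$, and $y\notin N_\gamma$ for all $\gamma$. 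So $\dab{x,I}\notin\Mk[\dab{x,I}]$, and the ideal is proper. That it is an ideal is immediate from the definitions.

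The only delicate step is the limit stage, i.e.\ verifying goodness is preserved. I expect no real obstacle there, given continuity of $i_\alpha$.
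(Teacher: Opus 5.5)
Your proof is correct and is essentially the paper's argument. The paper identifies $\dab{x,I}$ with the higher product space $\prod_{\alpha\in\kappa}\rb{{}^{I_\alpha}2\setminus\ac{x\restriction I_\alpha}}$ and appeals to the standard Baire category recursion there, whereas you run that recursion directly in the subspace, with your good sequences playing the role of the basic open sets of that product and the continuity of $\alpha\mapsto i_\alpha$ doing the work of the homeomorphism at limit stages.
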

	
	\begin{proof}
		Let $X_\alpha={}^{I_\alpha}2\setminus \{x\restriction I_\alpha\}$ for $\alpha\in\kappa$
		and consider the higher product space $\cal X=\prod_{\alpha\in\kappa}X_\alpha$. Since $|X_\alpha|\geq2$ for all $\alpha\in\kappa$, one easily sees that $\cal X\notin\Mk[\cal X]$, as in the classical proof of the Baire category theorem. Finally note that $\dab{x,I}$ is homeomorphic to $\cal X$.
	\end{proof}
	
	\begin{thm}\label{thm:BE < CM}
		Let $\kappa$ be regular uncountable, then $\BEk\subsetneq\CMk$. 
	\end{thm}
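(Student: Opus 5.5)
Two things need proving: the inclusion $\BEk\subset\CMk$, and a witness in $\CMk\setminus\BEk$. Since $\CMk$ is a ${\leq}\kappa$-complete ideal by \cref{lmm:CM properties}, for the inclusion it suffices to show $\Ek(B)\subset\CMk$ for each $B\in\BPklim$. That is, for $\phi\in\SkB$ I must find $x\in\kacs$ and $B'\in\BPk$ with $\sqb\phi_*\subset\dab{x,B'}_*$; the corollary following \cref{capturing BP with IP} then lets me use a bounded partition here. The natural choice is $B'=B$. Since $\card{\phi(\alpha)}<2^{\card{B_\alpha}}$, I can pick $t_\alpha\in{}^{B_\alpha}2\setminus\phi(\alpha)$ for each $\alpha$, and let $x=\Cup_\alpha t_\alpha$, which is well defined because the $B_\alpha$ partition $\kappa$. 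If $y\in\sqb\phi_*$, then for almost all $\alpha$ we have $y\restriction B_\alpha\in\phi(\alpha)\not\ni x\restriction B_\alpha$. Hence $y\in\dab{x,B}_*$, which gives the inclusion.

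For strictness I will use \cref{lmm:proper CM ideal}. Fix $x\in\kacs$ and $I\in\IPk$ with $\card{I_\alpha}\geq2$ for all $\alpha$, and set $Y=\dab{x,I}$. Then $Y\in\CMk$. I claim $Y\notin\BEk$. Suppose instead $Y\subset\Cup_{\zeta<\kappa}\sqb{\phi^\zeta}_*$ with $\phi^\zeta\in\Sigma_\kappa(B^\zeta)$ and $B^\zeta\in\BPklim$. Each $\sqb{\phi^\zeta}_*=\Cup_\beta\sqb{\phi^\zeta}_\beta$ is a union of $\kappa$ many sets. It therefore suffices to show that each $\sqb{\phi^\zeta}_\beta\cap Y$ is nowhere dense in the subspace $Y$. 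Then $Y$ would be $\kappa$-meagre in itself, contradicting \cref{lmm:proper CM ideal}.

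So fix $\phi\in\SkB$ with $B\in\BPklim$, fix $\beta$, and take a basic open set $\sqb s\cap Y\neq\emp$ of $Y$; I may extend $s$ so that its domain is some $i_\gamma$. I need $t\supset s$ with $\sqb t\cap Y\neq\emp$ and $\sqb t\cap\sqb\phi_\beta=\emp$. The idea is as follows. Because the $B_\alpha$ are bounded and only ${<}\kappa$ of them meet $i_\gamma$, there is some $\alpha\geq\beta$ with $B_\alpha\cap i_\gamma=\emp$ and with $\card{B_\alpha}$ large. I then want $u\in{}^{B_\alpha}2\setminus\phi(\alpha)$ such that $u$ is compatible with membership in $Y$, meaning that no interval $I_\delta$ lying entirely inside $B_\alpha$ has $u\restriction I_\delta=x\restriction I_\delta$. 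Any such $u$ extends to a partial function $t\supset s$ defined up to some $i_{\gamma'}$ and avoiding $x$ on every interval. This $t$ is compatible with $Y$, since intervals only partially covered by $B_\alpha$ can be filled using $\card{I_\delta}\geq2$. At the same time $t$ kills $\sqb\phi_\beta$, because $t\restriction B_\alpha=u\notin\phi(\alpha)$.

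The main obstacle is choosing $u$. The problem is that $\phi(\alpha)$ might contain every $u$ avoiding $x$ on the intervals inside $B_\alpha$, even though $\card{\phi(\alpha)}<2^{\card{B_\alpha}}$. My first attempt is a counting argument. For infinite $\card{B_\alpha}$, the set of $u$ avoiding $x$ on all intervals inside $B_\alpha$ already has size $2^{\card{B_\alpha}}$, since each $I_\delta$ with $\card{I_\delta}\geq2$ offers at least $2$ allowed patterns. This works when $B_\alpha$ contains $\card{B_\alpha}$ many full intervals, or when enough of $B_\alpha$ lies outside full intervals. In general one of these two pieces has full size $\card{B_\alpha}$: either the points of $B_\alpha$ that lie in intervals not contained in $B_\alpha$, where the coordinates are free, or the full intervals, where each contributes at least $2$ choices. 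In either case the number of admissible $u$ is $2^{\card{B_\alpha}}$, which exceeds $\card{\phi(\alpha)}$. If this counting fails in some edge case, my fallback is to modify the witness, taking $I$ with $\card{I_\alpha}$ growing rapidly, or using a smaller subset of $\dab{x,I}$ built from that space.
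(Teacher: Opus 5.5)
Your proposal is correct and follows essentially the same route as the paper. The inclusion comes from choosing $t_\alpha\in{}^{B_\alpha}2\setminus\phi(\alpha)$, and strictness uses the witness $\dab{x,I}$: each $\sqb\phi_\beta\cap\dab{x,I}$ is nowhere dense in $\dab{x,I}$, which contradicts \cref{lmm:proper CM ideal}.

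The only difference is the counting step. The paper fixes one point $\xi_\delta\in I_\delta$ per interval, outside $B_\alpha$ whenever $I_\delta\nsubset B_\alpha$, and only requires $u(\xi_\delta)\neq x(\xi_\delta)$, so the admissible $u$ number $2^{|B_\alpha\setminus\{\xi_\delta\}_\delta|}=2^{|B_\alpha|}$ at once. Your case split also works and needs no fallback, with one repair. When the full intervals inside $B_\alpha$ number fewer than $|B_\alpha|$ but their union has size $|B_\alpha|$, use that an infinite full interval admits $2^{|I_\delta|}$ patterns avoiding $x$, not merely $2$.
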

	\begin{proof}
		Let $Y\in\BEkE$, let $B\in\BPklim$ and let $\phi\in\Sigma_\kappa(B)$ be such that $Y\subset\sqb\phi_*$. For each $\alpha\in\kappa$ pick $s_\alpha\in{}^{B_\alpha}2$ such that $s_\alpha\notin\phi(\alpha)$ for almost all $\alpha\in\kappa$, and define $x=\Cup_{\alpha\in\kappa}s_\alpha$. If $y\in Y$, then $y\restriction B_\alpha\in \phi(\alpha)$ for almost all $\alpha$, hence $y\restriction B_\alpha\neq x\restriction B_\alpha$ for almost all $\alpha$, thus $y\in \dab{x,B}_*$. Therefore $Y\in\CMk$, and thus $\BEkE\subset\CMk$. By \cref{lmm:CM properties}\,\ref{lmm:CM properties:1}, it follows that $\BEk\subset\CMk$.
		
		Let $x\in\kacs$ and $I\in\IPklim$ with $\card{I_\alpha}\geq 2$ for all $\alpha\in\kappa$, then $\dab{x,I}\in\CMk\setminus\Mk[\dab{x,I}]$ by \cref{lmm:proper CM ideal} and the definition of $\CMk$. To conclude the proof, it suffices to show that $A\cap\dab{x,I}\in\Mk[\dab{x,I}]$ for all $A\in\BEkE$. This also implies for all $A'\in\BEk$ that $A'\cap \dab{x,I}\in\Mk[\dab{x,I}]$, since $\Mk[\dab{x,I}]$ is ${\leq}\kappa$-complete. Therefore $\dab{x,I}\notin\BEk$.
		Fix some $B\in\BPklim$ and $\phi\in\Sigma_\kappa(B)$, then we show that $\sqb \phi_\beta\cap \dab{x,I}\in\NWD[\dab{x,I}]$ for each $\beta<\kappa$, from which it follows that  $\sqb\phi_*\cap\dab{x,I}\in\Mk[\dab{x,I}]$.
		
		Fix some $s\in\fkacs$ such that $\dom(s)=\Cup_{\alpha\in\delta}I_\alpha$ for some $\delta\in\kappa$ and $[s]\cap\dab{x,I}\neq\emp$. Let $\gamma\geq\beta$ and $\delta'>\delta$ be such that $B_\gamma$ is infinite and $B_\gamma\subset\Cup_{\alpha\in\iv{\delta}{\delta'}}I_\alpha$. 
		For each $\alpha\in\iv{\delta}{\delta'}$, pick $\xi_\alpha\in I_\alpha$ with the only restriction that $\xi_\alpha\notin B_\gamma$ whenever $I_\alpha\nsubset B_\gamma$. Now note that $\card{B_\gamma\setminus\set{\xi_\alpha}{ \alpha\in\iv{\delta}{\delta'}}}=\card{B_\gamma}$, so there is $t\in{}^{B_\gamma}2\setminus\phi(\gamma)$ such that $t(\xi_\alpha)\neq x(\xi_\alpha)$ for all $\alpha\in \iv{\delta}{\delta'}$. We may now extend $s\cup t$ to some $r$ with $\dom(r)=\Cup_{\alpha\in\delta'}I_\alpha$ where $r(\xi_\alpha)\neq x(\xi_\alpha)$ for all $\alpha\in\iv{\delta}{\delta'}$. Then $[r]\cap\dab{x,I}\neq\emp$ and also $[r]\cap[\phi]_\beta=\emp$.
	\end{proof}
	
	As a consequence of \cref{thm:BE < CM}, we observe that $\NWD[\kacs]\nsubset\BEk$, since the set $\dab{x,I}$ from the above proof is nowhere dense in $\kacs$. Reversely, we can also observe that $\BEkA\nsubset\NWD[\kacs]$: because $\kappa=2^{<\kappa}$, it follows that $[\kacs]^\kappa$ contains a dense set, and we have $[\kacs]^\kappa\subset\BEkA$.
	
	Now let us take a brief look at the generalisation of halving trees and the ideal $\cal H$ from \cref{thm:classical E and H}. 
	
	\begin{dfn}\label{dfn:halving ideal}
		Call a tree $T\subset\fkacs$ a \emph{halving tree} if for every $s\in T$ there is $\alpha_s\in\kappa$ such that 
	\[\card[m]{\set[big]{t\in{}^{\dom(s)+\alpha_s}2}{s\subset t\in T}}\leq\card[m]{ \set[big]{t\in{}^{\dom(s)+\alpha_s}2}{s\subset t\notin T}}.\]
	Let $\cal H_\kappa$ be the least $\kappa$-complete ideal containing $\sqb T$ for every halving tree $T\subset\fkacs$. 
	\end{dfn}
	In contrast with the classical context, this ideal is actually equal to the $\kappa$-meagre ideal:
	
	\begin{thm}
		$\Mk[\kacs]=\cal H_\kappa$.
	\end{thm}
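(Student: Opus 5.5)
We prove the two inclusions separately. Both $\Mk[\kacs]$ and $\cal H_\kappa$ are $\kappa$-complete ideals; $\Mk$ is even ${\leq}\kappa$-complete. So it suffices to compare generators: show that every $\sqb T$ for a halving tree $T$ is $\kappa$-meagre, and that every nowhere dense set lies in some $\sqb T$ with $T$ halving. For the second inclusion there is one subtlety: $\cal H_\kappa$ is only assumed $\kappa$-complete, i.e.\ ${<}\kappa$-complete. Hence we must show that a $\kappa$-meagre set, a union of $\kappa$ many nowhere dense sets, is contained in a single $\sqb T$ with $T$ halving, or in fewer than $\kappa$ many such sets.

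\emph{$\cal H_\kappa\subset\Mk$.} Let $T$ be a halving tree. We claim $\sqb T$ is nowhere dense, in fact closed with empty interior. Suppose $\sqb s\subset\sqb T$ for some $s$. Then $s\in T$, and every $t\supseteq s$ is in $T$, since each such $t$ has an extension in $\sqb T$. At the level $\dom(s)+\alpha_s$ all $2^{\card{\alpha_s}}$ extensions of $s$ would lie in $T$, while the halving condition demands that at least as many lie outside $T$. That forces $0\ge 2^{\card{\alpha_s}}\geq1$, a contradiction. Thus $\sqb T\in\NWD$, and $\cal H_\kappa\subset\Mk$ follows.

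\emph{$\Mk\subset\cal H_\kappa$.} Let $X=\Cup_{\xi<\kappa}N_\xi$ with each $N_\xi$ nowhere dense. We build a single tree $T$ with $X\subset\sqb T$ that is halving, using a "one node killed per level" construction. We use $\kappa^{<\kappa}=\kappa$ to enumerate nodes and handle every node cofinally often. For each $s\in\fkacs$ and each $\xi$, nowhere density gives an extension $t_{s,\xi}\supseteq s$ with $\sqb{t_{s,\xi}}\cap N_\xi=\emp$.

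The plan is a recursion of length $\kappa$ defining levels $T\cap{}^\gamma2$ and a set of "removed" cones. At stage $\gamma$, for each node $s$ of length $<\gamma$, we remove only cones $\sqb{t}$ where $t$ avoids all $N_\eta$ with $\eta<\dom(s)$, or with $\eta$ below some bookkeeping index. Then any $x\in N_\xi$ is removed only at nodes $s\subset x$ of length $\le\xi$. Since there are fewer than $\kappa$ such removals along any branch, it is cleaner to assign to each such $s$ the requirement "remove cones avoiding $N_0,\dots,N_{\dom(s)}$". This does not quite work on its own, because the $\sqb{t}$ must avoid the finitely or ${<}\kappa$ many $N_\eta$ simultaneously. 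But ${<}\kappa$ unions of nowhere dense sets need not be nowhere dense, so instead we chain: for $\eta<\dom(s)$ we take successive extensions along limits. We would try $\dom(s)$-many successive extensions with unions at limits, which works since a union of an increasing chain of nodes of length ${<}\kappa$ is again a node.

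Then comes the key quantitative issue, which is the main obstacle. Given $s\in T$, we need a level $\dom(s)+\alpha_s$ at which at least half of the extensions of $s$ are outside $T$. Removing one cone $\sqb t$ only removes a fraction of extensions. Unlike the classical case, however, cardinal arithmetic helps. Take $\alpha_s$ infinite and $\ge\dom(t)-\dom(s)+\omega$, and remove cones so that the remaining extensions number at most the removed ones. In the infinite setting it suffices that the removed extensions number $2^{\card{\alpha_s}}$ at that level. This is achieved by a single removed cone $\sqb t$ with $\dom(t)-\dom(s)<\alpha_s$ and $\alpha_s$ infinite: the extensions of $t$ at level $\dom(s)+\alpha_s$ number $2^{\card{\alpha_s}}$, which is at least the total. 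So one removed cone per node suffices. The halving property then reduces to ensuring each $s\in T$ has at least one removed cone above it. Conversely, we must verify that no $x\in X$ enters a removed cone, using the avoidance chain above.

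The delicate point is reconciling "each node $s$ kills a cone avoiding all $N_\eta$" with $X$ being a $\kappa$-union. The fix I would try first is to let node $s$ avoid $N_\eta$ only for $\eta<\dom(s)$. Then a point $x\in N_\xi$ could be killed at nodes $s\subset x$ with $\dom(s)\le\xi$. To handle these, we split $X=\Cup_{\xi}N_\xi$ and first use $\kappa$-completeness: this handles $\Cup_{\eta<\xi}N_\eta$ for each fixed $\xi$ with fewer than $\kappa$ pieces. Alternatively, we restrict removal at short nodes by choosing $t_s$ off the tree of $\Cup_{\eta\le\dom(s)}N_\eta$ via the chain construction, and we handle the residual ${<}\kappa$-many early nodes by a separate tree for each $\xi$.
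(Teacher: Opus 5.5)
There is a genuine gap, and it comes from reading ``$\kappa$-complete'' as ${<}\kappa$-complete. Under that reading the statement is false, and your own first half shows it. Each $\sqb T$ with $T$ halving is closed nowhere dense. For regular $\kappa$, $\NWD[\kacs]$ is ${<}\kappa$-complete; your aside that ${<}\kappa$-unions of nowhere dense sets need not be nowhere dense is wrong, and the chain-with-unions-at-limits argument you sketch is exactly the proof that they are. Hence $\cal H_\kappa$ would sit inside $\NWD[\kacs]$.

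But $\NWD[\kacs]\neq\Mk[\kacs]$. Since $2^{<\kappa}=\kappa$, there is a dense $D\subseteq\kacs$ of size $\kappa$, and $D$ is $\kappa$-meagre. The only closed set containing $D$ is $\kacs$, whose tree $\fkacs$ is not halving. Fewer than $\kappa$ halving bodies have nowhere dense union, so they cannot cover $D$ either. So the goal of your second half, covering a $\kappa$-meagre set by one halving body or by fewer than $\kappa$ of them, is unattainable.

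That is why your cone-removal bookkeeping never closes: points of $N_\xi$ get removed at short nodes. Your last repair, ``a separate tree for each $\xi$'', produces $\kappa$ many trees, which silently uses closure under $\kappa$-sized unions.

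The theorem requires, and the paper's proof uses, that $\cal H_\kappa$ is ${\leq}\kappa$-complete, the analogue of the classical $\sigma$-ideal $\cal H$. Then comparing generators suffices, and no bookkeeping is needed. The only thing to prove is $\NWD[\kacs]\subseteq\cal H_\kappa$: take a nowhere dense $N$, write its closure as $\sqb T$, and check that this $T$ is halving.

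Given $s\in T$, nowhere density gives $t\supseteq s$ with $t\notin T$. Extending $t$, you may assume $\dom(t)=\dom(s)+\gamma$ with $\gamma$ infinite. Then $\alpha_s=\gamma\cdot 2$ works. The node $t$ alone has $2^{\card\gamma}$ extensions at level $\dom(s)+\gamma\cdot 2$, all outside $T$. The node $s$ has only $2^{\card{\gamma\cdot 2}}=2^{\card\gamma}$ extensions there in total.

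This is your ``one removed cone per node suffices'' observation, with $\alpha_s$ chosen correctly. Your stated condition on $\alpha_s$ does not by itself give $t$ some $2^{\card{\alpha_s}}$ extensions at level $\dom(s)+\alpha_s$. For that, the interval $\iv{\dom(t)}{\dom(s)+\alpha_s}$ must have cardinality $\card{\alpha_s}$. For example, $\alpha_s=\gamma+\omega$ fails when $2^{\aleph_0}<2^{\card\gamma}$.

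Your argument for $\cal H_\kappa\subseteq\Mk[\kacs]$ is fine.
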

	\begin{proof}
		Easily $[T]\in\NWD[\kacs]$ for every halving $T\subset\fkacs$.   We show that reversely $\NWD[\kacs]\subset\cal H_\kappa$.  
		
		Let $N\subset\kacs$ be nowhere dense, and without loss of generality, we may assume $N$ is closed.   Let $T\subset\fkacs$ be such that $N=[T]$   and $s\in T$,   then there is $t\in\fkacs$ with $s\subset t$ and $t\notin T$.   We may also assume without loss of generality that $\dom(t)\geq \dom(s)+\gamma$ for some infinite $\gamma$,   then $\alpha_s=\gamma\cdot 2$ suffices:   there are $\card{{}^\gamma2}=2^{\card{\gamma}}$ extensions of $t$ at the level $\dom(s)+\alpha_s$,   but $s$ also has $\card{{}^{\gamma\cdot 2}2}=2^{\card{\gamma}}$ extensions at this level.
	\end{proof}

	\subsection{Comparing the generating ideals}\label{sec:glueing} 
	Remember the classical \cref{thm:classical E}:
	\[\cal E=\Cup_{I\in\rm{IP},\epsilon\in\ell^1_+}\cal E(I,\epsilon).\] 
	Since $\cal E$ is a $\sigma$-ideal by definition, it follows from this theorem that, given $I^n\in\rm{IP}$ and $\epsilon^n\in\ell^1_+$ and slaloms $\phi^n\in\Sigma\rb{I^n,\epsilon^n}$ for each $n\in\omega$, there are $I\in\rm{IP}$ and $\epsilon\in\ell^1_+$ such that $\Cup_{n\in\omega} \sqb {\phi^n}_*\in \cal E\rb{I,\epsilon}$. Exactly, the theorem implies that any countably many slaloms are contained in a single generating set $\cal E(I,\epsilon)$ of the ideal $\cal E$.
	
	In this section we show that the higher analogue to \cref{thm:classical E} is false. In fact, we will show that $\BEkE$ and $\EkE$ are not even ideals (under suitable assumptions on $\kappa$), as they are not closed under  finite unions. We also present results comparing the generating sets $\Ek\rb{B}$ for different $B\in\BPklim$.

	Consider the equivalence relation $\eqs$ on $\kacs$, where $f\eqs g$ if $\forall^\infty\alpha\in\kappa\,(f(\alpha)=g(\alpha))$. If $\phi\in\SkB$ for some $B\in\BPk$, then $\sqb\phi_*$ is closed under $\eqs$, in the sense that $f\in\sqb \phi_*$ and $g\in\kacs$ with $f\eqs g$ implies that $g\in\sqb\phi_*$ as well. For $A\subset\kacs$, let us define $(A)_*=\set{f\in\kacs}{\exists g\in A\,\rb{f\eqs g}}$. We observe the following basic properties.
	\begin{prp}\label{lmm:almost equal properties}
		Let $A,A'\subset\kacs$, $B\in\BPk$, $\phi\in\SkB$, and $\beta\in\kappa$, then 
		\begin{enumerate}[label=\normalfont(\arabic*)]
			\item\label{lmm:almost equal properties:1} $A\subset \rb A_*$ and if $A\subset A'$, then $\rb A_*\subset\rb{A'}_*$,
			\item\label{lmm:almost equal properties:2} $\rb{\sqb \phi_\beta}_*=\rb{\sqb\phi_*}_*=\sqb\phi_*$,
			\item\label{lmm:almost equal properties:3} $A\in\Ek\rb B$ if and only if $(A)_*\in\Ek\rb B$,
			\item\label{lmm:almost equal properties:4} $A\subset\sqb{\phi}_*$ if and only if $\rb A_*\subset\sqb{\phi}_*$. \qed
		\end{enumerate}
	\end{prp}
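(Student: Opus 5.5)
The plan is to verify the four items in order, using only the definitions of $\eqs$, $(A)_*$, $\sqb\phi_\beta$ and $\sqb\phi_*$; the one substantive fact is that $\sqb\phi_*$ is $\eqs$-closed.

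\ref{lmm:almost equal properties:1}: this is immediate, since $f\eqs f$ gives $A\subset(A)_*$. Monotonicity holds because any witness $g\in A$ also lies in $A'$.

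\ref{lmm:almost equal properties:2}: I would first show that $\sqb\phi_*$ is $\eqs$-closed. Take $f\in\sqb\phi_*$ and $g\eqs f$. Then $f$ and $g$ disagree only on a set bounded by some $\gamma<\kappa$. Since $B$ is a partition into bounded sets and $\kappa$ is regular, only ${<}\kappa$ many indices $\alpha$ have $B_\alpha\cap\gamma\neq\emp$. Each element of $\gamma$ lies in exactly one $B_\alpha$, so these are at most $\card\gamma<\kappa$ indices, and their supremum is some $\alpha_0<\kappa$. For $\alpha\geq\alpha_0$ we get $g\restriction B_\alpha=f\restriction B_\alpha$. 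Hence $g\restriction B_\alpha\in\phi(\alpha)$ for almost all $\alpha$, so $g\in\sqb\phi_*$.

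This gives $(\sqb\phi_*)_*\subset\sqb\phi_*$, and \ref{lmm:almost equal properties:1} gives the reverse inclusion. For $(\sqb\phi_\beta)_*$ I would use $\sqb\phi_\beta\subset\sqb\phi_*$ together with monotonicity and the identity just proved, which yields $(\sqb\phi_\beta)_*\subset\sqb\phi_*$. For the reverse, take $f\in\sqb\phi_*$, so $f\in\sqb\phi_{\beta'}$ for some $\beta'$. The key step is to produce $g\eqs f$ with $g\in\sqb\phi_\beta$. Let $g$ agree with $f$ on $\Cup_{\alpha\geq\max(\beta,\beta')}B_\alpha$. For the ${<}\kappa$ many $\alpha\in[\beta,\beta')$, let $g\restriction B_\alpha$ be some element of $\phi(\alpha)$. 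Since these $B_\alpha$ are bounded and there are fewer than $\kappa$ of them, $g$ differs from $f$ only on a bounded set, so $g\eqs f$.

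This step needs $\phi(\alpha)\neq\emp$ for $\alpha\in[\beta,\beta')$, which is the main obstacle. If $\sqb\phi_\beta=\emp$ the equality fails, so in that degenerate case I expect to need an argument or a convention. What I would try first is to note that if $\sqb\phi_\beta$ is nonempty, then every $\phi(\alpha)$ with $\alpha\geq\beta$ is nonempty, and the construction goes through. Otherwise I would restrict the claim to the case $\sqb\phi_\beta\neq\emp$, which is the case in which it is applied.

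\ref{lmm:almost equal properties:4}: the direction $\Leftarrow$ follows from $A\subset(A)_*$. For $\Rightarrow$, $A\subset\sqb\phi_*$ gives $(A)_*\subset(\sqb\phi_*)_*=\sqb\phi_*$ by \ref{lmm:almost equal properties:1} and \ref{lmm:almost equal properties:2}.

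\ref{lmm:almost equal properties:3}: this follows directly from \ref{lmm:almost equal properties:4}, applied to each witness $\phi\in\SkB$ in the definition of $\Ek(B)$.
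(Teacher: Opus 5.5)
Your argument is correct and is the routine definition-chase the paper intends (it states the proposition without proof, as a basic observation): the only real content is the $\eqs$-closure of $\sqb\phi_*$, and only the identity $\rb{\sqb\phi_*}_*=\sqb\phi_*$ is needed for items (3) and (4). Your worry about the first equality in (2) is justified and reflects a genuine flaw in the statement, not in your method: with $\phi(0)=\emp$, $\phi(\alpha)$ a singleton for $\alpha\geq 1$, and $\beta=0$, one gets $\rb{\sqb\phi_0}_*=\emp\neq\sqb\phi_*$, so $\rb{\sqb\phi_\beta}_*=\sqb\phi_*$ holds precisely when $\sqb\phi_\beta\neq\emp$ or $\sqb\phi_*=\emp$ (failure therefore also needs $\sqb\phi_*\neq\emp$, not just $\sqb\phi_\beta=\emp$ as you wrote), and your added hypothesis $\sqb\phi_\beta\neq\emp$ is the right repair.
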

	
	For a fixed $B\in\BPk$ and $\phi\in\SkB$, we may consider $\sqb\phi_0=\set{x\in\kacs}{\forall\alpha\in\kappa\,\rb{x\restriction B_\alpha\in\phi(\alpha)}}$ as a subspace of $\kacs$, and naturally we may define the $\kappa$-meagre ideal $\Mk[\sqb\phi_0]$ on this space, similar to what we did in \cref{lmm:proper CM ideal}. Note, however, that $\Mk[\sqb\phi_0]$ need not be proper for certain pathological choices of $\phi$ (and $B$). E.g., if $\phi(\alpha)$ is a singleton for all $\alpha\in\kappa$, then $\sqb\phi_0$ is the one-point space. 
	
	\begin{lmm}\label{lmm:nontrivial meagre for slaloms}
		Let $B\in\BPk$ and $\phi\in\SkB$ be such that $\exists^\infty\alpha\in\kappa\,\rb{\card{\phi\rb\alpha}>1}$, then $\Mk[\sqb\phi_0]$ is a proper ${\leq}\kappa$-complete ideal on $\sqb\phi_0$.
	\end{lmm}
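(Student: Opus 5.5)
Completeness is routine: a union of $\kappa$ many $\kappa$-meagre sets is a union of $\kappa\cdot\kappa=\kappa$ many nowhere dense sets, and subsets of nowhere dense sets are nowhere dense, so $\Mk[\sqb\phi_0]$ is a ${\leq}\kappa$-complete ideal. The real content is properness, namely $\sqb\phi_0\notin\Mk[\sqb\phi_0]$. This needs $\sqb\phi_0\neq\emp$, so I read the hypothesis as implicitly requiring $\phi(\alpha)\neq\emp$ for all $\alpha$ (otherwise $\sqb\phi_0=\emp$ and the statement is degenerate). The plan is to identify $\sqb\phi_0$ with a higher product space and run the Baire category argument, as in \cref{lmm:proper CM ideal}.

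\textbf{Step 1 (topology).} Set $X_\alpha=\phi(\alpha)$ and $\cal X=\prod_{\alpha\in\kappa}X_\alpha$. Define $F\colon\sqb\phi_0\to\cal X$ by $x\mapsto\abset{x\restriction B_\alpha}{\alpha\in\kappa}$; this is a bijection because the $B_\alpha$ partition $\kappa$. It need not be a homeomorphism, since the $B_\alpha$ are not intervals. I would therefore not use $F$ directly but run the Baire argument inside $\sqb\phi_0$ itself, using the basic open sets $\sqb s\cap\sqb\phi_0$ for $s\in\fkacs$.

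\textbf{Step 2 (extending conditions).} Call $s\in\fkacs$ \emph{good} if $\sqb s\cap\sqb\phi_0\neq\emp$. I will show that for every good $s$ and every $\beta<\kappa$ there is a good $t\supseteq s$ with $\dom(t)\geq\beta$ such that $t$ is a union of a full choice on a set of blocks: $\dom(t)=\Cup_{\alpha\in A}B_\alpha$ for some $A\in[\kappa]^{<\kappa}$, with $t\restriction B_\alpha\in\phi(\alpha)$. This uses regularity of $\kappa$ and an $\omega$-closure argument exactly as in \cref{capturing BP with IP}, since each $B_\alpha$ is bounded. Good conditions of this block form are compatible along increasing chains of length ${<}\kappa$: a union of such a chain is again a partial choice function and extends to an element of $\sqb\phi_0$ because every $\phi(\alpha)$ is nonempty. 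Hence these conditions are ${<}\kappa$-closed.

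\textbf{Step 3 (diagonalisation).} Let $\sqb\phi_0=\Cup_{\xi<\kappa}N_\xi$ with each $N_\xi$ nowhere dense in $\sqb\phi_0$. Recursively build block conditions $t_\xi$: at successor steps, extend into an open set avoiding $N_\xi$ and then re-close to block form via Step 2; at limits, take unions, which is allowed by the closure in Step 2. The union determines a point of $\sqb\phi_0$ outside every $N_\xi$, which is a contradiction. I also need the domains to become cofinal in $\kappa$; the condition $\dom(t)\geq\beta$ in Step 2 takes care of this.

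\textbf{Main obstacle.} The point needing care is the successor step. Nowhere density in $\sqb\phi_0$ means every nonempty relatively open set contains a smaller one missing $N_\xi$, so the only requirement is that block conditions generate the relative topology. That holds by Step 2, because every good $s$ extends to a block condition. I expect the hypothesis $\exists^\infty\alpha\,\rb{\card{\phi(\alpha)}>1}$ is not needed for properness itself. It only guarantees that the space is not trivial, since with it $\sqb\phi_0$ has no isolated points; properness holds even without it once the nonemptiness of all $\phi(\alpha)$ is granted.
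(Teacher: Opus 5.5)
Your proof is correct, but it takes a more explicit route than the paper. The paper disposes of the lemma with the single remark that $[\phi]_0$ is closed. You instead run the $\kappa$-Baire category argument inside $[\phi]_0$, using block conditions $x\restriction\delta$ with $\delta$ a closure point of $B$; these form a ${<}\kappa$-closed base. This isolates the property that actually makes the lemma true. Closedness alone does not make a subspace of ${}^{\kappa}2$ $\kappa$-Baire: for example, $C=\{x:\lvert x^{-1}(1)\rvert<\omega\}$ is closed, yet it is the union of the sets $\{x\in C:\lvert x^{-1}(1)\rvert\le n\}$, each nowhere dense in $C$.

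One correction: contrary to your Step 1, $F$ is a homeomorphism onto the higher product space $\prod_{\alpha\in\kappa}\phi(\alpha)$. This follows from the same facts you use in Step 2. For $\gamma<\kappa$ the set $\bigcup_{\alpha<\gamma}B_\alpha$ is bounded below $\kappa$. Conversely, every $\delta<\kappa$ meets fewer than $\kappa$ blocks, so their indices lie below some $\gamma<\kappa$. Hence the basic sets of each topology are unions of basic sets of the other. You could therefore have invoked the argument of \cref{lmm:proper CM ideal} directly, though your detour does no harm.

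Your caveats are also right. The statement tacitly needs $\phi(\alpha)\neq\emptyset$ for all $\alpha$, since otherwise $[\phi]_0=\emptyset$ and the ideal is not proper. The hypothesis $\exists^\infty\alpha\,(\lvert\phi(\alpha)\rvert>1)$ only excludes isolated points and is not needed for properness, because a nonempty discrete space is never meagre in itself.
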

	\begin{proof}
		This follows quickly from the fact that $\sqb\phi_0$ is closed.
	\end{proof}
	
	Observe that the conditions of the above lemma can only be fulfilled if $\exists^\infty\alpha\in\kappa\,\rb{\card{B_\alpha}>1}$. 
	We are now ready to give a characterisation of the conditions on $B,C\in\BPk$ for which $\Ek\rb B\subset\Ek\rb C$.
	
	\begin{lmm}\label{lmm:Ek(B) comparison}
		Let $B,C\in\BPk$ and for $\alpha\in\kappa$ let
		$S_\alpha=\set{\delta\in\kappa}{B_\delta\cap C_\alpha\neq\emptyset}$.
		The following conditions are equivalent:
		\begin{enumerate}[label=\normalfont(\arabic*)]
			\item\label{lmm:Ek(B) comparison:1}
			$\Ek(B)\subseteq\Ek(C)$.
			\item\label{lmm:Ek(B) comparison:2}
			For all $\phi\in\Sigma_\kappa(B)$ there exists $\phi'\in\Sigma_\kappa(C)$ such that $
			\sqb\phi_*\subseteq\sqb{\phi'}_*$.
			\item\label{lmm:Ek(B) comparison:3}
			For all $\phi\in\Sigma_\kappa(B)$ for which $\exists^\infty\alpha\in\kappa\,\rb{\card{\phi\rb\alpha}>1}$ holds, there exists $\phi'\in\Sigma_\kappa(C)$ such that $\sqb{\phi'}_*\cap \sqb\phi_0\notin\Mk[\sqb\phi_0]$.
			\item\label{lmm:Ek(B) comparison:4}
			$\forall^\infty\alpha\in\kappa\,\rb[big]{\prod_{\delta\in S_\alpha}\kappa_{\alpha,\delta}<2^{\card{C_\alpha}}}$ holds for any system $\abset{\kappa_{\alpha,\delta}}{\alpha\in\kappa\text{, }\delta\in S_\alpha}$ of cardinals such that $\kappa_{\alpha,\delta}<2^{\card{B_\delta}}$ and $\kappa_{\alpha,\delta}\leq2^{\card{B_\delta\cap C_\alpha}}$ for all  $\delta\in S_\alpha$ and $\alpha\in\kappa$.
		\end{enumerate}
	\end{lmm}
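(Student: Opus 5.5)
I will prove the cycle of implications \ref{lmm:Ek(B) comparison:1}$\Rightarrow$\ref{lmm:Ek(B) comparison:2}$\Rightarrow$\ref{lmm:Ek(B) comparison:3}$\Rightarrow$\ref{lmm:Ek(B) comparison:4}$\Rightarrow$\ref{lmm:Ek(B) comparison:1}.

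\textbf{The easy implications.} The step \ref{lmm:Ek(B) comparison:1}$\Rightarrow$\ref{lmm:Ek(B) comparison:2} is immediate: $\sqb\phi_*\in\Ek(B)$, so it lies in $\Ek(C)$, which by definition yields some $\phi'\in\Sigma_\kappa(C)$ with $\sqb\phi_*\subset\sqb{\phi'}_*$. For \ref{lmm:Ek(B) comparison:2}$\Rightarrow$\ref{lmm:Ek(B) comparison:3}, take $\phi'$ as in \ref{lmm:Ek(B) comparison:2}. Then $\sqb{\phi'}_*\cap\sqb\phi_0=\sqb\phi_0$, and this is not meagre in itself by \cref{lmm:nontrivial meagre for slaloms}.

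\textbf{The implication \ref{lmm:Ek(B) comparison:4}$\Rightarrow$\ref{lmm:Ek(B) comparison:1}.} Given $\phi\in\Sigma_\kappa(B)$, define
\[\phi'(\alpha)=\set{t\in{}^{C_\alpha}2}{\forall\delta\in S_\alpha\,\rb{t\restriction(B_\delta\cap C_\alpha)\in\phi(\delta)\restriction(B_\delta\cap C_\alpha)}},\]
and set $\kappa_{\alpha,\delta}=\card{\phi(\delta)\restriction(B_\delta\cap C_\alpha)}$. These cardinals satisfy $\kappa_{\alpha,\delta}\le\card{\phi(\delta)}<2^{\card{B_\delta}}$ and $\kappa_{\alpha,\delta}\le 2^{\card{B_\delta\cap C_\alpha}}$. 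Moreover $\card{\phi'(\alpha)}=\prod_{\delta\in S_\alpha}\kappa_{\alpha,\delta}$, since $C_\alpha$ is the disjoint union of the sets $B_\delta\cap C_\alpha$. By \ref{lmm:Ek(B) comparison:4} this is $<2^{\card{C_\alpha}}$ for almost all $\alpha$, so \cref{lmm:almost to everywhere} lets us modify $\phi'$ into a slalom in $\Sigma_\kappa(C)$ with the same $\sqb{\cdot}_*$.

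It remains to check $\sqb\phi_*\subset\sqb{\phi'}_*$. If $x\restriction B_\delta\in\phi(\delta)$ for all $\delta\ge\beta$, then for almost all $\alpha$ we have $\min S_\alpha\ge\beta$, because $\bigcup_{\delta<\beta}B_\delta$ is bounded and the $C_\alpha$ are disjoint. For such $\alpha$, $x\restriction C_\alpha\in\phi'(\alpha)$. Finally, the inclusion passes to arbitrary members of $\Ek(B)$.

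\textbf{The main obstacle: \ref{lmm:Ek(B) comparison:3}$\Rightarrow$\ref{lmm:Ek(B) comparison:4}.} I argue by contraposition. Suppose a system $\kappa_{\alpha,\delta}$ witnesses failure: there are cofinally many $\alpha$, forming a set $K$, with $\prod_{\delta\in S_\alpha}\kappa_{\alpha,\delta}\ge 2^{\card{C_\alpha}}$. Thinning $K$, I may assume the sets $S_\alpha$ for $\alpha\in K$ are pairwise disjoint, since each $S_\alpha$ is bounded as $C_\alpha$ is bounded.

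Now build $\phi\in\Sigma_\kappa(B)$. For $\delta\in S_\alpha$ with $\alpha\in K$, let $\phi(\delta)$ be a set of $\kappa_{\alpha,\delta}$ functions whose restrictions to $B_\delta\cap C_\alpha$ are distinct. This is possible because $\kappa_{\alpha,\delta}\le 2^{\card{B_\delta\cap C_\alpha}}$, and $\card{\phi(\delta)}=\kappa_{\alpha,\delta}<2^{\card{B_\delta}}$. I choose these sets so that the product of restrictions is all of ${}^{C_\alpha}2$, or injects onto it. Hence $\sqb\phi_0\restriction C_\alpha$ contains ${}^{C_\alpha}2$, or at least a set of size $\ge2^{\card{C_\alpha}}$ after reindexing. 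The delicate case is when the product exceeds $2^{\card{C_\alpha}}$ only through cardinality and not through an actual covering. There I would first try to shrink the $\kappa_{\alpha,\delta}$ so that the product is exactly $2^{\card{C_\alpha}}$, and pick the sets coordinatewise so that the products cover ${}^{C_\alpha}2$. For the remaining $\delta$, let $\phi(\delta)$ be a singleton, or a two-element set if needed to guarantee $\exists^\infty(\card{\phi(\delta)}>1)$.

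Then for any $\phi'\in\Sigma_\kappa(C)$ and $\beta$, the set $\sqb{\phi'}_\beta\cap\sqb\phi_0$ is nowhere dense in $\sqb\phi_0$. Given a basic open set of $\sqb\phi_0$, pick $\alpha\in K$ beyond both it and $\beta$. Since $\phi'(\alpha)\ne{}^{C_\alpha}2$, there is $t\in{}^{C_\alpha}2\setminus\phi'(\alpha)$ that is realised in $\sqb\phi_0$, and extending by $t$ avoids $\sqb{\phi'}_\beta$. Taking the union over $\beta<\kappa$ gives meagreness, contradicting \ref{lmm:Ek(B) comparison:3}. The covering and realisation bookkeeping in this step is where I expect most of the work.
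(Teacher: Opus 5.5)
Your overall structure matches the paper's, and so do the constructions in (1)$\Rightarrow$(2)$\Rightarrow$(3) and (4)$\Rightarrow$(1). In (3)$\Rightarrow$(4), your choice of $\phi(\delta)$ (sets of $\kappa_{\alpha,\delta}$ functions with distinct restrictions to $B_\delta\cap C_\alpha$, on pairwise disjoint $S_\alpha$ for $\alpha\in K$) is also the paper's. The gap is in how you finish (3)$\Rightarrow$(4).

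Write $P_\alpha=\prod_{\delta\in S_\alpha}\{s\restriction(B_\delta\cap C_\alpha): s\in\phi(\delta)\}\subseteq{}^{C_\alpha}2$ for the restrictions to $C_\alpha$ that are realisable in $[\phi]_0$. Your plan is to arrange $P_\alpha={}^{C_\alpha}2$ and then use only $\phi'(\alpha)\neq{}^{C_\alpha}2$. But $P_\alpha={}^{C_\alpha}2$ forces every factor to be all of ${}^{B_\delta\cap C_\alpha}2$, hence $|\phi(\delta)|\geq 2^{|B_\delta\cap C_\alpha|}$. This contradicts $\phi\in\Sigma_\kappa(B)$ whenever $2^{|B_\delta\cap C_\alpha|}=2^{|B_\delta|}$, for instance whenever $B_\delta\subseteq C_\alpha$.

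In the main application (\cref{crl:noninclusion}\,(1) as used in \cref{thm:Ek(B) minmax}, where each $C_\alpha$ is a union of blocks of $B$), $B_\delta\subseteq C_\alpha$ holds for every $\delta\in S_\alpha$. So your ``delicate case'' is the typical one, and shrinking the $\kappa_{\alpha,\delta}$ only makes $P_\alpha$ smaller. As written, the step ``since $\phi'(\alpha)\neq{}^{C_\alpha}2$ there is $t\notin\phi'(\alpha)$ realised in $[\phi]_0$'' is unjustified: nothing you have said prevents $P_\alpha\subseteq\phi'(\alpha)$.

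The missing observation is that no covering is needed once you use the full strength of $\phi'\in\Sigma_\kappa(C)$, namely $|\phi'(\alpha)|<2^{|C_\alpha|}$. The factors of $P_\alpha$ live on the disjoint pieces $B_\delta\cap C_\alpha$ of $C_\alpha$, so $|P_\alpha|=\prod_{\delta\in S_\alpha}\kappa_{\alpha,\delta}\geq 2^{|C_\alpha|}>|\phi'(\alpha)|$, and therefore $P_\alpha\nsubseteq\phi'(\alpha)$. Now finish as follows.
\begin{itemize}
\item Pick $t\in P_\alpha\setminus\phi'(\alpha)$.
\item For each $\delta\in S_\alpha$, pick $t_\delta\in\phi(\delta)$ with $t_\delta\restriction(B_\delta\cap C_\alpha)=t\restriction(B_\delta\cap C_\alpha)$.
\item Take a point of $[s]\cap[\phi]_0$ and modify it on $\bigcup_{\delta\in S_\alpha}B_\delta$ (which avoids $\dom(s)$ and contains $C_\alpha$) to agree with the $t_\delta$.
\end{itemize}
A long enough initial segment of the resulting point gives a basic open subset of $[\phi]_0$ disjoint from $[\phi']_\beta$. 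With this replacement your argument is exactly the paper's proof.
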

	\begin{proof}
		\ref{lmm:Ek(B) comparison:1} $\bimp$ \ref{lmm:Ek(B) comparison:2}.\quad Obvious.
		
		\noindent\ref{lmm:Ek(B) comparison:2} $\imp$ \ref{lmm:Ek(B) comparison:3}.\quad Let $\phi\in\SkB$ be such that $\exists^\infty\alpha\in\kappa\,\rb{\card{\phi\rb\alpha}>1}$ holds. By \ref{lmm:Ek(B) comparison:2}, there exists some $\phi'\in\Sigma_\kappa\rb C$ such that $\sqb\phi_0\subset \sqb\phi_*\subset\sqb{\phi'}_*$, and thus $\sqb{\phi'}_*\cap \sqb\phi_0=\sqb\phi_0$. By \cref{lmm:nontrivial meagre for slaloms}, we have $\sqb\phi_0\notin\Mk[\sqb\phi_0]$.
		
		\noindent\ref{lmm:Ek(B) comparison:3} $\imp$ \ref{lmm:Ek(B) comparison:4}.\quad We prove the contrapositive. That is, we will construct $\phi\in\SkB$ such that for every $\phi'\in\Sigma_\kappa(C)$ we have $\sqb{\phi'}_*\cap\sqb\phi_0\in\Mk[\sqb\phi_0]$, under the assumption that there is $K\in\sqkaka$ such that  $2^{\card{C_\alpha}}\leq\prod_{\delta\in S_\alpha}\kappa_{\alpha,\delta}$ for all $\alpha\in K$, where  $\abset{\kappa_{\alpha,\delta}}{\alpha\in\kappa\text{, }\delta\in S_\alpha}$ is such that $\kappa_{\alpha,\delta}<2^{\card{B_\delta}}$ and $\kappa_{\alpha,\delta}\leq2^{\card{B_\delta\cap C_\alpha}}$ for all  $\delta\in S_\alpha$ and $\alpha\in\kappa$. 
		
		We may assume, by shrinking $K$ if necessary, that $S_\alpha\cap S_{\alpha'}=\emp$ for all distinct $\alpha,\alpha'\in K$. For each $\alpha\in K$ and $\delta\in S_\alpha$, we have $\kappa_{\alpha,\delta}\leq 2^{\card{B_\delta\cap C_\alpha}}$, thus we may choose $F_\delta\subset{}^{B_\delta}2$ such that $\kappa_{\alpha,\delta}=\card{F_\delta}=\card[big]{\set{s\restriction \rb{B_\delta\cap C_\alpha}}{s\in F_\delta}}$. Furthermore, since $\kappa_{\alpha,\delta}<2^{\card{B_\delta}}$, we may choose some $\phi\in\SkB$ such that $\phi(\delta)=F_\delta$ for all $\delta\in\Cup_{\alpha\in K}S_\alpha$. Note that for any $\alpha\in K$, we have $2\leq 2^{\card{C_\alpha}}\leq\prod_{\delta\in S_\alpha}\kappa_{\alpha,\delta}$, thus there is $\delta\in S_\alpha$ such that $2\leq\kappa_{\alpha,\delta}$, and therefore \cref{lmm:nontrivial meagre for slaloms} tells us that $\Mk[\sqb\phi_0]$ is a proper ideal on $\sqb\phi_0$.
		
		Let $\phi'\in\Sigma_\kappa(C)$ and $\beta\in\kappa$ be arbitrary, then we show that $\sqb{\phi'}_\beta\cap\sqb\phi_0\in\NWD[\sqb\phi_0]$, and consequently $\sqb{\phi'}_*\cap\sqb\phi_0\in\Mk[\sqb\phi_0]$. Let $s\in\fkacs$ be such that $\sqb s\cap\sqb\phi_0\neq\emp$, then we may choose $\alpha\in K\setminus\beta$ such that $\dom(s)\cap \Cup_{\delta\in S_\alpha}B_\delta=\emp$. By our above construction 
		\[\ts2^{\card{C_\alpha}}\leq\prod_{\delta\in S_\alpha}\kappa_{\alpha,\delta}=\prod_{\delta\in S_\alpha}\card[big]{\set{s\restriction\rb{B_\delta\cap C_\alpha}}{s\in \phi\rb\delta}},\] 
		hence we may choose $t_\delta\in \phi\rb\delta$ for each $\delta\in S_\alpha$ such that $t=\Cup_{\delta\in S_\alpha}t_\delta$ has the property that $t\restriction C_\alpha\notin \phi'\rb\alpha$. Finally, there is $x\in\sqb s\cap\sqb\phi_0$ with $t\subset x$, and thus if $\gamma$ is large enough such that $\Cup_{\delta\in S_\alpha}B_\delta\subset\gamma$, then $\sqb{x\restriction\gamma}\cap \sqb\phi_0\neq \emp$, and because $x\restriction C_\alpha\notin \phi'\rb\alpha$ we also have $\sqb{x\restriction\gamma}\cap\sqb{\phi'}_\beta=\emp$.

		\noindent\ref{lmm:Ek(B) comparison:4} $\imp$ \ref{lmm:Ek(B) comparison:2}.\quad Let $\phi\in\SkB$ be given and let $F_{\alpha,\delta}=\set{s\restriction\rb{B_\delta\cap C_\alpha}}{s\in \phi\rb\delta}$ and $\kappa_{\alpha,\delta}=\card{F_{\alpha,\delta}}$ for each $\alpha\in\kappa$ and $\delta\in S_\alpha$. Define 
		\[\phi'\rb\alpha=\set[big]{s\in{}^{C_\alpha}2}{\forall \delta\in S_\alpha\,\rb{s\restriction\rb{B_\delta\cap C_\alpha}\in F_{\alpha,\delta}}},\] 
		then $\sqb\phi_*\subset\sqb{\phi'}_*$ and $\card{\phi'\rb\alpha}=\prod_{\delta\in S_\alpha}\kappa_{\alpha,\delta}$. Hence, by \ref{lmm:Ek(B) comparison:4}, we see that $\card{\phi'\rb\alpha}<2^{\card{C_\alpha}}$ for almost all $\alpha\in\kappa$, which means that $\phi'\in\Sigma_\kappa^*\rb C$. By \cref{lmm:almost to everywhere}, we are done. 
	\end{proof}
	
	\begin{crl}\label{crl:noninclusion}
		Let $B,C\in\rm{BP}_\kappa$. Let  $S_\alpha=\set{\delta\in\kappa}{B_\delta\cap C_\alpha\neq\emp}$.
		\begin{enumerate}[label=(\arabic*)]
			\item\label{crl:noninclusion:Sa} If $\forall^\infty\delta\in\kappa\,(\card{B_\delta}\geq 2)$ and  $\exists^\infty\alpha\in\kappa\,\rb{\card{S_\alpha}=\card{C_\alpha}}$, then $\Ek(B)\nsubset\Ek(C)$.
			\item\label{crl:noninclusion:powers} If $\exists^\infty\alpha\in\kappa\fforall\delta\in\kappa\,\rb[big]{2^{\card{B_\delta\cap C_\alpha}}<2^{\card{B_\delta}}}$, then $\Ek(B)\nsubset\Ek(C)$.
		\end{enumerate}
	\end{crl}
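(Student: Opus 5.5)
Both parts follow from \cref{lmm:Ek(B) comparison}, using the implication \ref{lmm:Ek(B) comparison:1} $\imp$ \ref{lmm:Ek(B) comparison:4}. In each case I will exhibit a system $\abset{\kappa_{\alpha,\delta}}{\alpha\in\kappa\text{, }\delta\in S_\alpha}$ of cardinals such that
\begin{itemize}
\item $\kappa_{\alpha,\delta}<2^{\card{B_\delta}}$ and $\kappa_{\alpha,\delta}\leq 2^{\card{B_\delta\cap C_\alpha}}$ for all $\alpha$ and $\delta\in S_\alpha$;
\item $\prod_{\delta\in S_\alpha}\kappa_{\alpha,\delta}\geq 2^{\card{C_\alpha}}$ for cofinally many $\alpha$.
\end{itemize}
This violates \ref{lmm:Ek(B) comparison:4}, so $\Ek(B)\nsubset\Ek(C)$.

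For \ref{crl:noninclusion:powers}, take $\kappa_{\alpha,\delta}=2^{\card{B_\delta\cap C_\alpha}}$. The constraint $\kappa_{\alpha,\delta}\leq 2^{\card{B_\delta\cap C_\alpha}}$ is then trivial. The strict inequality $\kappa_{\alpha,\delta}<2^{\card{B_\delta}}$ holds by hypothesis for the cofinally many good $\alpha$, and for all $\delta$. For the remaining $\alpha$, I will simply set $\kappa_{\alpha,\delta}=1$, which satisfies both constraints since $B_\delta\neq\emp$. For good $\alpha$ the sets $B_\delta\cap C_\alpha$ with $\delta\in S_\alpha$ partition $C_\alpha$. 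Hence $\prod_{\delta\in S_\alpha}2^{\card{B_\delta\cap C_\alpha}}=2^{\sum_\delta\card{B_\delta\cap C_\alpha}}=2^{\card{C_\alpha}}$ by cardinal arithmetic, and the product fails to be below $2^{\card{C_\alpha}}$ for cofinally many $\alpha$.

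For \ref{crl:noninclusion:Sa}, take $\kappa_{\alpha,\delta}=2$ whenever $\card{B_\delta}\geq 2$, and $\kappa_{\alpha,\delta}=1$ otherwise. The constraints hold because $2<2^{\card{B_\delta}}$ when $\card{B_\delta}\geq2$, and $2\leq 2^{\card{B_\delta\cap C_\alpha}}$ since the intersection is nonempty for $\delta\in S_\alpha$. The only point requiring care is the product. Fix $\delta_0$ with $\card{B_\delta}\geq 2$ for all $\delta\geq\delta_0$. Since the $C_\alpha$ are disjoint and each $B_\delta$ is bounded, only ${<}\kappa$ many $\alpha$ meet $\Cup_{\delta<\delta_0}B_\delta$; for this I use that $\Cup_{\delta<\delta_0}B_\delta$ is bounded by regularity, hence has size ${<}\kappa$. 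So for cofinally many $\alpha$ we have both $\card{S_\alpha}=\card{C_\alpha}$ and $S_\alpha\subset\kappa\setminus\delta_0$. For such $\alpha$ the product is $2^{\card{S_\alpha}}=2^{\card{C_\alpha}}$, again violating \ref{lmm:Ek(B) comparison:4}.

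The main obstacle is exactly this bookkeeping: making sure the exceptional small $B_\delta$ do not destroy the product bound. The regularity/boundedness argument above handles it. The cardinal-arithmetic identity $\prod 2^{\mu_i}=2^{\sum\mu_i}$ is standard.
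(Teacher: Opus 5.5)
Your proof is correct and follows the paper's route: apply the implication \ref{lmm:Ek(B) comparison:1}$\imp$\ref{lmm:Ek(B) comparison:4} of \cref{lmm:Ek(B) comparison}, with $\kappa_{\alpha,\delta}=2$ for \ref{crl:noninclusion:Sa} and $\kappa_{\alpha,\delta}=2^{\card{B_\delta\cap C_\alpha}}$ for \ref{crl:noninclusion:powers}. Your extra step of setting $\kappa_{\alpha,\delta}=1$ at the exceptional indices (the $\delta$ with $\card{B_\delta}=1$, resp.\ the $\alpha$ not covered by the hypothesis) is a worthwhile piece of care that the paper's one-line argument leaves implicit, since condition \ref{lmm:Ek(B) comparison:4} requires the constraints for all $\alpha\in\kappa$ and $\delta\in S_\alpha$.
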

	\begin{proof}
		\ref{crl:noninclusion:Sa}. Note that $2\leq 2^{\card{B_\delta\cap C_\alpha}}$ for all $\alpha\in\kappa$ and $\delta\in S_\alpha$, and that by assumption $2<2^{\card{B_\delta}}$ for almost all $\delta\in\kappa$. Therefore, if we let $\kappa_{\alpha,\delta}=2$ for all $\alpha\in\kappa$ and $\delta\in S_\alpha$, then $\exists^{\infty}\alpha\in\kappa\,(\card S_\alpha=\card{C_\alpha})$ implies that $\exists^{\infty}\alpha\in\kappa\,\rb[big]{\prod_{\delta\in S_\alpha}\kappa_{\alpha,\delta}=2^{\card {S_\alpha}}=2^{\card{C_\alpha}}}$. By \cref{lmm:Ek(B) comparison}, we see that $\Ek(B)\nsubset\Ek(C)$.
		
		\noindent\ref{crl:noninclusion:powers}. This follows from \cref{lmm:Ek(B) comparison} by letting $\kappa_{\alpha,\delta}=2^{\card{B_\delta\cap C_\alpha}}$.
	\end{proof}
	
	\begin{crl}\label{crl:incomparable Ek(B)s}
		If $B,C\in\BPklim$ and there is some regular $\lambda$ with $\lambda<\kappa$ such that $\card{B_\delta\cap C_\alpha}<\lambda$ for almost all $\alpha,\delta\in\kappa$, then $\Ek(B)\nsubseteq\Ek(C)$ and $\Ek(C)\nsubseteq\Ek(B)$.
	\end{crl}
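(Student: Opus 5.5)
By symmetry of the hypothesis it suffices to prove $\Ek(B)\nsubseteq\Ek(C)$. I will apply \cref{crl:noninclusion}\,\ref{crl:noninclusion:powers}, so the task is to find cofinally many $\alpha$ such that $2^{\card{B_\delta\cap C_\alpha}}<2^{\card{B_\delta}}$ holds for every $\delta\in\kappa$.

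Fix $\alpha_0$ and $\delta_0$ such that $\card{B_\delta\cap C_\alpha}<\lambda$ whenever $\alpha\geq\alpha_0$ and $\delta\geq\delta_0$. Only the $\delta<\delta_0$ need extra care. The set $D=\Cup_{\delta<\delta_0}B_\delta$ is a union of fewer than $\kappa$ bounded sets, so by regularity of $\kappa$ it is bounded. Since the $C_\alpha$ are pairwise disjoint, only ${<}\kappa$ many $C_\alpha$ can meet $D$. Hence for almost all $\alpha$ we have $B_\delta\cap C_\alpha=\emp$ for every $\delta<\delta_0$, and for these $\delta$ the inequality reads $1<2^{\card{B_\delta}}$, which holds because $B_\delta\neq\emp$. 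After enlarging $\alpha_0$, the only remaining case is therefore $\alpha\geq\alpha_0$ and $\delta\geq\delta_0$, where $\mu:=\card{B_\delta\cap C_\alpha}<\lambda$.

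In that case I must show $2^{\mu}<2^{\card{B_\delta}}$. Since $B\in\BPklim$, after enlarging $\delta_0$ we may assume $\card{B_\delta}\geq\lambda$ for all $\delta\geq\delta_0$. This is immediate if $\kappa$ is a limit cardinal. If $\kappa=\nu^+$, then $\card{B_\delta}=\nu$ for almost all $\delta$, and $\lambda\leq\nu$ because $\lambda<\kappa$. This reduces the problem to showing $2^\mu<2^\lambda$ for $\mu<\lambda$, and this is the main obstacle: without GCH, strict monotonicity of $\kappa'\mapsto 2^{\kappa'}$ fails, since one can have $2^\mu=2^\lambda$.

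I expect this is exactly where regularity of $\lambda$ is meant to be used, but regularity alone does not give $2^\mu<2^\lambda$. My fallback is to use the freedom in \cref{lmm:Ek(B) comparison}\,\ref{lmm:Ek(B) comparison:4} directly rather than the corollary. Regularity of $\lambda$ does give a uniform bound: I only need $\sup_\delta\card{B_\delta\cap C_\alpha}$ to stay below the cofinality-type thresholds. With $\kappa_{\alpha,\delta}=2^{\card{B_\delta\cap C_\alpha}}$, the product $\prod_{\delta\in S_\alpha}\kappa_{\alpha,\delta}$ equals $2^{\card{C_\alpha}}$, because the sets $B_\delta\cap C_\alpha$ partition $C_\alpha$. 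So the witness to non-inclusion via \ref{lmm:Ek(B) comparison:4} is available exactly when these $\kappa_{\alpha,\delta}$ are legal, namely when $\kappa_{\alpha,\delta}<2^{\card{B_\delta}}$, and this is again the strict inequality of powers. I would therefore try to choose $\kappa_{\alpha,\delta}=\min\rb{2^{\card{B_\delta\cap C_\alpha}},\mu_\delta}$ for some cardinal $\mu_\delta<2^{\card{B_\delta}}$, and use that $\card{S_\alpha}$ is large, at least $\card{C_\alpha}/\lambda$-ish, to argue that the product still reaches $2^{\card{C_\alpha}}$. If that fails, I would note that under the standing assumption $\kappa^{<\kappa}=\kappa$, together with $\lambda$ regular, one can try to pick $\lambda$ so that $2^{<\lambda}<2^{\lambda}$ holds along the relevant cardinals. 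I expect the intended argument is the direct application of \cref{crl:noninclusion}\,\ref{crl:noninclusion:powers}, with the power inequality justified there.
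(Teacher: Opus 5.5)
There is a genuine gap. Your main route through \cref{crl:noninclusion}\,\ref{crl:noninclusion:powers} cannot be completed in ZFC, for exactly the reason you name: the hypothesis does not prevent $2^{\card{B_\delta\cap C_\alpha}}=2^{\card{B_\delta}}$. For instance, take $\kappa=\omega_2$ with $2^{\aleph_0}=2^{\aleph_1}=\aleph_2$, $\lambda=\omega_1$, and $B,C$ whose pieces of size $\aleph_1$ meet in countably infinite sets. So your closing expectation that the power inequality is ``justified there'' is wrong. The fallback paragraph also never becomes an argument. The claim that $\card{S_\alpha}$ is ``at least $\card{C_\alpha}/\lambda$-ish'', so that the product still reaches $2^{\card{C_\alpha}}$, is precisely the step that has to be proved. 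It is also where the regularity of $\lambda$ is actually used.

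The missing idea is to avoid comparing powers altogether and take $\kappa_{\alpha,\delta}=2$ in \cref{lmm:Ek(B) comparison}\,\ref{lmm:Ek(B) comparison:4}, i.e.\ to use \cref{crl:noninclusion}\,\ref{crl:noninclusion:Sa}. This is what the paper does; in your fallback it is the choice $\mu_\delta=2$. The requirement $2<2^{\card{B_\delta}}$ for almost all $\delta$ is immediate from $B\in\BPklim$, so everything reduces to showing $\card{S_\alpha}=\card{C_\alpha}$ for cofinally many $\alpha$.

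By your own reduction, for almost all $\alpha$ every $\delta\in S_\alpha$ is at least $\delta_0$. So $C_\alpha$ is the disjoint union of the nonempty sets $B_\delta\cap C_\alpha$, $\delta\in S_\alpha$, each of size ${<}\lambda$, and hence $\card{S_\alpha}\leq\card{C_\alpha}\leq\card{S_\alpha}\cdot\lambda$. Since $C\in\BPklim$, we have $\card{C_\alpha}\geq\lambda$ for almost all $\alpha$. By regularity of $\lambda$, a union of fewer than $\lambda$ sets of size ${<}\lambda$ has size ${<}\lambda$, so $\card{S_\alpha}\geq\lambda$. Hence $\card{S_\alpha}\cdot\lambda=\card{S_\alpha}$ and $\card{S_\alpha}=\card{C_\alpha}$. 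Then $\prod_{\delta\in S_\alpha}2=2^{\card{C_\alpha}}$, which gives $\Ek(B)\nsubseteq\Ek(C)$, and the other non-inclusion follows by symmetry.
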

	\begin{proof}
		Since $C\in\BPklim$, we see that $\forall^\infty\alpha\in\kappa\,(\card{C_\alpha}\geq\lambda)$. Hence, for large enough $\alpha$, we have $|S_\alpha|\geq\lambda$, where $S_\alpha=\set{\delta\in\kappa}{B_\delta\cap C_\alpha\neq\emp}$. It follows from \cref{crl:noninclusion} that $\Ek(B)\nsubset\Ek(C)$. The reverse follows by a symmetric argument.
	\end{proof}
	
	We will use the above corollary to make some notes about the existence of greatest, least and incomparable elements of the partial orders $\ab[big]{\set{\Ek(B)}{B\in\BPklim},\subset}$ and $\ab[big]{\set{\Ek(I)}{I\in\IPklim},\subset}$.
	
	\begin{prp}\label{thm:Ek(B) minmax}
		\leavevmode
		\begin{enumerate}[label=(\arabic*)]
			\item\label{thm:Ek(B) minmax:1} If $\kappa$ is regular uncountable, then $\ab[big]{\set{\Ek(B)}{B\in\BPklim},\subset}$  and $\ab[big]{\set{\Ek(I)}{I\in\IPklim},\subset}$ have no least element.
			\item\label{thm:Ek(B) minmax:2} If $\kappa$ is regular uncountable, then for every $B\in\BPklim$ there exists $C\in\BPklim$ such that $\Ek(B)$ and $\Ek(C)$ are incomparable. Hence, $\ab[big]{\set{\Ek(B)}{B\in\BPklim},\subset}$ has no greatest element.
			\item\label{thm:Ek(B) minmax:3} If $\kappa$ is regular limit, then for every $I\in\IPklim$ there exists $J\in\IPklim$ such that $\Ek(I)$ and $\Ek(J)$ are incomparable. Hence, $\ab[big]{\set{\Ek(I)}{I\in\IPklim},\subset}$ has no greatest element.
			\item\label{thm:Ek(B) minmax:4} If $\kappa$ is successor, then $\ab[big]{\set{\Ek(I)}{I\in\IPklim},\subset}$ has a greatest element.
		\end{enumerate}
	\end{prp}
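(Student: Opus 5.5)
Each part is a construction that feeds into \cref{lmm:Ek(B) comparison} or its corollaries \cref{crl:noninclusion} and \cref{crl:incomparable Ek(B)s}.

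\emph{Part \ref{thm:Ek(B) minmax:1}.} Given $I\in\IPklim$ (which also covers $B\in\BPklim$ by the same argument), I build a coarser $J\in\IPklim$ with $\Ek(I)\nsubseteq\Ek(J)$. This shows $\Ek(I)$ is not least in either order.

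The recursion runs as follows. Suppose $J_\xi$ for $\xi<\alpha$ have been built as unions of consecutive $I$-blocks, and $d$ is the least index of an $I$-block not yet used. Put $\nu_0=\card{I_d}$ and
\[\nu_{n+1}=\nu_n+\sup\set{\card{I_\delta}}{\delta<d+\nu_n}.\]
Let $\nu=\sup_n\nu_n$, which is below $\kappa$ by regularity, and set $J_\alpha=\Cup_{\delta\in\iv{d}{d+\nu}}I_\delta$.

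Then $J_\alpha$ is an interval consisting of exactly $\nu$ blocks, each of size ${\leq}\nu$. Hence $\card{S_\alpha}=\nu=\card{J_\alpha}$. Also $\card{J_\alpha}\geq\card{I_d}$, so $J\in\IPklim$ in the limit case. In the successor case all blocks eventually have size $\lambda$, so $\nu=\lambda$ there as well. Applying \cref{crl:noninclusion}\,\ref{crl:noninclusion:Sa} with the roles $B=I$, $C=J$ gives $\Ek(I)\nsubseteq\Ek(J)$.

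\emph{Part \ref{thm:Ek(B) minmax:2}.} Given $B\in\BPklim$, I build $C\in\BPklim$ with $\card{B_\delta\cap C_\alpha}\leq1$ for all $\alpha,\delta$. Then \cref{crl:incomparable Ek(B)s} with $\lambda=\omega$ yields incomparability.

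The recursion: at stage $\alpha$, let $\gamma$ be the least ordinal not yet covered. Fix a target cardinality $\mu_\alpha$, increasing to $\kappa$ if $\kappa$ is limit and equal to $\lambda$ if $\kappa=\lambda^+$. Let $C_\alpha$ consist of $\gamma$ together with one uncovered point from each of $\mu_\alpha$ many distinct $B$-blocks, not the block of $\gamma$, all chosen below some bound.

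Such blocks exist because only ${<}\kappa$ points have been used so far, while cofinally many blocks are large. Choosing $\gamma$ as the least uncovered point guarantees that $C$ covers $\kappa$. Each $C_\alpha$ is bounded and hits every $B_\delta$ at most once.

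\emph{Part \ref{thm:Ek(B) minmax:3}.} Let $\kappa$ be regular limit and $I\in\IPklim$. I build $J\in\IPklim$ by alternating two kinds of stretches.

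\emph{Coarse stretches.} These use the construction from part \ref{thm:Ek(B) minmax:1}. They give cofinally many $\alpha$ with $\card{S_\alpha}=\card{J_\alpha}$, and hence $\Ek(I)\nsubseteq\Ek(J)$ by \cref{crl:noninclusion}\,\ref{crl:noninclusion:Sa}.

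\emph{Fine stretches.} Let $\rho$ be the current size threshold, chosen above all previous $\card{J_\xi}$ so that $J$ stays in $\IPklim$. Pick $\delta$ beyond the current position with $\card{I_\delta}>\rho$, which exists because $\card{I_\delta}\to\kappa$. Cut $I_\delta$ into consecutive intervals of order type $\rho$. There are $\card{I_\delta}$ many such pieces, and the region in between is absorbed into one interval. Symmetrically, $I_\delta$ then meets $\card{I_\delta}$ many $J$-blocks, so \cref{crl:noninclusion}\,\ref{crl:noninclusion:Sa} with the roles swapped gives $\Ek(J)\nsubseteq\Ek(I)$.

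The main bookkeeping obstacle is keeping $\card{J_\alpha}\to\kappa$ while interleaving the two kinds of stretches. This is handled by raising $\rho$ at each stage, which the limit-ness of $\kappa$ permits.

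\emph{Part \ref{thm:Ek(B) minmax:4}.} Let $\kappa=\lambda^+$ and let $I\in\IPklim$ have every block of order type exactly $\lambda$. I claim $\Ek(I)$ is greatest, and I verify condition \ref{lmm:Ek(B) comparison:4} of \cref{lmm:Ek(B) comparison} with $B=J$ arbitrary in $\IPklim$ and $C=I$.

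For almost all $\delta$ the block $J_\delta$ is an interval of size $\lambda$, hence of order type at least $\lambda$. Such an interval is contained in $I_\alpha$ only if it is a final segment of $I_\alpha$. So $I_\alpha$ meets at most two such $J$-blocks: the one containing $i_\alpha$ and possibly one final segment. Thus $\card{S_\alpha}\leq2$ for almost all $\alpha$.

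Each $\kappa_{\alpha,\delta}<2^{\card{J_\delta}}=2^\lambda=\kappa$, so $\kappa_{\alpha,\delta}\leq\lambda$. The product over $S_\alpha$ is therefore at most $\lambda<2^\lambda=2^{\card{I_\alpha}}$. By \cref{lmm:Ek(B) comparison} this gives $\Ek(J)\subseteq\Ek(I)$.
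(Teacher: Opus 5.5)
Your proposal is correct. Parts (1), (3) and (4) follow the paper's proof:
\begin{itemize}
\item In (1), your closure recursion makes $J_\alpha$ a union of $\card{\nu}$ many $I$-blocks, each of size at most $\card{\nu}$. This is the paper's sequence $\xi_\alpha$ with $\card{\iv{\xi_\alpha}{\xi_{\alpha+1}}}=\card{\Cup_{\xi_\alpha\le\delta<\xi_{\alpha+1}}B_\delta}$.
\item In (3), your alternation of coarse and fine stretches is the paper's three-stage recursion, with your threshold $\rho$ in place of the paper's pieces of length $\alpha$.
\item In (4), your observation that $I_\alpha$ meets at most two $J$-blocks is exactly the paper's $I_\alpha\subseteq J_\delta\cup J_{\delta+1}$, which you justify in more detail.
\end{itemize}
One small bookkeeping point in (3), which the paper's ``remaining part'' shares: after you cut $I_\delta$ into pieces of order type $\rho$, the leftover tail can be tiny. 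Merge it into the next coarse block rather than making it a $J$-block of its own. Otherwise cofinally many small blocks would break $\lim\card{J_\beta}=\kappa$.

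Part (2) is where you genuinely differ. The paper runs the alternating scheme of (3) in the bounded setting. Its even-indexed $C_\alpha$ are coarse unions of $B$-blocks. On the odd stretches, each $B_\delta$ is split into $\card{B_\delta}$ parts of size $\card{B_\delta}$, and these become the odd-indexed $C_\alpha$. It then applies \cref{crl:noninclusion}\,\ref{crl:noninclusion:Sa} in each direction.

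You instead build a transversal $C$ with $\card{B_\delta\cap C_\alpha}\le 1$ for all $\alpha,\delta$. Then $\card{S_\alpha}=\card{C_\alpha}$ for every $\alpha$, and symmetrically every $B_\delta$ meets $\card{B_\delta}$ many $C$-blocks. \cref{crl:incomparable Ek(B)s} with $\lambda=\omega$ then gives both non-inclusions at once.

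Your route is shorter. It needs neither the paper's reduction to infinite $B_\delta$ nor any care in enumerating the odd pieces to keep $C\in\BPklim$. It is also the same kind of construction the paper itself uses for \cref{lmm:BE additive failure} and \cref{thm:BEkm is no ideal}. It makes clear why (2) holds at every regular uncountable $\kappa$ while (3) fails at successors. Two interval partitions with eventually large blocks can never be transversal, so for intervals one is forced into the alternating construction, and that needs $\kappa$ to be a limit. The paper's choice has the advantage of presenting (2) and (3) as one and the same argument.
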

	\begin{proof}
		\ref{thm:Ek(B) minmax:1}.\quad We will show this for $\BPklim$, but the situation for $\IPklim$ is identical. Suppose that $B\in\BPklim$, then we construct $C\in\BPklim$ such that $\Ek(B)\nsubset\Ek(C)$. Note that we can recursively construct a continuous strictly increasing sequence of ordinals $\abset{\xi_\alpha}{\alpha\in\kappa}$ such that 
		\[\card[big]{\iv{\xi_\alpha}{\xi_{\alpha+1}}}=\card[big]{\ts\Cup\set{B_\delta}{\xi_\alpha\leq \delta<\xi_{\alpha+1}}}.\]
		Then it suffices to define $C$ by $C_\alpha:=\Cup\set{B_\delta}{\xi_\alpha\leq\delta<\xi_{\alpha+1}}$ for all $\alpha\in\kappa$ and apply \cref{crl:noninclusion}\,\ref{crl:noninclusion:Sa}.

		\noindent\ref{thm:Ek(B) minmax:2}.\quad Suppose that $B\in\BPklim$, and without loss assume that $\card{B_\delta}$ is infinite for every $\delta\in\kappa$. Let $\abset{\xi_\alpha}{\alpha\in\kappa}$ be a sequence as defined in \ref{thm:Ek(B) minmax:1} above. We define $C$ as follows. 
		
		If $\alpha\in\kappa$ is an even ordinal, then we let $C_\alpha:=\Cup\set{B_\delta}{\xi_\alpha\leq\delta<\xi_{\alpha+1}}$. For each odd ordinal $\gamma\in\kappa$ and $\delta$ such that $\xi_\gamma\leq\delta<\xi_{\gamma+1}$, let $B'_\delta$ be a partition of $B_\delta$ into $\card{B_\delta}$-many parts of size $\card{B_\delta}$. We then let $\set{C_\alpha}{\alpha\in\kappa\text{ is odd}}$ be an enumeration of 
		$\Cup\set{B'_\delta}{\exists \gamma\in\kappa\,(\gamma\text{ is odd and }\xi_\gamma\leq\delta<\xi_{\gamma+1})}$.
		We may again apply \cref{crl:noninclusion}\,\ref{crl:noninclusion:Sa} to see that $\Ek(B)\nsubset\Ek(C)\nsubset\Ek(B)$.

		\noindent\ref{thm:Ek(B) minmax:3}.\quad The argument is similar to \ref{thm:Ek(B) minmax:2} above, but we have to take a little more care in order to deal with intervals. Given $I\in\IPklim$, we define $J\in\IPklim$ recursively by iterating through three stages. At each step of the iteration, suppose we have defined $J_\xi$ for all $\xi<\alpha$ and that $\alpha'$ is such that $\Cup_{\xi\in\alpha}J_\xi=\Cup_{\xi\in\alpha'}I_\xi$. In the first stage, we then define $J_\alpha=\Cup\set{I_\delta}{\alpha'\leq\delta<\alpha''}$, where $\alpha''$ is chosen large enough such that $\card{J_\alpha}=\card[big]{\iv{\alpha'}{\alpha''}}$. In the second stage, we define $J_{\alpha+1}=\iv{\min(I_{\alpha''})}{\min(I_\delta)}$, with $\delta>\alpha''$ large enough such that $\card{I_\xi}>\card\alpha$ for all $\xi\geq\delta$. Finally, in the third stage, we define $J_{\alpha+2+\xi}$ for each $\xi<\card{I_\delta}$ to be intervals of length $\alpha$, such that $\Cup_{\xi<\card{I_\delta}}J_{\alpha+2+\xi}$ covers the initial segment of $I_\delta$ of length $\card{I_\delta}$. If at this point $\Cup_{\xi\in\card{I_\delta}}J_{\alpha+2+\xi}\neq I_\delta$, then we define $J_{\alpha+\card{I_\delta}}$ as the remaining part of the interval $I_\delta$. This concludes the third stage, and we continue by iteration until $J_\alpha$ has been defined for all $\alpha\in\kappa$.
		
		\noindent\ref{thm:Ek(B) minmax:4}.\quad Let $\kappa=\lambda^+$ and define $I\in\IPklim$ by $I_\alpha=\iv{\lambda\alpha}{\lambda\alpha+\lambda}$ for each $\alpha\in\kappa$. Now, if $J\in\IPklim$, then for almost all $\delta\in\kappa$ we have $\card{J_\delta}=\lambda$, and thus we see that for almost all $\alpha\in\kappa$ there is a $\delta\in\kappa$ such that $I_\alpha\subset J_\delta\cup J_{\delta+1}$. By  \cref{lmm:Ek(B) comparison}, $\Ek(J)\subset\Ek(I)$.
	\end{proof}
	
	\begin{crl}\label{crl:Ekm is ideal}
		If $\kappa$ is successor, then let $I\in\IPklim$ be such that $\Ek(J)\subset\Ek(I)$ for all $J\in\IPklim$, then $\Ek(I)=\EkE=\Ek$. In particular, $\EkE$ is ${\leq}\kappa$-complete ideal when $\kappa$ is successor.\qed
	\end{crl}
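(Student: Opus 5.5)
We need to prove the corollary: for successor $\kappa$, if $I\in\IPklim$ satisfies $\Ek(J)\subset\Ek(I)$ for all $J\in\IPklim$ (such $I$ exists by \cref{thm:Ek(B) minmax}\,\ref{thm:Ek(B) minmax:4}), then $\Ek(I)=\EkE=\Ek$.

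The plan is to prove the chain of inclusions $\Ek\subset\Ek(I)\subset\EkE\subset\Ek$. The last two inclusions come directly from \cref{prp:simple relations}\,\eqref{prp:simple relations:1}. So the only real content is $\Ek\subset\Ek(I)$.

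For this, let $X\in\Ek$. By \cref{dfn:families} we can write $X=\Cup E$ with $E\in\sqb{\EkE}^{\leq\kappa}$. Enumerate $E$ as $\abset{X_\xi}{\xi<\kappa}$, with repetitions allowed; if $E$ is empty, then $X=\emp$ and there is nothing to prove. Each $X_\xi$ lies in $\Ek(J^\xi)$ for some $J^\xi\in\IPklim$. By the maximality of $\Ek(I)$, each $X_\xi$ then lies in $\Ek(I)$. Since $I\in\IPklim\subset\BPklim$, \cref{prp:completeness of Ek(B)}\,\ref{prp:completeness:2} says that $\Ek(I)$ is a ${\leq}\kappa$-complete ideal. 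Hence $X=\Cup_{\xi<\kappa}X_\xi\in\Ek(I)$.

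It follows that $\EkE=\Ek(I)$, so $\EkE$ is a ${\leq}\kappa$-complete ideal. No step here is a genuine obstacle: the only point that needs checking is that the completeness result applies to $I$, which holds because $I\in\BPklim$.
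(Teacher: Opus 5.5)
Your proof is correct and matches the argument the paper leaves implicit (the corollary is stated with \qed). Maximality of $\Ek(I)$, which exists by \cref{thm:Ek(B) minmax}\,\ref{thm:Ek(B) minmax:4}, gives $\EkE\subset\Ek(I)$; the ${\leq}\kappa$-completeness of $\Ek(I)$ from \cref{prp:completeness of Ek(B)}\,\ref{prp:completeness:2} then gives $\Ek\subset\Ek(I)$, and \cref{prp:simple relations} closes the chain.
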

	
	\begin{crl}\label{thm:EkA proper Ek(I)}
		Let $\kappa$ be regular uncountable. Then $\EkA\subsetneq\Ek(I)$ for all $I\in\IPklim$, and $\BEkA\subsetneq\Ek(B)$ for all $B\in\BPklim$.
	\end{crl}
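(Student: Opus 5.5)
The inclusions $\EkA\subset\Ek(I)$ and $\BEkA\subset\Ek(B)$ are immediate from \cref{prp:simple relations}. Only strictness needs an argument, and I will derive it from \cref{thm:Ek(B) minmax}\,\ref{thm:Ek(B) minmax:1}, or more precisely from its proof.

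Fix $I\in\IPklim$. The proof of \cref{thm:Ek(B) minmax}\,\ref{thm:Ek(B) minmax:1} constructs $J\in\IPklim$ with $\Ek(I)\nsubset\Ek(J)$. In that construction $J_\alpha$ is a union of the intervals $I_\delta$ for $\delta\in\iv{\xi_\alpha}{\xi_{\alpha+1}}$. Since the $\xi_\alpha$ are continuous and strictly increasing, these unions are again intervals, so $J$ really lies in $\IPklim$ and not merely in $\BPklim$.

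Now pick a witness: a set $X\in\Ek(I)\setminus\Ek(J)$, for instance $X=\sqb\phi_*$ for a suitable $\phi\in\Sigma_\kappa(I)$. Since $J\in\IPklim$ and $\EkA=\Cap_{J'\in\IPklim}\Ek(J')$, we have $\EkA\subset\Ek(J)$. Hence $X\notin\EkA$, while $X\in\Ek(I)$, and so $\EkA\subsetneq\Ek(I)$.

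The case $B\in\BPklim$ is identical. The construction gives $C\in\BPklim$ with $\Ek(B)\nsubset\Ek(C)$, and $\BEkA\subset\Ek(C)$, so any witness to the noninclusion lies in $\Ek(B)\setminus\BEkA$.

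The only point needing care is checking the hypotheses of \cref{crl:noninclusion}\,\ref{crl:noninclusion:Sa} used inside that construction. First, $\card{B_\delta}\geq 2$ for almost all $\delta$; this holds because $B\in\BPklim$. Second, $\card{S_\alpha}=\card{C_\alpha}$ for cofinally many $\alpha$. Here $S_\alpha=\iv{\xi_\alpha}{\xi_{\alpha+1}}$ exactly, and the choice of the $\xi_\alpha$ guarantees $\card{C_\alpha}=\card{S_\alpha}$. We also need $C\in\BPklim$, which holds because $\card{C_\alpha}\geq\card{B_{\xi_\alpha}}$. Granting that proposition, the argument is immediate.
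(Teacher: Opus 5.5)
Your proof is correct and matches the paper's: \cref{thm:Ek(B) minmax}\,\ref{thm:Ek(B) minmax:1} gives $J\in\IPklim$ (resp.\ $C\in\BPklim$) with $\Ek(I)\nsubset\Ek(J)$, and hence $\EkA\subset\Ek(I)\cap\Ek(J)\subsetneq\Ek(I)$. You do not need to re-check the construction inside that proposition, since ``no least element'' already says exactly that such a $J$ exists; your checks are nonetheless sound, except that in the successor case $\card{C_\alpha}\geq\card{B_{\xi_\alpha}}$ alone does not give $C\in\BPklim$, and you also need the upper bound $\card{C_\alpha}\leq\lambda$, which holds because $C_\alpha$ is bounded in $\kappa$.
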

	\begin{proof}
		Fix some $I\in\IPklim$, then \cref{thm:Ek(B) minmax}\,\ref{thm:Ek(B) minmax:1} shows that $\Ek(I)$ is not least, thus there is $J\in\IPklim$ with $\Ek(I)\nsubset\Ek(J)$. It follows from this that $\EkA\subset\Ek(I)\cap \Ek(J)\subsetneq\Ek(I)$. The same argument shows that $\BEkA\subsetneq\Ek(B)$ for all $B\in\BPklim$.
	\end{proof}
	
	We will use the characterisation given in \cref{crl:noninclusion} to show that $\BEkE$ is not an ideal.

	\begin{lmm}\label{lmm:BE additive failure}
		Suppose that 
		\begin{enumerate}[label=(\arabic*)]
		\item\label{lmm:addF:s1} $B,B'\in\BPklim$ are such that $\card{B_\alpha\cap B'_\beta}\leq 1$ for almost all $\alpha,\beta\in\kappa$,
		\item\label{lmm:addF:s2} $\phi\in\SkB$ and $\phi'\in\Sigma_\kappa(B')$ have the property that for any $\alpha,\alpha'\in\kappa$, any $\xi\in B_\alpha\cap B'_{\alpha'}$ and any $i\in\ac{0,1}$, there are $s\in\phi(\alpha)$ and $s'\in\phi'(\alpha')$ such that $s(\xi)=s'(\xi)=i$, and
		\item\label{lmm:addF:s3} $C\in\BPklim$ and $\psi\in\Sigma_\kappa(C)$,
		\end{enumerate} 
		then  $\sqb\psi_*\cap\sqb\phi_0\in\cal M_\kappa(\sqb\phi_0)$ or $\sqb\psi_*\cap\sqb{\phi'}_0\in\cal M_\kappa(\sqb{\phi'}_0)$.\footnote{This lemma originally only showed the existence of such $\phi,\phi'$ for which  $\sqb\phi_*\cup\sqb{\phi'}_*\notin\BEkE$, and was sharpened to its current form by ChatGPT~5.6. The proof presented here, however, was written by the authors.}
	\end{lmm}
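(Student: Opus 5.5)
The plan is to show that one of the two alternatives is forced by a cardinality count on the parts $C_\gamma$. I would first reduce \emph{non-meagreness} to a local statement, and then show that this local statement fails for $\phi$ or for $\phi'$, depending on how $C_\gamma$ meets the partitions $B$ and $B'$.

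\textbf{Step 1: reduction to a local statement.} Suppose $\sqb\psi_*\cap\sqb\phi_0\notin\Mk[\sqb\phi_0]$. Since $\sqb\psi_*=\Cup_{\beta\in\kappa}\sqb\psi_\beta$ and $\Mk[\sqb\phi_0]$ is ${\leq}\kappa$-complete, some $\sqb\psi_\beta\cap\sqb\phi_0$ is somewhere dense in $\sqb\phi_0$. The set $\sqb\psi_\beta$ is closed, so there is $s\in\fkacs$ with $\emp\neq\sqb s\cap\sqb\phi_0\subset\sqb\psi_\beta$.

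Only ${<}\kappa$ many $\alpha$ have $B_\alpha\cap\dom(s)\neq\emp$. Hence for almost all $\gamma\geq\beta$, every $B_\alpha$ that meets $C_\gamma$ is disjoint from $\dom(s)$. For such $\gamma$, the $B_\alpha$ are pairwise disjoint, so we may freely combine choices $t_\alpha\in\phi(\alpha)$. This gives
\[P_\gamma:=\set[big]{t\in{}^{C_\gamma}2}{\forall\alpha\in S_\gamma\,\rb{t\restriction(B_\alpha\cap C_\gamma)\in\set{u\restriction(B_\alpha\cap C_\gamma)}{u\in\phi(\alpha)}}}\subset\psi(\gamma),\]
where $S_\gamma=\set{\alpha}{B_\alpha\cap C_\gamma\neq\emp}$. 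The analogous statement holds for $\phi'$, with $P'_\gamma$ and $S'_\gamma=\set{\alpha'}{B'_{\alpha'}\cap C_\gamma\neq\emp}$.

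\textbf{Step 2: lower bound $\card{P_\gamma}\geq 2^{\card{S_\gamma}}$.} For each $\alpha\in S_\gamma$, pick $\xi\in B_\alpha\cap C_\gamma$ and let $\alpha'$ be such that $\xi\in B'_{\alpha'}$. Hypothesis \ref{lmm:addF:s2} yields $u_0,u_1\in\phi(\alpha)$ with $u_i(\xi)=i$. So the restrictions of $\phi(\alpha)$ to $B_\alpha\cap C_\gamma$ take at least two values, and $\card{P_\gamma}\geq 2^{\card{S_\gamma}}$. By the symmetric argument, $\card{P'_\gamma}\geq 2^{\card{S'_\gamma}}$.

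\textbf{Step 3: the cardinality count.} By \ref{lmm:addF:s1}, for almost all $\gamma$ the map $\xi\mapsto(\alpha,\alpha')$ with $\xi\in B_\alpha\cap B'_{\alpha'}$ is injective on $C_\gamma$. Here we take $\gamma$ large enough that all relevant $\alpha,\alpha'$ lie beyond the exceptional bound; this is possible because the $B_\alpha$ are bounded, so only ${<}\kappa$ parts meet any bounded set. Hence $\card{C_\gamma}\leq\card{S_\gamma}\cdot\card{S'_\gamma}$.

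Since $C\in\BPklim$, $C_\gamma$ is infinite for almost all $\gamma$. Then $\max(\card{S_\gamma},\card{S'_\gamma})=\card{C_\gamma}$ for almost all $\gamma$. By pigeonhole, one of the following holds:
\begin{itemize}
\item $\card{S_\gamma}=\card{C_\gamma}$ for cofinally many $\gamma$;
\item $\card{S'_\gamma}=\card{C_\gamma}$ for cofinally many $\gamma$.
\end{itemize}

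\textbf{Step 4: conclusion.} Say the first case holds. If $\sqb\psi_*\cap\sqb\phi_0$ were non-meagre in $\sqb\phi_0$, Steps 1 and 2 would give, for cofinally many large $\gamma$,
\[2^{\card{C_\gamma}}=2^{\card{S_\gamma}}\leq\card{P_\gamma}\leq\card{\psi(\gamma)}<2^{\card{C_\gamma}},\]
a contradiction. So $\sqb\psi_*\cap\sqb\phi_0\in\Mk[\sqb\phi_0]$. The second case gives the $\phi'$ alternative symmetrically.

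This is essentially the argument for \ref{lmm:Ek(B) comparison:3}$\imp$\ref{lmm:Ek(B) comparison:4} in \cref{lmm:Ek(B) comparison}, specialised via \cref{crl:noninclusion}\,\ref{crl:noninclusion:Sa}. The main obstacle I expect is the bookkeeping in Step 1: ensuring that for cofinally many $\gamma$ the inclusion $P_\gamma\subset\psi(\gamma)$ really holds. This needs every $B_\alpha$ meeting $C_\gamma$ to avoid $\dom(s)$, and needs extensions in $\sqb\phi_0$ to exist, which follows from $\card{\phi(\delta)}\geq1$ everywhere by \ref{lmm:addF:s2}. It also needs the ``almost all'' clause of \ref{lmm:addF:s1} to apply uniformly on $C_\gamma$. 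I would handle both by discarding the ${<}\kappa$ many initial $\alpha$, $\alpha'$, $\gamma$, using regularity of $\kappa$ and boundedness of the parts.
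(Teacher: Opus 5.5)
Your proof is correct and follows essentially the paper's argument. Both use the dichotomy that $|S_\gamma|=|C_\gamma|$ or $|S'_\gamma|=|C_\gamma|$ for almost all $\gamma$, and then compare the at least $2^{|S_\gamma|}$ patterns realisable through $\phi$ (using hypothesis (2)) against $|\psi(\gamma)|<2^{|C_\gamma|}$. The paper directly extends a given $s$ to a condition avoiding $[\psi]_\beta$, whereas you argue by contradiction via the closedness of $[\psi]_\beta$. Your derivation of the dichotomy from the injection $\xi\mapsto(\alpha,\alpha')$ of $C_\gamma$ into $S_\gamma\times S'_\gamma$ is a slightly cleaner replacement for the paper's supremum computation.
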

	\begin{proof}
		Let $S_\alpha:=\set{\delta\in\kappa}{B_\delta\cap C_\alpha\neq \emp}$ and $S'_\alpha:=\set{\delta\in\kappa}{B'_\delta\cap C_\alpha\neq\emp}$, then we first show that for almost all $\alpha\in\kappa$ we have $\card{S_\alpha}=\card{C_\alpha}$ or $\card{S'_\alpha}=\card{C_\alpha}$. Namely, if $\alpha\in\kappa$ and $\card{S_\alpha}<\card{C_\alpha}$, then we see that \[\ts\card{C_\alpha}=\card[m]{\Cup_{\delta\in S_\alpha}B_\delta\cap C_\alpha}=\card{S_\alpha}\cdot \sup_{\delta\in S_\alpha}\card{B_\delta\cap C_\alpha}=\sup_{\delta\in S_\alpha}\card{B_\delta\cap C_\alpha}.\]
		If $\lambda$ is a cardinal with $\lambda<\card{C_\alpha}$, then there is $\delta\in S_\alpha$ such that $\lambda\leq\card{B_\delta\cap C_\alpha}$. Thus, if $\alpha$ is sufficiently large, there are distinct $\epsilon_\xi$ for each $\xi<\lambda$ for which $\card{B'_{\epsilon_\xi}\cap B_\delta\cap C_\alpha}=1$. Hence $\epsilon_\xi\in S_\alpha'$ for each $\xi<\lambda$, showing that $\lambda\leq\card{S_\alpha'}$. Since $\lambda$ was arbitrary, it follows that $\card{C_\alpha}=\card{S_\alpha'}$.
		
		Now assume that $\card[big]{\set{\alpha\in\kappa}{\card{C_\alpha}=\card{S_\alpha}}}=\kappa$ (the alternative with $S_\alpha'$ in place of $S_\alpha$ is analogous), then we show that $\sqb\psi_\beta\cap \sqb\phi_0$ is nowhere dense in the space $\sqb\phi_0$ for each $\beta\in\kappa$. This suffices to prove the lemma, because $\sqb\psi_*=\Cup_{\beta\in\kappa}\sqb\psi_\beta$. Fix some arbitrary $s\in\fkacs$ such that $[s]\cap \sqb\phi_0\neq\emp$. Let $\alpha_0\in\kappa$ be large enough such that 
		\begin{enumerate}[label=(\roman*)]
			\item\label{lmm:addF:2} for any $\alpha,\beta\in\kappa$ with $\card{B_\alpha\cap B'_{\beta}}>1$ we have $\sup(B_\alpha\cup B'_\beta)<\alpha_0$,
			\item\label{lmm:addF:3} $\sup\rb[big]{\Cup_{\xi\in\beta}C_\xi}<\alpha_0$, and
			\item\label{lmm:addF:1} if $\alpha\in\kappa$ is such that $\dom(s)\cap B_\alpha\neq\emp$, then $\sup(B_\alpha)<\alpha_0$.
		\end{enumerate}
		Now we fix $\alpha\geq \alpha_0$ such that $\card{C_\alpha}=\card{S_\alpha}$. Since $\card{\psi(\alpha)}<2^{\card{C_\alpha}}$, we may fix for each $\delta\in S_\alpha$ some distinct $\epsilon_\delta\in B_\delta\cap C_\alpha$, and some $t\colon\set{\epsilon_\delta}{\delta\in S_\alpha}\to 2$ such that no extension of $t$ is contained in $\psi(\alpha)$. By \ref{lmm:addF:3}, we then see that $[t]\cap \sqb\psi_\beta=\emp$. For each $\delta\in S_\alpha$, fix $t_\delta\in\phi(\delta)$ such that $t_\delta(\epsilon_\delta)=t(\epsilon_\delta)$, which is possible by assumption \ref{lmm:addF:s2} made in the lemma statement. By \ref{lmm:addF:1},  we see that $s':=s\cup \Cup_{\delta\in S_\alpha}t_\delta$ is a well-defined function and that $[s']\cap \sqb\phi_0\neq\emp$. This proves that $\sqb\psi_\beta$ is nowhere dense in $\sqb\phi_0$.
	\end{proof}

	\begin{thm}\label{thm:BEkm is no ideal}Let $\kappa$ be regular uncountable.
		\begin{enumerate}[label=(\arabic*)]
			\item \label{thm:BEKm is no ideal:1}
			There exist $B,B'\in\BPklim$ such that for all $C\in\BPklim$ we have $\Ek(B)\cup\Ek(B')\nsubset\Ek(C)$. 
			\item\label{thm:BEKm is no ideal:2} There exist $B,B'\in\BPklim$, $\phi\in\Sigma_\kappa(B)$ and $\phi'\in\Sigma_\kappa(B')$ such that $\sqb\phi_*\cup\sqb{\phi'}_*\notin\BEkE$.
		\end{enumerate}
	\end{thm}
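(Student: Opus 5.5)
The plan is to derive both parts from \cref{lmm:BE additive failure}. First I construct $B,B'\in\BPklim$ with $\card{B_\alpha\cap B'_\beta}\leq 1$ for all $\alpha,\beta$, together with slaloms $\phi\in\SkB$ and $\phi'\in\Sigma_\kappa(B')$ satisfying hypothesis \ref{lmm:addF:s2} of that lemma.

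For the partitions, I fix a sequence of cardinals $\lambda_\alpha<\kappa$ witnessing the $\BPklim$ condition. If $\kappa$ is a limit, take $\lambda_\alpha$ infinite and increasing cofinally; if $\kappa=\lambda^+$, take $\lambda_\alpha=\lambda$. I would build a continuous increasing sequence $\abset{\gamma_\alpha}{\alpha\in\kappa}$ such that each block $\iv{\gamma_\alpha}{\gamma_{\alpha+1}}$ has size $\lambda_\alpha$, and identify the block with $\lambda_\alpha\times\lambda_\alpha$. Within the block, $B$ takes the rows and $B'$ takes the columns. Each row meets each column in exactly one point, and parts coming from different blocks are disjoint. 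After reindexing by $\kappa$, both $B$ and $B'$ are bounded partitions whose parts have size $\lambda_\alpha$, so $B,B'\in\BPklim$.

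For the slaloms, I would set $\phi(\alpha)=\set{s\in{}^{B_\alpha}2}{s\text{ is constant}}$, and define $\phi'$ the same way on $B'$. Then $\card{\phi(\alpha)}=2<2^{\card{B_\alpha}}$ once $\card{B_\alpha}\geq 2$. If $\kappa$ is a limit, the first few blocks may have small size; I would avoid this by making every $\lambda_\alpha\geq\omega$. Hypothesis \ref{lmm:addF:s2} then holds trivially: for $\xi\in B_\alpha\cap B'_{\alpha'}$ and $i\in\ac{0,1}$, the constant-$i$ functions lie in $\phi(\alpha)$ and $\phi'(\alpha')$ respectively.

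For \ref{thm:BEKm is no ideal:2}, suppose toward a contradiction that $\sqb\phi_*\cup\sqb{\phi'}_*\subset\sqb\psi_*$ for some $C\in\BPklim$ and $\psi\in\Sigma_\kappa(C)$. Then $\sqb\psi_*\cap\sqb\phi_0=\sqb\phi_0$, and similarly $\sqb\psi_*\cap\sqb{\phi'}_0=\sqb{\phi'}_0$. Since $\card{\phi(\alpha)}=2>1$ for all $\alpha$, \cref{lmm:nontrivial meagre for slaloms} shows that $\sqb\phi_0\notin\Mk[\sqb\phi_0]$, and likewise for $\phi'$. This contradicts \cref{lmm:BE additive failure}.

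Part \ref{thm:BEKm is no ideal:1} then follows at once. If $\Ek(B)\cup\Ek(B')\subset\Ek(C)$, then $\sqb\phi_*$ and $\sqb{\phi'}_*$ both lie in $\Ek(C)$. Since $C\in\BPklim$, \cref{prp:completeness of Ek(B)} says $\Ek(C)$ is an ideal, so their union also lies in $\Ek(C)\subset\BEkE$, contradicting \ref{thm:BEKm is no ideal:2}.

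The only real work is the grid construction. The main obstacle is the successor-cardinal bookkeeping, where I need all parts to have size exactly $\lambda$ while keeping the pairwise intersections of size at most $1$. The $\lambda\times\lambda$ blocks handle this, since each row and each column of a block has size exactly $\lambda$.
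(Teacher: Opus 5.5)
Your proof is correct. Part~(2) is exactly the paper's argument. You use constant-function slaloms on two partitions whose parts pairwise meet in at most one point. Then $\sqb\phi_0$ and $\sqb{\phi'}_0$ are not $\kappa$-meagre in themselves by \cref{lmm:nontrivial meagre for slaloms}, which contradicts \cref{lmm:BE additive failure}.

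You differ from the paper on part~(1). The paper proves~(1) independently of~(2). It reuses the counting argument from inside the proof of \cref{lmm:BE additive failure}: for almost all $\alpha$, $\card{S_\alpha}=\card{C_\alpha}$ or $\card{S'_\alpha}=\card{C_\alpha}$. It then invokes the combinatorial criterion \cref{crl:noninclusion}\,\ref{crl:noninclusion:Sa}, which rests on \cref{lmm:Ek(B) comparison}. You instead deduce~(1) from~(2), using only that $\Ek(C)$ is closed under finite unions for $C\in\BPklim$ (\cref{prp:completeness of Ek(B)}).

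Your route is shorter. It uses \cref{lmm:BE additive failure} only through its statement rather than through its proof, and it never needs the cardinal-arithmetic characterisation of $\Ek(B)\subset\Ek(C)$. The paper's route shows that the noninclusion depends only on the intersection pattern of $B,B'$ (together with $\card{B_\delta}\geq2$), without reference to any particular slaloms. It also tells you which of $\Ek(B),\Ek(B')$ escapes $\Ek(C)$, in terms of the sets $S_\alpha,S'_\alpha$.

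Your explicit $\lambda_\alpha\times\lambda_\alpha$ grid makes concrete the construction the paper only indicates with ``by recursion''. It gives $\card{B_\alpha\cap B'_\beta}\leq1$ for all $\alpha,\beta$, not just almost all, and it handles the successor case cleanly.
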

	\begin{proof}
		\ref{thm:BEKm is no ideal:1}.\quad
		By recursion, we can construct $B,B'\in\BPklim$ be as in \cref{lmm:BE additive failure}, and define  $S_\alpha=\set{\delta\in\kappa}{B_\delta\cap C_\alpha\neq\emp}$ and $S'_\alpha=\set{\delta\in\kappa}{B'_\delta\cap C_\alpha\neq\emp}$. Then, as in the proof of \cref{lmm:BE additive failure}, we see that for any $C\in\BPklim$, for almost every $\alpha\in\kappa$ one of $\card{S_\alpha}=\card{C_\alpha}$ or $\card{S'_\alpha}=\card{C_\alpha}$ holds. Therefore, by \cref{crl:noninclusion}, we have $\Ek(B)\nsubset \Ek(C)$ or $\Ek(B')\nsubset \Ek(C)$.

		\noindent\ref{thm:BEKm is no ideal:2}.\quad Let $B,B'\in\BPklim$ be as in \cref{lmm:BE additive failure} and let $\phi\in\Sigma_\kappa(B)$ contain only the constant functions on each interval, and similarly for $\phi'\in\Sigma_\kappa(B')$, then $\phi$ and $\phi'$ satisfy the assumptions of \cref{lmm:BE additive failure}. For any $C\in\BPklim$ and $\psi\in\Sigma_\kappa(C)$, \cref{lmm:BE additive failure}  tells us that $\sqb\psi_*\cap\sqb\phi_0\in\cal M_\kappa(\sqb\phi_0)$ or $\sqb\psi_*\cap\sqb{\phi'}_0\in\cal M_\kappa(\sqb{\phi'}_0)$. This obviously implies that $\sqb\phi_*\cup\sqb{\phi'}_*\nsubset \sqb\psi_*$, and therefore it follows that \mbox{$\sqb\phi_*\cup\sqb{\phi'}_*\notin\BEkE$}.
	\end{proof}
	
	With a slightly different argument we can show that $\EkE$ is not an ideal either under certain assumptions, including when $\kappa$ is strongly inaccessible, but excluding the successor case given by \cref{crl:Ekm is ideal}. The following lemma is the interval version of \cref{lmm:BE additive failure}.

	\begin{lmm}\label{lmm:E additive failure}
		Suppose that
		\begin{enumerate}[label=(\arabic*)] 
			\item\label{lmm:EaddF-1} $\abset{L_\alpha}{\alpha\in\kappa}\in\IPklim$ is such that $2^{\card{L_\alpha}}<2^{\card{L_\beta}}$ for any $\alpha<\beta\in\kappa$,
			\item\label{lmm:EaddF-2} $x,x'\in\kaka$ are strictly increasing such that $\card{\ran(x)\cap \ran(x')}<\kappa$,
			\item\label{lmm:EaddF-3} $I,I'\in\IPklim$ have $\sup(I_\alpha)=\sup(L_{x(\alpha)+1})$ and $\sup(I'_\alpha)=\sup(L_{x'(\alpha)+1})$ for each $\alpha\in\kappa$,
			\item\label{lmm:EaddF-4} $\phi\in\SkI$ and $\phi'\in\Sigma_\kappa(I')$ are such that 
			\begin{enumerate}[label=(\alph*)]
				\item each $s\colon I_\alpha\setminus L_{x(\alpha)+1}\to 2$ has some $t\in\phi(\alpha)$ such that $s\subset t$, and 
				\item each $s'\colon I'_\alpha\setminus L_{x'(\alpha)+1}\to 2$ has some $t'\in\phi'(\alpha)$ such that $s'\subset t'$,
				\item $2\leq\min\ac{\card{\phi(\alpha)},\card{\phi'(\alpha)}}$ for all $\alpha\in\kappa$,
			\end{enumerate}
			\item\label{lmm:EaddF-5} $B\in\BPk$ and $\psi\in\Sigma_\kappa(B)$,
		\end{enumerate}
		then $\sqb\psi_*\cap\sqb\phi_0\in\cal M_\kappa(\sqb\phi_0)$ or $\sqb\psi_*\cap\sqb{\phi'}_0\in\cal M_\kappa(\sqb{\phi'}_0)$.
	\end{lmm}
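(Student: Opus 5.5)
The plan is to reuse the nowhere-density scheme from the proof of \cref{lmm:BE additive failure}. The only new ingredient is a cardinality dichotomy that says, for almost all $\alpha$, on which of the two sides $\psi(\alpha)$ is too small. Let $P=\Cup_{\gamma\in\kappa}L_{x(\gamma)+1}$ and $P'=\Cup_{\gamma\in\kappa}L_{x'(\gamma)+1}$.

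First I check the geometry. By \ref{lmm:EaddF-3}, and since $x$ is strictly increasing, $\min(I_\gamma)=\sup(L_{x(\gamma')+1})\leq\min(L_{x(\gamma)+1})$ for $\gamma=\gamma'+1$, and similarly at limits. Hence $I_\gamma\cap P=L_{x(\gamma)+1}$, and each $I_\gamma$ is a union of $L$-blocks. By \ref{lmm:EaddF-4}(a), the space $\sqb\phi_0$ is completely free on $\kappa\setminus P$: for any $v\colon I_\gamma\setminus P\to2$ there is $t\in\phi(\gamma)$ extending $v$. The same holds for $\phi'$ on $\kappa\setminus P'$. By \ref{lmm:EaddF-2}, the set $P\cap P'$ is contained in fewer than $\kappa$ many $L$-blocks, so it is bounded in $\kappa$. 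Hence for almost all $\alpha$ we have $B_\alpha\cap P\cap P'=\emp$.

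The dichotomy step is as follows. For such $\alpha$, write $B_\alpha=F_\alpha\sqcup F'_\alpha\sqcup R_\alpha$ with $F_\alpha=B_\alpha\cap P$ and $F'_\alpha=B_\alpha\cap P'$. Then $B_\alpha\setminus F_\alpha\supseteq F'_\alpha\cup R_\alpha$ and $B_\alpha\setminus F'_\alpha\supseteq F_\alpha\cup R_\alpha$. Cardinal addition gives
\[\card{B_\alpha}=\max\ac{\card{B_\alpha\setminus F_\alpha},\card{B_\alpha\setminus F'_\alpha}}\]
whenever $B_\alpha$ is infinite; the finite case is handled directly. So for almost all $\alpha$, either $2^{\card{B_\alpha\setminus F_\alpha}}=2^{\card{B_\alpha}}>\card{\psi(\alpha)}$ or the same holds with $F'_\alpha$ in place of $F_\alpha$. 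One of the two alternatives occurs for $\kappa$ many $\alpha$; say it is the unprimed one, the other case being symmetric.

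For each such $\alpha$, the projection $\set{s\restriction(B_\alpha\setminus F_\alpha)}{s\in\psi(\alpha)}$ has size less than $2^{\card{B_\alpha\setminus F_\alpha}}$. So there is $v_\alpha\colon B_\alpha\setminus F_\alpha\to2$ such that no element of $\psi(\alpha)$ extends $v_\alpha$. Now I show that $\sqb\psi_\beta\cap\sqb\phi_0$ is nowhere dense in $\sqb\phi_0$ for each $\beta$, which suffices. Given $s$ with $\sqb s\cap\sqb\phi_0\neq\emp$, choose such an $\alpha\geq\beta$ with $B_\alpha$ lying above every $I_\gamma$ that meets $\dom(s)$. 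For each $\gamma$ with $I_\gamma\cap B_\alpha\neq\emp$, the restriction $v_\alpha\restriction(I_\gamma\setminus P)$ can be extended arbitrarily on $I_\gamma\setminus P$, and then by \ref{lmm:EaddF-4}(a) to some $t_\gamma\in\phi(\gamma)$. Then $s\cup\Cup_\gamma t_\gamma$ is well defined, is compatible with $\sqb\phi_0$, and its restriction to $B_\alpha$ extends $v_\alpha$. Hence the basic set it determines misses $\sqb\psi_\beta$. The properness of $\Mk[\sqb\phi_0]$, via \ref{lmm:EaddF-4}(c) and \cref{lmm:nontrivial meagre for slaloms}, is what makes this conclusion meaningful.

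I expect the main obstacle to be the bookkeeping in the geometry step. One must verify that each $I_\gamma$ decomposes exactly as a free part plus the single constrained block $L_{x(\gamma)+1}$. One must also verify that only $<\kappa$ many blocks, hence a bounded set, are constrained on both sides. A further point is that $B_\alpha$ may meet many $I_\gamma$; this is harmless, since only finitely or boundedly many indices below $\sup B_\alpha$ are involved and we choose each $t_\gamma$ independently. I would also double-check whether hypothesis \ref{lmm:EaddF-1} is actually needed. The argument above seems to use it only to guarantee $\card{\phi(\gamma)}<2^{\card{I_\gamma}}$ is compatible with (a), so it may be needed only for such slaloms to exist.
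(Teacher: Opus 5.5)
Your argument is the paper's argument. Your $B_\alpha\setminus F_\alpha$ is exactly the paper's $B_\alpha\cap L$, since $L=\kappa\setminus P$, and the choice of $\alpha$ and the extension through (4)(a) are the same. Your checks that $I_\gamma\cap P=L_{x(\gamma)+1}$ and that $\alpha\geq\beta$ make explicit what the paper leaves tacit, and you are right that (1) only serves to make (4) satisfiable.

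The genuine problem is the sentence ``the finite case is handled directly''. It cannot be handled, because for $B$ with finite parts the statement is false. Here is a counterexample.
\begin{itemize}
\item Take the $L_\alpha$ infinite, let $x$ and $x'$ enumerate the even and the odd ordinals, and put $\phi(\gamma)=\{t\in{}^{I_\gamma}2 : t\restriction L_{x(\gamma)+1}\equiv 0\}$, with $\phi'$ defined analogously.
\item Each $I_\gamma\setminus L_{x(\gamma)+1}$ is a nonempty union of blocks $L_\delta$ with $\delta\leq x(\gamma)$. Hence $2\leq\card{\phi(\gamma)}\leq 2^{\card{L_{x(\gamma)}}}<2^{\card{I_\gamma}}$ by (1), so (4) holds. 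Moreover $[\phi]_0=\{y : y\restriction P\equiv 0\}$ and $[\phi']_0=\{y : y\restriction P'\equiv 0\}$.
\item Choose $B\in\BPk$ whose parts are finite and each contain a point $b_\alpha\in P$ and a point $a_\alpha\in P'$. This is possible since $P,P'$ are disjoint of size $\kappa$.
\item Let $\psi(\alpha)=\{u\in{}^{B_\alpha}2 : u(a_\alpha)=0\text{ or }u(b_\alpha)=0\}$. It has size $\frac34\cdot 2^{\card{B_\alpha}}<2^{\card{B_\alpha}}$, so $\psi\in\Sigma_\kappa(B)$.
\item Every $y\in[\phi]_0$ vanishes at $b_\alpha$, and every $y\in[\phi']_0$ vanishes at $a_\alpha$. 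Hence $[\phi]_0\cup[\phi']_0\subseteq[\psi]_0$.
\end{itemize}
So both intersections in the conclusion are the whole spaces $[\phi]_0$ and $[\phi']_0$. By \cref{lmm:nontrivial meagre for slaloms}, neither is meagre in itself.

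What you have actually proved is the lemma under the additional hypothesis that cofinally many $B_\alpha$ are infinite, for instance $B\in\BPklim$ as in \cref{lmm:BE additive failure}. You should state this hypothesis and drop the finite-case claim. The paper's proof carries the same tacit restriction: its step ``there are either cofinally many $\alpha$ with $\card{B_\alpha\cap L}=\card{B_\alpha}$, or with $\card{B_\alpha\cap L'}=\card{B_\alpha}$'' also fails for finite parts. The restriction is harmless where the lemma is applied, in \cref{thm:eveq equi} and \cref{thm:add BEk and Ek are absolute}. There $B\in\IPklim$, and $\kappa$ is a limit cardinal by (1), so the parts are eventually infinite.
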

	
	Before proving \cref{lmm:E additive failure}, let us make some remarks on the consistency of its assumptions. Clearly, a sequence as in \ref{lmm:EaddF-1} exists for strongly inaccessible $\kappa$, but it also may exist for weakly inaccessible $\kappa$. The situation in which such a sequence exists is useful enough to give a name.
	
	\begin{dfn}
	Define the \emph{continuum function} on cardinal numbers as the function $\lambda\mapsto 2^\lambda$, and say it is \emph{eventually constant} below $\kappa$ if for some $\lambda<\kappa$ we have $2^\lambda=2^{\lambda'}$ for all cardinals $\lambda'\in \iv{\lambda}{\kappa}$.
	\end{dfn}
	Observe that the continuum function is ``eventually'' constant below any successor cardinal, and is not eventually constant below any strongly inaccessible cardinal, although it is independent for weakly inaccessible cardinals.
	
	We also note that \ref{lmm:EaddF-4} can be satisfied if a sequence as in \ref{lmm:EaddF-1} exists: if $\abset{L_\alpha}{\alpha\in\kappa}$ is as in \ref{lmm:EaddF-1}, then $2^{\card{\Cup_{\xi\leq\alpha}L_\alpha}}<2^{\card{L_{\alpha+1}}}$; it follows that the cardinality of the set of all $s\colon I_\alpha\setminus L_{x(\alpha)+1}\to 2$ equals $2^{\card{L_{x(\alpha)}}}$, which is strictly smaller than $2^{\card{I_\alpha}}=2^{\card{L_{x(\alpha)+1}}}$, and thus $\phi$ as in \ref{lmm:EaddF-4} can be found.
	
	\begin{proof}[Proof of \cref{lmm:E additive failure}]
		Let $L:=\Cup_{\xi\in\kappa} (I_\xi\setminus L_{x(\xi)+1})$ and $L':=\Cup_{\xi\in\kappa} (I'_\xi\setminus L_{x'(\xi)+1})$, then \ref{lmm:EaddF-2} implies $\card{\kappa\setminus (L\cup L')}<\kappa$. Therefore, there are either cofinally many $\alpha\in\kappa$ for which $\card{B_\alpha\cap L}=\card{B_\alpha}$ holds, or for which $\card{B_\alpha\cap L'}=\card{B_\alpha}$ holds. Let us assume the former, as the two options are symmetric. We then show that $\sqb\psi_\beta\cap \sqb\phi_0$ is nowhere dense in $\sqb\phi_0$.
		
		Fix $s\in\fkacs$ with $\sqb s\cap \sqb\phi_0\neq\emp$ and choose $\alpha\in\kappa$ such that
		\begin{enumerate}[label=(\roman*)]
			\item\label{lmm:EaddF-s1} $\card{B_\alpha\cap L}=\card{B_\alpha}$, and
			\item\label{lmm:EaddF-s2} if $\xi\in\kappa$ is such that $I_\xi\cap B_\alpha\neq \emp$, then $\dom(s)<\min(I_\xi)$.
		\end{enumerate}
		Since $\card{\psi(\alpha)}<2^{\card{B_\alpha}}$, we may choose $t\colon B_\alpha\cap L\to 2$ such that no extension of $t$ is in $\psi(\alpha)$. For each $\xi\in\kappa$ such that $I_\xi\cap B_\alpha\neq \emp$, let $t_\xi = t\restriction I_\xi$, then assumption \ref{lmm:EaddF-4} implies that there exists $t'_\xi\in\phi(\xi)$ such that $t_\xi\subset t'_\xi$. Moreover, \ref{lmm:EaddF-s2} implies that $s':= s\cup \Cup \set{t_\xi'}{\xi\in\kappa\text{ and }I_\xi\cap B_\alpha\neq\emp}$ is a well-defined function, and we see that $\sqb{s'}\cap \sqb\phi_0\neq\emp$. Since $t\subset s'$ we have $\sqb{s'}\cap \sqb\psi_\beta=\emp$. Finally, $s$ and $\beta$ were arbitrary, so $\sqb \psi_*\cap \sqb\phi_0\in\Mk(\sqb\phi_0)$.
	\end{proof}

	\begin{thm}\label{thm:eveq equi}
		The following are equivalent for regular uncountable $\kappa$.
		\begin{enumerate}[label=(\arabic*)]
			\item\label{eveq equi-1} The continuum function is eventually constant below $\kappa$.
			\item\label{eveq equi-2} The family $\EkE$ is closed under finite unions.
			\item\label{eveq equi-3} For every $B\in\BPklim$ there exists $I\in\IPklim$ with $\Ek(I)\subset\Ek(B)$.
		\end{enumerate}
	\end{thm}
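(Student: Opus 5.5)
I will prove the implications \ref{eveq equi-1}$\imp$\ref{eveq equi-3}$\imp$\ref{eveq equi-2}$\imp$\ref{eveq equi-1}.

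\emph{\ref{eveq equi-1}$\imp$\ref{eveq equi-3}.} Fix $\lambda<\kappa$ with $2^{\lambda'}=2^\lambda$ for all cardinals $\lambda'\in\iv{\lambda}{\kappa}$, and let $B\in\BPklim$. Almost all $B_\delta$ have size at least $\lambda$, and each $B_\delta$ is bounded. I recursively build a continuous increasing sequence $\abset{i_\alpha}{\alpha\in\kappa}$ and set $I_\alpha=\iv{i_\alpha}{i_{\alpha+1}}$. Each $I_\alpha$ is to contain a full part $B_{\delta_\alpha}$ of size $\geq\lambda$ (so $|I_\alpha|$ also grows, giving $I\in\IPklim$) and is to be a union of whole parts of $B$, using the closure argument of \cref{capturing BP with IP}. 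I then verify condition \ref{lmm:Ek(B) comparison:4} of \cref{lmm:Ek(B) comparison} for the pair $(I,B)$, i.e.\ for $\Ek(I)\subset\Ek(B)$, with the roles so that $C=B$ and the parts $I_\delta$ meet $C_\alpha=B_\alpha$. Since each $B_\alpha$ lies inside a single $I_\delta$, the set $S_\alpha$ is a singleton $\{\delta\}$. Hence the product $\prod_{\delta\in S_\alpha}\kappa_{\alpha,\delta}$ is a single cardinal $\kappa_{\alpha,\delta}\leq 2^{|I_\delta\cap B_\alpha|}=2^{|B_\alpha|}$, and we need it to be $<2^{|B_\alpha|}$. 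We know $\kappa_{\alpha,\delta}<2^{|I_\delta|}$. For almost all $\alpha$ we have $|B_\alpha|\geq\lambda$ and $|I_\delta|\geq\lambda$, so $2^{|B_\alpha|}=2^\lambda=2^{|I_\delta|}$, which gives the strict inequality. For the finitely many small $B_\alpha$ the condition is only needed for almost all $\alpha$, so they do not matter. The one point of care is that the intervals must be unions of $B$-parts; \cref{capturing BP with IP} provides exactly this.

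\emph{\ref{eveq equi-3}$\imp$\ref{eveq equi-2}.} Take $X\in\Ek(I)$ and $X'\in\Ek(I')$. By \cref{thm:BEkm is no ideal} I cannot simply glue $I$ and $I'$ into a bounded partition. Instead I use \ref{eveq equi-1}, which I obtain in the cycle below, and I expect to restructure the proof as \ref{eveq equi-1}$\imp$\ref{eveq equi-2} directly. Form the common coarsening $J\in\IPklim$ whose intervals are unions of both $I$-intervals and $I'$-intervals, built by a club of common endpoints. Each $J_\alpha$ has size at least $\lambda$ eventually. With $C=J$ and $B=I$ (or $I'$) in condition \ref{lmm:Ek(B) comparison:4}, each factor satisfies $\kappa_{\alpha,\delta}<2^{|I_\delta|}=2^\lambda$. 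The product over $S_\alpha$ must be $<2^{|J_\alpha|}=2^\lambda$. This is the main obstacle: a product of $|S_\alpha|$ many cardinals each below $2^\lambda$ can reach $2^\lambda$. To avoid it I would arrange $J_\alpha$ to contain only few $I$-intervals, say $|S_\alpha|<\lambda$ or $S_\alpha$ finite. Finite is achievable, since we only need a subsequence of common endpoints: take $J_\alpha$ to be a union of boundedly many consecutive intervals of each partition. At limit stages a cofinal sequence forms, but the union of intervals up to a common limit point is still an interval structure, and only the successor blocks matter. If $S_\alpha$ is finite, the finite product of cardinals $<2^\lambda$ is $<2^\lambda$ when $2^\lambda$ is infinite. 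So the finite-overlap construction is where I would concentrate the effort. Then $X\cup X'\in\Ek(J)$.

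\emph{\ref{eveq equi-2}$\imp$\ref{eveq equi-1}.} This is the contrapositive via \cref{lmm:E additive failure}. If the continuum function is not eventually constant, pick cardinals $\lambda_\alpha$ with $2^{\lambda_\alpha}$ strictly increasing, and build $L\in\IPklim$ with $|L_\alpha|=\lambda_\alpha$ (passing to sums if necessary to keep the intervals growing). Take almost disjoint increasing $x,x'$, for instance even and odd indices, and define $I,I',\phi,\phi'$ as in the remarks following the lemma. If $[\phi]_*\cup[\phi']_*\subset[\psi]_*$ for some $\psi\in\Sigma_\kappa(J)$, then \cref{lmm:E additive failure} together with \cref{lmm:nontrivial meagre for slaloms} (properness, by (c)) gives a contradiction. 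Hence $[\phi]_*\cup[\phi']_*\notin\EkE$, even though each of the two sets belongs to $\EkE$, so $\EkE$ is not closed under finite unions.
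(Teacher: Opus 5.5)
Your steps (1)$\imp$(3) and (2)$\imp$(1) are fine and essentially match the paper's proof. The proof as a whole, however, has two genuine gaps.

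\textbf{First gap: nothing is derived from (3).} In the step you label (3)$\imp$(2) you discard (3) and assume (1) instead. Taking (1) ``from the cycle below'' is circular, since that step assumes (2), so what you actually give is (1)$\imp$(2). You end up with (1)$\imp$(3) and (1)$\bimp$(2), but nothing derives (1) or (2) from (3). So (3) is not shown to be equivalent to the others.

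The missing implication (3)$\imp$(1) needs its own construction, by contraposition. Let $\abset{\lambda_\alpha}{\alpha\in\kappa}$ be cardinals below $\kappa$ with $\abset{2^{\lambda_\alpha}}{\alpha\in\kappa}$ strictly increasing. Build $B\in\BPklim$ with $\card{B_\alpha}=\lambda_\alpha$ whose points are spread out, say $\iv{\zeta}{\zeta+\lambda_{\alpha+1}}\cap B_\alpha=\ac\zeta$ for every $\zeta\in B_\alpha$. Fix $I\in\IPklim$, a large $\alpha$, and any $\delta$. Either $\card{I_\delta}<\lambda_{\alpha+1}$, so $\card{I_\delta\cap B_\alpha}\leq1$ and $2^{\card{I_\delta\cap B_\alpha}}<2^{\card{I_\delta}}$. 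Or $2^{\card{I_\delta\cap B_\alpha}}\leq2^{\lambda_\alpha}<2^{\lambda_{\alpha+1}}\leq2^{\card{I_\delta}}$. Then \cref{crl:noninclusion}\,\ref{crl:noninclusion:powers} gives $\Ek(I)\nsubset\Ek(B)$ for every $I\in\IPklim$.

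\textbf{Second gap: $S_\alpha$ cannot be made finite.} Your direct argument for (1)$\imp$(2) relies on the claim that a common coarsening can be chosen with every $S_\alpha$ finite, and this is false. A common coarsening $J$ of $I$ and $I'$ can only have endpoints in the intersection of the two clubs of endpoints. Passing to a subsequence of common endpoints only makes blocks larger, never smaller.

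For instance, take $\kappa=\omega_1$. Let $I$ have endpoints $\omega\cdot\beta$ for even $\beta$, and let $I'$ have endpoints $\omega\cdot\beta$ for odd, limit, or zero $\beta$.
\begin{itemize}
\item Both partitions lie in $\IPklim$.
\item The common endpoints are only the $\omega\cdot\gamma$ with $\gamma$ limit or $0$.
\item Hence every block $J_\alpha$ of every common coarsening contains infinitely many $I$-intervals, so $\card{S_\alpha}=\aleph_0=\card{J_\alpha}$.
\item \Cref{crl:noninclusion}\,\ref{crl:noninclusion:Sa} then yields $\Ek(I)\nsubset\Ek(J)$ for every such $J$.
\end{itemize}
This happens even though the continuum function is trivially eventually constant below $\omega_1$. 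So the product obstacle you identified is real.

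The remedy, which is what the paper does, is to go the other way and use the common \emph{refinement}. Take the nonempty sets $I_\alpha\cap I'_{\alpha'}$ and merge consecutive pieces in pairs to get $J'\in\IPklim$; two consecutive pieces always contain a whole interval of $I$ or of $I'$. Each $J'_\alpha$ then meets at most two intervals of $I$ and at most two of $I'$.

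Let $\psi(\alpha)$ consist of the $t\in{}^{J'_\alpha}2$ whose restrictions to the (at most two) sets $J'_\alpha\cap I_\xi$ all extend to members of $\phi(\xi)$, together with the analogous functions for $\phi'$. This is a union of two products of at most two cardinals below $2^\lambda$. By eventual constancy, $2^{\card{J'_\alpha}}=2^\lambda=2^{\card{I_\xi}}$ for large $\alpha$. Hence $\psi\in\Sigma^*_\kappa(J')$ and $\sqb\phi_*\cup\sqb{\phi'}_*\subset\sqb\psi_*$.
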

	\begin{proof}
		\ref{eveq equi-1} $\imp$ \ref{eveq equi-2}.\quad If $\kappa$ is successor, then this follows from \cref{crl:Ekm is ideal}, so let us assume that $\kappa$ is weakly inaccessible, and that $\lambda<\kappa$ is such that $2^{\lambda'}=2^\lambda$ for all cardinals $\lambda'\in\iv\lambda\kappa$. Suppose that $I,I'\in\IPklim$ and $\phi\in\SkI$ and $\phi'\in\Sigma_\kappa(I')$. Let $J=\abset{J_\alpha}{\alpha\in\kappa}$ be the enumeration of the set $\set{I_\alpha\cap I'_{\alpha'}}{\alpha,\alpha'\in\kappa}\setminus\ac\emp$ such that $J\in\IPk$. We don't necessarily have $J\in\IPklim$; however, it is easy to see that $\lim_{\alpha\to\kappa}\,\card{J_\alpha\cup J_{\alpha+1}}=\kappa$, and thus we may find $J'\in\IPklim$ such that  for each $\alpha'\in\kappa$, there is $\alpha\in\kappa$ such that $J'_{\alpha'}=J_\alpha$ or $J'_{\alpha'}=J_{\alpha}\cup J_{\alpha+1}$.
		
		Now define $\psi\in\Sigma_\kappa(J')$ as follows. Given $J'_\alpha$, either there is one or there are two $\xi\in\kappa$ such that $J'_\alpha\cap I_\xi\neq\emp$. For any such $\xi$, let $\set{s\restriction J'_\alpha}{s\in \phi(\xi)}\subset \psi(\alpha)$. We similarly incorporate $\phi'(\xi')$ into $\psi(\alpha)$ for the relevant one or two $\xi'$. If $\alpha$ is large enough, then $2^{\card{J'_\alpha}}=2^{\card{I_\xi}}$, and hence $\card{\phi(\xi)}<2^{\card{J'_\alpha}}$, hence we may find such $\psi$ with $\psi\in\Sigma^*_\kappa(J')$. It follows that $\sqb \phi_*\cup\sqb{\phi'}_*\subset \sqb\psi_*$.

		\noindent\ref{eveq equi-2} $\imp$ \ref{eveq equi-1}.\quad We prove the contrapositive. If the continuum function is not eventually constant below $\kappa$, then the assumptions of \cref{lmm:E additive failure} can be satisfied. The rest follows as in \cref{thm:BEkm is no ideal}\,\ref{thm:BEKm is no ideal:2}, using \cref{lmm:E additive failure} instead of \cref{lmm:BE additive failure}.
		
		\noindent\ref{eveq equi-1} $\imp$ \ref{eveq equi-3}.\quad Assume that the continuum function is eventually constant below $\kappa$, and let $\lambda<\kappa$ be such that $2^\lambda=2^{\lambda'}$ for all $\lambda'\in\iv{\lambda}{\kappa}$. Given $B\in\BPklim$, we see that $\card{B_\alpha}\geq\lambda$ for almost all $\alpha\in\kappa$. Let $I\in\IPk$ be defined as in \cref{capturing BP with IP} such that every $I_\delta$ has a set $A_\delta\subset\kappa$ with $I_\delta=\Cup_{\alpha\in A_\delta}B_\alpha$, and let $\delta_\alpha$ be such that $B_\alpha\subset I_{\delta_\alpha}$. Hence, if $S_\alpha:=\set{\delta\in\kappa}{I_\delta\cap B_\alpha\neq\emp}$, then $S_\alpha=\ac{\delta_\alpha}$. Note that $I\in\IPklim$, and thus for large enough $\alpha$ we see that $\card{I_{\delta_\alpha}}\geq \lambda$, which implies that $2^{\card{I_{\delta_\alpha}}}=2^{\card{B_\alpha}}$ for almost all $\alpha\in\kappa$. Therefore, for any $\abset{\kappa_{\alpha,\delta}}{\alpha\in\kappa, \delta\in S_\alpha}$ such that $\kappa_{\alpha,\delta}<2^{\card{I_{\delta}}}$, we see that, for almost all $\alpha\in\kappa$, 
		\[\ts\prod_{\delta\in S_\alpha}\kappa_{\alpha,\delta}=\kappa_{\alpha,\delta_\alpha}<2^{\card{I_{\delta_\alpha}}}=2^{\card{B_\alpha}}.\] Thus, by \cref{lmm:Ek(B) comparison} it follows that $\Ek(I)\subset\Ek(B)$.

		\noindent\ref{eveq equi-3} $\imp$ \ref{eveq equi-1}.\quad 
		Suppose $\abset{\lambda_\alpha}{\alpha\in\kappa}$ is a sequence of infinite cardinals  below $\kappa$ such that $2^{\lambda_\alpha}<2^{\lambda_{\alpha'}}$ for each $\alpha<\alpha'$. We define disjoint sets $B_\alpha\subset\kappa$ such that for each $\alpha\in\kappa$ we have $\card{B_\alpha}=\lambda_{\alpha}$, but such that for any $\zeta\in B_\alpha$ we have $\iv{\zeta}{\zeta+\lambda_{\alpha+1}}\cap B_\alpha=\ac\zeta$. It is not hard to see that we can additionally require $\Cup B_\alpha=\kappa$, meaning that $B:=\abset{B_\alpha}{\alpha\in\kappa}\in\BPklim$. Let $I\in\IPklim$ and $\delta\in\kappa$. If $\card{I_\delta}<\lambda_{\alpha+1}$, then $2^{\card{I_\delta\cap B_\alpha}}\leq 2^1<2^{\card{I_\delta}}$, and otherwise we have  $2^{\card{I_\delta\cap B_\alpha}}\leq2^{\card{B_\alpha}}=2^{\lambda_\alpha}< 2^{\card{I_\delta}}$. By \cref{crl:noninclusion}\,\ref{crl:noninclusion:powers}, we see that $\Ek(I)\nsubset\Ek(B)$.
	\end{proof}
	
	\begin{crl}
		If $\kappa$ is regular uncountable and the continuum function is not eventually constant below $\kappa$, then $\Ek\nsubset\BEkE$.
	\end{crl}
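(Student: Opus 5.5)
We want some $X\in\Ek$ with $X\notin\BEkE$. The natural candidate is a union of two sets $[\phi]_*\cup[\phi']_*$ with $I,I'\in\IPklim$, $\phi\in\SkI$, $\phi'\in\Sigma_\kappa(I')$. This set lies in $\Ek$ because $\Ek$ is an ideal containing $\EkE$ (\cref{prp:simple relations}). Since the continuum function is not eventually constant below $\kappa$, the hypotheses of \cref{lmm:E additive failure} can be met, as remarked after that lemma. So I will fix $L\in\IPklim$ as in \ref{lmm:EaddF-1}, take strictly increasing $x,x'$ with disjoint ranges (say the even and odd ordinals), build $I,I'$ as in \ref{lmm:EaddF-3}, and choose $\phi,\phi'$ as in \ref{lmm:EaddF-4}.

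The key point is that \cref{lmm:E additive failure}\,\ref{lmm:EaddF-5} allows an arbitrary $B\in\BPk$, not merely an interval partition. Suppose towards a contradiction that $[\phi]_*\cup[\phi']_*\in\BEkE$. Then there are $B\in\BPklim$ and $\psi\in\Sigma_\kappa(B)$ with $[\phi]_*\cup[\phi']_*\subset[\psi]_*$. Applying the lemma, say $[\psi]_*\cap[\phi]_0\in\Mk([\phi]_0)$. But $[\phi]_0\subset[\phi]_*\subset[\psi]_*$, so $[\phi]_0\in\Mk([\phi]_0)$. This contradicts \cref{lmm:nontrivial meagre for slaloms}, which applies because $\card{\phi(\alpha)}\geq2$ for all $\alpha$ by condition (c) of \ref{lmm:EaddF-4}. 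The other case, with $\phi'$ in place of $\phi$, is symmetric.

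The only step needing care is existence. First, $I,I'\in\IPklim$ with the prescribed suprema: each $I_\alpha$ is the interval between consecutive suprema and contains $L_{x(\alpha)+1}$, so its size tends to $\kappa$ (or is eventually $\lambda$ in the successor case, where the hypothesis cannot hold anyway). Second, $\phi$ and $\phi'$ satisfying (a)--(c): the paper's remark gives $2^{\card{I_\alpha\setminus L_{x(\alpha)+1}}}<2^{\card{I_\alpha}}$. I can pick one extension for each $s$ and add a second element to ensure size at least $2$, keeping the total below $2^{\card{I_\alpha}}$. Possibly finitely many initial $\alpha$ must be handled by choosing $x(0)$ large, which I expect to be routine.
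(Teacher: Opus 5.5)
Your proposal is correct and is essentially the paper's argument: the paper takes the set $\sqb\phi_*\cup\sqb{\phi'}_*$ built from \cref{lmm:E additive failure} in the proof of \cref{thm:eveq equi} and concludes as in \cref{thm:BEkm is no ideal}, which is what you do, with the appeal to \cref{lmm:nontrivial meagre for slaloms} made explicit. One small caution: you only need that lemma for $B\in\BPklim$, and this matters, because the step in its proof giving cofinally many $\alpha$ with $\card{B_\alpha\cap L}=\card{B_\alpha}$ uses that the blocks $B_\alpha$ are eventually infinite (true for $B\in\BPklim$ when $\kappa$ is a limit cardinal) and can fail for partitions into finite blocks.
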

	\begin{proof}
		This holds, because the set $\sqb\phi_*\cup\sqb{\phi'}_*$ constructed in \cref{thm:eveq equi} \ref{eveq equi-2} $\imp$ \ref{eveq equi-1} is contained in $\Ek\setminus\BEkE$.
	\end{proof}

	Finally, \cref{thm:eveq equi} allows us to conclude something about the inclusion $\BEkA\subset\EkA$. 
	
	\begin{thm}\label{thm:successor BEkA=EkA}
		 If the continuum function is eventually constant below $\kappa$, then $\EkA=\BEkA$.
	\end{thm}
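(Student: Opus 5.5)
By \cref{prp:simple relations}\,\eqref{prp:simple relations:3} we already have $\BEkA\subset\EkA$. It therefore suffices to show $\EkA\subset\BEkA$, that is, $\EkA\subset\Ek(B)$ for every $B\in\BPklim$.

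The plan is to use \cref{thm:eveq equi}. Since the continuum function is eventually constant below $\kappa$, condition \ref{eveq equi-1} of that theorem holds, and hence so does \ref{eveq equi-3}. Fix an arbitrary $B\in\BPklim$. By \ref{eveq equi-3} there is some $I\in\IPklim$ with $\Ek(I)\subset\Ek(B)$. By definition $\EkA=\Cap_{J\in\IPklim}\Ek(J)\subset\Ek(I)$, and therefore $\EkA\subset\Ek(B)$. As $B$ was arbitrary, this gives $\EkA\subset\Cap_{B\in\BPklim}\Ek(B)=\BEkA$.

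No step here is a real obstacle; all the combinatorial work is contained in the implication \ref{eveq equi-1} $\imp$ \ref{eveq equi-3} of \cref{thm:eveq equi}. That implication coarsens $B$ to an interval partition via \cref{capturing BP with IP} and applies the criterion of \cref{lmm:Ek(B) comparison}. The only point to check is that this proof covers both the successor case and the weakly inaccessible case with an eventually constant continuum function. It does, since it uses only the existence of $\lambda<\kappa$ with $2^{\lambda'}=2^\lambda$ for all cardinals $\lambda'\in\iv{\lambda}{\kappa}$, together with $B\in\BPklim$, which gives $\card{B_\alpha}\geq\lambda$ for almost all $\alpha$.
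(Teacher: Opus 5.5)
Your proof is correct and follows the paper's argument. The paper likewise derives $\EkA\subset\BEkA$ from the implication (1)$\Rightarrow$(3) of \cref{thm:eveq equi} together with the definitions of $\EkA$ and $\BEkA$, and the reverse inclusion comes from $\IPklim\subset\BPklim$.
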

	\begin{proof}
		This follows as in \cref{thm:eveq equi} \ref{eveq equi-1} $\imp$ \ref{eveq equi-3} and the definitions of $\EkA$ and $\BEkA$.
	\end{proof}

	\section{Additive ideals}\label{sec:additive ideals}
	
	In this section we will study additive closure of the ideals from \cref{dfn:families}.
	
	\subsection{Definitions of additive ideals and related ideals}\label{sec:additive ideals definitions}
	
	For $x,y\in\kacs$, let us define addition on $\kacs$ by the sum $x+y$ being the function that maps $\alpha\mapsto 0$ if $x(\alpha)=y(\alpha)$ and $\alpha\mapsto 1$ otherwise. We expand this to subsets $X,Y\subset\kacs$ by defining 
	\begin{align*}
		x+Y&=\set{x+y}{y\in Y},&
		X+y&=\set{x+y}{x\in X},\\
		X+Y&=\set{x+y}{x\in X\text{ and }y\in Y}.
	\end{align*}
	For an ideal $\cal I_\kappa$ on $\kacs$, we call a set $X\subset\kacs$ an \emph{$\cal I_\kappa$-additive set} if the sum $X+Y$ is in $\cal I_\kappa$ for all $Y\in\cal I_\kappa$. The family of $\cal I_\kappa$-additive sets is an ideal, which we will denote by $\cal{IA}_\kappa$. We call an ideal that is closed under addition an \emph{additive ideal}. We make some easy observations.
	\begin{prp}\leavevmode
		\begin{enumerate}
			\item $\cal {IA}_\kappa\subset\cal I_\kappa$,
			\item $\cal{IAA}_\kappa=\cal{IA}_\kappa$, that is, the family of $\cal{IA}_\kappa$-additive sets is simply $\cal{IA}_\kappa$,
			\item If $\cal I_\kappa$ is ${<}\lambda$-complete, then so is $\cal{IA}_\kappa$,
			\item If $X+Y\in\cal I_\kappa$ for all $X,Y\in\cal I_\kappa$, then $\cal I_\kappa=\cal{IA}_\kappa$.\qed
		\end{enumerate}
	\end{prp}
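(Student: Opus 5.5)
We verify the four items in order, working directly from the definitions. The addition on $\kacs$ is coordinatewise addition mod $2$, so $\kacs$ is an abelian group of exponent $2$: every element is its own inverse, $x+x=\bar 0$ (the constant zero function), and $\bar 0$ is neutral. This fact is what makes all four claims elementary. Throughout we assume that $\cal I_\kappa$ is a proper ideal containing all singletons, as is the case for every ideal considered in the paper; in particular $\emp$ and $\ac{\bar 0}$ lie in $\cal I_\kappa$.

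For (1), let $X\in\cal{IA}_\kappa$. Since $\ac{\bar 0}\in\cal I_\kappa$, additivity gives $X=X+\ac{\bar 0}\in\cal I_\kappa$. (If $X=\emp$ there is nothing to prove.) Before (2), we record that $\cal{IA}_\kappa$ is itself an ideal. Downward closure holds because $X'\subset X$ implies $X'+Y\subset X+Y$. Closure under unions holds because
\[\ts\rb{\Cup_{i}X_i}+Y=\Cup_i\rb{X_i+Y},\]
and this identity also yields (3): if the family has fewer than $\lambda$ members and $\cal I_\kappa$ is ${<}\lambda$-complete, the right-hand side lies in $\cal I_\kappa$.

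For (2), we first show $\cal{IA}_\kappa\subset\cal{IAA}_\kappa$. Let $X\in\cal{IA}_\kappa$ and $Y\in\cal{IA}_\kappa$, and take any $Z\in\cal I_\kappa$. By associativity, $(X+Y)+Z=X+(Y+Z)$. Since $Y$ is additive, $Y+Z\in\cal I_\kappa$, and then, since $X$ is additive, $X+(Y+Z)\in\cal I_\kappa$. Hence $X+Y\in\cal{IA}_\kappa$, so $X$ is $\cal{IA}_\kappa$-additive. For the reverse inclusion, let $X$ be $\cal{IA}_\kappa$-additive. Because $\ac{\bar 0}\in\cal{IA}_\kappa$ (indeed $\ac{\bar 0}+Y=Y$), we get $X=X+\ac{\bar 0}\in\cal{IA}_\kappa$. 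This is the same trick as in (1), applied to the ideal $\cal{IA}_\kappa$.

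For (4), the hypothesis says exactly that every $X\in\cal I_\kappa$ is $\cal I_\kappa$-additive, so $\cal I_\kappa\subset\cal{IA}_\kappa$; combined with (1) this gives equality. The only point requiring care in the whole argument is making the assumption $\ac{\bar 0}\in\cal I_\kappa$ explicit; otherwise (1) fails trivially, for instance when $\cal I_\kappa=\ac\emp$. Everything else is routine algebra in the group $(\kacs,+)$.
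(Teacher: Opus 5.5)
Your verification is correct and is exactly the routine check the paper intends; the paper states these observations without proof. You use $X+\ac{\bar 0}=X$ for (1) and for the reverse inclusion in (2), associativity of $+$ for the forward inclusion in (2), and the identity $(\Cup_i X_i)+Y=\Cup_i(X_i+Y)$ for (3). Your explicit hypothesis $\ac{\bar 0}\in\cal I_\kappa$ is a genuine tacit assumption of the paper, needed only for (1) and hence (4); it holds for every ideal the paper considers. Items (2) and (3) hold unconditionally, since $\ac{\bar 0}\in\cal{IA}_\kappa$ always, as your own argument shows.
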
 
	In this section, we will study the additive counterparts to the ideals defined in \cref{dfn:families}. Before we do so,  we will need two additional ideals.

	\begin{dfn}
		A set $X\subset\kacs$ is \emph{strongly $\kappa$-null} if for every $A\in\sqb\kappa^\kappa$ there exists $s_\alpha\in{}^\alpha2$ for every $\alpha\in A$ such that $X\subset\Cup_{\alpha\in A}\sqb{s_\alpha}$. We write $\SNk$ for the ideal of strongly $\kappa$-null subsets of $\kacs$. 
	\end{dfn}
	
	The above definition is a straightforward generalisation of the strongly null ideal on $\omcs$, or of the ideal of strong measure zero sets on the reals. The $\kappa$-version of this ideal has been previously considered in \cite{Halko1996,HalkoShelah2001,ChapmanSchurz2024}.
	
	\begin{dfn}\label{dfn:NAk}
		A set $X\subset\kacs$ is \emph{$\kappa$-null-additive} if for every $I\in\IPk$ there exists $\phi\in\prod_{\alpha\in\kappa}\sqb[big]{{}^{I_\alpha}2}^{\leq\card\alpha}$ such that $X\subset\sqb\phi_*$. We write $\NAk$ for the ideal of $\kappa$-null-additive subsets of $\kacs$.
	\end{dfn}
	
	As the name suggests, classically the property described above is equivalent to Lebesgue measure null-additive subsets of $\omcs$, as was shown by \citet{Shelah1995}. The above characterisation was used to show that Lebesgue null-additive sets are  meagre-additive, and the proof generalises without any hardship. 
	\begin{thm}[cf.\ \cite{Shelah1995}]
		$\NAk\subset\MAk$.\qed
	\end{thm}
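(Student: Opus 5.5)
The plan is to follow Shelah's classical argument and transport it to $\kacs$. Since $\Mk[\kacs]$ is ${\leq}\kappa$-complete and $X+\Cup_{\xi<\kappa}Y_\xi=\Cup_{\xi<\kappa}(X+Y_\xi)$, it suffices to fix $X\in\NAk$ and a single ``simple'' meagre set $Y$, and to show $X+Y\in\Mk[\kacs]$. I would take $Y$ to be combinatorially meagre, i.e.\ $Y\subset\dab{f,J}_*$ for some $f\in\kacs$ and $J\in\IPk$. This reduction is only justified, via \cref{lmm:CM properties}\,\ref{lmm:CM properties:3}, when $\kappa$ is strongly inaccessible; for other $\kappa$ it is a gap that I address in the last paragraph.

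\emph{Step 1: choose a coarser partition.} Choose $I\in\IPk$ such that every $I_\alpha$ is a union of $J$-intervals, and contains at least $\card\alpha$ (and at least one) of them. This is a routine recursion in the style of \cref{capturing BP with IP}: after deciding the start $i_\alpha$, keep absorbing $J$-intervals until $\card\alpha$ whole ones are included and the endpoint is again a $J$-boundary. Since $X\in\NAk$, \cref{dfn:NAk} gives $\phi\in\prod_{\alpha\in\kappa}\sqb[big]{{}^{I_\alpha}2}^{\leq\card\alpha}$ with $X\subset\sqb\phi_*$.

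\emph{Step 2: build the new centre $g$.} For each $\alpha$, enumerate $\phi(\alpha)=\set{u^\alpha_\xi}{\xi<\mu_\alpha}$ with $\mu_\alpha\leq\card\alpha$. Pick distinct $J$-intervals $J_{\gamma^\alpha_\xi}\subset I_\alpha$ for $\xi<\mu_\alpha$, which is possible by Step 1. Define $g\in\kacs$ by
\[g\restriction J_{\gamma^\alpha_\xi}=\rb{u^\alpha_\xi+f}\restriction J_{\gamma^\alpha_\xi},\]
and let $g$ be arbitrary elsewhere.

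\emph{Step 3: verify $X+Y\subset\dab{g,I}_*$.} Take $x\in X$ and $y\in Y$. For almost all $\alpha$ we have $x\restriction I_\alpha=u^\alpha_\xi$ for some $\xi$. Also $y\restriction J_\gamma\neq f\restriction J_\gamma$ for almost all $\gamma$, and $\alpha\mapsto\gamma^\alpha_\xi$ is unbounded. Hence, for almost all $\alpha$,
\[(x+y)\restriction J_{\gamma^\alpha_\xi}=u^\alpha_\xi\restriction J_{\gamma^\alpha_\xi}+y\restriction J_{\gamma^\alpha_\xi}\neq u^\alpha_\xi\restriction J_{\gamma^\alpha_\xi}+f\restriction J_{\gamma^\alpha_\xi}=g\restriction J_{\gamma^\alpha_\xi},\]
so $(x+y)\restriction I_\alpha\neq g\restriction I_\alpha$. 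Thus $X+Y\in\CMk\subset\Mk[\kacs]$. Note that this argument already proves unconditionally that $\NAk$ is contained in the $\CMk$-additive ideal.

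\emph{Main obstacle.} The real difficulty is the case where $\CMk\subsetneq\Mk[\kacs]$, for instance successor $\kappa$. There I would instead take $Y$ closed nowhere dense and fix, for each $s\in\fkacs$, an extension $e(s)\supseteq s$ with $\sqb{e(s)}\cap Y=\emp$. I would then try to redo Step 2 blockwise, aiming to show that each set $\Cup_{x\in\sqb\phi_\beta}(x+Y)$ is nowhere dense. Given a basic $\sqb s$, pick $\alpha\geq\beta$ past $\dom(s)$ and handle the ${\leq}\card\alpha$ many $u\in\phi(\alpha)$ one at a time by a recursion of length $\card\alpha$ inside $I_\alpha$, applying $e$ to the current translate by $u$. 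The problem is that $x$ is unconstrained on $\iv{\dom s}{i_\alpha}$, and there are up to $2^{\card{i_\alpha}}$, possibly $\kappa$, many prefixes to neutralise, so $I$ cannot simply be chosen to absorb all of them. My first attempt would be to fold those intermediate coordinates into the same block, choosing $I$ so that $I_\alpha$ begins right at the relevant point. If that fails for successor $\kappa$, I would fall back on reading the ideal $\MAk$ as the $\CMk$-additive ideal, for which the argument above is complete.
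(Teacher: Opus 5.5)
Your Steps~1--3 are correct. For every regular uncountable $\kappa$ they give $X+Y\subseteq\dab{g,I}_*$ whenever $X\in\NAk$ and $Y\subseteq\dab{f,J}_*$, so $\NAk$ is contained in the $\CMk$-additive ideal. Since $\Mk=\CMk$ for strongly inaccessible $\kappa$, this proves $\NAk\subseteq\MAk$ in that case. This is the higher transcription of Shelah's argument that the paper appeals to; the paper gives no proof beyond the citation.

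The gap is the remaining case, and it is genuine. The statement is made under the standing hypothesis $\kappa=\kappa^{<\kappa}$ alone, and $\MAk$ is defined from the full ideal $\Mk[\kacs]$. If $\kappa$ is not strongly inaccessible, then $2^\lambda=\kappa$ for some $\lambda<\kappa$ and $\CMk\subsetneq\Mk$, so Steps~1--3 say nothing about a nowhere dense $Y\notin\CMk$.

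Your last paragraph does not close this, and its concrete plan provably fails. Fix injective maps $c_\gamma\colon{}^{\lambda\gamma}2\to{}^{\iv{\lambda\gamma}{\lambda\gamma+\lambda}}2$ for $\gamma<\kappa$. Let $Y$ be the set of $x\in\kacs$ with $x\restriction\iv{\lambda\gamma}{\lambda\gamma+\lambda}\neq c_\gamma(x\restriction\lambda\gamma)$ for all $\gamma$; this $Y$ is closed nowhere dense. Suppose $\sqb\phi_\beta\neq\emptyset$, $i_\beta=\lambda\gamma_0$ with $\gamma_0\geq1$, and the blocks of $I$ beyond $\beta$ have order type at least $\lambda$. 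Your recursion inside single blocks needs this last condition for this $Y$, since an escape takes a whole $\lambda$-block. Then $\sqb\phi_\beta+Y$ is dense:
\begin{itemize}
\item Let $t$ extend some $s$ with $\dom(s)=i_\beta$. There are $\kappa$ many $q\in{}^{i_\beta}2$ with $s+q$ in the tree of $Y$.
\item Each of the ${<}\kappa$ many $\lambda$-blocks in $\dom(t)\setminus i_\beta$ meets at most two blocks of $I$. So, by injectivity of $c_\gamma$, it lets only ${<}\kappa$ of the prefixes $x\restriction i_\beta$ with $x\in\sqb\phi_\beta$ escape $Y$.
\item Hence some $x\in\sqb\phi_\beta$ has $t+x\restriction\dom(t)$ in the tree of $Y$, and $\sqb t$ meets $\sqb\phi_\beta+Y$.
\end{itemize}
So the unconstrained coordinates below $i_\beta$ are a real obstruction, not a technicality.

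Replacing $\MAk$ by the $\CMk$-additive ideal proves a different statement. The classical proof behind the citation also runs through the interval-partition description of meagre sets, i.e.\ through $\Mk=\CMk$, so it does not cover this case either. You would need a genuinely new idea, such as a decomposition of $X$ that uses the $\NAk$ property for several partitions at once, or a substitute description of $\Mk$ for non-inaccessible $\kappa$. Otherwise the theorem has to be restricted to strongly inaccessible $\kappa$.
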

	
	The Galvin--Mycielski--Solovay theorem \cite{GalvinMycielskiSolovay1979,GalvinMycielskiSolovay2017} connects meagre-additive sets to the strongly null ideal. It generalises to the higher context for strongly inaccessible $\kappa$:
	
	\begin{thm}[\citet{Wohofsky,Schurz2023}]\label{thm:GMS}
		If a set $X\subset\kacs$ has the property that for every $M\in\Mk$ we have $X+M\neq\kacs$, then $X\in\SNk$. If $\kappa$ is strongly inaccessible, then the reverse holds as well. \qed
	\end{thm}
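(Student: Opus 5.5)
The plan is to reduce both directions to a combinatorial reformulation of strong $\kappa$-nullness in terms of interval partitions, and to use the description of $\kappa$-meagre sets by $\dab{x,I}_*$ from \cref{lmm:CM properties}.

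\textbf{Key reformulation.} Call $X\subset\kacs$ \emph{$\ast$-small} if for every $I\in\IPk$ there is $y\in\kacs$ with
\[X\subset\set{x\in\kacs}{\exists^\infty\alpha\in\kappa\,\rb{x\restriction I_\alpha=y\restriction I_\alpha}}.\]
Since $x+y\in\dab{z,I}_*$ iff $(x+y)\restriction I_\alpha\neq z\restriction I_\alpha$ for almost all $\alpha$, a translation by $z$ gives the equivalence
\[w\notin X+\dab{z,I}_*\iff X\subset\set{x}{\exists^\infty\alpha\,\rb{x\restriction I_\alpha=(w+z)\restriction I_\alpha}}.\]
So ``$X+\dab{z,I}_*\neq\kacs$ for all $z$ and all $I\in\IPk$'' is exactly $\ast$-smallness of $X$. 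The theorem then splits into two implications involving $\ast$-smallness.

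\textbf{Forward direction.} The sets $\dab{z,I}_*$ are $\kappa$-meagre by \cref{lmm:CM properties}\,\ref{lmm:CM properties:2}. Hence the hypothesis implies that $X$ is $\ast$-small, and it remains to show that $\ast$-small implies strongly $\kappa$-null.

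Fix $A\in\sqkaka$. I would build $I\in\IPk$ by recursion so that between $i_\alpha$ and $i_{\alpha+1}$ there are many elements of $A$. The cylinder $\set{x}{x\restriction I_\alpha=y\restriction I_\alpha}$ is the union of the $2^{\card{i_\alpha}}$ sets $[u\cup y\restriction I_\alpha]$ with $u\in{}^{i_\alpha}2$. I would like to charge each of these to a separate element $\gamma\in A$ with $\gamma\leq i_{\alpha+1}$. The difficulty is that the basic set must be exactly $[s_\gamma]$ with $s_\gamma\in{}^\gamma2$, and a prefix of length $\gamma<i_{\alpha+1}$ does not fix all of $I_\alpha$.

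To repair this, I would first split $X$ into the $\kappa$ pieces $X_u=X\cap[u]$ by prefixes $u$, using $\kappa^{<\kappa}=\kappa$. I would also split $A$ into $\kappa$ cofinal pieces $A_u$. Strong $\kappa$-nullness is ${\leq}\kappa$-complete with respect to such splittings of $A$, so it suffices to treat each $X_u$ against $A_u$. For that I would apply $\ast$-smallness with a partition whose intervals are $[\gamma,\gamma')$ for consecutive elements $\gamma<\gamma'$ of $A_u$ (taking closures at limits). I would then choose $s_{\gamma'}=u\cup y\restriction[\dom u,\gamma')$ for a single one of the infinitely many matching intervals. I expect this bookkeeping, namely ensuring that one $\gamma\in A$ suffices per interval, to be the main obstacle. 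If it fails, I would instead exploit $\ast$-smallness for $\kappa$ many partitions simultaneously and diagonalise.

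\textbf{Reverse direction for strongly inaccessible $\kappa$.} By \cref{lmm:CM properties}\,\ref{lmm:CM properties:3}, every $M\in\Mk$ lies in some $\dab{z,I}_*$. So it suffices to show that strongly $\kappa$-null sets are $\ast$-small.

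Given $I\in\IPk$, inaccessibility gives $2^{\card{i_{\alpha+1}}}<\kappa$. I would choose $A\in\sqkaka$ sparse relative to $I$, with each $\gamma\in A$ placed so that a whole block of intervals lies between consecutive elements of $A$. Strong nullness then gives $X\subset\Cup_{\gamma\in A}[s_\gamma]$. Using $\kappa$ disjoint cofinal subsets of $A$ and the ${\leq}\kappa$ union, every $x\in X$ lies in $[s_\gamma]$ for cofinally many $\gamma$. I would then define $y$ by copying $s_\gamma\restriction I_{\alpha(\gamma)}$ onto a designated interval $I_{\alpha(\gamma)}\subset\gamma$ disjoint from the intervals used for the other $\gamma$. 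Every $x\in X$ then agrees with $y$ on cofinally many $I_\alpha$, so $X$ is $\ast$-small.
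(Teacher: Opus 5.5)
\paragraph{The gap}
The reverse direction is fine, but the forward direction has a genuine gap at the step ``$\ast$-small implies strongly $\kappa$-null''.

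Your concrete choice $s_{\gamma'}=u\cup y\restriction[\dom u,\gamma')$ does not cover the points it is meant to cover. Take $x\in X_u$ that agrees with $y$ on the single interval $[\gamma,\gamma')$. Such an $x$ is pinned down only on $\dom u$ and on $[\gamma,\gamma')$, while $x\restriction[\dom u,\gamma)$ is arbitrary. So in general $x\notin[s_{\gamma'}]$.

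Splitting $X$ into the $X_u$ and $A$ into the $A_u$ does not help, since ``$X_u$ against $A_u$'' is just another instance of the original problem. The fallback of diagonalising over $\kappa$ many partitions is not yet an argument.

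The obstruction is a counting one. The set $\set{x}{x\restriction I_\alpha=y\restriction I_\alpha}$ is a union of $2^{\card{i_\alpha}}$ disjoint basic sets of length $i_{\alpha+1}$. The naive cover gives each of them its own $\gamma\in A$ with $\gamma\leq i_{\alpha+1}$.
\begin{itemize}
\item This works when $2^{\card{i_\alpha}}<\kappa$, i.e.\ for strongly inaccessible $\kappa$.
\item For successor $\kappa=\lambda^+$ it fails. There $2^\lambda=\kappa$ by $\kappa^{<\kappa}=\kappa$, so $2^{\card{i_\alpha}}=\kappa$ for large $\alpha$. Yet fewer than $\kappa$ elements of $A$ lie below $i_{\alpha+1}$.
\end{itemize}

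More fundamentally, $\ast$-smallness records the hypothesis only for $M\in\CMk$. When $\kappa$ is not strongly inaccessible, $\CMk\subsetneq\Mk$ by \cref{lmm:CM properties}. So your reduction discards exactly the meagre sets the forward direction needs, in the case where it is supposed to hold without inaccessibility.

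Your own reverse argument shows that strongly $\kappa$-null sets are always $\ast$-small. So proving ``$\ast$-small implies strongly $\kappa$-null'' for all $\kappa$ would characterise $\SNk$ via $\CMk$-translates. That is more than the theorem asserts, and it is far from clear for successor $\kappa$.

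\paragraph{The fix}
Apply the hypothesis directly to a nowhere dense set.
\begin{enumerate}
\item Fix $A\in\sqkaka$ and enumerate $\fkacs$ as $\abset{r_\xi}{\xi\in\kappa}$, using $2^{<\kappa}=\kappa$.
\item Pick distinct $\gamma_\xi\in A$ with $\gamma_\xi\geq\dom(r_\xi)$, and $t_{\gamma_\xi}\in{}^{\gamma_\xi}2$ extending $r_\xi$. Let $t_\gamma$ be arbitrary for the other $\gamma\in A$.
\item Then $U=\Cup_{\gamma\in A}[t_\gamma]$ is open dense, so $N=\kacs\setminus U\in\NWD\subset\Mk$.
\item The hypothesis gives some $w\notin X+N$. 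Hence $x+w\in U$ for all $x\in X$.
\item That is, $X\subset\Cup_{\gamma\in A}[t_\gamma+w\restriction\gamma]$, with the sum taken pointwise. This is the required cover.
\end{enumerate}
No bookkeeping is needed, and only $2^{<\kappa}=\kappa$ is used.

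\paragraph{The reverse direction}
Your reverse direction is correct and follows the standard route. Its combinatorial part (strongly $\kappa$-null implies $\ast$-small) never uses $2^{\card{i_{\alpha+1}}}<\kappa$. Inaccessibility enters only through $\CMk=\Mk$, from \cref{lmm:CM properties}\,\ref{lmm:CM properties:3}.
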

	\begin{crl}
		$\MAk\subset\SNk$.\qed
	\end{crl}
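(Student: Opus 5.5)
The plan is to read the corollary as the composition of the two inclusions stated just before it: first $\MAk\subset\SNk$ is reduced to the first half of \cref{thm:GMS}. Let $X\in\MAk$, so $X+M\in\Mk$ for every $M\in\Mk$. By \cref{thm:GMS}, to get $X\in\SNk$ it suffices to show $X+M\neq\kacs$ for every $M\in\Mk$.

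So fix $M\in\Mk$. Since $X$ is $\Mk$-additive, $X+M\in\Mk=\Mk[\kacs]$. We then use that $\Mk[\kacs]$ is a proper ideal, as stated in the preliminaries (the $\kappa$-Baire category theorem for $\kacs$). Hence $\kacs\notin\Mk$, and therefore $X+M\neq\kacs$. Because $M$ was arbitrary, \cref{thm:GMS} yields $X\in\SNk$.

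Only the direction of \cref{thm:GMS} valid for every regular uncountable $\kappa$ is used, so no inaccessibility assumption is needed. There is essentially no obstacle here. The one point to verify is that $\MAk$ is formed with respect to the full ideal $\Mk[\kacs]$, so that properness of that ideal applies to $X+M$. This holds because the additive ideal is defined relative to $\cal I_\kappa=\Mk[\kacs]$.
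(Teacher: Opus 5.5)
Your argument is correct and is exactly the reasoning the paper leaves implicit behind the \qed. For $X\in\MAk$ and $M\in\Mk$ you have $X+M\in\Mk$, hence $X+M\neq\kacs$ by properness of $\Mk[\kacs]$, and the half of \cref{thm:GMS} that holds for every regular uncountable $\kappa$ then gives $X\in\SNk$ (your opening phrase about a ``composition of two inclusions'' misdescribes this single application of \cref{thm:GMS}, but it does not affect correctness).
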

	
	\subsection{Higher closed null-additive}\label{sec:higher closed null additive}
	
	Note that the definition of $\NAk$ essentially involves an intersection of a family of ideals based on slaloms on interval partitions, so it should come as no surprise that $\NAk$ is related to the ideals $\EkA$ and $\BEkA$. Specifically, we will show that $\NAk\subset\BEkA$. Before we do so, we will prove the weaker claim that $\NAk\subset\EkA$ and use that, for successor $\kappa$, $\EkA=\BEkA$ by \cref{thm:successor BEkA=EkA}.
	
	\begin{lmm}\label{thm:NAk subset EkA}
		Let $\kappa$ be regular uncountable, then $\NAk\subset\EkA$.
	\end{lmm}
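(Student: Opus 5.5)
The plan is to fix $X\in\NAk$ and $I\in\IPklim$, and to produce $\phi\in\SkI$ with $X\subset\sqb\phi_*$. Applying the definition of $\NAk$ directly to $I$ gives slaloms with $\card{\phi(\alpha)}\leq\card\alpha$. However, $\card\alpha<2^{\card{I_\alpha}}$ need not hold for almost all $\alpha$ when $\kappa$ is a limit cardinal, because the sizes $\card{I_\alpha}$ may grow more slowly than $\card\alpha$. So I will instead apply the null-additivity of $X$ to a coarser interval partition $J$ whose $\gamma$-th part is a block of consecutive intervals of $I$. The blocks will be chosen so that every $I_\delta$ inside $J_\gamma$ is large compared to $\gamma$. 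Projecting the resulting slalom back onto the parts of $I$ then gives a slalom on $I$ that is small enough.

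\textbf{Choosing the blocks.} For a cardinal $\mu<\kappa$, let $t(\mu)\in\kappa$ be such that $\card{I_\delta}\geq\mu$ for all $\delta\geq t(\mu)$. This exists since $I\in\IPklim$: if $\kappa$ is a limit cardinal, then $\lim_\delta\card{I_\delta}=\kappa$. If $\kappa=\lambda^+$, then almost all $\card{I_\delta}$ equal $\lambda$, and we only need $\mu\leq\lambda$. Build a continuous strictly increasing sequence $\abset{\xi_\gamma}{\gamma\in\kappa}$ with $\xi_0=0$ and $\xi_{\gamma+1}>\max\ac{\xi_\gamma,t(\card{\gamma+1})}$. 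Limit stages are given by suprema. Put $J_\gamma=\Cup\set{I_\delta}{\xi_\gamma\leq\delta<\xi_{\gamma+1}}$. By continuity of $\xi$ and of $\alpha\mapsto i_\alpha$, $J\in\IPk$.

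The key property is the following: if $\gamma\geq1$ and $\xi_\gamma\leq\delta$, then $\card{I_\delta}\geq\card\gamma$. For successor $\gamma=\gamma'+1$ this holds since $\delta\geq\xi_{\gamma}\geq t(\card\gamma)$. For limit $\gamma$ we have $\delta\geq\xi_{\gamma'+1}\geq t(\card{\gamma'+1})$ for all $\gamma'<\gamma$, so $\card{I_\delta}\geq\sup_{\gamma'<\gamma}\card{\gamma'+1}=\card\gamma$. The only fiddly point is making sure the thresholds propagate through limit stages, and this is exactly why the thresholds are attached at successor stages as above.

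\textbf{Transferring the slalom.} By $X\in\NAk$, there is $\psi\in\prod_{\gamma}\sqb[big]{{}^{J_\gamma}2}^{\leq\card\gamma}$ with $X\subset\sqb\psi_*$. For $\delta\in\iv{\xi_\gamma}{\xi_{\gamma+1}}$, define $\phi(\delta)=\set{s\restriction I_\delta}{s\in\psi(\gamma)}$. Then for $\gamma\geq1$ and infinite $\gamma$, we get $\card{\phi(\delta)}\leq\card\gamma\leq\card{I_\delta}<2^{\card{I_\delta}}$. For finite $\gamma$, $\card{\phi(\delta)}$ is finite while $I_\delta$ is infinite for almost all $\delta$. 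Hence $\phi\in\Sigma_\kappa^*(I)$. If $x\in X$, then $x\restriction J_\gamma\in\psi(\gamma)$ for all $\gamma\geq\gamma_0$. Consequently $x\restriction I_\delta\in\phi(\delta)$ for all $\delta\geq\xi_{\gamma_0}$, so $X\subset\sqb\phi_*$.

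Finally, \cref{lmm:almost to everywhere} converts $\phi$ into some $\phi'\in\SkI$ with the same $\sqb{\phi'}_*$. Thus $X\in\Ek(I)$ for every $I\in\IPklim$, i.e.\ $X\in\EkA$.
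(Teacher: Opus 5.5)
Your strategy is the same as the paper's: coarsen $I$ into blocks $J_\gamma$ of consecutive $I_\delta$'s so that $|I_\delta|\geq|\gamma|$ for every $I_\delta\subset J_\gamma$, apply $X\in\NAk$ to $J$, and project back onto $I$. The gap is in your justification of the key property at limit stages, precisely the ``fiddly point'' you flagged. You claim $\sup_{\gamma'<\gamma}|\gamma'+1|=|\gamma|$ for limit $\gamma$. This fails whenever $\gamma$ is a successor cardinal $\nu^+$ with $\nu$ infinite, since then $|\gamma'+1|\leq\nu$ for all $\gamma'<\nu^+$ (e.g.\ $\sup_{\gamma'<\omega_1}|\gamma'+1|=\aleph_0$).

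Your thresholds therefore only give $\xi_{\nu^+}\geq t(\nu)$, not $\xi_{\nu^+}\geq t(\nu^+)$. If $|I_\delta|$ grows slowly, which $I\in\IPklim$ allows, some $\delta\in[\xi_{\nu^+},\xi_{\nu^++1})$ can have $|I_\delta|=\nu$. For such $\delta$ you only know $|\phi(\delta)|\leq|\psi(\nu^+)|\leq\nu^+\leq 2^\nu=2^{|I_\delta|}$, with no strict inequality. Under $\mathsf{GCH}$ nothing prevents $\psi(\nu^+)$ from projecting onto all of ${}^{I_\delta}2$. When $\kappa$ is a limit cardinal there are $\kappa$ many successor cardinals below $\kappa$, so this can happen for cofinally many $\delta$. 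Then $\phi\notin\Sigma^*_\kappa(I)$, and \cref{lmm:almost to everywhere} no longer preserves $[\phi]_*$: the $\phi'$ it produces is empty at cofinally many places, so $[\phi']_*=\emptyset$. For $\kappa=\lambda^+$ the offending $\gamma$ are all $\leq\lambda$, so only boundedly many $\delta$ are affected and your argument does go through there.

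The repair is easy. For limit $\kappa$, require at successor steps $\xi_{\gamma'+1}>t(|\gamma'+1|^+)$; this is legitimate since $|\gamma'+1|^+<\kappa$. Then for limit $\gamma=\nu^+$, taking $\gamma'=\nu$ gives $|I_\delta|\geq\nu^+=|\gamma|$ for all $\delta\geq\xi_\gamma$, and your old argument handles the remaining limits. With this change your proof coincides with the paper's. The paper simply asserts the existence of a continuous increasing $\langle\alpha_\xi:\xi\in\kappa\rangle$ with $|\xi|\leq|I_\alpha|$ for all $\alpha\geq\alpha_\xi$, which is exactly the property you were constructing. It then gets $\phi\in\SkI$ directly from $|\phi(\alpha)|\leq|\xi|\leq|I_\alpha|<2^{|I_\alpha|}$, without passing through $\Sigma^*_\kappa(I)$.
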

	\begin{proof}
		Let $A\in\NAk$ and fix some arbitrary $I\in\IPklim$. We will find $\phi\in\SkI$ such that $A\subset\sqb\phi_*$.
		
		We inductively define a continuous increasing sequence $\abset{\alpha_\xi}{\xi\in\kappa}$ of ordinals in $\kappa$ such that for all $\xi\in\kappa$ and $\alpha\geq\alpha_\xi$ we have $\card \xi\leq \card{I_\alpha}$. Define $J\in\IPklim$ by $J_\xi=\Cup\set{I_\alpha}{\alpha_\xi\leq\alpha<\alpha_{\xi+1}}$ for each $\xi\in\kappa$. Since $A\in\NAk$, there is $\psi\in\prod_{\xi\in\kappa}\sqb[big]{^{J_\xi}2}^{\card\xi}$ such that $A\subset\sqb\psi_*$. Let $\phi$ be defined such that for any $\xi,\alpha\in\kappa$ with $\alpha\in\iv{\alpha_\xi}{\alpha_{\xi+1}}$ we have $\phi\colon \alpha\mapsto\set{s\restriction I_\alpha}{s\in \psi(\xi)}$. Note that \[\card{\phi(\alpha)}\leq\card{\psi(\xi)}\leq\card\xi\leq\card{I_\alpha}<2^{\card{I_\alpha}}.\]Therefore $\phi\in\SkI$, and since $\sqb\psi_*\subset\sqb\phi_*$ it follows that $A\in\Ek(I)$. Finally, as $I$ was arbitrary, we see that $\NAk\subset\EkA$.
	\end{proof}

	\begin{thm}\label{thm:NAk subset BEkA}
		Let $\kappa$ be regular uncountable, then $\NAk\subset\BEkA$.
	\end{thm}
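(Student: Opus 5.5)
Fix $A\in\NAk$ and an arbitrary $B\in\BPklim$; the aim is a slalom $\phi\in\SkB$ with $A\subset\sqb\phi_*$. The plan is to repeat the proof of \cref{thm:NAk subset EkA}, but first to replace $B$ by an interval partition via \cref{capturing BP with IP}. Concretely, that lemma gives $I\in\IPk$ and sets $A_\delta$ with $I_\delta=\Cup_{\beta\in A_\delta}B_\beta$. Hence each $B_\beta$ lies inside a unique block $I_{\delta_\beta}$.

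Next I coarsen $I$ exactly as in \cref{thm:NAk subset EkA}. I choose a continuous increasing sequence $\abset{\alpha_\xi}{\xi\in\kappa}$ such that $\card\xi\leq\card{B_\beta}$ for every $\beta$ with $\delta_\beta\geq\alpha_\xi$. Such $\alpha_\xi$ exists because $B\in\BPklim$: only ${<}\kappa$ many $\beta$ have $\card{B_\beta}<\card\xi$ (or, in the successor case, $\card{B_\beta}<\lambda$, and $\card\xi\leq\lambda$ always). Those finitely-bounded many $B_\beta$ lie below some bound, so all sufficiently large blocks of $I$ avoid them. Set $J_\xi=\Cup\set{I_\delta}{\alpha_\xi\leq\delta<\alpha_{\xi+1}}$; then $J\in\IPk$. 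Since $A\in\NAk$, there is $\psi\in\prod_{\xi}\sqb{{}^{J_\xi}2}^{\leq\card\xi}$ with $A\subset\sqb\psi_*$.

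Now define $\phi(\beta)=\set{s\restriction B_\beta}{s\in\psi(\xi)}$, where $\xi$ is the unique index with $B_\beta\subset J_\xi$. This index is unique because $B_\beta\subset I_{\delta_\beta}\subset J_\xi$. Then $\card{\phi(\beta)}\leq\card\xi\leq\card{B_\beta}<2^{\card{B_\beta}}$ for almost all $\beta$, so $\phi\in\Sigma^*_\kappa(B)$, and \cref{lmm:almost to everywhere} converts it to an element of $\SkB$ with the same $\sqb\cdot_*$.

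It remains to check $\sqb\psi_*\subset\sqb\phi_*$. If $x\restriction J_\xi\in\psi(\xi)$ for all $\xi\geq\xi_0$, then $x\restriction B_\beta\in\phi(\beta)$ for every $\beta$ with $B_\beta\subset\Cup_{\xi\geq\xi_0}J_\xi$. Only ${<}\kappa$ many $\beta$ meet the bounded set $\Cup_{\xi<\xi_0}J_\xi$, so this covers almost all $\beta$. Hence $A\in\Ek(B)$, and since $B$ was arbitrary, $A\in\BEkA$.

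The main obstacle is choosing $\alpha_\xi$ so that the size bound $\card\xi\leq\card{B_\beta}$ holds uniformly for all $B_\beta$ inside $J_\xi$. Because the $B_\beta$ are not intervals, I have to control $\beta$ through the map $\beta\mapsto\delta_\beta$, using that the small $B_\beta$ form a set of size ${<}\kappa$ and are therefore bounded in $\kappa$. In the successor case one could instead use $\EkA=\BEkA$ from \cref{thm:successor BEkA=EkA}, but the direct argument above treats all regular uncountable $\kappa$ at once.
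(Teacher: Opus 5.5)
Your proof is correct in outline, except for one step (the choice of the $\alpha_\xi$) that needs a small repair, described in the second paragraph below.

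\textbf{Comparison with the paper.} For limit $\kappa$ your argument is essentially the paper's. The paper builds $J$ directly: the closure function $f$ makes each $J_\alpha$ a union of whole $B$-blocks, and the thresholds $\beta_\xi=\sup\bigl(\Cup\set{B_\beta}{\card{B_\beta}\leq\card\xi}\bigr)$ push $J_\alpha$ past every block of size $\leq\card\xi$ for all $\xi<\alpha$. You get the same coarsening from \cref{capturing BP with IP}, followed by the grouping used in \cref{thm:NAk subset EkA}.

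The real difference is the successor case. For $\kappa=\lambda^+$ and $\card\xi=\lambda$ the paper's $\beta_\xi$ equals $\kappa$, so the paper instead combines \cref{thm:NAk subset EkA} with $\EkA=\BEkA$ from \cref{thm:successor BEkA=EkA}. Your non-strict threshold $\card\xi\leq\card{B_\beta}$ avoids that detour and covers all regular uncountable $\kappa$ with one direct argument. That is a genuine simplification.

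\textbf{The step that needs repair.} You justify the existence of $\abset{\alpha_\xi}{\xi\in\kappa}$ pointwise in $\xi$. But continuity forces $\alpha_\xi=\sup_{\eta<\xi}\alpha_\eta$ at limit stages, so the pointwise argument does not apply there. When $\xi=\mu^+$, the requirements at stages $\eta<\xi$ only give $\card{B_\beta}\geq\mu$ for $\delta_\beta\geq\alpha_\xi$. Then $\card{\phi(\beta)}\leq\mu^+$ need not be below $2^{\card{B_\beta}}=2^\mu$, for instance if $2^\mu=\mu^+$. For limit $\kappa$ such stages are cofinal in $\kappa$, so your ``for almost all $\beta$'' does not absorb them.

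The fix is easy:
\begin{itemize}
\item For limit $\kappa$, demand at each successor step that $\card{B_\beta}>\card\eta$ whenever $\delta_\beta\geq\alpha_{\eta+1}$. This is possible because $\lim_\beta\card{B_\beta}=\kappa$. It gives $\card{B_\beta}\geq\card\xi$ at every limit $\xi$, and this strictness is exactly what the paper's $\beta_\xi$ encodes.
\item For $\kappa=\lambda^+$, the bad stages are among $\xi\leq\lambda$, so they involve only boundedly many $\beta$ and are harmless. More cleanly, choose $\alpha_1$ so that $\card{B_\beta}=\lambda$ whenever $\delta_\beta\geq\alpha_1$.
\end{itemize}
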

	\begin{proof}
		If $\kappa$ is successor, then by \cref{thm:NAk subset EkA,thm:successor BEkA=EkA} it follows that $\NAk\subset\BEkA$. Let us therefore assume that $\kappa$ is regular limit. Let $A\in\NAk$ and fix some arbitrary $B\in\BPklim$, then we will find $\phi\in\SkB$ such that $A\subset\sqb\phi_*$.
		
		Since $\lim_{\beta\in\kappa}\card{B_\beta}=\kappa$ is limit, we may define $\beta_\xi=\sup\rb[big]{\Cup\set{B_\beta}{\beta\in\kappa\text{ and }\card{B_\beta}\leq\card\xi}}$.\footnote{We assume that $\sup(\emp)=0$.} Let $f\in\kaka$ be defined by 
		\[f\colon \alpha\mapsto \min\set[big]{\gamma\geq\alpha}{\forall\beta\in\kappa\,\rb{B_\beta\cap\gamma\neq\emp\to B_\beta\subset\gamma}}.\]
		By recursion on $\alpha\in\kappa$, we define a continuous strictly increasing sequence of ordinals $\abset{\gamma_\alpha}{\alpha\in\kappa}$ such that $\gamma_0=0$ and $\gamma_{\alpha+1}=f(\max\ac{\gamma_\alpha,\beta_\alpha}+1)$. Let $J\in\IPklim$ be defined by $J_\alpha=\iv{\gamma_\alpha}{\gamma_{\alpha+1}}$.
		
		Since $A\in\NAk$, there exists $\psi\in\prod_{\alpha\in\kappa}\sqb[big]{{}^{J_\alpha}2}^{\leq\card\alpha}$ such that $A\subset\sqb\psi_*$. Now define $\phi$ such that for any $\alpha,\beta\in\kappa$ with $B_\beta\subset J_\alpha$ we have $\phi\colon\alpha\mapsto\set{s\restriction B_\beta}{s\in\psi(\alpha)}$. Since $B_\beta\subset J_\alpha$ implies that for each $\xi\in\alpha$ we have $B_\beta\cap \beta_\xi=\emp$, it follows from the construction of $\beta_\xi$ that $\card\alpha\leq\card{B_\beta}$. In particular, if $B_\beta\subset J_\alpha$, then
		\[\card{\phi(\beta)}\leq\card{\psi(\alpha)}\leq\card\alpha\leq\card{B_\beta}<2^{\card{B_\beta}}.\]
		It follows that $\phi\in\SkB$ and $\sqb\psi_*\subset\sqb\phi_*$, so $A\in\Ek(B)$ and since $B$ was arbitrary $\NAk\subset\BEkA$.
	\end{proof}

	\begin{thm}\label{thm:EkA subset EAk}
		Let $\kappa$ be regular uncountable, then $\EkA\subset\EAk\subset\Ek$.
	\end{thm}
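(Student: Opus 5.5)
The plan is to prove the two inclusions separately. The second inclusion $\EAk\subset\Ek$ is the general fact $\cal{IA}_\kappa\subset\cal I_\kappa$ noted in the proposition at the start of this section. To see it concretely: the singleton $\ac{\bar 0}$ of the constant zero function lies in $\Ek$, since $\ac{\bar0}\in[\kacs]^\kappa\subset\BEkA\subset\EkA\subset\Ek$ by \cref{prp:simple relations}. Hence $X=X+\ac{\bar 0}\in\Ek$ for every $X\in\EAk$.

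For the first inclusion $\EkA\subset\EAk$, fix $X\in\EkA$ and $Y\in\Ek$; we must show $X+Y\in\Ek$. By definition $Y=\Cup_{\zeta\in\kappa}Y_\zeta$ with each $Y_\zeta\in\EkE$. Addition distributes over unions, so $X+Y=\Cup_{\zeta\in\kappa}(X+Y_\zeta)$. Because $\Ek$ is ${\leq}\kappa$-complete (\cref{prp:simple relations}), it suffices to treat a single $Y_\zeta$. So we may assume there are $I\in\IPklim$ and $\psi\in\SkI$ with $Y\subset\sqb\psi_*$.

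The key point is that $X\in\EkA\subset\Ek(I)$ for this same $I$. So there is $\phi\in\SkI$ with $X\subset\sqb\phi_*$. Define
\[\chi\colon\alpha\mapsto\set{s+t}{s\in\phi(\alpha),\ t\in\psi(\alpha)},\]
with addition taken pointwise modulo $2$ on ${}^{I_\alpha}2$. Let $x\in\sqb\phi_*$ and $y\in\sqb\psi_*$. Then for almost all $\alpha$ we have $x\restriction I_\alpha\in\phi(\alpha)$ and $y\restriction I_\alpha\in\psi(\alpha)$. Hence $(x+y)\restriction I_\alpha\in\chi(\alpha)$ for almost all $\alpha$, which gives $X+Y\subset\sqb\phi_*+\sqb\psi_*\subset\sqb\chi_*$.

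It remains to check the size bound $\card{\chi(\alpha)}\leq\card{\phi(\alpha)}\cdot\card{\psi(\alpha)}<2^{\card{I_\alpha}}$ for almost all $\alpha$. This is the only point needing care, and it is handled by cardinal arithmetic. Since $I\in\IPklim$, the set $I_\alpha$ is infinite for almost all $\alpha$, and then $2^{\card{I_\alpha}}$ is infinite. If both factors are finite, their product is finite and hence smaller than $2^{\card{I_\alpha}}$. Otherwise the product equals the maximum of the two factors, and both factors are smaller than $2^{\card{I_\alpha}}$. Thus $\chi\in\Sigma^*_\kappa(I)$. By \cref{lmm:almost to everywhere} there is $\chi'\in\SkI$ with $\sqb{\chi'}_*=\sqb\chi_*$. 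Therefore $X+Y\in\Ek(I)\subset\EkE\subset\Ek$, as required.
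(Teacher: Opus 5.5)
Your proof is correct and is essentially the paper's argument: write $Y$ as a union of $\kappa$ many pieces from $\EkE$, use $X\in\Ek(I)$ for the same partition $I$ as each piece, and add the two slaloms pointwise on that partition. Your explicit check is slightly more careful than the paper, which simply asserts $\chi^\beta\in\Sigma_\kappa(I_\beta)$. Because finite $I_\alpha$ can occur, the sum slalom is only guaranteed to lie in $\Sigma^*_\kappa(I)$, and you correctly repair this via \cref{lmm:almost to everywhere}. The only slip is harmless: $\ac{\bar 0}$ is a subset of a member of $[\kacs]^\kappa$, not itself a member.
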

	
	\begin{proof}
		That $\EAk\subset\Ek$ is clear.
		
		Assume $X\in\EkA$ and for each $I\in\IPklim$ pick some $\psi^I\in\SkI$ such that $X\subset\Cap_{I\in\IPklim}\sqb[big]{\psi^I}_*$. Let $E\in\Ek$ be arbitrary, then there is a sequence $\abset{I_\beta}{\beta\in\kappa}$ in $\IPklim$ and $\phi^\beta\in\Sigma_\kappa(I_\beta)$ for each $\beta\in\kappa$ such that $E\subset\Cup_{\beta\in\kappa}\sqb[big]{\phi^\beta}_*$. Given $\beta\in\kappa$, let us define $\chi^\beta\colon \alpha\mapsto\phi^\beta(\alpha)+\psi^{I_\beta}(\alpha)$, and note that $\chi^\beta\in\Sigma_\kappa(I_\beta)$. Finally note that \[E+X\subset\Cup\limits_{\beta\in\kappa}\sqb[big]{\phi^\beta}_*\ +\!\Cap\limits_{I\in\IPklim}\sqb[big]{\psi^I}_*\subset\Cup\limits_{\beta\in\kappa}\rb[m]{\sqb[big]{\phi^\beta}_*+\sqb[big]{\psi^{I_\beta}}}=\Cup\limits_{\beta\in\kappa}\sqb[big]{\chi^\beta}_*\in\Ek.\qedhere\]
	\end{proof}
	
	Both of the inclusions given in \cref{thm:EkA subset EAk} are strict for certain choices of $\kappa$, albeit for different choices. If $\kappa$ is strongly inaccessible, then we will see that $\EAk\subsetneq\Ek$ as a consequence of \cref{EkE adds to everything}. The case for successor $\kappa$ is different. Note that ideals of the form $\Ek(B)$ for $B\in\BPklim$ are additively closed (which also implies that $\EkA$ and $\BEkA$ are additive ideals).
	\begin{lmm}
		If $\kappa$ is successor, then $\EkA\subsetneq \EAk=\Ek$.
	\end{lmm}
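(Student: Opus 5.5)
The plan is to prove the equality $\EAk=\Ek$ first, using that for successor $\kappa$ the ideal $\Ek$ collapses to a single generating ideal, and then to get the strict inclusion from the fact that $\EkA$ is strictly smaller than every generating ideal.

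\emph{Step 1: $\Ek$ is a single generating ideal.} Let $\kappa=\lambda^+$. By \cref{thm:Ek(B) minmax}\,\ref{thm:Ek(B) minmax:4} there is $I\in\IPklim$ with $\Ek(J)\subset\Ek(I)$ for all $J\in\IPklim$. By \cref{crl:Ekm is ideal} we then have $\Ek(I)=\EkE=\Ek$.

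\emph{Step 2: $\Ek(I)$ is additively closed.} This is the remark made just before the statement, and I would verify it directly. Let $\phi,\psi\in\SkI$. Define $\chi(\alpha)=\set{s+t}{s\in\phi(\alpha),\,t\in\psi(\alpha)}$, where $+$ is pointwise addition on ${}^{I_\alpha}2$.

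\begin{itemize}
\item Inclusion: $\sqb\phi_*+\sqb\psi_*\subset\sqb\chi_*$, since $x\restriction I_\alpha\in\phi(\alpha)$ and $y\restriction I_\alpha\in\psi(\alpha)$ give $(x+y)\restriction I_\alpha\in\chi(\alpha)$.
\item Size bound: $\card{\chi(\alpha)}\leq\card{\phi(\alpha)}\cdot\card{\psi(\alpha)}$.
\end{itemize}

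The only point needing care is that this product stays below $2^{\card{I_\alpha}}$. For almost all $\alpha$ we have $\card{I_\alpha}=\lambda$, which is infinite. For such $\alpha$, either both factors are finite, or the product equals the larger factor; in both cases it is ${<}2^{\card{I_\alpha}}$. Hence $\chi\in\Sigma^*_\kappa(I)$, and \cref{lmm:almost to everywhere} yields a slalom in $\SkI$ with the same set $\sqb\chi_*$.

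The same argument works for $\kappa$-unions. An element of $\Ek(I)$ is contained in some $\sqb\phi_*$ with $\phi\in\SkI$. Therefore $X+Y\in\Ek(I)$ for all $X,Y\in\Ek(I)$.

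\emph{Step 3: the equality.} Combining Steps 1 and 2, $X+Y\in\Ek$ for all $X,Y\in\Ek$. By the last item of the easy proposition on additive ideals, this gives $\EAk=\Ek$.

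\emph{Step 4: strictness.} By \cref{thm:EkA proper Ek(I)}, $\EkA\subsetneq\Ek(I)$, and $\Ek(I)=\Ek=\EAk$ by Steps 1 and 3. Together with $\EkA\subset\EAk$ from \cref{thm:EkA subset EAk}, this gives $\EkA\subsetneq\EAk$.

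The only mildly delicate step is the cardinal arithmetic in Step 2, which holds because the relevant intervals eventually have infinite size $\lambda$. Everything else is a direct citation of earlier results.
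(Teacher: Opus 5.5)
Your proposal is correct and is essentially the paper's proof. You pick $I\in\IPklim$ with $\Ek=\Ek(I)$ via \cref{crl:Ekm is ideal}, use additivity of $\Ek(I)$ to get $\EAk=\Ek$, and obtain strictness from \cref{thm:EkA proper Ek(I)}; the only difference is that you verify the additivity of $\Ek(I)$ explicitly (correctly, using that almost all $I_\alpha$ have infinite size $\lambda$), whereas the paper just asserts it in the remark preceding the lemma.
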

	\begin{proof}
		By \cref{crl:Ekm is ideal} we may choose $I\in\IPklim$ with $\Ek=\Ek(I)$, and since $\Ek(I)$ is additive, we see that $\Ek=\EAk$. It then follows from \cref{thm:EkA proper Ek(I)} that $\EkA\subsetneq\Ek(I)=\Ek$.
	\end{proof}

	\subsection{Decomposing the higher Cantor space}\label{sec:decomposing}
	
	In this subsection we study some cases where additivity fails, and where large sets can be decomposed into the sum of small sets. As mentioned above, $\Ek(B)$ is an additive ideal for any $B\in\BPklim$. We will show that $\EkE$ does not have to be additive (and thus neither does $\Ek$), and $\BEkE$ is not additive for any regular uncountable $\kappa$.

	\begin{thm}\label{EkE adds to everything}
		Let $\kappa$ be regular limit, then the following are equivalent:
		\begin{enumerate}
			\item there exist $X,Y\in\EkE$ for which $X+Y=\kacs$.
			\item the continuum function is not eventually constant below $\kappa$,
		\end{enumerate}
	\end{thm}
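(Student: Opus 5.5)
For the direction from the second condition to the first, I will take the argument behind \cref{lmm:E additive failure} and make it explicit, so that the two slaloms become complementary. Suppose the continuum function is not eventually constant below $\kappa$. Then there is $\abset{L_\alpha}{\alpha\in\kappa}\in\IPk$ with $2^{\card{L_\alpha}}<2^{\card{L_\beta}}$ for all $\alpha<\beta$. Since the cardinals $\card{L_\alpha}$ then form a strictly increasing $\kappa$-sequence below $\kappa$, they are unbounded in $\kappa$, and $\abset{L_\alpha}{\alpha\in\kappa}\in\IPklim$.

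I will interleave two partitions. Let $I_\alpha:=L_{2\alpha}\cup L_{2\alpha+1}$, and let $I'_0:=L_0\cup L_1\cup L_2$ and $I'_\alpha:=L_{2\alpha+1}\cup L_{2\alpha+2}$ for $\alpha\geq1$. For limit $\alpha$ the ordinal $2\alpha$ is again limit, so both are interval partitions of $\kappa$, and both lie in $\IPklim$. Let $\phi(\alpha)$ be the set of $s\in{}^{I_\alpha}2$ that vanish on $L_{2\alpha+1}$. Then $\card{\phi(\alpha)}=2^{\card{L_{2\alpha}}}<2^{\card{L_{2\alpha+1}}}\leq2^{\card{I_\alpha}}$, so $\phi\in\SkI$. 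Define $\phi'\in\Sigma_\kappa(I')$ symmetrically, letting $\phi'(\alpha)$ consist of the $s$ that vanish on the later block $L_{2\alpha+2}$ of $I'_\alpha$.

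Put $X=\sqb\phi_0$ and $Y=\sqb{\phi'}_0$, so $X,Y\in\EkE$. Let $O=\Cup_\alpha L_{2\alpha+1}$. Given any $z\in\kacs$, write $z=z\cdot\ft_{\kappa\setminus O}+z\cdot\ft_O$. The first summand vanishes on all odd blocks, so it lies in $X$. The second vanishes on $L_0$ and on all $L_{2\alpha+2}$, so it lies in $Y$. Hence $X+Y=\kacs$.

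For the direction from the first condition to the second, I will prove the contrapositive. Assume $2^{\lambda'}=2^\lambda=:\mu$ for all cardinals $\lambda'\in\iv\lambda\kappa$. Let $X\subset\sqb\phi_*$ and $Y\subset\sqb{\phi'}_*$ with $\phi\in\SkI$ and $\phi'\in\Sigma_\kappa(I')$, where $I,I'\in\IPklim$.

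As in the proof of \cref{thm:eveq equi} (first condition implies second), I form $J'\in\IPklim$ whose pieces are unions of one or two consecutive nonempty sets $I_\xi\cap I'_{\xi'}$. Then each $J'_\alpha$ meets at most two $I_\xi$ and at most two $I'_{\xi'}$.

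Define $\psi(\alpha)$ as the set of $s\in{}^{J'_\alpha}2$ such that, for each relevant $\xi$, the restriction $s\restriction(J'_\alpha\cap I_\xi)$ equals the restriction of some element of $\phi(\xi)$. Define $\psi'$ analogously from $\phi'$. Then $\sqb\phi_*\subset\sqb\psi_*$ and $\sqb{\phi'}_*\subset\sqb{\psi'}_*$.

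The main obstacle is the cardinal bookkeeping for these slaloms. For almost all $\alpha$, all relevant blocks have size at least $\lambda$, so $2^{\card{I_\xi}}=2^{\card{J'_\alpha}}=\mu$. Hence $\card{\psi(\alpha)}$ is at most a product of two cardinals below $\mu$. Because $\mu$ is infinite, this product is still below $\mu$, and the same bound holds for $\psi'$.

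Now set $\chi(\alpha):=\psi(\alpha)+\psi'(\alpha)$, computed pointwise. Then $\card{\chi(\alpha)}\leq\card{\psi(\alpha)}\cdot\card{\psi'(\alpha)}<\mu=2^{\card{J'_\alpha}}$ for almost all $\alpha$, and $X+Y\subset\sqb\chi_*$. For each $\alpha$ pick $s_\alpha\in{}^{J'_\alpha}2\setminus\chi(\alpha)$ whenever possible, and let $z=\Cup_\alpha s_\alpha$. Then $z$ avoids $\chi(\alpha)$ for almost all $\alpha$, so $z\notin\sqb\chi_*$, and therefore $X+Y\neq\kacs$.
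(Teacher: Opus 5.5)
\textbf{Second condition implies first.} This is the paper's construction, but your $I'$ is not an interval partition of $\kappa$. The indices $2\beta+1$ and $2\beta+2$ are always successor ordinals, so no $I'_\beta$ with $\beta\geq1$ contains $L_\gamma$ for a limit $\gamma>0$. Also $I'_0$ contains only $L_0,L_1,L_2$. Hence for limit $\alpha>0$ the block $L_{2\alpha}$ (for instance $L_\omega$) is skipped: the $I'_\beta$ with $\beta<\alpha$ cover exactly $\Cup_{\gamma<2\alpha}L_\gamma$, and $I'_\alpha$ starts with $L_{2\alpha+1}$. The fact that $2\alpha$ is limit is what makes $I$ work, and it is exactly what breaks $I'$.

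The repair is the case distinction in the paper's proof. For every limit $\alpha$, not only $\alpha=0$, put $I'_\alpha:=L_{2\alpha}\cup L_{2\alpha+1}\cup L_{2\alpha+2}$. Then $\phi'(\alpha)$, the set of $s\in{}^{I'_\alpha}2$ vanishing on $L_{2\alpha+2}$, still has size $2^{\card{L_{2\alpha}\cup L_{2\alpha+1}}}<2^{\card{I'_\alpha}}$. Also $z\cdot\ft_O$ still lies in $\sqb{\phi'}_0$. So the rest of your argument goes through unchanged.

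\textbf{First condition implies second.} Your argument is correct but takes a different route from the paper's. The paper builds no new partition. It takes strictly increasing $\alpha_\xi,\beta_\xi$ with $\card{I_{\alpha_\xi}\cap I'_{\beta_\xi}}\geq\lambda$. On each of these pairwise disjoint pieces, the restrictions of $\phi(\alpha_\xi)+\phi'(\beta_\xi)$ then form a set of size ${<}2^\lambda=2^{\card{I_{\alpha_\xi}\cap I'_{\beta_\xi}}}$, and one diagonalises against these sets.

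You instead coarsen the common refinement to some $J'$, transport both slaloms to $J'$, and add them. For this you only need $\card{J'_\alpha}\geq\lambda$ for almost all $\alpha$, not the full $\IPklim$ property. Your route is heavier, but it proves more: $X+Y\in\Ek(J')\subset\EkE$, so under eventual constancy $\EkE$ is closed under addition. It also makes explicit the diagonalisation against the sum slalom, which the paper's counting argument leaves implicit.
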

	\begin{proof}
		We first assume that the continuum function is not eventually constant. Let $\abset{\kappa_\alpha}{\alpha\in\kappa}$ be a sequence of cardinals such that $2^{\kappa_\alpha}<2^{\kappa_\beta}$ for all $\alpha<\beta$. We define $I,J\in\IPklim$, where for each $\alpha\in\kappa$ we have
		\begin{align*}
			J_\alpha&=\iv{\kappa_{2\alpha}}{\kappa_{2\alpha+2}},&
			I_\alpha&=\begin{cases}
				\iv{\kappa_{2\alpha}}{\kappa_{2\alpha+3}} & \text{ if }\alpha\text{ is limit,}\\
				\iv{\kappa_{2\alpha+1}}{\kappa_{2\alpha+3}} & \text{ otherwise.}
			\end{cases}
		\end{align*}
		Now we define the sets
		\begin{align*}
			X&:=\set[big]{x\in\kacs}{\forall\alpha\in\kappa\fforall\xi\in\iv{\kappa_{2\alpha}}{\kappa_{2\alpha+1}}\,\rb{x(\xi)=0}},\\
			Y&:=\set[big]{y\in\kacs}{\forall\alpha\in\kappa\fforall\xi\in\iv{\kappa_{2\alpha+1}}{\kappa_{2\alpha+2}}\,\rb{y(\xi)=0}}.
		\end{align*}
		Then $X\in \Ek(I)$ and $Y\in \Ek(J)$, and $X+Y=\kacs$.
		
		Now let us assume that the continuum function is eventually constant, and fix $\lambda<\kappa$ such that $2^{\lambda'}\leq2^\lambda$ for all $\lambda'<\kappa$. Let $X\in\Ek(I)$ and $Y\in\Ek(J)$ for some $I,J\in\IPklim$, then there exist two strictly increasing sequences $\abset{\alpha_\xi}{\xi\in\kappa}$ and $\abset{\beta_\xi}{\xi\in\kappa}$ such that, for all $\xi\in\kappa$, \[2^{\card[big]{I_{\alpha_\xi}}}=2^{\card[big]{J_{\beta_\xi}}}=2^{\card[big]{I_{\alpha_\xi}\cap J_{\beta_\xi}}}=2^\lambda.\]
		But, since $\card[m]{X\restriction I_{\alpha_\xi}}<2^\lambda$ and $\card[m]{Y\restriction J_{\beta_\xi}}<2^\lambda$, we also see that $(X+Y)\restriction (I_{\alpha_\xi}\cap J_{\beta_\xi})<2^\lambda$, and thus $X+Y\neq\kacs$.
	\end{proof}
	
	\begin{thm}
		Let $\kappa$ be regular uncountable, then there exist $X,Y\in\BEkE$ such that $X+Y=\kacs$.
	\end{thm}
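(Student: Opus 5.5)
Split $\kappa$ into two disjoint pieces and let $X$ and $Y$ each be free on one piece and zero on the other; then $X+Y=\kacs$. Concretely, I would take $E$ the even ordinals and $O$ the odd ordinals, as in \cref{thm:BEk is not Ek}. Set $X=\set{x\in\kacs}{x^{-1}(1)\subset E}$ and $Y=\set{y\in\kacs}{y^{-1}(1)\subset O}$. Given any $z\in\kacs$, let $x=z\cdot\ft_E$ and $y=z\cdot\ft_O$. Then $x\in X$, $y\in Y$ and $x+y=z$, because their supports are disjoint. So $X+Y=\kacs$, and it remains to show that $X,Y\in\BEkE$.

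That $X\in\BEkE$ was already shown in the proof of \cref{thm:BEk is not Ek}. We choose $B\in\BPklim$ such that each $B_\alpha$ contains exactly one even ordinal $\xi_\alpha$. We let $\phi(\alpha)$ consist of the two functions on $B_\alpha$ that vanish outside $\xi_\alpha$. Then $\card{\phi(\alpha)}=2<2^{\card{B_\alpha}}$ whenever $\card{B_\alpha}\ge 2$, and we may arrange $\card{B_\alpha}\ge2$ for all $\alpha$. Hence $\phi\in\SkB$ and $X\subset\sqb\phi_0\subset\sqb\phi_*$. For $Y$ we do the same thing with the roles of $E$ and $O$ swapped: a $B'\in\BPklim$ each of whose parts contains exactly one odd ordinal.

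The only point needing care is the existence of such $B\in\BPklim$ (and likewise $B'$). I would build it by recursion of length $\kappa$, using a bookkeeping enumeration so that every ordinal gets covered. At stage $\alpha$, take the least even ordinal $\xi_\alpha$ not yet used and the least odd ordinal not yet used. Put $B_\alpha=\ac{\xi_\alpha}\cup D_\alpha$, where $D_\alpha$ is a set of unused odd ordinals containing that least unused odd ordinal. We choose $\card{D_\alpha}$ to satisfy the cardinality requirement of $\BPklim$: size $\card\alpha+\omega$ in the limit case, which gives $\lim\card{B_\alpha}=\kappa$, and size $\lambda$ if $\kappa=\lambda^+$.

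This is possible because fewer than $\kappa$ ordinals have been used before stage $\alpha$, so unboundedly many unused odd ordinals remain, and we can take $D_\alpha$ bounded. Every even ordinal is eventually chosen as some $\xi_\alpha$, since we always take the least unused one and the recursion has length $\kappa$. Every odd ordinal is eventually covered for the same reason. Hence $B$ is a bounded partition lying in $\BPklim$. This bookkeeping is the main (mild) obstacle; the rest is immediate.
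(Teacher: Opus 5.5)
Your proof is correct and is essentially the paper's argument. The paper builds $B,C\in\BPklim$ with distinguished points $b_\xi\in B_\xi$ and $c_{\xi'}\in C_{\xi'}$ that together cover $\kappa$, and uses the slaloms of functions vanishing off these points; your even ordinals playing the role of the $b_\xi$ and odd ordinals playing the role of the $c_{\xi'}$ is a concrete instance of this (the paper additionally arranges $\card{B_\xi\cap C_{\xi'}}\leq 1$, which is not needed here, and your recursion fills in the construction the paper leaves as clear).
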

	\begin{proof}
		We define $B,C\in\BPklim$ and $\phi\in\Sigma_\kappa(B)$ and $\psi\in\Sigma_\kappa(C)$ in such a way that for every $\xi,\xi'\in\kappa$ we have $\card{B_\xi\cap C_{\xi'}}\leq 1$, and there exist unique $b_\xi\in B_\xi$ and $c_{\xi'}\in C_{\xi'}$ such that 
		\begin{enumerate}
			\item $s\in \phi(\xi)$ if and only if $s\colon B_\xi\to 2$ and $s(\eta)=0$ for all $\eta\neq b_\xi$,
			\item $t\in \psi(\xi')$ if and only if $t\colon C_{\xi'}\to 2$ and $t(\eta)=0$ for all $\eta\neq c_{\xi'}$, and
			\item $B_\xi\cap C_{\xi'}\subset \ac{b_\xi,c_{\xi'}}$.
		\end{enumerate}
		It is clear that such $B,C,\phi,\psi$ can be constructed recursively. Note that $\card{\phi(\xi)}=\card{\psi(\xi')}=2$ for all $\xi,\xi'\in\kappa$, hence $\phi\in\Sigma_\kappa(B)$ and $\psi\in\Sigma_\kappa(C)$. We also note that $\kappa=\set{b_\xi}{\xi\in\kappa}\cup \set{c_{\xi'}}{\xi'\in\kappa}$, thus if $z\in\kacs$ and if we define $z_B,z_C\in \kacs$ by 
		\begin{align*}
			&z_B\colon\alpha\mapsto\begin{cases}
				z(\alpha)&\text{ if }\alpha\in\set{b_\xi}{\xi\in\kappa},\\
				0 & \text{ otherwise,}
			\end{cases}&
			&z_C\colon\alpha\mapsto\begin{cases}
				z(\alpha)&\text{ if }\alpha\notin\set{b_\xi}{\xi\in\kappa},\\
				0 & \text{ otherwise,}
			\end{cases}
 		\end{align*}
 		then $z=z_B+z_C$ and $z_B\in\sqb\phi_*$ and $z_C\in\sqb\psi_*$. Thus $\sqb\phi_*+\sqb\psi_*=\kacs$.
	\end{proof}
	
	Next we will show that $\Ek(B)$ is not $\kappa$-meagre-additive.
	
	\begin{thm}
		For every $B\in\BPklim$ there are $X\in\Ek(B)$ and $N\in\NWD$ with $X+N=\kacs$.
	\end{thm}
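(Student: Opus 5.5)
Fix $B\in\BPklim$; without loss of generality $\card{B_\alpha}\geq 2$ for all $\alpha$ (merge finitely/boundedly many initial pieces if necessary, which does not change $\Ek(B)$ up to the tail condition). The idea is to mirror the classical decomposition $\omcs=X+N$ with $X$ small in one sense and $N$ nowhere dense: we let $X$ be the set of functions that are ``simple'' on every block, and $N$ a closed set that fixes a single coordinate per block but is free elsewhere on cofinally many levels.

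Concretely, for each $\alpha\in\kappa$ pick a point $b_\alpha\in B_\alpha$. Define $\phi(\alpha)$ to be the set of $s\in{}^{B_\alpha}2$ with $s(\xi)=0$ for all $\xi\neq b_\alpha$; then $\card{\phi(\alpha)}=2<2^{\card{B_\alpha}}$, so $\phi\in\SkB$ and $X:=\sqb\phi_0\in\Ek(B)$. Let $D=\set{b_\alpha}{\alpha\in\kappa}$ and define $N=\set{y\in\kacs}{\forall\xi\in D\,(y(\xi)=0)}$. Given $z\in\kacs$, put $x=z\restriction D$ extended by $0$ off $D$, and $y=z$ with values on $D$ replaced by $0$; then $x\in X$, $y\in N$ and $x+y=z$. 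So $X+N=\kacs$.

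The remaining step, and the only one needing care, is that $N$ is nowhere dense, i.e.\ $D$ must be cofinal in every sense that matters: given $s\in\fkacs$, we need an extension $t$ with $t(\xi)=1$ for some $\xi\in D\setminus\dom(s)$. Since $N$ is closed, it suffices that $D$ is unbounded in $\kappa$, which holds because the $B_\alpha$ are bounded, pairwise disjoint and cover $\kappa$, so $\sup_{\alpha}\min B_\alpha$ cannot be bounded (only ${<}\kappa$ many blocks meet any bounded initial segment, by regularity of $\kappa$, yet there are $\kappa$ blocks). Hence for any $s$ there is $b_\alpha\geq\dom(s)$, and $t=s\cup\{(b_\alpha,1)\}$ extended arbitrarily gives $\sqb t\cap N=\emp$. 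I do not expect a real obstacle here; the main point to double-check is that $X$ already lies in $\Ek(B)$ with $\sqb\phi_0\subset\sqb\phi_*$, which is immediate.
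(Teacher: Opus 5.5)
Your proof is correct, but it uses a different decomposition from the paper's. The paper takes $\phi(\alpha)$ to be the two constant functions on $B_\alpha$, and takes $N$ to be the set of $y\in\kacs$ that are non-constant on every block. You instead let $X$ consist of functions vanishing off a transversal $D=\set{b_\alpha}{\alpha\in\kappa}$, and let $N$ freeze the coordinates in $D$ to $0$. Then $z=x+y$ is just a splitting of the coordinates of $z$ between $D$ and $\kappa\setminus D$, and nowhere density of $N$ reduces to $D$ being unbounded.

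Your route is the more reliable one. With the paper's pair, if $x$ is constant on $B_\alpha$ and $y$ is not, then $x+y$ is not constant on $B_\alpha$. So every element of $\sqb\phi_*+N$ is non-constant on almost every block, and, for instance, the zero function is not in the sum. The constant-function slalom can be rescued by replacing $N$ with $\set{y\in\kacs}{\forall\alpha\in\kappa\,(y(\min B_\alpha)=0)}$, which is exactly your idea of pinning one coordinate per block. Your $X$ is also the same kind of slalom the paper uses in the preceding theorem, which gives $X+Y=\kacs$ for some $X,Y\in\BEkE$.

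Two small remarks. First, your preliminary reduction to $\card{B_\alpha}\geq 2$ by merging blocks changes $B$. It therefore needs the true but unstated check that $\Ek(B)$ only depends on tails. It is simpler to keep $B$ as given: since $B\in\BPklim$, your $\phi$ satisfies $\card{\phi(\alpha)}<2^{\card{B_\alpha}}$ for almost all $\alpha$. Hence $\phi\in\Sigma_\kappa^*(B)$, and \cref{lmm:almost to everywhere} yields $\phi'\in\SkB$ with $X=\sqb\phi_0\subset\sqb\phi_*=\sqb{\phi'}_*$. Second, your argument only uses that almost all blocks have at least two elements, not the full strength of $B\in\BPklim$.
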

	\begin{proof}
		Let $\phi\in\SkB$ be defined by $s\in \phi(\alpha)$ if and only if $s\colon B_\alpha\to 2$ and $\forall \xi,\xi'\in B_\alpha\,\rb{s(\xi)=s(\xi')}$. Define $N\subset\kacs$ by $x\in N$ if and only if for all $\alpha\in\kappa$ there are $\xi,\xi'\in B_\alpha$ with $x(\xi)\neq x(\xi')$. It is easy to see that $N\in\NWD$ and $\sqb\phi_*+N=\kacs$.
	\end{proof}
	\begin{crl}\label{crl:Ek(B) not subset MAk}
		Let $\kappa$ be regular uncountable and $B\in\BPklim$, then $\Ek(B)\nsubset\MAk$. In particular, $\EkE\nsubset\MAk$.\qed
	\end{crl}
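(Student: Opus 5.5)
The plan is to read the corollary directly off the preceding theorem. That theorem gives, for the fixed $B\in\BPklim$, a set $X\in\Ek(B)$ and a nowhere dense set $N\in\NWD$ with $X+N=\kacs$. I will show that this $X$ is not $\Mk$-additive, where $\MAk$ is understood as the family of $\Mk[\kacs]$-additive sets in the sense of \cref{sec:additive ideals definitions}.

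First, $\NWD[\kacs]\subset\Mk[\kacs]$, since a nowhere dense set is trivially a union of $\kappa$-many (indeed one) nowhere dense sets. Hence $N\in\Mk$. If $X$ were in $\MAk$, then $X+N\in\Mk$ would follow, i.e.\ $\kacs\in\Mk[\kacs]$.

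This contradicts the properness of $\Mk[\kacs]$. Properness was noted in \cref{sec:preliminaries}; it is the $\kappa$-Baire category theorem, which uses $\kappa^{<\kappa}=\kappa$ and $\kappa$ regular, and is also the special case of \cref{lmm:proper CM ideal} style arguments. Therefore $X\in\Ek(B)\setminus\MAk$, so $\Ek(B)\nsubset\MAk$.

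For the ``in particular'' clause, pick any $I\in\IPklim$, which is nonempty (for instance intervals of increasing size, or of size $\lambda$ when $\kappa=\lambda^+$). Since $\IPklim\subset\BPklim$, the above applies to $B=I$, and $\Ek(I)\subset\EkE$ by \cref{prp:simple relations}. This gives $\EkE\nsubset\MAk$.

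The only point needing any care is confirming properness of $\Mk[\kacs]$, which the paper already asserts. Nothing here is a real obstacle.
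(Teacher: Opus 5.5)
Your deduction is correct and is exactly the paper's (implicit) proof: $N\in\NWD\subset\Mk$ together with $X+N=\kacs\notin\Mk$ shows $X\notin\MAk$, and the second clause follows by specialising to some $I\in\IPklim\subset\BPklim$. One caveat about the theorem you use as a black box: the witness $N$ in the paper's proof of it (sequences nonconstant on every $B_\alpha$) does not work, because $x+y$ is nonconstant on almost every $B_\alpha$ whenever $x\in\sqb\phi_*$ and $y\in N$, so for instance the zero function is not in $\sqb\phi_*+N$; the theorem itself is nevertheless true with the closed nowhere dense set $N=\set{y\in\kacs}{\forall\alpha\in\kappa\,(y(\min B_\alpha)=0)}$ (given $z$, let $x$ be constant with value $z(\min B_\alpha)$ on each $B_\alpha$ and put $y=z+x$), so your argument stands.
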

	
	Using the Galvin--Mycielski--Solovay characterisation of $\SNk$  (\cref{thm:GMS}), we may strengthen the above corollary and conclude that $\Ek(B)\nsubset\SNk$ for any $I\in\BPklim$ if $\kappa$ is strongly inaccessible. On the other hand, it is consistent with $\kappa$ strongly inaccessible that $\EkA\subset\SNk$:
	
	\begin{thm}
		Assume $\sf{GCH}$. If $\kappa$ is strongly inaccessible, then $\EkA\subset\SNk$.
	\end{thm}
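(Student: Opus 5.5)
The plan is to use the Galvin--Mycielski--Solovay characterisation (\cref{thm:GMS}) together with the fact that $\CMk=\Mk[\kacs]$ for strongly inaccessible $\kappa$ (\cref{lmm:CM properties}\,\ref{lmm:CM properties:3}). By \cref{thm:GMS}, to show $X\in\SNk$ it suffices to show $X+M\neq\kacs$ for every $M\in\Mk$. By the characterisation of $\CMk$ we may take $M\subset\dab{z,I}_*$ for some $z\in\kacs$ and $I\in\IPk$. Coarsening $I$ only shrinks $\dab{z,I}_*$, so we may also assume $I\in\IPklim$.

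Unwinding the definitions, an element $w$ lies outside $X+\dab{z,I}_*$ iff for every $x\in X$ we have $\exists^\infty\eta\in\kappa\,\rb{x\restriction I_\eta=(w+z)\restriction I_\eta}$. So the core claim is the following. \emph{For every $I\in\IPklim$ there is $y\in\kacs$ such that every $x\in X$ agrees with $y$ on $I_\eta$ for cofinally many $\eta$.} Given such $y$, setting $w=y+z$ gives $w\notin X+M$.

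To prove the core claim, I will build a coarser partition $J\in\IPklim$ with $J_\zeta=\Cup\set{I_\eta}{\eta\in\iv{\eta_\zeta}{\eta_{\zeta+1}}}$. The sequence $\eta_\zeta$ is chosen so that each block contains exactly $\card{J_\zeta}$ many intervals $I_\eta$. This is possible since $\kappa$ is inaccessible: we take enough intervals that their number exceeds the total size of the ones taken so far, and a closure argument makes the number of intervals equal to the cardinality of their union. Since $X\in\EkA$, there is $\psi\in\Sigma_\kappa(J)$ with $X\subset\sqb\psi_*$. By $\sf{GCH}$, $\card{\psi(\zeta)}<2^{\card{J_\zeta}}=\card{J_\zeta}^+$, so $\card{\psi(\zeta)}\leq\card{J_\zeta}$. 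Hence we can injectively assign to each $u\in\psi(\zeta)$ a distinct subinterval $I_{\eta(u)}$ of the block $J_\zeta$. We then define $y\restriction I_{\eta(u)}=u\restriction I_{\eta(u)}$, and let $y$ be arbitrary elsewhere. For $x\in X$ there is, for almost all $\zeta$, some $u\in\psi(\zeta)$ with $x\restriction J_\zeta=u$, and then $x$ agrees with $y$ on $I_{\eta(u)}$. So the agreement happens in cofinally many blocks, which gives cofinally many $\eta$.

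The main obstacle is the bookkeeping in the construction of $J$: each block must have as many $I$-subintervals as its own cardinality while $J$ stays in $\IPklim$. This is where both strong inaccessibility (to close off the recursion below $\kappa$) and $\sf{GCH}$ (to bound $\card{\psi(\zeta)}$ by $\card{J_\zeta}$) are used. The rest is a direct application of \cref{thm:GMS} in its reverse direction, which again needs $\kappa$ strongly inaccessible.
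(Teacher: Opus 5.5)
Your argument is correct, but it takes a different route from the paper.

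\textbf{The paper's route.} The paper never passes through translates of meagre sets. Given $A\in\sqb{\kappa}^\kappa$, it builds $I\in\IPklim$ with $\card{A\cap I_\alpha}=\card{I_\alpha}=\lambda_{\alpha+1}$ and takes $\phi\in\SkI$ covering $X$. It then uses $\sf{GCH}$ twice to see that the restrictions $x\restriction\lambda_{\alpha+1}$ with $x\restriction I_\alpha\in\phi(\alpha)$ number at most $\lambda_\alpha^+\cdot\lambda_{\alpha+1}=\lambda_{\alpha+1}$. Finally it assigns these restrictions injectively to the points of $A\cap I_\alpha$, which verifies the definition of $\SNk$ directly.

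\textbf{Comparison.} You share the combinatorial core: blocks with as many slots as their own cardinality, $\sf{GCH}$ to push the slalom size down to the block size, and an injective assignment. Your slots are $I$-subintervals, and you produce a single $y$ that agrees with every $x\in X$ cofinally often. Because agreement is on a subinterval rather than an initial segment, you need $\sf{GCH}$ only once, at the block level, and never count the $2^{\lambda_\alpha}$ possible prefixes. The price is reliance on two external results: the first half of \cref{thm:GMS} and $\CMk=\Mk$ from \cref{lmm:CM properties}. The paper's proof, by contrast, uses nothing beyond the definitions.

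\textbf{Two small slips.}
\begin{enumerate}
\item Coarsening $I$ \emph{enlarges} $\dab{z,I}_*$, not shrinks it: a coarse block containing some $I_\eta$ on which $y$ differs from $z$ is itself a block of difference. This is exactly why $M$ stays inside and you may assume $I\in\IPklim$.
\item The half of \cref{thm:GMS} you use ($X+M\neq\kacs$ for all $M\in\Mk$ implies $X\in\SNk$) is the one stated without any inaccessibility hypothesis. In your route, strong inaccessibility enters only through $\CMk=\Mk$. The closure recursion building $J$ needs only the regularity of $\kappa$.
\end{enumerate}
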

	\begin{proof}
		Fix some $X\in\EkA$ and let $A\in\sqb\kappa^\kappa$, then we will first find a suitable $I\in\IPklim$ by inductively defining a continuous strictly increasing sequence of cardinals $\abset{\lambda_\alpha}{\alpha\in\kappa}$ with $\lambda_0=0$ and letting $I_\alpha=\iv{\lambda_\alpha}{\lambda_{\alpha+1}}$. We only need to consider the successor case, so assume we have defined $\lambda_\alpha$, then we choose $\lambda_{\alpha+1}$ such that $\lambda_{\alpha+1}= \card[big]{A\cap I_\alpha}$; for instance, let $j_0>\lambda_\alpha$ be large enough such that $\lambda_\alpha^+\leq \card[big]{A\cap \iv{\lambda_\alpha}{j_0}}$, for each $n\in\omega$ pick $j_{n+1}$ large enough such that $\card{j_{n}}<\card[big]{A\cap \iv{\lambda_\alpha}{j_{n+1}}}$, and let $\lambda_{\alpha+1}=\sup_{n\in\omega}j_n$.
		
		Since $X\in\EkA$, there exists $\phi\in\SkI$ such that $X\subset\sqb\phi_*$. For a given $\alpha\in\kappa$, let us define $X_\alpha:=\set{x\in X}{x\restriction I_\alpha\in \phi(\alpha)}$.  Since $\card{\phi(\alpha)}<2^{\card{I_\alpha}}=\lambda_{\alpha+1}^+$ and $\card[m]{{}^{\lambda_\alpha}2}=\lambda_\alpha^+\leq \lambda_{\alpha+1}$, we can enumerate $X_\alpha\restriction\lambda_{\alpha+1}$ as $\abset{x_\xi}{\xi\in \lambda_{\alpha+1}}$ and  $A\cap I_\alpha$ as $\abset{a^\alpha_\xi}{\xi\in \lambda_{\alpha+1}}$. Then  $X_\alpha\subset\Cup_{\xi\in\lambda_{\alpha+1}}[x_\xi\restriction a^\alpha_\xi]$. Finally note that $X=\Cup_{\alpha\in\kappa}X_\alpha$. Since $A$ was arbitrary, it follows that $X\in\SNk$.
	\end{proof}

	We remark that in the classical case $\cal{MA}=\cal{EA}$, as was shown by \citet{Zindulka2019}. For successor~$\kappa$ this cannot be generalised, since successor $\kappa$ satisfy $\EAk=\Ek\nsubset\MAk$ by \cref{crl:Ek(B) not subset MAk}, but we do not know more than this.

	\section{Cardinal characteristics}\label{sec:cardinal characteristics}

	In this section we will study the cardinal characteristics of the families from \cref{dfn:families} and compare them to several well-known cardinal characteristics of the higher Baire space. For the $\sf{ZFC}$-provable results, it is often possible to provide a Galois--Tukey connection between two relational systems in order to compare cardinal characteristics (cf.\ \cite{Blass1994}). Although the use of relational systems provides more general theorems, we will, for the sake of clarity, give our proofs in a direct fashion, and leave the translation to relational systems to the reader.

	\begin{dfn}
		Let $\cal I_\kappa$ be a ${\leq}\kappa$-complete ideal on $\kacs$, then we define the following cardinal characteristic parametrised by $\cal I_\kappa$:
		\begin{enumerate}
			\item The \emph{additivity number} $\add(\cal I_\kappa)=\min\set[big]{\card{\cal A}}{\cal A\subset\cal I_\kappa\text{ and }\Cup \cal A\notin\cal I_\kappa}$,
			\item The \emph{covering number} $\cov(\cal I_\kappa)=\min\set[big]{\card{\cal C}}{\cal C\subset\cal I_\kappa\text{ and }\Cup \cal C=\kacs}$,
			\item The \emph{uniformity number} $\non(\cal I_\kappa)=\min\set[big]{\card N}{N\subset\kacs\text{ and }N\notin\cal I_\kappa}$,
			\item The \emph{cofinality number} $\cof(\cal I_\kappa)=\min\set[big]{\card{\cal F}}{\cal F\subset\cal I_\kappa\text{ and }\forall X\in\cal I_\kappa\eexists Y\in \cal F\,(X\subset Y)}$.
		\end{enumerate}
	\end{dfn}

	\begin{figure}[t]
		\begin{tikzpicture}[xscale=1.3,yscale=1]
			\node (a1) at (0,0) {$\aleph_1$};
			\node (c)  at (10.667,7) {$2^{\aleph_0}$};
			
			\node (aN) at (1,0) {$\add(\cal N)$};
			\node (fN) at (9.667,7) {$\cof(\cal N)$};
			
			\node (s)  at (0,3) {$\fr s$};
			\node (r)  at (10.667,4) {$\fr r$};
			
			\node (cN) at (1,4) {$\cov(\cal N)$};
			\node (nN) at (9.667,3) {$\non(\cal N)$};
			
			\node (aM) at (3,0) {$\add(\cal M)$\rlap{${}=\add(\cal E)=\min\ac{\fr b,\cov(\cal M)}$}};
			\node (fM) at (7.667,7) {\llap{$\max\ac{\fr d,\non(\cal M)}=\cof(\cal E)={}$}$\cof(\cal M)$};
			
			\node (nE) at (3,3) {$\non(\cal E)$};
			\node (cE) at (7.667,4) {$\cov(\cal E)$};
			
			\node (b)  at (4.8,2.667) {$\fr b$};
			\node (d)  at (5.85,4.333) {$\fr d$};
			
			\node (cM) at (5.85,1.3) {$\cov(\cal M)$};
			\node (nM) at (4.8,5.7) {$\non(\cal M)$};
			
			\node (e)  at (1.85,1.3) {$\fr e$};
			\node (v)  at (8.817,5.7) {$\fr v$};

			\draw (a1) edge[->] (aN);
			\draw (fN) edge[->] (c);
			
			\draw (cE) edge[->] (fM);
			\draw (aM) edge[->] (nE);
			
			\draw (cM) edge[->] (cE);
			\draw (nE) edge[->] (nM);
			
			\draw (cN) edge[->] (cE);
			\draw (nE) edge[->] (nN);
			
			\draw (b)  edge[->,in=180,out=0,looseness=1.3] (r);
			\draw (s)  edge[->,in=180,out=0,looseness=1.3] (d);
			
			\draw (cE) edge[->] (r);
			\draw (s)  edge[->] (nE);
			
			\draw (a1)  edge[->] (s);
			\draw (r)  edge[->] (c);
			
			\draw (e)  edge[->] (cM);
			\draw (nM)  edge[->] (v);
			
			\draw (aN) edge[->] (e);
			\draw (v) edge[->] (fN);
			
			\draw (e)  edge[->] (nE);
			\draw (cE)  edge[->] (v);

			\draw (cN) edge[-,white,line width=5pt] (cE);
			\draw (nE) edge[-,white,line width=5pt] (nN);
			\draw (cN) edge[->] (cE);
			\draw (nE) edge[->] (nN);
			
			\draw (cE) edge[-,white,line width=5pt] (fM);
			\draw (cM) edge[-,white,line width=5pt] (cE);
			\draw (cE) edge[->] (fM);
			\draw (cM) edge[->] (cE);
			
			\draw (aM) edge[-,white,line width=5pt] (nE);
			\draw (nE) edge[-,white,line width=5pt] (nM);
			\draw (aM) edge[->] (nE);
			\draw (nE) edge[->] (nM);
			
			\draw (aM) edge[-,white,line width=5pt] (b);
			\draw (d)  edge[-,white,line width=5pt] (fM);
			\draw (aM) edge[->] (b);
			\draw (d)  edge[->] (fM);
			
			\draw (aN) edge[-,white,line width=5pt] (cN);
			\draw (cN) edge[-,white,line width=5pt] (nM);
			\draw (aN) edge[->] (cN);
			\draw (cN) edge[->] (nM);
			
			\draw (aN) edge[-,white,line width=5pt] (aM);
			\draw (fM) edge[-,white,line width=5pt] (fN);
			\draw (aN) edge[->] (aM);
			\draw (fM) edge[->] (fN);
			
			\draw (b)  edge[-,white,line width=5pt] (d);
			\draw (b)  edge[->] (d);
			
			\draw (b)  edge[-,white,line width=5pt] (nM);
			\draw (cM) edge[-,white,line width=5pt] (d);
			\draw (b)  edge[->] (nM);
			\draw (cM) edge[->] (d);
			
			\draw (aM) edge[-,white,line width=5pt] (cM);
			\draw (nM) edge[-,white,line width=5pt] (fM);
			\draw (aM) edge[->] (cM);
			\draw (nM) edge[->] (fM);
			
			\draw (cM) edge[-,white,line width=5pt] (nN);
			\draw (nN) edge[-,white,line width=5pt] (fN);
			\draw (cM) edge[->] (nN);
			\draw (nN) edge[->] (fN);
		\end{tikzpicture}
		\caption{A diagram of $\sf{ZFC}$-provable relations between cardinal characteristics of the (classical) continuum related to the ideal $\cal E$. An arrow $\fr x\to\fr y$ means that $\fr x\leq\fr y$ is provable.}\label{fig:classical extended Cichon}
	\end{figure}

	\begin{figure}[t]
		\begin{tikzpicture}[xscale=1.6,yscale=1.65]
			
			\node (a1) at (0,0) {$\kappa^+$};
			\node (c)  at (8,4) {$2^{\kappa}$};
			\node (s)  at (0,2) {$\fr s_\kappa$};
			\node (r)  at (8,2) {$\fr r_\kappa$};
			\node (b)  at (3,2) {$\fr b_\kappa$};
			\node (d)  at (5,2) {$\fr d_\kappa$};
			\node (aN) at (1,0) {$\bins{h}$};
			\node (fN) at (7,4) {$\dins{h}$};
			\node (aM) at (3,0) {$\add(\cal M_\kappa)$};
			\node (fM) at (5,4) {$\cof(\cal M_\kappa)$};
			\node (nM) at (3,3.2) {$\non(\cal M_\kappa)$};
			\node (cM) at (5,.8) {$\cov(\cal M_\kappa)$};
			\node (e)  at (1.5,.8) {$\fr e_{2,\kappa}$};
			\node (v)  at (6.5,3.2) {$\fr v_{2,\kappa}$};

			\node (nE) at (1,1.5) {$\non(\cal E_\kappa)$};
			\node (cE) at (7,2.5) {$\cov(\cal E_\kappa)$};
			\node (nBE) at (1,2.4) {$\non(\cal {BE}_\kappa)$};
			\node (cBE) at (7,1.6) {$\cov(\cal {BE}_\kappa)$};
			
			\node (aEM) at (3,1.25) {$\add(\Ek,\cal M_\kappa)$};
			\node (fEM) at (5,2.75) {$\cof(\Ek,\cal M_\kappa)$};
			
			\node at (8,4.6) {\llap{$\cof(\BEk)={}$}$\cof(\Ek)$};
			\node at (0,-.6) {$\add(\Ek)$\rlap{${}=\add(\BEk)$}};
			\node[rotate=90] at (8,4.3) {$=$};
			\node[rotate=90] at (0,-.3) {$=$};

			\draw (a1) edge[->] (aN);
			\draw (aN) edge[->] (nE);
			\draw (nBE) edge[->] (nM);
			
			\draw (cM) edge[->] (cBE);
			\draw (cE) edge[->] (fN);
			\draw (aN) edge[->] (aM);
			\draw (aM) edge[->] (cM);
			\draw (aEM) edge[->] (b);
			\draw (b)  edge[->] (d);
			\draw (b)  edge[->] (nM);
			\draw (nM) edge[->] (fM);
			\draw (d)  edge[->] (fEM);
			\draw (cM) edge[->] (d);
			\draw (fM) edge[->] (fN);
			\draw (fN) edge[->] (c);
			
			\draw (d)  edge[->] (r);
			\draw (cBE) edge[->] (r);
			\draw (s)  edge[->] (b);
			\draw (s)  edge[->] (nBE);
			\draw (a1)  edge[->,line width=.8pt, dotted] (s);
			\draw (r)  edge[->] (c);

			\draw (e)  edge[->] (cM);
			\draw (aN) edge[->] (e);
			
			\draw (v) edge[->] (fN);
			\draw (nM)  edge[->] (v);

			\draw (nE) edge[-,white,line width=5pt] (nBE);
			\draw (e) edge[-,white,line width=5pt] (nM);
			\draw (aM) edge[-,white,line width=5pt] (aEM);
			\draw (cM) edge[-,white,line width=5pt] (v);
			\draw (fEM) edge[-,white,line width=5pt] (fM);
			\draw (cBE) edge[-,white,line width=5pt] (cE);
			
			\draw (nE) edge[->] (nBE);
			\draw (e) edge[->] (nM);
			\draw (aM) edge[->] (aEM);
			\draw (cM) edge[->] (v);
			\draw (fEM)  edge[->] (fM);
			\draw (cBE) edge[->] (cE);

		\end{tikzpicture}
		\caption{A diagram of $\mathsf{ZFC}$-provable relations between cardinal characteristics of the higher continuum under the assumption that $\kappa$ is strongly inaccessible. The dotted arrow requires the assumption that $\kappa$ is weakly compact.}\label{fig:higher extended Cichon}
	\end{figure}
	
	The following properties are well-known and easy to see. Note that the last two properties imply various relations between the uniformity and covering numbers of the ideals given in \cref{fig:ideals}, reflecting the relations displayed in that diagram.
	\begin{prp}\label{cardinal functions comparison}
		Let $\cal I_\kappa\subset\cal J_\kappa$ be \mbox{${\leq}\kappa$-complete} ideals on $\kacs$.
		\begin{enumerate}
			\item $\kappa^+\leq \add(\cal I_\kappa)\leq\min\ac{\cov(\cal I_\kappa),\non(\cal I_\kappa)}$,
			\item $\max\ac{\cov(\cal I_\kappa),\non(\cal I_\kappa)}\leq\min\ac{2^\kappa,\cof(\cal I_\kappa)}$,
			\item $\cof(\cal I_\kappa)\leq 2^{2^\kappa}$,
			\item $\non(\cal I_\kappa)\leq \non(\cal J_\kappa)$, 
			\item $\cov(\cal J_\kappa)\leq \cov(\cal I_\kappa)$.\qed
		\end{enumerate}
	\end{prp}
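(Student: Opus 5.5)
The plan is to verify the five items one at a time, directly from the definitions, using only that the ideals are ${\leq}\kappa$-complete proper ideals containing all singletons. That last fact is needed for $\kappa^+\le\add$ and for $\non\le 2^\kappa$, $\cov\le 2^\kappa$. We check it first: for $x\in\kacs$, the slalom $\phi(\alpha)=\{x\restriction B_\alpha\}$ has $|\phi(\alpha)|=1<2^{|B_\alpha|}$, so singletons lie in $\Ek(B)$ and hence in every ideal under consideration. Properness of the ideals in \cref{fig:ideals} follows from $\BEk\subset\CMk\subset\Mk$ together with the $\kappa$-Baire category theorem.

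For (1), the bound $\kappa^+\le\add(\cal I_\kappa)$ is immediate from ${\leq}\kappa$-completeness. For $\add\le\cov$, take a covering family $\cal C$: its union is $\kacs\notin\cal I_\kappa$, so $\cal C$ witnesses non-additivity. For $\add\le\non$, take $N\notin\cal I_\kappa$ and the family of its singletons $\{\{x\}:x\in N\}$; this has size $|N|$ and union $N\notin\cal I_\kappa$.

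For (2):
\begin{enumerate}
\item $\cov\le 2^\kappa$, since the singletons cover $\kacs$.
\item $\non\le 2^\kappa$, since $\kacs\notin\cal I_\kappa$ by properness.
\item $\cov\le\cof$, since a cofinal family $\cal F$ also covers: every singleton lies in some member of $\cal F$.
\item $\non\le\cof$: given a cofinal $\cal F$ with $|\cal F|<\non(\cal I_\kappa)$, pick for each $Y\in\cal F$ a point $x_Y\notin Y$. This is possible because $Y\neq\kacs$. The set $N=\{x_Y:Y\in\cal F\}$ has size less than $\non$, hence $N\in\cal I_\kappa$. But then $N$ is contained in some $Y\in\cal F$, contradicting $x_Y\in N\setminus Y$.
\end{enumerate}

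For (3), note that $\cal I_\kappa$ itself is cofinal and $|\cal I_\kappa|\le|\cal P(\kacs)|=2^{2^\kappa}$.

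For (4), any $N\notin\cal J_\kappa$ also satisfies $N\notin\cal I_\kappa$, because $\cal I_\kappa\subset\cal J_\kappa$. For (5), any covering family $\cal C\subset\cal I_\kappa$ is also contained in $\cal J_\kappa$.

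There is no real obstacle here. The only point requiring care is the implicit assumption that $\cal I_\kappa$ is proper and contains singletons, which is needed for $\add\le\non$, for $\non\le\cof$, and for the $2^\kappa$ bounds. We make this assumption explicit as described above.
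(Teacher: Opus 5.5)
Your proof is correct and is exactly the routine verification the paper has in mind, since it records the proposition without proof as well known. You are also right to make explicit the tacit hypotheses that the ideals are proper and contain all singletons, without which $\add$, $\cov$ and $\non$ need not even be defined.

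Your side remarks on the specific families of \cref{fig:ideals} are a little loose. Membership of singletons in $\NAk$, $\MAk$, $\SNk$, and properness of $\SNk$ (which need not be contained in $\Mk$), do not follow from the reasons you give, although each is easy directly; moreover $\NWD$ and $\BEkE$ are not ${\leq}\kappa$-complete ideals at all. None of this affects the proposition, which concerns arbitrary such ideals.
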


	\subsection{Comparison between the higher closed null cardinal characteristics}\label{sec:comparison within closed null}
	
	We will first compare the cardinal characteristics of the families defined in \cref{dfn:families} to each other. The following theorem gives a general condition for comparing the cardinal characteristics of the ideal $\Ek(B)$ with those of $\Ek(C)$, for $B,C\in\BPklim$.
	
	\begin{thm}
		If $B,C\in\BPklim$ and $f\in\kaka$ is injective such that $2^{\card{B_\alpha}}=2^{\card{C_{f(\alpha)}}}$ for almost all $\alpha\in\kappa$, then 
		\begin{align*}
			\non(\Ek(C))&\leq \non(\Ek(B)), & \cov(\Ek(B))\leq \cov(\Ek(C)),\\
			\add(\Ek(C))&\leq \add(\Ek(B)), & \cof(\Ek(B))\leq \cof(\Ek(C)).
		\end{align*}
	\end{thm}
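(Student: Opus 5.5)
We will construct a pair of maps witnessing a Tukey-style (Galois–Tukey) connection from the relational system of $\Ek(B)$ to that of $\Ek(C)$. Concretely, we want two maps. The first is $\Phi\colon\kacs\to\kacs$ such that $\Phi[X]\in\Ek(C)$ implies $X\in\Ek(B)$. The second is $\Psi\colon\Sigma_\kappa(C)\to\Sigma_\kappa(B)$ such that $\Phi(x)\in\sqb\psi_*$ implies $x\in\sqb{\Psi(\psi)}_*$.

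Granting such maps, the four inequalities follow in the standard way. A non-$\Ek(B)$ set $X$ of minimal size gives the non-$\Ek(C)$ set $\Phi[X]$, so $\non(\Ek(C))\le\non(\Ek(B))$. A covering family $\set{\sqb{\psi_i}_*}{i}$ for $\Ek(C)$ pulls back to the covering family $\set{\sqb{\Psi(\psi_i)}_*}{i}$ for $\Ek(B)$, giving the covering inequality. A cofinal family for $\Ek(C)$ is mapped by $\Psi$ to a cofinal family for $\Ek(B)$: given $\phi\in\SkB$, the image $\Phi[\sqb\phi_*]$ lies in $\Ek(C)$, hence in some $\sqb\psi_*$, hence $\sqb\phi_*\subset\sqb{\Psi(\psi)}_*$. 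For additivity, a family $\mathcal A\subset\Ek(B)$ with $\Cup\mathcal A\notin\Ek(B)$ yields $\set{\Phi[A]}{A\in\mathcal A}$, whose union $\Phi[\Cup\mathcal A]$ is not in $\Ek(C)$. For these last two arguments we also need $\Phi[A]\in\Ek(C)$ whenever $A\in\Ek(B)$. By \cref{prp:completeness of Ek(B)} all the ideals involved are ${\le}\kappa$-complete, so finitely or boundedly many exceptional indices can be absorbed.

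\emph{Construction.} Discard an initial segment $\alpha<\alpha_0$ of indices, so that $2^{\card{B_\alpha}}=2^{\card{C_{f(\alpha)}}}$ for all $\alpha\ge\alpha_0$. For each such $\alpha$ fix a bijection $e_\alpha\colon{}^{B_\alpha}2\to{}^{C_{f(\alpha)}}2$. On indices $\gamma\notin\ran(f)$, fix a distinguished element $z_\gamma\in{}^{C_\gamma}2$. Define $\Phi(x)$ as the element of $\kacs$ with
\[
\Phi(x)\restriction C_{f(\alpha)}=e_\alpha(x\restriction B_\alpha)\quad\text{for }\alpha\ge\alpha_0,
\qquad
\Phi(x)\restriction C_\gamma=z_\gamma\quad\text{otherwise}.
\]
Define $\Psi(\psi)(\alpha)=e_\alpha^{-1}[\psi(f(\alpha))]$ for $\alpha\ge\alpha_0$, and $\Psi(\psi)(\alpha)=\emp$ below $\alpha_0$. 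Since $e_\alpha$ is a bijection, $\card{\Psi(\psi)(\alpha)}=\card{\psi(f(\alpha))}<2^{\card{C_{f(\alpha)}}}=2^{\card{B_\alpha}}$, so $\Psi(\psi)\in\Sigma_\kappa(B)$.

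\emph{Checking the defining implication.} Suppose $\Phi(x)\in\sqb\psi_*$. Since $f$ is injective, $f(\alpha)\to\kappa$ as $\alpha\to\kappa$; that is, each bounded set has bounded preimage under $f$. Hence for almost all $\alpha$ we have $\Phi(x)\restriction C_{f(\alpha)}\in\psi(f(\alpha))$, so $x\restriction B_\alpha\in\Psi(\psi)(\alpha)$, and therefore $x\in\sqb{\Psi(\psi)}_*$. This step uses only injectivity of $f$ and is routine.

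\emph{Main obstacle: showing $\Phi[\sqb\phi_*]\in\Ek(C)$.} Given $\phi\in\SkB$, define $\psi(f(\alpha))=e_\alpha[\phi(\alpha)]$ and $\psi(\gamma)=\ac{z_\gamma}$ for $\gamma\notin\ran(f)$. The first clause has size $<2^{\card{C_{f(\alpha)}}}$ by the cardinal equality. The second clause is a singleton, which is allowed once $\card{C_\gamma}\ge1$; this holds because $C\in\BPklim$ and the parts are nonempty. Any finitely many problematic indices below $\alpha_0$ are handled via \cref{lmm:almost to everywhere}. This gives $\Phi[\sqb\phi_*]\subset\sqb\psi_*$ with $\psi\in\Sigma_\kappa(C)$, which completes the argument for all four inequalities.
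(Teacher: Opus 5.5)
Your proof is correct, and its core is the paper's: your $\Phi$ and $\Psi$ are the paper's maps $x\mapsto x'$ and $\phi'\mapsto\phi$ (the paper fills in $0$ where you use $z_\gamma$), and the $\non$ and $\cov$ inequalities are obtained the same way.

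The routes differ for $\add$ and $\cof$. The paper introduces a second, forward map on slaloms, $\psi\mapsto\psi'$ with $\psi'(\gamma)=\emp$ for $\gamma\notin\ran(f)$. It then argues through almost-inclusion of slaloms: from $\Cup_\beta\sqb{\psi'_\beta}_*\subseteq\sqb{\phi'}_*$ it infers $\psi'_\beta(f(\alpha))\subseteq\phi'(f(\alpha))$ for almost all $\alpha$. You instead stay with sets, using one Tukey pair plus the extra property $\Phi[\sqb\phi_*]\subseteq\sqb\psi_*$.

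Your version is the more robust one. If $\kappa\setminus\ran(f)$ is unbounded, which the hypotheses allow, the paper's $\psi'$ has $\sqb{\psi'}_*=\emp$, so that inference fails as written. Repairing it needs two things: nonempty fillers off $\ran(f)$, and the small lemma that for slaloms with all values nonempty, $\sqb\psi_*\subseteq\sqb\phi_*$ implies $\psi(\gamma)\subseteq\phi(\gamma)$ for almost all $\gamma$. Your singletons $\ac{z_\gamma}$ are exactly that filler, and arguing with images $\Phi[A]$ makes the lemma unnecessary. What the repaired paper argument buys in return is a direct comparison of the almost-inclusion orders on $\Sigma_\kappa(B)$ and $\Sigma_\kappa(C)$; for the four inequalities, your single pair of maps is cleaner.

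Two cosmetic slips:
\begin{itemize}
\item $z_\gamma$, and $\psi(\gamma)$, must also be specified for $\gamma=f(\alpha)$ with $\alpha<\alpha_0$.
\item These exceptional indices are fewer than $\kappa$, hence bounded by regularity, not ``finitely many''. They are absorbed by the $\forall^\infty$ in the definition of $\sqb\psi_*$, not by ${\leq}\kappa$-completeness.
\end{itemize}
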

	\begin{proof}
		We first show that $\non(\Ek(B))\geq\non(\Ek(C))$ and remark that $\cov(\Ek(B))\leq \cov(\Ek(C))$ is proved by the same argument.
		Suppose $B,C\in\BPklim$, $\alpha_0\in\kappa$ and  $f\in\kaka$ are such that $f$ is injective and $2^{\card{B_\alpha}}=2^{\card{C_{f(\alpha)}}}$ for all $\alpha\in\iv{\alpha_0}{\kappa}$. For each $\alpha\in\iv{\alpha_0}\kappa$ we fix a bijection $\pi_\alpha\colon{}^{B_\alpha}2\to {}^{C_{f(\alpha)}}2$.
		
		Let us fix some $X\subset\kacs$ with $X\notin\Ek(B)$. For each $x\in X$, let $x'\in\kacs$ be defined by
		\begin{align*}
			x'\colon\xi\mapsto\begin{cases}
				\pi_\alpha(x\restriction B_\alpha)(\xi) & \text{ if there is }\alpha\in\iv{\alpha_0}\kappa\text{ such that }\xi\in C_{f(\alpha)},\\
				0 & \text{ otherwise.}
			\end{cases}
		\end{align*} 
		Let $X'=\set{x'}{x\in X}$, then $\card{X'}\leq\card X$. Let $\phi'\in\Sigma_\kappa(C)$, then we define $\phi$ by
		\begin{align*}
			\phi\colon\alpha\mapsto\begin{cases}
				\pi_\alpha^{-1}\sqb[big]{\phi'(f(\alpha))} & \text{ if }\alpha\in\iv{\alpha_0}\kappa,\\
				\emp & \text{ otherwise}.
			\end{cases}
		\end{align*}
		Then $\phi\in\Sigma_\kappa(B)$, since $\card{\phi(\alpha)}\leq\card{\phi'(f(\alpha))}<2^{\card{C_{f(\alpha)}}}=2^{\card{B_\alpha}}$. Because $X\notin\Ek(B)$, there is $x\in X\setminus \sqb\phi_*$, and thus $\exists^\infty\alpha\in\kappa\,\rb[big]{x\restriction B_\alpha\notin\phi(\alpha)}$. However, note that $x'\restriction C_{f(\alpha)}=\pi_\alpha(x\restriction B_\alpha)$, therefore $\exists^\infty\alpha\,\rb[big]{x'\restriction C_{f(\alpha)}\notin \phi'(\alpha)}$, and thus $X'\nsubset\sqb{\phi'}_*$. Since $\phi'$ was arbitrary, we see that $X'\notin\Ek(C)$.
		
		Now we show $\add(\Ek(C))\leq\add(\Ek(B))$, and remark that $\cof(\Ek(B))\leq \cof(\Ek(C))$ is proved similarly. Given any $\psi\in\Sigma_\kappa(B)$, we define 
		\begin{align*}
			\psi'\colon\xi\mapsto\begin{cases}
				\pi_\alpha\sqb[big]{\psi(\alpha)} & \text{ if }\xi\in\ran(f)\text{ and }\alpha=f^{-1}(\xi),\\
				\emp & \text{ otherwise,}
			\end{cases}
		\end{align*}
		and given $\phi'\in\Sigma_\kappa(C)$, we define $\phi\in\SkB$ as above. Then note that $\psi'(f(\alpha))\subset \phi'(f(\alpha))$ implies $\psi(\alpha)\subset \phi(\alpha)$. Now, if $\psi_\beta\in\Sigma_\kappa(B)$ for each $\beta\in\gamma$, where $\gamma<\add(\Ek(C))$, then $\Cup_{\beta\in\gamma}\sqb[big]{\psi'_\beta}_*\in\Ek(C)$, and thus there is some $\phi'\in\Sigma_\kappa(C)$ such that $\psi'_\beta(f(\alpha))\subset\phi'(f(\alpha))$ for almost all $\alpha\in\kappa$, from which we see that $\psi_\beta(\alpha)\subset\phi(\alpha)$ for almost all $\alpha$, and thus $\Cup_{\beta\in\gamma}\sqb[big]{\psi_\beta}_*\subset\sqb\phi_*\in\Ek(B)$. Therefore $\gamma<\add(\Ek(B))$.
	\end{proof}
	
	\begin{crl}\label{unif cov of E for successor}
		For successor $\kappa$ and $B\in\BPklim$,  $\non(\Ek(B))=\non(\Ek)$ and $\cov(\Ek(B))=\cov(\Ek)$.
	\end{crl}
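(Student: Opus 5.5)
For successor $\kappa=\lambda^+$, I would reduce everything to the preceding theorem together with \cref{crl:Ekm is ideal}. By \cref{thm:Ek(B) minmax}\,\ref{thm:Ek(B) minmax:4} and \cref{crl:Ekm is ideal}, there is $I\in\IPklim$ with $\Ek(I)=\EkE=\Ek$. Concretely this is $I_\alpha=\iv{\lambda\alpha}{\lambda\alpha+\lambda}$, so $\card{I_\alpha}=\lambda$ for all $\alpha$. Hence $\non(\Ek)=\non(\Ek(I))$ and $\cov(\Ek)=\cov(\Ek(I))$. It therefore suffices to show $\non(\Ek(B))=\non(\Ek(I))$ and $\cov(\Ek(B))=\cov(\Ek(I))$ for an arbitrary $B\in\BPklim$.

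Fix $B\in\BPklim$. By the definition of $\BPklim$ for successor $\kappa$, we have $\card{B_\alpha}=\lambda$ for almost all $\alpha\in\kappa$. Likewise $\card{I_\alpha}=\lambda$ for all $\alpha$. So with $f=\mathrm{id}_\kappa$, which is injective, we get $2^{\card{B_\alpha}}=2^\lambda=2^{\card{I_{f(\alpha)}}}$ for almost all $\alpha$.

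The preceding theorem applied to the pair $(B,I)$ gives $\non(\Ek(I))\leq\non(\Ek(B))$ and $\cov(\Ek(B))\leq\cov(\Ek(I))$. Applied to the pair $(I,B)$, again with $f=\mathrm{id}$ (the hypothesis only needs to hold for almost all $\alpha$, and it holds on a tail), it gives the reverse inequalities $\non(\Ek(B))\leq\non(\Ek(I))$ and $\cov(\Ek(I))\leq\cov(\Ek(B))$. Combining the two directions yields both equalities.

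The only point needing care is to check that the hypotheses of the preceding theorem really hold in both directions. The theorem demands $2^{\card{B_\alpha}}=2^{\card{C_{f(\alpha)}}}$ only for almost all $\alpha$, and both partitions have all but boundedly many parts of size exactly $\lambda$, so this is immediate. I also note that $\non$ and $\cov$ of $\Ek$ are taken with respect to the ideal $\Ek$, which coincides with $\Ek(I)$ by \cref{crl:Ekm is ideal}. Given that, there is no real obstacle.
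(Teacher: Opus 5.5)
Your proof is correct and is essentially the paper's argument. The paper likewise notes that every $B\in\BPklim$ has $\card{B_\alpha}=\lambda$ on a tail, and that $\Ek=\Ek(I)$ for some $I\in\IPklim$ by \cref{crl:Ekm is ideal}; it leaves implicit the two applications of the preceding theorem (with $f=\mathrm{id}$, in both directions) that you spell out.
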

	\begin{proof}
		For $\kappa$ successor, $B\in\BPklim$ if and only if $\card{B_\alpha}=\lambda$ for almost all $\alpha$. Moreover, by \cref{crl:Ekm is ideal}, $\Ek=\Ek(B)$ for some $B\in\BPklim$.
	\end{proof}

	\begin{thm}
		For each $B\in\BPklim$, $\cof(\Ek(B))\leq 2^\kappa$ and $\cof(\Ek)\leq 2^\kappa$ and $\cof(\BEk)\leq 2^\kappa$.
	\end{thm}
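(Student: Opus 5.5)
The plan is to count slaloms directly. For a fixed $B\in\BPk$, the family $\set{\sqb\phi_*}{\phi\in\SkB}$ is cofinal in $\Ek(B)$ by the definition of $\Ek(B)$, so $\cof(\Ek(B))\leq\card{\SkB}$. Each $B_\alpha$ is a bounded subset of $\kappa$, hence $\card{B_\alpha}<\kappa$. Since $\kappa^{<\kappa}=\kappa$, we get $\card{{}^{B_\alpha}2}\leq\kappa$, and moreover every $\phi(\alpha)$ has size less than $2^{\card{B_\alpha}}\leq\kappa$. Thus $\phi(\alpha)$ is a subset of size ${<}\kappa$ of a set of size at most $\kappa$, and again by $\kappa^{<\kappa}=\kappa$ there are at most $\kappa$ choices for $\phi(\alpha)$. (Even without this refinement, $\card{\cal P({}^{B_\alpha}2)}\leq 2^\kappa$ suffices.) Hence $\card{\SkB}\leq(2^\kappa)^\kappa=2^\kappa$, and so $\cof(\Ek(B))\leq 2^\kappa$.

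For $\Ek$ and $\BEk$, the generating family is $\BEkE=\Cup_{B\in\BPklim}\Ek(B)$. First I bound the number of partitions. A bounded partition is a sequence of length $\kappa$ of elements of $[\kappa]^{<\kappa}$, and $\card{[\kappa]^{<\kappa}}=\kappa$, so $\card{\BPk}\leq\kappa^\kappa=2^\kappa$. Consequently the family
\[\cal F=\set[big]{\sqb\phi_*}{B\in\BPklim,\ \phi\in\SkB}\]
has size at most $2^\kappa\cdot2^\kappa=2^\kappa$, and it is cofinal in $\BEkE$. Let $\cal F'$ be the set of unions of ${\leq}\kappa$-sized subfamilies of $\cal F$. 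Then $\card{\cal F'}\leq(2^\kappa)^\kappa=2^\kappa$. Every element of $\BEk$ is a union $\Cup E$ with $E\in[\BEkE]^{\leq\kappa}$, and each member of $E$ lies inside some element of $\cal F$, so $\Cup E$ is contained in an element of $\cal F'$. Elements of $\cal F'$ belong to $\BEk$, so $\cal F'$ witnesses $\cof(\BEk)\leq 2^\kappa$. The same argument, restricted to $I\in\IPklim$, gives $\cof(\Ek)\leq2^\kappa$.

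I expect no real obstacle. The only points needing care are the cardinal arithmetic $\card{[\kappa]^{<\kappa}}=\kappa$ and $(2^\kappa)^\kappa=2^\kappa$, and remembering that the cofinal witness must consist of members of the ideal itself, which holds because each $\sqb\phi_*$ lies in $\Ek(B)$ and the ideals are ${\leq}\kappa$-complete by \cref{prp:simple relations}.
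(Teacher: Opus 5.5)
Your proof is correct and takes the same approach as the paper: both bound $\card{\Sigma_\kappa(B)}$ by $2^\kappa$ using $\kappa^{<\kappa}=\kappa$, and both bound the number of partitions by $2^\kappa$ using $\card{[\kappa]^{<\kappa}}=\kappa$. You also spell out the passage to ${\leq}\kappa$-sized unions via $(2^\kappa)^\kappa=2^\kappa$ for $\Ek$ and $\BEk$, a step the paper leaves implicit.
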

	\begin{proof}
		Remember that we assume $\kappa=\kappa^{<\kappa}$. Now $\Sigma_\kappa(B)\subseteq\prod_{\alpha\in\kappa}\sqb[big]{{}^{B_\alpha}2}^{<2^{\card{B_\alpha}}}$, and since $\card{B_\alpha}<\kappa$, we see that $\card[big]{{}^{B_\alpha}2}\leq 2^{<\kappa}=\kappa$, and thus $\card[Big]{\sqb[big]{{}^{B_\alpha}2}^{<2^{\card{{B_\alpha}}}}}\leq\kappa^{<\kappa}=\kappa$. That is, $\card{\Sigma_\kappa(B)}=2^\kappa$. Moreover, $\card[big]{\BPklim}=2^\kappa$, since  $\BPklim\subseteq\prod_{\alpha\in\kappa}[\kappa]^{<\kappa}$ and $\card[big]{[\kappa]^{<\kappa}}=\kappa$.
	\end{proof}
	
	\begin{thm}
		There exist some $I\in\IPklim$ and $B\in\BPklim$ for which $\add(\Ek(I))\leq \add(\EkA)$ and $\add(\Ek(B))\leq \add(\BEkA)$. Moreover, there exist some $I\in\IPklim$ and $B\in\BPklim$ for which $\non(\BEkA)=\non(\Ek(B))$ and $\non(\EkA)=\non(\Ek(I))$.
	\end{thm}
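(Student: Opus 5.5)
The plan is to read off both claims directly from the definitions of $\EkA=\Cap_{I\in\IPklim}\Ek(I)$ and $\BEkA=\Cap_{B\in\BPklim}\Ek(B)$, combined with monotonicity of the cardinal characteristics. By \cref{prp:completeness of Ek(B)}, all of $\Ek(I)$, $\Ek(B)$, $\EkA$ and $\BEkA$ are ${\leq}\kappa$-complete ideals, so $\add$ and $\non$ are defined for them. Each of them is contained in a proper ideal ($\Ek(B)\subset\BEk\subsetneq\CMk\subset\Mk$ by \cref{thm:BE < CM}), so $\kacs\notin\EkA$ and the minima defining these characteristics exist.

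For the additivity statement, I fix a family $\cal A\subset\EkA$ with $\card{\cal A}=\add(\EkA)$ and $\Cup\cal A\notin\EkA$. By the definition of $\EkA$ as an intersection, there is some $I\in\IPklim$ with $\Cup\cal A\notin\Ek(I)$. I then take this $I$ as the witness. Since $\EkA\subset\Ek(I)$ (\cref{prp:simple relations}), we have $\cal A\subset\Ek(I)$, and its union lies outside $\Ek(I)$. Hence $\add(\Ek(I))\leq\card{\cal A}=\add(\EkA)$. The same argument, with $\BPklim$ in place of $\IPklim$, produces $B\in\BPklim$ with $\add(\Ek(B))\leq\add(\BEkA)$. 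The point to note is that $I$ depends on the chosen witness family. The statement only asks for existence, so this is harmless.

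For the uniformity statement, one inequality holds for every $B$. Since $\BEkA\subset\Ek(B)$ for all $B\in\BPklim$, \cref{cardinal functions comparison} gives $\non(\BEkA)\leq\non(\Ek(B))$. For the reverse inequality, I pick $X\subset\kacs$ with $X\notin\BEkA$ and $\card X=\non(\BEkA)$. Again by the definition of $\BEkA$ as an intersection, there is $B\in\BPklim$ with $X\notin\Ek(B)$. For this $B$ we get $\non(\Ek(B))\leq\card X=\non(\BEkA)$, which together with the first inequality gives equality. The interval version, $\non(\EkA)=\non(\Ek(I))$ for a suitable $I\in\IPklim$, is identical.

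I do not expect a genuine obstacle here. The only care needed is to observe that the partition $I$ or $B$ is chosen \emph{after} fixing the witness family or set, and to confirm that the ideals involved are ${\leq}\kappa$-complete and proper, so that the characteristics are well defined.
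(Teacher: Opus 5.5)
Your proposal is correct and follows the paper's proof: both fix a witness family (resp.\ set) for $\EkA$ or $\BEkA$, then use the intersection definition to choose a partition $I$ (resp.\ $B$) whose ideal $\Ek(I)$ (resp.\ $\Ek(B)$) already fails to contain its union (resp.\ the set). Your remarks that monotonicity gives the reverse inequality for $\non$, and that the characteristics are well defined, just spell out what the paper leaves to ``by definition''.
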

	\begin{proof}
		By definition: suppose $\abset{X_\alpha}{\alpha\in\add(\EkA)}$ is a sequence in $\EkA$ such that $\Cup_{\alpha}X_\alpha\notin\EkA$, then there is $I\in\IPklim$ such that $\Cup_{\alpha}X_\alpha\notin\Ek(I)$. Similarly for the uniformity numbers.
	\end{proof}
	
	In terms of additivity numbers, \cref{thm:BEkm is no ideal,thm:eveq equi} showed that $\add(\BEkE)=2$ and (under assumptions on $\kappa$) $\add(\EkE)=2$. We will use the same method of proof to determine the values of $\add(\BEk)$, $\add(\Ek)$, $\cof(\BEk)$ and $\cof(\Ek)$. Note that the higher case does not follow our classical intuition, where $\add(\cal E)=\add(\cal M)$ and $\cof(\cal E)=\cof(\cal M)$.
	
	\begin{thm}\label{thm:add BEk and Ek are absolute}
		Assume $\kappa=\kappa^{<\kappa}$.
		\begin{enumerate}[label=(\arabic*)]
			\item\label{thm:add BEk is absolute} $\add(\BEk)=\kappa^+$.
			\item\label{thm:cof BEk is absolute} $\cof(\BEk)=2^\kappa$.
			\item\label{thm:add Ek is absolute} If the continuum function is not eventually constant below $\kappa$, then $\add(\Ek)=\kappa^+$.
			\item\label{thm:cof Ek is absolute} If the continuum function is not eventually constant below $\kappa$, then $\cof(\Ek)=2^\kappa$.
		\end{enumerate}
	\end{thm}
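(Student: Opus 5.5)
We will prove all four items by a single construction: a family of $2^\kappa$ (or $\kappa^+$) sets in $\BEkE$ (respectively $\EkE$) such that no single generator $\sqb\psi_*$ with $\psi\in\Sigma_\kappa(C)$, $C\in\BPklim$, can contain two of them. The mechanism is \cref{lmm:BE additive failure}, and in the interval case \cref{lmm:E additive failure}.

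\textbf{Building the family.} Fix a family $\abset{B^\eta}{\eta<2^\kappa}$ in $\BPklim$ such that any two distinct members $B=B^\eta$ and $B'=B^{\eta'}$ satisfy hypothesis \ref{lmm:addF:s1} of \cref{lmm:BE additive failure}, namely $\card{B_\alpha\cap B'_\beta}\le1$ for almost all $\alpha,\beta$. For each $\eta$, let $\phi^\eta\in\Sigma_\kappa(B^\eta)$ consist of the constant functions, as in \cref{thm:BEkm is no ideal}; then hypothesis \ref{lmm:addF:s2} holds for every pair.

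The first obstacle is to produce $2^\kappa$ such partitions. The plan is to take an almost disjoint family $\set{A_\eta}{\eta<2^\kappa}\subset\sqkaka$ of "directions". For instance, identify $\kappa$ with ${}^{<\kappa}2\times\kappa$ and use the branches of ${}^{<\kappa}2$, whose pairwise intersections are bounded. From these we build partitions whose pieces mostly meet each other in at most one point, obtained as "transversals" of a fixed grid $\kappa\cong\kappa\times\kappa$ along distinct slopes. If exact pairwise conditions for all $2^\kappa$ indices are too hard, it suffices to get hypothesis \ref{lmm:addF:s1} only eventually, which is all the lemma uses. If even that fails, we fall back to the weaker target below.

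\textbf{Consequences.} Given $C\in\BPklim$ and $\psi\in\Sigma_\kappa(C)$, \cref{lmm:BE additive failure} applied to any pair $\eta\ne\eta'$ shows that $\sqb\psi_*$ is meagre in $\sqb{\phi^\eta}_0$ or in $\sqb{\phi^{\eta'}}_0$. Hence $\sqb\psi_*$ contains at most one $\sqb{\phi^\eta}_*$. Moreover, by ${\le}\kappa$-completeness of each $\cal M_\kappa(\sqb{\phi^\eta}_0)$ (\cref{lmm:nontrivial meagre for slaloms}), any $Z\in\BEk$, being a union of $\kappa$ generators, satisfies $Z\cap\sqb{\phi^\eta}_0\in\cal M_\kappa(\sqb{\phi^\eta}_0)$ for all but at most $\kappa$ many $\eta$.

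\textbf{Finishing.} For \ref{thm:cof BEk is absolute}: a cofinal family $\cal F\subset\BEk$ needs a member covering each $\sqb{\phi^\eta}_*$, and each member covers at most $\kappa$ of them. So $\card{\cal F}\cdot\kappa\ge2^\kappa$, which gives $\cof(\BEk)=2^\kappa$ together with the earlier bound $\cof(\BEk)\le2^\kappa$. For \ref{thm:add BEk is absolute}: take $\kappa^+$ of the sets $\sqb{\phi^\eta}_*$. Any $Z\in\BEk$ misses (meagrely) all but $\kappa$ of them, so their union is not in $\BEk$. Hence $\add(\BEk)\le\kappa^+$, and equality follows from ${\le}\kappa$-completeness. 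Only $\kappa^+$ many partitions are needed here, and these are easily built by recursion of length $\kappa^+$, diagonalising against the fewer than $\kappa^+$ earlier ones. That is the fallback if the $2^\kappa$ construction is hard.

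\textbf{Interval case.} For \ref{thm:add Ek is absolute} and \ref{thm:cof Ek is absolute}, fix $L$ as in \ref{lmm:EaddF-1}, available since the continuum function is not eventually constant. Take an almost disjoint family $\set{x_\eta}{\eta<2^\kappa}$ of strictly increasing functions in $\kaka$, meaning $\card{\ran x_\eta\cap\ran x_{\eta'}}<\kappa$; such families of size $2^\kappa$ exist because $\kappa^{<\kappa}=\kappa$ (use branches of ${}^{<\kappa}2$ coded into $\kappa$). Let $I^\eta$ and $\phi^\eta$ be as in \ref{lmm:EaddF-3} and \ref{lmm:EaddF-4}. Then \cref{lmm:E additive failure} plays the role of \cref{lmm:BE additive failure} for every pair, with arbitrary $B\in\BPk$, so the same counting argument applies verbatim to $\Ek\subset\BEk$.
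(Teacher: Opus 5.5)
Your overall mechanism is the paper's. You use pairwise almost-${\le}1$-intersecting partitions, with slaloms that allow both values at every point. \Cref{lmm:BE additive failure} (resp.\ \cref{lmm:E additive failure}) then shows that each generator $\sqb\psi_*$ is non-meagre in at most one $\sqb{\phi^\eta}_0$, and the argument finishes with ${\leq}\kappa$-completeness of $\cal M_\kappa(\sqb{\phi^\eta}_0)$ and counting.

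Items \ref{thm:add BEk is absolute}, \ref{thm:add Ek is absolute} and \ref{thm:cof Ek is absolute} go through as you describe. Your $\kappa^+$-recursion only ever has to avoid $\kappa$ earlier partitions at a time. The almost disjoint family of increasing functions feeding \cref{lmm:E additive failure} is exactly what the paper uses.

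The gap is item \ref{thm:cof BEk is absolute}. It needs $2^\kappa$ partitions in $\BPklim$ that pairwise satisfy hypothesis \ref{lmm:addF:s1} of \cref{lmm:BE additive failure}, and you never construct them. Your fallback yields only $\kappa^+$ of them, which gives nothing beyond the trivial $\cof(\BEk)\ge\kappa^+$.

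The grid-and-slopes picture cannot be pushed through as you describe it. It produces partitions whose pieces meet in at most one point outright, not just eventually. Any such family has size at most $\kappa$: a pair $\ac{p,q}$ lying inside a piece of one partition cannot lie inside a piece of another, and there are only $\kappa$ pairs. You are right that the eventual version suffices (it is literally hypothesis \ref{lmm:addF:s1}), but the almost disjointness then has to be built into the pieces themselves, and you give no way to do that for $2^\kappa$ indices.

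The paper does it as follows. Fix $C\in\IPklim$ and an almost disjoint family $\set{A_\xi}{\xi<2^\kappa}\subset\sqkaka$, which you already know how to obtain from branches of ${}^{<\kappa}2$. Let $\abset{a^\xi_\alpha}{\alpha\in\kappa}$ increasingly enumerate $A_\xi$, let $\abset{b^\xi_\alpha}{\alpha\in\kappa}$ increasingly enumerate $\Cup_{\gamma\in\kappa\setminus A_\xi}C_\gamma$, and put $B^\xi_\alpha=C_{a^\xi_\alpha}\cup\ac{b^\xi_\alpha}$.

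Since $A_\xi\cap A_\eta$ is bounded, for large $\alpha,\delta$ we have $a^\xi_\alpha\neq a^\eta_\delta$. So the $C$-parts are disjoint and $\card{B^\xi_\alpha\cap B^\eta_\delta}\le 2$, which already suffices to run the proof of \cref{lmm:BE additive failure}. Thinning the $A_\xi$ so that $b^\xi_\alpha<\min(C_{a^\xi_\alpha})$ lowers the bound to $1$. A two-point intersection would force $b^\xi_\alpha\in C_{a^\eta_\delta}$ and $b^\eta_\delta\in C_{a^\xi_\alpha}$, which is then impossible.

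With this family in place, your counting argument for item \ref{thm:cof BEk is absolute} goes through unchanged.
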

	\begin{proof}
		\ref{thm:add BEk is absolute}.\quad By recursion we may construct a sequence $\abset{B^\xi}{\xi\in\kappa^+}$ such that for any distinct $\xi,\xi'\in\kappa^+$ and for almost all $\alpha,\beta\in\kappa$ we have $\card[big]{B^\xi_\alpha\cap B^{\xi'}_\beta}\leq 1$. For each $\xi$ we define $\phi^\xi\in\Sigma_\kappa(B^\xi)$ such that for any $\alpha\in\kappa$, any $\eta\in B^\xi_\alpha$ and any $i\in\ac{0,1}$, there is some $s\in\phi^\xi(\alpha)$ with $s(\eta)=i$. Then we claim that $\Cup_{\xi\in\kappa^+}\sqb{\phi^\xi}_*\notin\BEk$.
		
		Suppose, towards contriction, that $\abset{C^\eta}{\eta\in\kappa}$ is a sequence in $\BPklim$ and $\abset{\psi^\eta}{\eta\in\kappa}$ is such that $\psi^\eta\in\Sigma_\kappa(C^\eta)$ for each $\eta\in\kappa$ and $\Cup_{\xi\in\kappa^+}\sqb{\phi^\xi}_*\subset \Cup_{\eta\in\kappa}\sqb{\psi^\eta}_*$. For each $\xi\in\kappa^+$, we see that $\sqb{\phi^\xi}_0=\Cup_{\eta\in\kappa}\rb[big]{\sqb{\psi^\eta}_*\cap \sqb{\phi^\xi}_0}$ and, by \cref{lmm:nontrivial meagre for slaloms}, $\Mk(\sqb{\phi^\xi}_0)$ is ${\leq}\kappa$-complete and proper, therefore for each $\xi\in\kappa^+$ there is $\eta_\xi\in\kappa$ for which $\sqb{\psi^{\eta_\xi}}_*\cap \sqb{\phi^\xi}_0\notin\Mk(\sqb{\phi^\xi}_0)$. Then by the pigeonhole principle there are distinct $\xi,\xi'\in\kappa^+$ such that $\eta:=\eta_\xi=\eta_{\xi'}$. But this contradicts \cref{lmm:BE additive failure}, as $B^\xi$, $B^{\xi'}$, $\phi^\xi$ and $\phi^{\xi'}$ satisfy all of its assumptions.

		\noindent\ref{thm:cof BEk is absolute}.\quad The idea is dual to \ref{thm:add BEk is absolute}. We show that there exists a sequence $\abset{B^\xi}{\xi\in2^\kappa}$ in $\BPklim$ such that for any distinct $\xi,\xi'\in2^\kappa$ and almost all $\alpha,\beta\in\kappa$ we have $\card[big]{B^\xi_\alpha\cap B^{\xi'}_\beta}\leq 1$. This suffices: if we define $\phi^\xi$ for each $\xi$ as above, and we fix a sequence $\abset{C^\eta}{\eta\in\lambda}$ in $\BPklim$ with $\lambda<2^\kappa$, as well as corresponding $\psi^\eta\in\Sigma_\kappa(C^\eta)$, then \cref{lmm:BE additive failure} shows that for each $\eta$ there is at most one $\xi\in2^\kappa$ for which $\sqb{\psi^\eta}_*\cap \sqb{\phi^\xi}_0\notin\Mk(\sqb{\phi^\xi}_0)$, and therefore, no $\kappa$-unions of sets of the form $\sqb{\psi^\eta}_*$ will be able to cover more than $\kappa$-many sets of the form $\sqb{\phi^\xi}_*$, that is, $\cof(\BEk)> \lambda$.
		
		To conclude, we argue that a sequence $\abset{B^\xi}{\xi\in 2^\kappa}$ in $\BPklim$ with the above properties exists. We fix an almost disjoint family $\set{A_\xi}{\xi\in 2^\kappa}\subset\sqkaka$, that is, a family such that $\card{A_\xi\cap A_{\xi'}}<\kappa$ for all distinct $\xi,\xi'\in 2^\kappa$. We also fix $C\in\BPklim$. Now, for any $\xi\in 2^\kappa$, let $\abset[big]{a^\xi_\alpha}{\alpha\in\kappa}$ and $\abset[big]{b^\xi_\alpha}{\alpha\in\kappa}$  enumerate $A_\xi$ and  $\Cup_{\alpha\in\kappa\setminus A_\xi}C_\alpha$, respectively, and define $B^\xi_\alpha=C_{a^\xi_\alpha}\cup\ac{b^\xi_\alpha}$. 
		
		It is easy to see that for distinct $\xi,\eta\in 2^\kappa$ and for large enough $\alpha,\delta\in\kappa$ we have $\card[big]{B^\xi_\alpha\cap B^{\eta}_{\delta}}\leq 2$. This is actually good enough to prove a corresponding version of \cref{lmm:BE additive failure}, or alternatively we can bring the ${\leq}\,2$ down to ${\leq}\,1$ with a little more precision in the set-up. It suffices to assume that $C\in\IPklim$ (i.e., consists of consecutive intervals), that the enumerations $\abset[big]{a^\xi_\alpha}{\alpha\in\kappa}$ and $\abset[big]{b^\xi_\alpha}{\alpha\in\kappa}$ are increasing, and each $A_\xi$ is sufficiently sparse to ensure $b^\xi_\alpha<\min\rb[big]{C_{a^\xi_\alpha}}$. Under these additional assumptions, for large enough $\alpha,\delta$ we would have $\card[big]{B^\xi_\alpha\cap B^{\eta}_{\delta}}=2$ if and only if $B^\xi_\alpha\cap B^{\eta}_{\delta}=\ac[big]{b^\xi_\alpha,b^{\eta}_{\delta}}$, but this implies $b^{\xi}_\alpha\in C_{a^{\eta}_{\delta}}$ and $b^{\eta}_{\delta}\in C_{a^\xi_\alpha}$, contradicting that both $b^\xi_\alpha<\min\rb[big]{C_{a^\xi_\alpha}}$ and $b^{\eta}_{\delta}<\min\rb[big]{C_{a^{\eta}_{\delta}}}$.
		
		\noindent\ref{thm:add Ek is absolute} \& 
		\noindent\ref{thm:cof Ek is absolute}.\quad Similar to \ref{thm:add BEk is absolute} \& \ref{thm:cof BEk is absolute}, but we use families $\abset{x^\xi}{\xi\in\kappa^+}$ and $\abset{x^\xi}{\xi\in2^\kappa}$, respectively, of strictly increasing functions in $\kaka$ whose ranges are mutually almost disjoint, and then apply \cref{lmm:E additive failure} instead of \cref{lmm:BE additive failure}.
	\end{proof}

	\subsection{Comparison with other cardinal characteristics}\label{sec:comparison with other cardinals}
	
	As a starting ground, we could consider the cardinal characteristics of the classical closed null ideal, such as displayed in \cref{fig:classical extended Cichon}. See \cite{Blass1994,BartoszynskiJudah} for the classical definitions of the cardinal characteristics and proofs of the theorems involved.	
	With the exception of $\cov(\cal N)$ and $\non(\cal N)$, each of the cardinal characteristics from \cref{fig:classical extended Cichon} has a natural generalisation to the higher context of $\kacs$, and we will introduce these in the relevant sections. We will look at relations with the bounding and dominating numbers (\cref{sec:cc b}), splitting and reaping numbers   (\cref{sec:cc s}), localisation numbers (\cref{sec:cc loc}), and evasion and prediction numbers   (\cref{sec:cc e}), and finally we give a relation between cardinal characteristics of the ideal $\NAk$ and the ideals $\Ek(I)$ (\cref{sec:NAk and EkI}).

	\subsubsection{The bounding and dominating numbers}\label{sec:cc b} We refer to \cite{CummingsShelah,BrendleBrookeTaylorFriedmanMontoya} for more information on the higher bounding and dominating numbers, here we just provide the definition and the known relations to the cardinal characteristics of $\Mk$.
	
	\begin{dfn}\label{dfn:dom bd}
		For $f,g\in\kaka$, let $f\leqs g$ if and only if $\card[big]{\set{\alpha\in\kappa}{g(\alpha)<f(\alpha)}}<\kappa$. We call a set $B\subset\kaka$ \emph{unbounded} if no $g\in\kaka$ exists such that $f\leqs g$ for all $f\in B$, and we call a set $D\subset\kaka$ \emph{dominating} if every $f\in\kaka$ has some $g\in D$ such that $f\leqs g$. The \emph{bounding number} $\fr b_\kappa$ is the least size of an unbounded subset of $\kaka$ and the \emph{dominating number} $\fr d_\kappa$ is the least size of a dominating subset of $\kaka$.
	\end{dfn}
	
	\begin{fct}[\citet{CummingsShelah,BrendleBrookeTaylorFriedmanMontoya}]
		Assume $\kappa=\kappa^{<\kappa}$.
		\begin{enumerate}
			\item $\fr b_\kappa\leq\fr d_\kappa$,
			\item $\fr b_\kappa\leq\non(\Mk)$ and $\cov(\Mk)\leq \fr d_\kappa$,
			\item $\add(\Mk)=\min\ac{\fr b_\kappa,\cov(\Mk)}$ and $\cof(\Mk)=\max\ac{\fr d_\kappa,\non(\Mk)}$.\qed
		\end{enumerate}
	\end{fct}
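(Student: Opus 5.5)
The five assertions differ in difficulty. I will prove the easy inequalities directly and reduce the two equalities to a generalised Truss--Miller argument. Throughout I will use that, because $\kappa=\kappa^{<\kappa}$, the spaces $\kacs$ and $\kaka$ have the same meagre-ideal invariants. The standard coding of $\kaka$ into $\kacs$ (each $f(\alpha)$ written as a block of the form $0^{f(\alpha)}1$) is a homeomorphism onto a comeagre subspace, so I may work in $\kaka$ whenever that is convenient.

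\textbf{Easy inequalities.} For $\fr b_\kappa\leq\fr d_\kappa$, I show that a dominating family $D$ is unbounded. If some $g$ bounded $D$, then $\alpha\mapsto g(\alpha)+1$ would be dominated by some $f\in D$, and then $g+1\leqs f\leqs g$, which is impossible.

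For the next two inequalities I use the key observation that, for every $g\in\kaka$, the set $\set{f}{f\leqs g}$ is $\kappa$-meagre. It is the union over $\beta<\kappa$ of the closed sets $\set{f}{\forall\alpha\geq\beta\,(f(\alpha)\leq g(\alpha))}$. Each of these is nowhere dense, because any $s\in{}^{<\kappa}\kappa$ extends to some $t$ with $t(\alpha)>g(\alpha)$ for some $\alpha\geq\max(\beta,\dom s)$. Now:
\begin{itemize}
\item A bounded set is meagre, so a non-meagre set of size $\non(\Mk)$ is unbounded; this gives $\fr b_\kappa\leq\non(\Mk)$.
\item If $D$ is dominating, the sets $\set{f}{f\leqs g}$ for $g\in D$ cover $\kaka$; this gives $\cov(\Mk)\leq\fr d_\kappa$.
\end{itemize}
Together with $\add\leq\cov$ and $\non\leq\cof$ from \cref{cardinal functions comparison}, the inequalities $\add(\Mk)\leq\min\{\fr b_\kappa,\cov(\Mk)\}$ and $\cof(\Mk)\geq\max\{\fr d_\kappa,\non(\Mk)\}$ reduce to showing $\add(\Mk)\leq\fr b_\kappa$ and $\fr d_\kappa\leq\cof(\Mk)$.

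\textbf{The combinatorial coding.} The remaining work uses a coding of nowhere dense sets. For $x\in\kacs$ and $I\in\IPk$, the set $\dab{x,I}_*$ is meagre (\cref{lmm:CM properties}). Conversely, every nowhere dense $N$ is contained in a set of the form
\[\set{y}{\forall^\infty\alpha\,(y\restriction I_\alpha\neq x\restriction I_\alpha)}\cup(\text{a set handling limit stages}).\]
The construction is as follows. Since $\card{\fkacs}=\kappa$, I enumerate all $s\in\fkacs$ in order type $\kappa$ and choose, for each $s$, an extension $e(s)$ avoiding $N$. I then build a club of stages and interval blocks so that each block $I_\alpha$ contains witnesses $e(s)$ for all $s$ with domain below $\min I_\alpha$. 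The extension map $e$ is an element of $\kaka$ up to coding.

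\begin{itemize}
\item For $\add(\Mk)\leq\fr b_\kappa$: take an unbounded family $\set{g_\xi}{\xi<\fr b_\kappa}$ and let $M_\xi$ be meagre sets designed so that any $\Mk$-set covering $\Cup_\xi M_\xi$ would yield, through its extension map, a single function bounding every $g_\xi$. A natural choice is $M_\xi=\dab{x,I^{g_\xi}}_*$, where $I^{g_\xi}$ is the interval partition generated by iterating $g_\xi$.
\item For $\fr d_\kappa\leq\cof(\Mk)$: the extension maps of a cofinal family yield a dominating family.
\end{itemize}

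\textbf{Main obstacle: the reverse inequalities.} These are $\add(\Mk)\geq\min\{\fr b_\kappa,\cov(\Mk)\}$ and dually $\cof(\Mk)\leq\max\{\fr d_\kappa,\non(\Mk)\}$.

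I would follow Miller's classical proof. Let $\lambda<\min\{\fr b_\kappa,\cov(\Mk)\}$ and let $N_\xi$, for $\xi<\lambda$, be nowhere dense with extension maps $e_\xi$.
\begin{itemize}
\item Since $\lambda<\fr b_\kappa$, there is a single function bounding, modulo ${<}\kappa$ exceptions, the lengths of all the $e_\xi$. This yields one interval partition $J$ such that each $N_\xi$ is avoided inside almost every block of $J$ by some pattern.
\item Since $\lambda<\cov(\Mk)$, I find a single Cohen-like $x$ that, for every $\xi$, matches on cofinally many blocks the avoiding patterns from that block.
\item Then every $N_\xi$ is contained in $\dab{x,J'}_*$ for a suitable coarsening $J'$.
\end{itemize}

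The delicate point is the limit stages of $J'$. Because $\kappa$ is uncountable, the "cofinally many blocks" must be arranged along a club, and continuity at limit stages has to be ensured when $\kappa$ is not inaccessible. This is exactly where $\CMk\neq\Mk$ can occur. So I would work with $\kappa$-meagre sets presented as $\kappa$-unions of sets of the form $\dab{x,J}$ restricted to closed subspaces, rather than with $\CMk$ alone.

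The cofinality equality would then follow by dualising the same construction.
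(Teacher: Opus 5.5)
Your elementary steps are fine. These are the coding of $\kaka$ onto the comeagre set of $x\in\kacs$ with unboundedly many $1$s, the inequality $\fr b_\kappa\leq\fr d_\kappa$, and the meagreness of $\{f : f\leqs g\}$, which gives $\fr b_\kappa\leq\non(\Mk)$ and $\cov(\Mk)\leq\fr d_\kappa$.

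The gap is everything you route through the ``combinatorial coding''. It fails for the $\kappa$ that the hypothesis $\kappa=\kappa^{<\kappa}$ allows beyond the strongly inaccessible ones, e.g.\ $\kappa=\mu^+$ with $2^\mu=\kappa$. There $\card{{}^{\gamma}2}=\kappa$ for every $\gamma\in[\mu,\kappa)$. So no block $I_\alpha$ of size ${<}\kappa$ can carry witnesses $e(s)$ for all $s$ with $\dom(s)<\min(I_\alpha)$, and for suitable nowhere dense sets $\sup\{\dom(e(s)) : \dom(s)=\gamma\}=\kappa$ whatever $e$ is. The obstruction is this cardinality, not continuity at limit stages.

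Worse, the conclusion of your reverse-inequality sketch is false in that case. You conclude that every $N_\xi$ lies in some $\dab{x,J'}_*$. Already for $\lambda=1$ this says $\NWD\subseteq\CMk$, hence $\Mk=\CMk$ by ${\leq}\kappa$-completeness of $\CMk$, contradicting \cref{lmm:CM properties}\,\ref{lmm:CM properties:3}. Your proposed remedy (unions of sets $\dab{x,J}$ restricted to closed subspaces) is not specified enough to be checked. For strongly inaccessible $\kappa$ the outline can be completed using $\Mk=\CMk$. Even then, the coarsening $J'$ needs a second use of $\lambda<\fr b_\kappa$, applied to the functions giving, for each $\xi$, the next block on which $x$ copies the pattern avoiding $N_\xi$.

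The missing idea is that chopped reals are not needed: Miller's argument works whenever $2^{<\kappa}=\kappa$.
\begin{enumerate}
\item Let $\lambda<\min\{\fr b_\kappa,\cov(\Mk)\}$ and let $N_\xi$ ($\xi<\lambda$) be closed nowhere dense. This suffices, since each of $\lambda$ meagre sets is a union of $\kappa$ such sets and $\lambda\cdot\kappa<\min\{\fr b_\kappa,\cov(\Mk)\}$.
\item Using $\lambda<\cov(\Mk)$, pick $q_s\in[s]\setminus\bigcup_\xi N_\xi$ for each $s\in\fkacs$.
\item Let $f_\xi(s)$ be the least $\gamma\geq\dom(s)$ with $[q_s\restriction\gamma]\cap N_\xi=\emptyset$.
\item Identify $\fkacs$ with $\kappa$ via a bijection $\pi$, and use $\lambda<\fr b_\kappa$ to get a single $f$ with $f_\xi\leqs f$ for all $\xi$.
\item Each $U_\zeta=\bigcup\{[q_s\restriction f(s)] : \pi(s)\geq\zeta\}$ is open dense, and $N_\xi\cap U_\zeta=\emptyset$ for all large $\zeta$. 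Hence $\bigcup_\xi N_\xi\subseteq\bigcup_{\zeta<\kappa}(\kacs\setminus U_\zeta)\in\Mk$.
\end{enumerate}

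Dually, $\cof(\Mk)\leq\max\{\fr d_\kappa,\non(\Mk)\}$ follows by letting $\langle q_s : s\in\fkacs\rangle$ range over a non-meagre set of size $\non(\Mk)$ in $\prod_s[s]$, and $f$ over a dominating family. With the ${<}\kappa$-box topology, $\prod_s[s]$ is homeomorphic to $\kacs$. For a fixed meagre $M=\bigcup_\eta N_\eta$, the selections with $q_s\in N_\eta$ for $\kappa$ many $s$, for some $\eta$, form a meagre set.

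Finally, your choice $M_\xi=\dab{x,I^{g_\xi}}_*$ for $\add(\Mk)\leq\fr b_\kappa$ (and dually for $\fr d_\kappa\leq\cof(\Mk)$) does work, but not through a supremum over all nodes of one length. Use Baire category in the closed space $\dab{x,I^{g}}$ (\cref{lmm:proper CM ideal}) to find one piece $N_\eta$ of the covering meagre set and some $s$ with $\emptyset\neq[s]\cap\dab{x,I^g}\subseteq N_\eta$. Then build the bound from a single extension per level $\gamma$ for each of the $\kappa$ many pairs $(\eta,s)$: extend $s$ by $1-x$ up to $\gamma$, then out of $N_\eta$. At level $\gamma$, use only the pairs whose index is below $\gamma$.
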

	
	We will need the following generalisations of the additivity and cofinality numbers, which we will use to generalise a result by \citet{Miller} (see also \cite[Theorems~2.6.10 and~2.2.2]{BartoszynskiJudah} for classical analogues).
	
	\begin{dfn}
		Suppose that $\cal J $ is a subset of $\cal I $, then $\add(\cal J ,\cal I )$ denotes the least size of a set $J\subset\cal J $ such that $\Cup J\notin \cal I $, and $\cof(\cal J ,\cal I )$ denotes the least size of a family $\cal F\subset\cal I $ such that every $J\in \cal J $ is contained in some $I\in\cal F$.
	\end{dfn}
	
	\begin{fct}
		If $\cal J \subset\cal K \subset\cal I $, then $\add(\cal K )\leq\add(\cal J ,\cal I )$ and $\cof(\cal K )\geq\cof(\cal J ,\cal I )$.\qed
	\end{fct}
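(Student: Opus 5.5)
The statement splits into two inequalities, and for each I unwind the definitions and use that $\cal J\subset\cal K\subset\cal I$.

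For $\add(\cal K)\leq\add(\cal J,\cal I)$, I fix a witness for $\add(\cal J,\cal I)$: a family $J\subset\cal J$ with $\card J=\add(\cal J,\cal I)$ and $\Cup J\notin\cal I$. Since $\cal J\subset\cal K$, the same family satisfies $J\subset\cal K$. Since $\cal K\subset\cal I$, the fact that $\Cup J\notin\cal I$ gives $\Cup J\notin\cal K$. So $J$ witnesses that $\cal K$ is not closed under unions of size $\card J$, and hence $\add(\cal K)\leq\card J=\add(\cal J,\cal I)$.

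For $\cof(\cal K)\geq\cof(\cal J,\cal I)$, I fix a cofinal family $\cal F\subset\cal K$ of size $\cof(\cal K)$, meaning every $X\in\cal K$ is contained in some $Y\in\cal F$. Then $\cal F\subset\cal I$ because $\cal K\subset\cal I$. Also every $X\in\cal J$ lies in $\cal K$, so it is contained in some member of $\cal F$. Thus $\cal F$ is admissible in the definition of $\cof(\cal J,\cal I)$, which gives $\cof(\cal J,\cal I)\leq\card{\cal F}=\cof(\cal K)$.

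The only possible obstacle is a degenerate case where a minimum is taken over an empty set. For instance, if $\Cup J\in\cal I$ for every $J\subset\cal J$, then $\add(\cal J,\cal I)$ is undefined. The usual convention is to treat such a minimum as $\infty$ or as larger than everything, and under that convention the inequalities still hold trivially. So the argument above covers every case that is actually defined.
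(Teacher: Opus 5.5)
Your proof is correct and is the same definitional unwinding the paper intends; the paper states this as a Fact with no written proof. A witness for $\add(\cal J,\cal I)$ is already a witness for $\add(\cal K)$, and a cofinal subfamily of $\cal K$ is admissible for $\cof(\cal J,\cal I)$, which is exactly what you show.
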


	\begin{thm}\label{thm:add E M and bk}
		Let $\kappa$ be strongly inaccessible, then $\add(\Ek,\Mk)\leq\fr b_\kappa$ and $\cof(\Ek,\Mk)\geq \fr d_\kappa$.
	\end{thm}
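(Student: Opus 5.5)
The plan is to adapt Miller's classical argument. We attach to each $f\in\kaka$ a set $X_f\in\EkE$ such that if $\Cup_{f\in F}X_f$ is $\kappa$-meagre, then $F$ is $\leqs$-bounded. This gives $\add(\Ek,\Mk)\leq\fr b_\kappa$. The dual statement $\cof(\Ek,\Mk)\geq\fr d_\kappa$ follows from the same map: if $\cal F\subset\Mk$ has size $<\fr d_\kappa$ and covers every $X_f$, then the bounds attached to the members of $\cal F$ form a family of size $<\fr d_\kappa$ dominating everything, a contradiction.

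Two ingredients make this work for strongly inaccessible $\kappa$.
\begin{enumerate}[label=(\alph*)]
\item By \cref{lmm:CM properties}\,\ref{lmm:CM properties:3}, $\Mk=\CMk$. So every $\kappa$-meagre set is contained in some $\dab{x,J}_*$ with $x\in\kacs$ and $J\in\IPk$.
\item The higher version of Blass's characterisation of $\fr b$ and $\fr d$ via interval partitions. Say $J$ \emph{dominates} $I$ if for almost all $\alpha$ the interval $I_\alpha$ contains some interval $J_\gamma$. Then an unbounded (dominating) family in $\kaka$ can be converted into an undominated (dominating) family of interval partitions of the same size, and conversely. This goes through for regular $\kappa$ by the usual closure-point arguments, using that the starts of interval partitions form clubs.
\end{enumerate}
Given these, it suffices to assign to each $I\in\IPk$ (think $I=I^f$) a set $X_I\in\EkE$ such that $X_I\subset\dab{x,J}_*$ implies that $I$ is dominated by $J$, or by some $J'$ computed from $J$ alone.

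The construction of $X_I$ is as follows. From $I$ we build $R\in\IPklim$ with $R_\alpha=I'_\alpha\cup P_\alpha$. Here $I'_\alpha$ is a block of consecutive $I$-intervals (coarsened so that $\card{I'_\alpha}\to\kappa$), and $P_\alpha$ is a padding interval of length a cardinal $\lambda_\alpha>2^{\card{I'_\alpha}}$; strong inaccessibility is used exactly here. Let $\phi(\alpha)$ be the set of $s\in{}^{R_\alpha}2$ with $s\restriction P_\alpha\equiv0$. Then $\card{\phi(\alpha)}=2^{\card{I'_\alpha}}<2^{\card{R_\alpha}}$, so $\phi\in\Sigma_\kappa(R)$, and we set $X_I=\sqb\phi_*\in\Ek(R)\subset\EkE$.

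Now suppose $X_I\subset\dab{x,J}_*$, and suppose for cofinally many $\alpha$ the free part $I'_\alpha$ contains an entire $J$-interval $J_{\gamma_\alpha}$. Thin this to a cofinal set of such $\alpha$ whose intervals $J_{\gamma_\alpha}$ are pairwise distinct. Define $y$ to agree with $x$ on each $J_{\gamma_\alpha}$ and to be $0$ on every $P_\alpha$; then $y\in X_I\setminus\dab{x,J}_*$, a contradiction. Hence for almost all $\alpha$, $I'_\alpha$ contains no $J$-interval. We then convert this into a domination statement between $I$ and a partition $J'$ obtained from $J$, for instance by merging consecutive $J$-intervals, so that it says ``$I$ is dominated by $J'$'' in Blass's sense. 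Undominated families of $I$'s then cannot be covered by fewer than $\fr b_\kappa$ (resp.\ $\fr d_\kappa$) sets of the form $\dab{x,J}_*$.

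I expect the main obstacle to be this bookkeeping between the free parts $I'_\alpha$, the paddings $P_\alpha$ (which are long and may swallow $J$-intervals), and the direction of domination. Specifically, ``almost no free part contains a $J$-interval'' must become ``$J'$ dominates $I$'' for a $J'$ depending only on $J$, uniformly in $I$. My first attempt would choose $J'$ so that every $J'$-interval contains a full $J$-interval together with the gaps. Then the statement that the $R$-blocks $I'_\alpha$ contain no $J$-interval bounds how far the $I$-starts can spread relative to $J$, which yields $f\leqs g_J$ for the function $g_J$ coded by $J'$. Should the paddings obstruct this, the fallback is to place the padding before the free part, or to interleave it only at positions determined by $f$'s closure points, so that the free parts alone form a partition coding $f$.
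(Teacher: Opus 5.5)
Your reduction is sound as far as it goes. By $\Mk=\CMk$, a single set $\dab{x,J}_*$ covers $\Cup_{f\in F}X_f$ (resp.\ each member of the covering family). Your argument that $X_I\subset\dab{x,J}_*$ forces almost every free part $I'_\alpha$ to contain no $J$-interval is also correct.

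The gap is the step that actually carries the theorem, which you leave open as ``the main obstacle'': converting this into $f\leqs g_J$ for some $g_J$ depending only on $J$. It does not follow from your set-up. The fact that no free part contains a $J$-interval constrains $J$ only around the free parts; on the paddings $J$ is unconstrained. In your construction the paddings have $f$-dependent length $\lambda_\alpha$ and unspecified position. If the $I'_\alpha$ are blocks of a coarsening of $I$, there is no room left for the paddings, so they must swallow $I$-intervals. A $J$ that is coarse around the free parts but fine inside the paddings says nothing about $f$ on the paddings, nor about where the next free part begins. So no $J'$ computed from $J$ alone is forced to dominate $I$.

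Your notion of domination in (b) is also reversed. If ``$J$ dominates $I$'' means that almost every $I_\alpha$ contains some $J_\gamma$, then the partition of $\kappa$ into singletons dominates every $I$, so (b) as stated is false. In Blass's sense it is the intervals of the dominating partition that contain intervals of the dominated one.

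The missing idea is to make the padding length a fixed function of its starting point, so that $J$ also controls where the next free part starts. For instance, for increasing $f$:
\begin{itemize}
\item let $R_\alpha=\iv{r_\alpha}{r_{\alpha+1}}$ consist of a free part $\iv{r_\alpha}{q_\alpha}$ with $q_\alpha>f(r_\alpha)$;
\item follow it by a padding $\iv{q_\alpha}{r_{\alpha+1}}$ with $r_{\alpha+1}=e(q_\alpha):=(2^{\card{q_\alpha}})^+$, taking suprema at limits.
\end{itemize}
Then $\phi\in\Sigma_\kappa(R)$. Write $j_\gamma=\min(J_\gamma)$ and $c_J(\beta):=j_{\gamma+2}$ for $\beta\in J_\gamma$. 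Since $\iv{r_\alpha}{q_\alpha}$ contains no $J$-interval, $q_\alpha<c_J(r_\alpha)$ for almost all $\alpha$. Hence for $m\in R_\alpha$ you get $f(m)\leq f(r_{\alpha+1})<q_{\alpha+1}<c_J(e(c_J(m)))$. This bound depends only on $J$, and (b) is not needed at all.

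The paper deals with the same difficulty differently. It works directly with nowhere dense sets, in effect re-proving $\Mk=\CMk$ inline via strong inaccessibility. It attaches to each $b$ two sets $X^b,Y^b\in\EkE$ whose zero blocks alternate. Whichever parity of $b$-block the intervals extracted from the meagre set land in, one of the two sets is free there.
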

	\begin{proof}
		We prove that $\add(\Ek,\Mk)\leq\fr b_\kappa$, and remark that $\cof(\Ek,\Mk)\geq\fr d_\kappa$ is proved similarly. 
		
		Let $B\subset\kaka$ be an unbounded set. Fix a sequence of cardinals $\abset{\kappa_\alpha}{\alpha\in\kappa}$ such that $\abset{2^{\kappa_\alpha}}{\alpha\in\kappa}$ is strictly increasing. Given $b\in B$ we define $b'\in\kaka$ by $b'(0)=0$, and for limit $\gamma\in\kappa$ let $b'(\gamma)=\sup_{\alpha\in\gamma}b'(\alpha)$, and finally 
		\begin{align}
			b'(\alpha+1)=\sup\rb[big]{\ac{b'(\alpha)+1}\cup\set{b(\xi)}{\xi\leq b'(\alpha)}}.\tag{$\dagger$}\label{eqn:b'}
		\end{align}
		We will write $\kappa^b_\alpha:=\kappa_{b'(\alpha)}$ for all $\alpha\in\kappa$. Note that $b'$ (and thus also $\abset{\kappa^b_\alpha}{\alpha\in\kappa}$) is strictly increasing and continuous.  Now we will proceed in a similar manner to the proof of \cref{EkE adds to everything}. Define the sets
		\begin{align*}
			X^b&:=\set[m]{x\in\kacs}{\forall\alpha\in\kappa\fforall\xi\in\iv[big]{\kappa^b_{2\alpha}}{\kappa^b_{2\alpha+1}}\,\rb{x(\xi)=0}},\\
			Y^b&:=\set[m]{x\in\kacs}{\forall\alpha\in\kappa\fforall\xi\in\iv[big]{\kappa^b_{2\alpha+1}}{\kappa^b_{2\alpha+2}}\,\rb{x(\xi)=0}},\\
			Z&:=\set[big]{X^b}{b\in B}\cup\set[big]{Y^b}{b\in B}.
		\end{align*}
		We first show that $Z\subset\EkE$. Namely, let us define
		\begin{align*}
			J_\alpha&=\iv{\kappa^b_{2\alpha}}{\kappa^b_{2\alpha+2}},&
			I_\alpha&=\begin{cases}
				\iv{\kappa^b_{2\alpha}}{\kappa^b_{2\alpha+3}} & \text{ if }\alpha\text{ is limit,}\\
				\iv{\kappa^b_{2\alpha+1}}{\kappa^b_{2\alpha+3}} & \text{ otherwise,}
			\end{cases}
		\end{align*}
		then $I,J\in\IPklim$ and $X^b\in\Ek(I)$ and $Y^b\in\Ek(J)$. 
		
		To finish the proof, it suffices to show that $\Cup Z\notin\cal M_\kappa$. Suppose otherwise, and let $\abset{N_\eta}{\eta\in\kappa}$ be a sequence of nowhere dense subsets of $\kacs$ such that $\Cup Z=\Cup_{\eta\in\kappa}N_\eta$. Using that $\kappa$ is strongly inaccessible, we inductively build a continuous sequence of ordinals $\abset{\xi_\eta}{\eta\in\kappa}$ and a sequence $\abset{s_\eta}{\eta\in\kappa}$ of elements of $\fkacs$ such that for each $\eta\in\kappa$,
		\begin{enumerate}
			\item  if $t\colon\kappa_{\xi_\eta}\to2$, then $\sqb{t^\frown s_\eta}$ is disjoint from $\Cup_{\zeta<\eta}N_\zeta$ (note that this is possible, since $\Cup_{\zeta<\eta}N_\zeta\in\NWD$, by ${<}\kappa$-completeness of $\NWD$), and
			\item $\kappa_{\xi_{\eta+1}}$ is larger than the ordinal sum $\kappa_{\xi_\eta}+\dom(s_\eta)$.
		\end{enumerate}
		Define $g\in\kaka$ by 
		\[g\colon\beta\mapsto \min\set{\gamma\in\kappa}{\exists\eta\in\kappa\,(\beta\leq\xi_\eta\text{ and }\xi_{\eta+2}\leq\gamma)}.\]
		Since $B$ is unbounded, there is some $b\in B$ such that $b\nleqs g$, and thus the set $\set{\beta\in\kappa}{g(\beta)<b(\beta)}$ has size $\kappa$. Fix an element $\beta$ such that $g(\beta)<b(\beta)$, and let $\alpha\in\kappa$ be minimal such that $\beta\leq b'(\alpha)$, with $b'$ defined from $b$ as in \eqref{eqn:b'}, then by definition of $b'$ we have $b(\beta)\leq b'(\alpha+1)$. Now by definition of $g$, there exists $\eta\in\kappa$ such that 
		\[\beta\leq\xi_\eta<\xi_{\eta+1}<\xi_{\eta+2}\leq g(\beta)<b(\beta)\leq b'(\alpha+1).\]
		We consider two cases. Either $b'(\alpha)\leq\xi_{\eta+1}<\xi_{\eta+2}< b'(\alpha+1)$, or else $\xi_{\eta+1}<b'(\alpha)$ and thus by minimality of $\alpha$ we see that $\alpha$ is successor, say $\alpha=\alpha'+1$ with $b'(\alpha')<\beta$, and then we see that
		\[b'(\alpha')<\beta \leq \xi_\eta<\xi_{\eta+1}< b'(\alpha)=b'(\alpha'+1).\] 
		In conclusion, for each $\beta$ with $g(\beta)<b(\beta)$, there are $\alpha,\eta\in\kappa$ with $b'(\alpha)\leq \xi_\eta<\xi_{\eta+1}< b'(\alpha+1)$. Let us define the sets
		\begin{align*}
			A_0&:=\set[m]{(\alpha,\eta)\in\kappa\times\kappa}{b'(2\alpha)\leq \xi_\eta<\xi_{\eta+1}<b'(2\alpha+1)},\\
			A_1&:=\set[m]{(\alpha,\eta)\in\kappa\times\kappa}{b'(2\alpha+1)\leq \xi_\eta<\xi_{\eta+1}<b'(2\alpha+2)},
		\end{align*}
		then $\card{A_0\cup A_1}=\kappa$, thus let us assume without loss that $\card{A_0}=\kappa$. We define $x\in\kacs$ by
		\begin{align*}
			x(\zeta)=\begin{cases}
				s_\eta(\zeta') & \text{if there are }(\alpha,\eta)\in A_0\text{ and }\zeta'\in\dom(s_\eta)\text{ with }\zeta=\kappa_{\xi_\eta}+\zeta',\\
				0 & \text{otherwise},
			\end{cases}
		\end{align*}
		then $x$ is only nonzero in intervals $\iv{\kappa_{\xi_\eta}}{\kappa_{\xi_{\eta+1}}}$ for which there exists an $\alpha\in\kappa$ with $(\alpha,\eta)\in A_0$. Therefore $x$ is only nonzero on intervals $\iv[big]{\kappa^b_{2\alpha}}{\kappa^b_{2\alpha+1}}$, that is, $x\in Y^b\subset\Cup Z$. But simultaneously, for each $(\alpha,\eta)\in A_0$ we have $\sqb{x\restriction (\kappa_{\xi_\eta}+\dom(s_\eta))}=\sqb{(x\restriction \kappa_{\xi_\eta}){}^\frown s_\eta}$, which is disjoint from $\Cup_{\zeta<\eta} N_\zeta$ by our definitions of $\xi_\eta$ and $s_\eta$. But then $x\notin\Cup_{\eta\in\kappa}N_\eta=\Cup Z$, which is a contradiction.
	\end{proof}
	
	Although we already know the values of $\add(\Ek)$, $\add(\BEk)$, $\cof(\Ek)$ and $\cof(\BEk)$ from \cref{thm:add BEk and Ek are absolute}, \cref{thm:add E M and bk} gives us new information, since we note that $\add(\cal M_\kappa)\leq \add(\cal I,\cal M_\kappa)$ for any $\cal I\subset\Mk$, and thus consistently $\add(\Ek,\Mk)\neq \add(\Ek)$, and so on.
	
	Classically, we have $\add(\cal M)=\add(\cal E)$, and thus $\add(\cal M)\leq \non(\cal E)$. We do not know if the analogous result can be proved in the higher context.

	\subsubsection{The splitting and reaping numbers}\label{sec:cc s} The splitting number has been well-studied in the higher context, and behaves  differently than our classical intuition would suggest.
	
	\begin{dfn}
		For $X,Y\in\sqb\kappa^\kappa$, say that $X$ splits $Y$ if $\card{Y\cap X}=\card{Y\setminus X}=\kappa$. We call a family $S\subset\sqb\kappa^\kappa$ a \emph{splitting family} if every $Y\in\sqb\kappa^\kappa$ is split by some $X\in S$, and we call a family $R\subset\sqb\kappa^\kappa$ an \emph{unreapable family} if no $X\in\sqb\kappa^\kappa$ splits all $Y\in R$. The \emph{splitting number} $\fr s_\kappa$ is the least size of a splitting family, and the \emph{reaping number} $\fr r_\kappa$ is the least size of an unreapable family.
	\end{dfn}
	
	\begin{fct}[\citet{RaghavanShelah2017,Suzuki1998,Zapletal}]\leavevmode
		\begin{enumerate}
			\item $\fr s_\kappa\leq\fr b_\kappa\leq\fr r_\kappa$ and if $\kappa\geq\beth_\omega$, then $\fr r_\kappa\geq\fr d_\kappa$,
			\item  $\fr s_\kappa\leq\non(\Mk)$ and $\fr r_\kappa\geq\cov(\Mk)$,
			\item $\kappa$ is strongly inaccessible if and only if $\kappa\leq\fr s_\kappa$
			\item $\kappa$ weakly compact if and only if $\kappa^+\leq\fr s_\kappa$,
			\item if $\kappa^+<\fr s_\kappa$, an inner model with a measurable $\lambda$ of Mitchell order $o(\lambda)\geq\lambda^{++}$ exists.\qed
		\end{enumerate}
	\end{fct}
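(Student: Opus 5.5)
This Fact collects results from the literature. I would not reprove all of it; I would verify the parts with short combinatorial proofs and cite the deep ones. The order would be: the easy $\sf{ZFC}$ inequalities, then the characterisations of $\kappa\leq\fr s_\kappa$ and $\kappa^+\leq\fr s_\kappa$, then the consistency-strength statement, which I would only cite.

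First, $\fr s_\kappa\leq\non(\Mk)$ and $\fr r_\kappa\geq\cov(\Mk)$. Identify $X\in\sqb\kappa^\kappa$ with its characteristic function in $\kacs$. For fixed $Y\in\sqb\kappa^\kappa$, the set of $x$ that do not split $Y$ is $\kappa$-meagre: it is the union over $\beta\in\kappa$ of the sets $\set{x}{x\restriction(Y\setminus\beta)\text{ is constant}}$, and each of these is nowhere dense. Hence a non-meagre family is splitting, which gives $\fr s_\kappa\leq\non(\Mk)$. Dually, an unreapable family $R$ yields $\kappa$-meagre sets (one per $Y\in R$) covering $\kacs$, up to the $\kappa$-meagre set of bounded or co-bounded $x$. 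This gives $\cov(\Mk)\leq\fr r_\kappa$.

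For $\fr s_\kappa\leq\fr b_\kappa$, I would follow the classical argument. Take an unbounded family $\set{f_\xi}{\xi<\fr b_\kappa}$ of increasing functions, close each under iteration, and form the interval partitions $I^\xi$ they determine. The sets $X_\xi$ formed as the unions of the even-indexed intervals of $I^\xi$ are then splitting. Indeed, if $Y$ is not split by any $X_\xi$, then the function sending $\alpha$ to the next element of $Y$ after $\alpha$, suitably iterated, would bound all the $f_\xi$.

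The remaining claims I would cite. The regularity of $\kappa$ is used at limit stages of the iteration above; this is the place where the higher case needs the club structure of interval partitions rather than the classical argument. The inequality $\fr b_\kappa\leq\fr r_\kappa$ is dual to $\fr s_\kappa\leq\fr b_\kappa$. The inequality $\fr d_\kappa\leq\fr r_\kappa$ for $\kappa\geq\beth_\omega$ is Raghavan--Shelah and uses pcf-theoretic methods (Shelah's revised GCH); I would not attempt it. For $\kappa\leq\fr s_\kappa$ iff $\kappa$ is strongly inaccessible, there are two directions. If $2^\lambda\geq\kappa$ for some $\lambda<\kappa$, then $\lambda$ many sets coding the binary expansions of a partition of $\kappa$ into $\kappa$ blocks of size at most $2^\lambda$ already split everything. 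Conversely, for strongly inaccessible $\kappa$, any family of fewer than $\kappa$ sets is refined by a single unsplit set, built by intersecting along a $\kappa$-complete diagonal construction. The weakly compact characterisation (Suzuki) goes the same way, using the tree property, or a $\kappa$-complete filter on the relevant algebra of size $\kappa$, to find an unsplit set against $\kappa$ many sets. Item (5) is Zapletal's inner-model result, which I would simply cite.

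The main obstacle, if one attempted everything, would be item (1)'s $\fr d_\kappa\leq\fr r_\kappa$ for $\kappa\geq\beth_\omega$ and item (5); both rely on substantial external machinery, so the proposal is to invoke \cite{RaghavanShelah2017,Suzuki1998,Zapletal} for them.
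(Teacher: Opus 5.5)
The paper gives no proof of this Fact and simply cites the literature, so citing the deep items is fine. The problem is the item you do try to prove. Your argument for $\fr s_\kappa\leq\fr b_\kappa$ does not work.

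That $Y$ is not split by $X_\xi$ only tells you that $Y$ is almost contained in the even-indexed intervals of $I^\xi$, or almost contained in the odd-indexed ones. This is one-sided information and gives no bound $f_\xi\leqs h$ for any $h$ computed from $Y$ alone, since $Y$ may cluster inside long intervals of one parity however fast $f_\xi$ grows. The interval-partition argument needs domination. If the next-element function $g_Y$ of $Y$ satisfies $g_Y\leqs f_\xi$, then almost every interval of $I^\xi$ meets $Y$, so both parities do. Unboundedness only gives some $\xi$ with $f_\xi\nleqs g_Y$, i.e.\ cofinally many intervals meet $Y$, and these may all have the same parity. What your argument proves is $\fr s_\kappa\leq\fr d_\kappa$.

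The flaw cannot be patched inside this scheme. By your own account, the only use of $\kappa>\omega$ is taking suprema at limit stages. Hence the argument would prove $\fr s\leq\fr b$ on $\omega$, contradicting the consistency of $\fr b<\fr s$, which is exactly the contrast the paper's introduction draws. The higher inequality is a genuinely new theorem of Raghavan and Shelah and has to be cited. Its content lies only at weakly compact $\kappa$: otherwise item (4) already gives $\fr s_\kappa\leq\kappa<\kappa^+\leq\fr b_\kappa$.

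Relatedly, $\fr b_\kappa\leq\fr r_\kappa$ is not the dual of $\fr s_\kappa\leq\fr b_\kappa$. That dual is $\fr d_\kappa\leq\fr r_\kappa$, precisely the inequality known only for $\kappa\geq\beth_\omega$. Instead, $\fr b_\kappa\leq\fr r_\kappa$ is the dual of the $\fr s_\kappa\leq\fr d_\kappa$ argument you actually gave. Given fewer than $\fr b_\kappa$ sets $Y$, a single increasing $f$ eventually dominates all the $g_Y$. The union of the even-indexed intervals of the partition generated by $f$ then splits every one of them.

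Your item (2) is correct. In item (3) the ``blocks'' are unnecessary. Under the natural reading, coding the position of $\alpha$ within its block by an element of ${}^\lambda2$, the argument fails, since a $Y$ taking the same position in $\kappa$ many blocks is unsplit. The clean version injects $\kappa$ into ${}^\lambda2$ and takes the $\lambda$ coordinate sets. Regularity of $\kappa$ then bounds the union of the $\lambda$ exceptional initial segments, so almost all elements of an unsplit $Y$ would share one code, contradicting injectivity.

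In item (4), ``goes the same way'' covers only the forward direction, via the tree property. The converse needs a $\kappa$-Aronszajn tree when $\kappa$ is inaccessible but not weakly compact, to build a splitting family of size $\kappa$; as a citation this is acceptable.
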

	
	It follows from this that, as a lower bound, the only cases in which it is interesting to consider $\fr s_\kappa$ are when $\kappa$ is at least weakly compact.
	
	\begin{thm}We have
		$\fr s_\kappa\leq\non(\BEk)$ and $\fr r_\kappa\geq\cov(\BEk)$.
	\end{thm}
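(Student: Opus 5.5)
We prove $\fr s_\kappa\leq\non(\BEk)$ by showing that every $X\subset\kacs$ with $\card X<\fr s_\kappa$ lies in $\BEkE\subset\BEk$. The dual inequality $\fr r_\kappa\geq\cov(\BEk)$ will follow from the same construction read in the opposite direction: from an unreapable family $R$ we build $\card R$ many sets in $\BEk$ that cover $\kacs$.

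Identify each $x\in\kacs$ with $x^{-1}(1)\subset\kappa$. Fix some $C\in\IPklim$, say with $\card{C_\alpha}$ infinite for all $\alpha$.

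\textbf{The key combinatorial step.} Given $X$ with $\card X<\fr s_\kappa$, we want a single $Y\in\sqkaka$ that no $x\in X$ splits. This means that for each $x\in X$ either $\card{Y\cap x^{-1}(1)}<\kappa$ or $\card{Y\setminus x^{-1}(1)}<\kappa$, i.e.\ $x\restriction Y$ is eventually constant. Such $Y$ exists precisely because $\{x^{-1}(1)\}_{x\in X}$ has size ${<}\fr s_\kappa$ and so is not splitting. The one subtlety is that elements with $x^{-1}(1)$ of size ${<}\kappa$, or co-size ${<}\kappa$, never split anything; these are harmless.

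\textbf{Building the slalom.} Enumerate $Y=\ac{y_\alpha}{\alpha\in\kappa}$ increasingly. We design $B\in\BPklim$ so that each $B_\alpha$ contains exactly one point $y_\alpha$ of $Y$, plus a chunk of $\kappa\setminus Y$ large enough to make $\card{B_\alpha}\to\kappa$; this is done by a recursion as in \cref{capturing BP with IP}, distributing $\kappa\setminus Y$. This alone is not enough, since $x$ restricted to the non-$Y$ part is arbitrary. So the plan is to make $B_\alpha$ consist \emph{mostly} of points of $Y$ instead. Take the blocks to be $B_\alpha=Y\cap C_\alpha'$ for a suitable interval partition $C'$ (coarsened so each block meets $Y$ in ${\geq}$ the required size), with the points of $\kappa\setminus Y$ absorbed one at a time into blocks. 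Then define
\[\phi(\alpha)=\set{s\in{}^{B_\alpha}2}{s\restriction (B_\alpha\cap Y)\text{ is constant}}.\]
This gives $\card{\phi(\alpha)}\leq 2\cdot 2^{\card{B_\alpha\setminus Y}}$. Choosing $\card{B_\alpha\setminus Y}$ small, finite or at most $1$, while $\card{B_\alpha\cap Y}$ is infinite gives $\card{\phi(\alpha)}<2^{\card{B_\alpha}}$, so $\phi\in\SkB$. Each $x\in X$ is eventually constant on $Y$, hence $x\restriction B_\alpha\in\phi(\alpha)$ for almost all $\alpha$, and so $X\subset\sqb\phi_*\in\Ek(B)$. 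We must also ensure $B$ covers $\kappa$ with every block bounded. We do this by interleaving: the $\alpha$-th point of $\kappa\setminus Y$ is put together with the next unused block of $Y$-points of size $\card\alpha$ (or the predecessor cardinal in the successor case).

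\textbf{Dual.} For the reaping side, let $R$ be unreapable of size $\fr r_\kappa$. For each $Y\in R$, build $B^Y$ and $\phi^Y$ as above; the argument shows $[\phi^Y]_*$ contains every $x$ that fails to split $Y$. Since no $x$ splits every $Y\in R$, $\Cup_{Y\in R}[\phi^Y]_*=\kacs$, so $\cov(\BEk)\leq\fr r_\kappa$.

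The main obstacle is the bookkeeping of $B$: each block must be bounded, the blocks must partition $\kappa$, $\card{B_\alpha}$ must tend to $\kappa$ as required by $\BPklim$, and each block may contain only one point outside $Y$. Since $Y$ is unbounded, the recursion on $\alpha$ always has fresh $Y$-points available, so this should be routine.
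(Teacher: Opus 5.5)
Your proof is correct and is essentially the paper's argument read directly instead of by contraposition: the paper uses the same slalom $\phi(\alpha)=\set{s\in{}^{B_\alpha}2}{s\restriction(B_\alpha\cap Y)\text{ is constant}}$ and the same observation that escaping $\sqb\phi_*$ forces a split of $Y$. The only difference is the block sizes. The paper takes $\card{B_\alpha\cap Y}=2^{\card\alpha}$ and $\card{B_\alpha\setminus Y}=\card\alpha$, after shrinking $Y$ so that $\card{\kappa\setminus Y}=\kappa$; this tacitly needs $2^{\card\alpha}<\kappa$. Your choice of at most one point outside $Y$ per block works for every regular uncountable $\kappa$, including successors, with the few small initial blocks handled by \cref{lmm:almost to everywhere}.
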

	\begin{proof}
		We prove that $\fr s_\kappa\leq\non(\BEk)$ and remark that $\fr r_\kappa\geq\cov(\BEk)$ is shown similarly.
		
		Suppose that $X\subset\kacs$ and $X\notin\BEk$ and let $X'\subset\sqb\kappa^{\leq\kappa}$ be defined as $X'=\set{x^{-1}(1)}{x\in X}$; that is, $X$ is the set of characteristic functions for the sets in $X'$. We claim that $X'$ is a splitting family. Given $Y\subset\sqb\kappa^\kappa$ for which $\card{\kappa\setminus Y}=\kappa$, we choose some $B\in\BPklim$ such that $\card{B_\alpha\cap Y}=2^{\card \alpha}$ and $\card{B_\alpha\setminus Y}=\card \alpha$ for each $\alpha\in\kappa$, and we define $\phi\in\SkB$ such that 
		\[\phi(\alpha)=\set[big]{s\in {}^{B_\alpha}2}{\forall \xi,\xi'\in B_\alpha\cap Y\,(s(\xi)=s(\xi'))}.\]
		Since $X\notin\BEk$, we may choose $x\in\sqb X\setminus\sqb\phi_*$, then for $\kappa$-many $\alpha$ there are some $\xi_0,\xi_1\in B_\alpha\cap Y$ such that $x(\xi_0)=0$ and $x(\xi_1)=1$. It follows that $Y$ is split by $x^{-1}(1)$.
	\end{proof}
	
	We note that the above proof is highly specific to $\BEk$, and does not carry over to $\Ek$.

	\subsubsection{Localisation numbers}\label{sec:cc loc} We first introduce some terminology in order to define the localisation numbers.
	Consider a cardinal-valued function $b$ with domain $\kappa$ such that $b(\alpha)\geq 2$ for all $\alpha\in\kappa$, and let $\prod b$ denote the product $\prod_{\alpha\in\kappa}b(\alpha)$. If $h$ is another cardinal-valued function with domain $\kappa$, then the elements of $\prod_{\alpha\in\kappa}\sqb{b(\alpha)}^{<h(\alpha)}$ are called \emph{$(b,h)$-slaloms}. In the special case where $b=\bar\kappa$ is the constant function sending $\alpha\mapsto\kappa$ for all $\alpha\in\kappa$, we call a $(\bar \kappa,h)$-slalom simply an \emph{$h$-slalom}.
	
	For a $(b,h)$-slalom $\phi$ and $f\in\prod b$, we say that $f$ is \emph{localised} by $\phi$ if $\forall^\infty\alpha\in\kappa\,(f(\alpha)\in\phi(\alpha))$, denoted by $f\ins\phi$. The \emph{$(b,h)$-localisation number} $\dins{b,h}$ is the least cardinality of a family $\cal F$ of $(b,h)$-slaloms such that every $f\in\prod b$ is localised by a $(b,h)$-slalom in $\cal F$. Dually, the \emph{$(b,h)$-avoidance number} $\bins{b,h}$ is the least cardinality of a family $\cal A\subset\prod b$ such that no single $(b,h)$-slalom localises all $f\in\cal A$. We write $\fr d^h_\kappa(\ins)$ and $\fr b^h_\kappa(\ins)$ if $b=\bar \kappa$.
	
	Classically, localisation numbers were introduced by \citet{BartoszynskiLocalisation} to provide a combinatorial characterisation of the cofinality and additivity numbers of the Lebesgue null ideal:
	
	\begin{fct}[\citet{BartoszynskiLocalisation}]\label{Bartoszynski characterisation}
		Let $h\in\omom$ be strictly increasing, then $\add(\cal N)=\fr b_\omega^h(\ins)$ and $\cof(\cal N)=\fr d_\omega^h(\ins)$.\qed
	\end{fct}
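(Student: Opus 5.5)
We outline the standard proof of Bartoszyński's characterisation for strictly increasing $h\in\omom$: $\add(\cal N)=\fr b^h_\omega(\ins)$ and $\cof(\cal N)=\fr d^h_\omega(\ins)$. The proof splits into four inequalities, arranged as two dual pairs. Each pair is best organised as a Galois--Tukey connection between the relational system $\ab{\cal N,\cal N,\subset}$ and the localisation system $\ab{\omom,\cal S_h,\ins}$, where $\cal S_h$ denotes the $h$-slaloms.

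\textbf{Step 1: independence from $h$.} We first show that $\fr b^h_\omega(\ins)$ and $\fr d^h_\omega(\ins)$ do not depend on the choice of strictly increasing (indeed, of any unbounded) $h$. Given $h,h'$, choose an interval partition $\abset{J_n}{n\in\omega}$ such that $h'(n)\geq h(\card{J_n})$, or the reverse, as needed. Code $f\in\omom$ by $\bar f(n)=f\restriction J_n$, identifying finite sequences with natural numbers. An $h$-slalom catching $\bar f$ yields an $h'$-slalom catching $f$ by taking coordinate projections, and this bookkeeping works in both directions. This step is routine.

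\textbf{Step 2: localisation controls null sets, giving $\fr b^h_\omega(\ins)\leq\add(\cal N)$ and $\cof(\cal N)\leq\fr d^h_\omega(\ins)$.} Fix a null set $A$. Choose open sets $U_n\supset A$ with $\mu(U_n)\leq 4^{-n}$, and write each $U_n$ as a union of basic clopen sets. Let $f_A(n)$ be a finite code for the part of $U_n$ that is generated at levels in $J_n$, where the levels are arranged so that the measure of this part is at most $2^{-n}$ (using a fixed fast partition). Then $A\subset\Cap_m\Cup_{n\geq m}$ of these pieces. Now suppose an $n$-slalom $\phi$ localises $f_A$. Set $U^\phi_n=\Cup\phi(n)$, which has measure at most $n\cdot2^{-n}$. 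Then $\Cap_m\Cup_{n\geq m}U^\phi_n$ is a null set containing $A$. Consequently, a family of fewer than $\fr b^h_\omega(\ins)$ null sets has codes localised by a single slalom, so its union is null; and a localising family of size $\fr d^h_\omega(\ins)$ yields a cofinal family in $\cal N$.

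\textbf{Step 3: null sets control localisation, giving $\add(\cal N)\leq\fr b^h_\omega(\ins)$ and $\fr d^h_\omega(\ins)\leq\cof(\cal N)$.} This is the direction I expect to be the main obstacle, since a null set must be converted back into a thin slalom. Fix a partition into consecutive intervals $I_n$ with $\card{I_n}$ large, and identify $f(n)\in\omega$ with elements of a suitable finite set of clopen sets. Map $f$ to the null set
\[
G_f=\set{x\in\omcs}{\exists^\infty n\,\rb{x\restriction I_n\in F_n(f(n))}},
\]
where $F_n(k)$ are the blocks of a fixed sequence of sets of measure $2^{-n}$ whose union over $k$ is all of ${}^{I_n}2$ at fine levels.

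Given a null set $N$, we must produce a slalom $\phi_N$ with $\card{\phi_N(n)}\leq n$ such that $G_f\subset N$ implies $f\ins\phi_N$. To do this, use an open set $U\supset N$ of small measure and put
\[
\phi_N(n)=\set{k}{\mu\rb{U\cap F_n(k)}>\tfrac12\mu(F_n(k))}.
\]
A counting argument with a density/Lebesgue-type estimate, using a family $U_m$ of measures $2^{-m}$, bounds $\card{\phi_N(n)}$, and shows that if $f(n)\notin\phi_N(n)$ infinitely often, one can build an $x\in G_f\setminus N$ by a fusion argument. The delicate point is arranging the independence between different $n$ so that the fusion succeeds. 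Bartoszyński does this via probabilistic independence of the $F_n$ across blocks, and I would follow that route.
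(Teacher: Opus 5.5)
The paper only quotes this result from Bartoszyński, so your outline has to stand on its own. The architecture (two Tukey connections between $\ab{\cal N,\cal N,\subset}$ and localisation) is the right one, and Step~2 is essentially the standard easy direction. Step~3, however, carries the whole difficulty, and it does not work as set up.

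\textbf{The coding of $f$ is finitary.} The $I_n$ are fixed finite intervals, and for each $n$ the sets $F_n(k)$ are among the finitely many blocks of ${}^{I_n}2$. So $k\mapsto F_n(k)$ is not injective on $\omega$, and $G_f$ depends only on $\abset{F_n(f(n))}{n\in\omega}$. Take $N=G_f$ for an $f$ whose values lie in infinite fibres. Then every $f'$ with $F_n(f'(n))=F_n(f(n))$ for all $n$ satisfies $G_{f'}\subset N$, yet these $f'$ are not localised by any single slalom. Hence no choice of $\phi_N$ can give $G_{f'}\subset N\imp f'\ins\phi_N$.

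At best this approach proves a bounded localisation statement (for functions in $\prod_n 2^n$), which is weaker. Localising unbounded functions contains domination ($\fr b^h_\omega(\ins)\le\fr b$ and $\fr d\le\fr d^h_\omega(\ins)$), and no finitary coding can detect it. You have two options:
\begin{enumerate}
\item prove $\add(\cal N)\le\fr b$ and $\fr d\le\cof(\cal N)$ separately, then reduce to a bounded product $\prod_n g(n)$, choosing the blocks according to $g$; or
\item use, for each $n$, infinitely many independent events $E_{n,k}$, $k\in\omega$, e.g.\ on pairwise disjoint blocks.
\end{enumerate}

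\textbf{The slalom extraction fails even for bounded $f$.} The definition $\phi_N(n)=\set{k}{\mu(U\cap F_n(k))>\frac12\mu(F_n(k))}$ does not do the job, and no fusion argument can repair it. Write $E_n=\set{x\in\omcs}{x\restriction I_n\in F_n(f(n))}$. These are independent clopen sets with $\mu(E_n)=2^{-n}$, and $G_f=\limsup_n E_n$. For $r\ge2$ let $U^{(r)}=\Cup_{r\le a<b}(E_a\cap E_b)$. Then:
\begin{itemize}
\item $U^{(r)}$ is open and contains $G_f$, since each point of $G_f$ lies in two $E_a$ with $a\ge r$;
\item $\mu(U^{(r)})\le\sum_{a\ge r}4^{-a}<4^{1-r}$;
\item by independence, $\mu(U^{(r)}\cap E_n)\le 2^{1-r}\mu(E_n)\le\frac12\mu(E_n)$ for every $n$.
\end{itemize}
So with $N=G_f$ and $U=U^{(r)}$, we get $f(n)\notin\phi_N(n)$ for all $n$, although $G_f\subset N$. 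Here $U^{(r)}$ can be made as small as your counting needs, the same works for every member of a family $U_m$, and any other fixed threshold is defeated by taking $r$ larger.

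The underlying problem is that $G_f$ is null, so density estimates say nothing at its points. The information about $f(n)$ can sit entirely in how $U$ meets the intersections $E_a\cap E_n$. Extracting a thin slalom from an arbitrary null superset of $G_f$ is the real content of Bartoszyński's theorem, and your sketch does not supply it.

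Two minor points:
\begin{itemize}
\item In Step~1 you need $h'(m)\ge h(n)$ for all $m\in J_n$, and both functions must tend to infinity; mere unboundedness is not enough.
\item In Step~2 the grouping of basic pieces into the clopen sets $f_A(n)$ must depend on $A$. This is harmless, since $f_A(n)$ then ranges over the countable set of clopen sets of measure $\le 2^{-n}$.
\end{itemize}
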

	
	The choice of parameters $(b,h)$ is relevant classically, as \citet{GoldsternShelah,Kellner} showed that there can consistently be continuum many localisation numbers with distinct values. In the higher context, even if we fix $b=\bar \kappa$, the choice of $h$ remains relevant, as shown in  \cite[Theorem~70]{BrendleBrookeTaylorFriedmanMontoya}.
	In the higher context, localisation numbers were furthermore studied in \cite{vanderVlugt2025,SwitzerBenhamou,RoslanowskiLoc}.

	The covering number of the ideal $\Ek(B)$ can be expressed in terms of localisation numbers.
	
	\begin{thm} \label{cov Ek as a localisation number}
		Let $B\in\BPk$ and let $b\in\kaka$ be defined by $b\colon\alpha\mapsto 2^{\card{B_\alpha}}$, then
		$\cov(\Ek(B))=\dins{b,b}$ and $\non(\Ek(B))=\bins{b,b}$.
	\end{thm}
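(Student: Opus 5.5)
The plan is to give an explicit coding bijection between the space $\kacs$ and the product $\prod b=\prod_{\alpha\in\kappa}2^{\card{B_\alpha}}$, which translates membership in $\sqb\phi_*$ into localisation by a $(b,b)$-slalom. For each $\alpha\in\kappa$ fix a bijection $e_\alpha\colon{}^{B_\alpha}2\to 2^{\card{B_\alpha}}=b(\alpha)$. Define $\Phi\colon\kacs\to\prod b$ by $\Phi(x)(\alpha)=e_\alpha(x\restriction B_\alpha)$. Since $B$ is a partition of $\kappa$, $x$ is determined by the family $\abset{x\restriction B_\alpha}{\alpha\in\kappa}$, and every such family of restrictions comes from a unique $x$. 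So $\Phi$ is a bijection. Likewise, $\phi\mapsto\hat\phi$ with $\hat\phi(\alpha)=e_\alpha[\phi(\alpha)]$ is a bijection between $\SkB$ and the set of $(b,b)$-slaloms $\prod_{\alpha}\sqb{b(\alpha)}^{<b(\alpha)}$. This works because the condition $\card{\phi(\alpha)}<2^{\card{B_\alpha}}$ for all $\alpha$ is exactly $\card{\hat\phi(\alpha)}<b(\alpha)$. The key identity is then
\[x\in\sqb\phi_*\iff\forall^\infty\alpha\in\kappa\,\rb{e_\alpha(x\restriction B_\alpha)\in\hat\phi(\alpha)}\iff\Phi(x)\ins\hat\phi.\]

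With this dictionary, $\cov(\Ek(B))=\dins{b,b}$ follows directly. Every member of $\Ek(B)$ is contained in some $\sqb\phi_*$ with $\phi\in\SkB$, so a covering family may be assumed to consist of sets of the form $\sqb\phi_*$. A family $\set{\sqb{\phi_i}_*}{i\in\mu}$ covers $\kacs$ if and only if every $f\in\prod b$ (which is $\Phi(x)$ for a unique $x$) is localised by some $\hat\phi_i$, i.e.\ if and only if $\set{\hat\phi_i}{i\in\mu}$ witnesses $\dins{b,b}$. The same holds in reverse, using the bijectivity of $\phi\mapsto\hat\phi$. Similarly, $X\subset\kacs$ lies in $\Ek(B)$ if and only if some $(b,b)$-slalom localises every element of $\Phi[X]$. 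So $X\notin\Ek(B)$ exactly when $\Phi[X]$ is a witness for $\bins{b,b}$, and since $\Phi$ is a bijection, $\card X=\card{\Phi[X]}$. This gives $\non(\Ek(B))=\bins{b,b}$.

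The only point needing care is the bookkeeping: $b$ must have values $\geq2$, which holds because $B_\alpha\neq\emp$ gives $2^{\card{B_\alpha}}\geq2$. Also, the paper writes $b\in\kaka$, which requires $2^{\card{B_\alpha}}\leq\kappa$; this is guaranteed by $\kappa^{<\kappa}=\kappa$. I also need to check that nothing in the definition of the localisation numbers depends on $\Ek(B)$ being an ideal. The definitions of $\cov$ and $\non$ make sense for any family, and the argument above never uses completeness, so no hypothesis such as $B\in\BPklim$ is required. I expect no real obstacle. The step to write most carefully is verifying that the slalom-size condition transfers exactly, i.e.\ that the strict inequality $<2^{\card{B_\alpha}}$ in $\SkB$ matches the $<h(\alpha)$ in the definition of $(b,h)$-slaloms with $h=b$.
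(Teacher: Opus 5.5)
Your proposal is correct and takes essentially the same route as the paper: both fix coordinatewise bijections ${}^{B_\alpha}2\to 2^{\card{B_\alpha}}$ and use them to transport points of $\kacs$ to $\prod b$ and slaloms in $\SkB$ to $(b,b)$-slaloms. Both then observe that $x\in\sqb\phi_*$ holds exactly when the coded point is localised by the coded slalom. The only difference is cosmetic: the paper maps $(b,b)$-slaloms back to $\SkB$ rather than forward, and you spell out the transfer of the size condition and the reduction of covering families to sets of the form $\sqb\phi_*$, which the paper leaves implicit.
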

	\begin{proof}
		This is not much more than a notational shuffle. Let $B$ and $b$ be as in the statement of the theorem, then we may consider the bijection $\pi\colon \kacs\to\prod b$ given by $x\mapsto \abset{\pi_\alpha(x\restriction B_\alpha)}{\alpha\in\kappa}$, where each $\pi_\alpha$ is a bijection between ${}^{B_\alpha}2$ and $2^{\card{B_\alpha}}$. Similarly, if $\phi$ is a $(b,b)$-slalom, we define the slalom $\tilde \pi(\phi)\colon\alpha\mapsto \pi_\alpha^{-1}[\phi(\alpha)]$. The conclusion follows by noting that for any $x\in\kacs$ and any $(b,b)$-slalom $\phi$ we have $\pi(x)\ins\phi$ if and only if $x\in\sqb{\tilde \pi(\phi)}_*$.
	\end{proof}
	
	\begin{crl}\label{successor equal to b}
		If $\kappa=2^{<\kappa}$ is successor, then $\cov(\Ek)=\fr d_\kappa$ and $\non(\Ek)=\fr b_\kappa$.
	\end{crl}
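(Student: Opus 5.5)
We prove the corollary by combining three earlier results: \cref{crl:Ekm is ideal}, \cref{unif cov of E for successor} and \cref{cov Ek as a localisation number}. This reduces both cardinals to the localisation numbers $\dins{b,b}$ and $\bins{b,b}$ for a constant $b$, which we then identify with $\fr d_\kappa$ and $\fr b_\kappa$.

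Write $\kappa=\lambda^+$. Since $2^\lambda=2^{<\kappa}=\kappa$, the value $2^{\card{B_\alpha}}=\kappa$ for almost all $\alpha$ whenever $B\in\BPklim$. By \cref{crl:Ekm is ideal} there is $I\in\IPklim$ with $\Ek=\Ek(I)$. We may take $I_\alpha=\iv{\lambda\alpha}{\lambda\alpha+\lambda}$, so that $\card{I_\alpha}=\lambda$ for all $\alpha$. With $b\colon\alpha\mapsto 2^{\card{I_\alpha}}$ we get $b=\bar\kappa$. Then \cref{cov Ek as a localisation number} gives
\[
\cov(\Ek)=\dins{\bar\kappa,\bar\kappa}\qquad\text{and}\qquad\non(\Ek)=\bins{\bar\kappa,\bar\kappa}.
\]
In words, these are the localisation and avoidance numbers for slaloms $\phi$ with $\card{\phi(\alpha)}<\kappa$, localising functions in $\kaka$.

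The remaining step, which I expect to be the main one, is $\dins{\bar\kappa,\bar\kappa}=\fr d_\kappa$ and $\bins{\bar\kappa,\bar\kappa}=\fr b_\kappa$. I would prove four inequalities.

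First, $\fr d_\kappa\le\dins{\bar\kappa,\bar\kappa}$. A slalom $\phi$ with $\card{\phi(\alpha)}<\kappa$ determines $g_\phi(\alpha)=\sup\phi(\alpha)+1<\kappa$, using regularity of $\kappa$. If $f\ins\phi$ then $f\leqs g_\phi$, so a localising family yields a dominating family of the same size.

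Second, $\dins{\bar\kappa,\bar\kappa}\le\fr d_\kappa$. Given $g\in\kaka$, let $\phi_g(\alpha)=g(\alpha)$, viewed as the set of ordinals below $g(\alpha)$, after replacing $g$ by $g+1$. Then $\card{\phi_g(\alpha)}<\kappa$, and $f\leqs g$ implies $f\ins\phi_g$. So a dominating family gives a localising family.

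Dually, for the avoidance numbers: an unbounded family is not localised by any single slalom, since otherwise $g_\phi$ would bound it. Conversely, a family not localised by any slalom is unbounded, since a bound $g$ gives the slalom $\phi_g$ localising it. Hence $\bins{\bar\kappa,\bar\kappa}=\fr b_\kappa$.

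The only subtlety is the slalom-size convention. Membership in $\Sigma_\kappa(I)$ requires $\card{\phi(\alpha)}<2^{\card{I_\alpha}}=\kappa$, which matches the $(b,b)$-slaloms with $b=\bar\kappa$. I would check this carefully against the definition of $\Sigma_\kappa(I)$ before finishing.
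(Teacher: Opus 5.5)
Your proposal is correct and follows essentially the paper's own route: you identify $\Ek$ with a single $\Ek(I)$, then apply \cref{cov Ek as a localisation number} to get $\dins{b,b}$ and $\bins{b,b}$ with $b=\bar\kappa$, and finally use regularity of $\kappa$ to identify these with $\fr d_\kappa$ and $\fr b_\kappa$. The only differences are minor: the paper passes through \cref{unif cov of E for successor} (itself derived from \cref{crl:Ekm is ideal}) and leaves the last identification as ``easy to see'', whereas your four inequalities via $g_\phi(\alpha)=\sup\phi(\alpha)+1$ and $\phi_g(\alpha)=g(\alpha)+1$ supply those details, and your choice $I_\alpha=\iv{\lambda\alpha}{\lambda\alpha+\lambda}$ makes $b=\bar\kappa$ everywhere rather than only for almost all $\alpha$.
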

	\begin{proof}
		By \cref{unif cov of E for successor,cov Ek as a localisation number} we have $\cov(\Ek)=\cov(\Ek(B))=\dins{b,b}$ for some $B\in\BPklim$ and suitably defined $b$. Since $2^{\card{B_\alpha}}=\kappa$ for large enough $\alpha$ and $\kappa$ is regular, it is easy to see that $\dins{b,b}=\fr d_\kappa$.
	\end{proof}
	
	In light of \cref{Bartoszynski characterisation}, the following could be seen as a generalisation of the classical facts that $\add(\cal N)\leq \non(\cal E)$ and $\cov(\cal E)\leq \cof(\cal N)$.\footnote{Classically, these are corollaries of the fact that $\add(\cal N)\leq \add(\cal M)$ and $\cof(\cal M)\leq\cof(\cal N)$, but we remark that this is not immediately apparent in the higher context: although $\bins{h}\leq\add(\cal M_\kappa)$ and $\cof(\cal M_\kappa)\leq\dins{h}$ are provable for inaccessible $\kappa$, the situation under the weaker assumption that $\kappa=\kappa^{<\kappa}$ is unclear (see \cite[Question~24]{BrendleBrookeTaylorFriedmanMontoya}). Moreover, the relation between $\add(\Mk)$ and  $\non(\Ek)$ is unclear.}
	
	\begin{thm}
		Suppose $\kappa=2^{<\kappa}$ is regular. Let $h$ be cofinally increasing, then $\cov(\Ek)\leq\dins{h}$ and $\bins{h}\leq \non(\Ek)$.
	\end{thm}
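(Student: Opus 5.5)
The plan is to reduce both inequalities to a single interval partition $J\in\IPklim$ and a coding of ${}^{J_\alpha}2$ into $\kappa$. We then use $\Ek(J)\subset\EkE\subset\Ek$ from \cref{prp:simple relations}, which gives $\cov(\Ek)\leq\cov(\Ek(J))$ and $\non(\Ek(J))\leq\non(\Ek)$ by \cref{cardinal functions comparison}. We could pass through \cref{cov Ek as a localisation number}, but it is more direct to translate $h$-slaloms into slaloms in $\Sigma_\kappa(J)$ by hand. I assume $h\in\kaka$, so that $h(\alpha)<\kappa$ for every $\alpha$. The hypothesis that $h$ is cofinally increasing should only serve to make $h$-slaloms nontrivial, so that $\dins h$ and $\bins h$ are meaningful; it should play no other role.

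First I construct $J\in\IPklim$ such that $h(\alpha)\leq\card{J_\alpha}$ for almost all $\alpha\in\kappa$. If $\kappa$ is a regular limit cardinal, let $J_\alpha$ be consecutive intervals of cardinality $\max\ac{h(\alpha),\card\alpha,\aleph_0}<\kappa$. Then $\lim_\alpha\card{J_\alpha}=\kappa$, and since the intervals are consecutive and each has size ${<}\kappa$, they exhaust $\kappa$. If $\kappa=\lambda^+$, let each $J_\alpha$ have cardinality $\lambda$; since $h(\alpha)<\kappa$ gives $h(\alpha)\leq\lambda$, the requirement is met. In both cases $h(\alpha)\leq\card{J_\alpha}<2^{\card{J_\alpha}}$. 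Because $2^{<\kappa}=\kappa$, I fix injections $e_\alpha\colon{}^{J_\alpha}2\to\kappa$ and assign to each $x\in\kacs$ the function $f_x\in\kaka$ given by $f_x(\alpha)=e_\alpha(x\restriction J_\alpha)$. To each $h$-slalom $\phi$ I assign $\psi_\phi\colon\alpha\mapsto e_\alpha^{-1}[\phi(\alpha)]$. Then $\card{\psi_\phi(\alpha)}\leq\card{\phi(\alpha)}<h(\alpha)\leq\card{J_\alpha}<2^{\card{J_\alpha}}$ for almost all $\alpha$, so $\psi_\phi\in\Sigma^*_\kappa(J)$. By \cref{lmm:almost to everywhere} we may replace $\psi_\phi$ by a slalom in $\Sigma_\kappa(J)$ with the same $\sqb{\psi_\phi}_*$. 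The key equivalence to check is that $f_x\ins\phi$ if and only if $x\in\sqb{\psi_\phi}_*$, which holds because each $e_\alpha$ is injective.

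For the covering inequality, let $\cal F$ be a family of $h$-slaloms of size $\dins h$ that localises every $f\in\kaka$. For each $x\in\kacs$, some $\phi\in\cal F$ localises $f_x$, so $x\in\sqb{\psi_\phi}_*$. Hence $\set{\sqb{\psi_\phi}_*}{\phi\in\cal F}$ is a cover of $\kacs$ by members of $\Ek(J)\subset\Ek$, and $\cov(\Ek)\leq\dins h$. For the uniformity inequality, let $X\subset\kacs$ with $X\notin\Ek$ and $\card X=\non(\Ek)$. If a single $h$-slalom $\phi$ localised every $f_x$ with $x\in X$, then $X\subset\sqb{\psi_\phi}_*\in\Ek(J)\subset\Ek$, a contradiction. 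So $\set{f_x}{x\in X}$ is a family of size at most $\card X$ not localised by any $h$-slalom, and $\bins h\leq\non(\Ek)$.

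The main point needing care is the choice of $J$. It must lie in $\IPklim$, which in the successor case forces $\card{J_\alpha}=\lambda$ eventually. At the same time it must dominate $h$, so that the transferred slalom stays strictly below $2^{\card{J_\alpha}}$. This is exactly where $h(\alpha)<\kappa$ is needed. If $h$ were allowed to take the value $\kappa$, one would first have to replace $h$ by a $\kappa$-valued function without changing $\dins h$ and $\bins h$, and I would handle that reduction separately if the intended definition permits such values.
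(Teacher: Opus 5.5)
Your proof is correct and is essentially the paper's argument written out by hand. The paper takes $I\in\IPklim$ with $\card{I_\alpha}=h(\alpha)$, puts $b(\alpha)=2^{h(\alpha)}$, and chains $\cov(\Ek)\leq\cov(\Ek(I))=\dins{b,b}\leq\dins{\bar\kappa,b}\leq\dins{h}$ via \cref{cov Ek as a localisation number}; you compose these steps into one coding and only require $h(\alpha)\leq\card{J_\alpha}$, which avoids needing exact sizes $h(\alpha)$ (in the successor case $\IPklim$ forces size $\lambda$ eventually) and is a mild improvement.
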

	\begin{proof}
		Let $h$ be cofinally increasing, let $I\in\IPklim$ be such that ${\card{I_\alpha}}=h(\alpha)$, and let $b\colon\alpha\mapsto 2^{h(\alpha)}$. Then we have $\cov(\Ek)\leq \cov(\Ek(I))=\dins{b,b}\leq\dins{\bar\kappa,b}\leq\dins{h}$. 
	\end{proof}

	\subsubsection{The evasion and prediction numbers}\label{sec:cc e}
	
	Since we work on the higher Cantor space $\kacs$, we will only consider binary evasion and prediction. Suppose we have a function $p\colon \fkacs\to2$ and $D\in\sqb\kappa^\kappa$, then $f\in\kacs$ is \emph{predicted} by $p$ on $D$ if for almost all $\alpha\in D$ we have $p(f\restriction \alpha)=f(\alpha)$, and otherwise $f$ is \emph{evaded} by $p$ on $D$. We call the pair $(p,D)$ a \emph{predictor}. The (binary) \emph{prediction number} $\fr v_{2,\kappa}$ is the least number of predictors necessary to predict every $f\in\kacs$, and the (binary) \emph{evasion number}~$\fr e_{2,\kappa}$ is the least number of functions in $\kacs$ such that every predictor is evaded by one of them.
	
	It is fairly simple to see that the classical binary evasion number $\fr e_2$ is at most of size $\non(\cal E)$, as was first mentioned in \cite[\S\,3.1]{BrendleEvasionI}. Namely, if we fix a predictor $(p,D)$ on $\omcs$, then the set $\set{x\in\omcs}{\forall n\in D\setminus n_0\,(p(x\restriction n)=x(n))}$ is easily seen to be closed and measure zero.
	On $\kappa$ the situation is different, and our classical intuition does not apply.
	
	\begin{thm}
		There exists a predictor $(p,D)\in {}^{(\fkacs)}2\times \sqb\kappa^\kappa$ such that the set of $f\in\kacs$ predicted by $(p,D)$ is not in $\cal{BE}_\kappa$. 
	\end{thm}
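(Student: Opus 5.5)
We need a predictor $(p,D)$ whose set $P$ of predicted functions is not in $\BEk$. The plan is to mimic the proof of \cref{thm:BE < CM}: find a closed subspace $Q\subseteq P$ on which $\kappa$-meagreness is proper, and show that every generator of $\BEk$ meets $Q$ in a $\kappa$-meagre subset of $Q$. Since $\Mk[Q]$ is ${\leq}\kappa$-complete, this gives $Q\notin\BEk$ and hence $P\notin\BEk$.

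\emph{The predictor.} Take $D$ to be the set of successor ordinals below $\kappa$, and set $p(s)=s(\gamma)$ whenever $\dom(s)=\gamma+1$; on other arguments let $p$ be $0$. Then $f$ is predicted by $(p,D)$ exactly when $f(\gamma+1)=f(\gamma)$ for almost all $\gamma$. Let
\[Q=\set{f\in\kacs}{\forall\gamma\in\kappa\,\rb{f(\gamma+1)=f(\gamma)}}.\]
Then $Q\subseteq P$, and $Q$ is closed. Write each ordinal as $\lambda+n$ with $\lambda$ limit or $0$ and $n\in\omega$. An element of $Q$ is constant on each block $\iv{\lambda}{\lambda+\omega}$, so $Q$ is homeomorphic to $\kacs$ via $f\mapsto\abset{f(\lambda)}{\lambda\text{ limit or }0}$. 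By the Baire category theorem (as in \cref{lmm:proper CM ideal}), $Q\notin\Mk[Q]$.

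\emph{Each generator is meagre in $Q$.} Fix $B\in\BPklim$, $\phi\in\SkB$ and $\beta\in\kappa$; I will show $\sqb\phi_\beta\cap Q\in\NWD[Q]$. Given $s\in\fkacs$ with $\sqb s\cap Q\neq\emp$, we may extend $s$ so that $\dom(s)$ is a limit ordinal. Choose $\alpha\geq\beta$ with $B_\alpha$ infinite and $B_\alpha\cap\dom(s)=\emp$. The key point is that the restriction map $Q\restriction B_\alpha\to{}^{B_\alpha}2$ need not be onto, because $B_\alpha$ may contain many points of a single $\omega$-block. So I must compare $\card{Q\restriction B_\alpha}$ with $2^{\card{B_\alpha}}$.

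The number of blocks meeting $B_\alpha$ is some cardinal $\mu_\alpha\leq\card{B_\alpha}$, and $\card{Q\restriction B_\alpha}=2^{\mu_\alpha}$. If $\mu_\alpha=\card{B_\alpha}$, then $\card{\phi(\alpha)}<2^{\card{B_\alpha}}=\card{Q\restriction B_\alpha}$. Hence some $t\in Q\restriction B_\alpha$ lies outside $\phi(\alpha)$, and by the choice of $\alpha$ (blocks meeting $B_\alpha$ lie beyond $\dom(s)$) it can be chosen compatible with $s$. Extending $s\cup t$ to some $r$ with $\sqb r\cap Q\neq\emp$ and $\sqb r\cap\sqb\phi_\beta=\emp$ then proves nowhere density, as in \cref{thm:BE < CM}.

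\emph{Main obstacle.} The difficulty is the case where $\mu_\alpha<\card{B_\alpha}$ for almost all $\alpha$. This can only happen when $\card{B_\alpha}$ is at most countable beyond the blocks, i.e.\ $B_\alpha$ concentrates on few blocks. Since each block is countable, $\card{B_\alpha}\leq\mu_\alpha\cdot\aleph_0$, so for uncountable $\card{B_\alpha}$ we automatically get $\mu_\alpha=\card{B_\alpha}$. As $B\in\BPklim$ and $\kappa$ is uncountable, $\card{B_\alpha}$ is uncountable for almost all $\alpha$, so the case split above disappears once $\alpha$ is taken large enough.

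The one real subtlety is the successor case $\kappa=\aleph_1$, where $\card{B_\alpha}=\aleph_0$ eventually. There a single block can exhaust $B_\alpha$, giving $\mu_\alpha=1$, and $\phi(\alpha)$ could contain both constant functions. To handle this, I would replace the $\omega$-blocks by a sparser predictor: predict only on a set $D$ whose complement meets every interval of length $\omega$ cofinally, so that the free coordinates (those with index in $\kappa\setminus D$) are spread out. I would then re-run the counting argument, making sure that almost every $B_\alpha$ contains $\card{B_\alpha}$ free coordinates. The fallback, if this fails, is to choose $D$ very thin (for example the successors of limits) and argue directly.
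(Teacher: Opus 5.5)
\paragraph{Gap at $\kappa=\aleph_1$}
Your Baire-category argument in the closed subspace $Q$ is correct whenever $\card{B_\alpha}$ is eventually uncountable, i.e.\ for $\kappa\geq\aleph_2$. The standing hypothesis $\kappa=\kappa^{<\kappa}$ also allows $\kappa=\aleph_1$ (under $\sf{CH}$), and there your predictor fails, not just your proof of it. Take $I_\alpha=\iv{\omega\cdot\alpha}{\omega\cdot\alpha+\omega}$, so $I\in\IPklim$. Let $\phi(\alpha)$ consist of the two constant functions on $I_\alpha$; then $\phi\in\Sigma_\kappa(I)$, since $2<2^{\aleph_0}$. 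Any $f$ with $f(\gamma+1)=f(\gamma)$ for almost all $\gamma$ is constant on almost every $I_\alpha$. Hence the whole predicted set lies in $\sqb\phi_*\in\Ek(I)\subseteq\BEk$.

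Your first proposed repair also targets the wrong property. Since $B$ is chosen after $D$, you cannot arrange that almost every $B_\alpha$ contains $\card{B_\alpha}$ coordinates outside $D$: take each $B_\alpha$ to be a subset of $D$ plus a single further point. The counting does not need this property anyway.

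\paragraph{The fix}
The count only requires that the classes on which members of $Q$ are forced to be constant be finite. Then every infinite $B_\alpha$ meets exactly $\card{B_\alpha}$ classes, so $\card{Q\restriction B_\alpha}=2^{\card{B_\alpha}}>\card{\phi(\alpha)}$ with no uncountability assumption. The rest of your argument then goes through verbatim for every regular uncountable $\kappa$.

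This is the paper's choice. It predicts on the odd ordinals by copying the preceding even coordinate, so the classes are the pairs $\ac{2\cdot\gamma,2\cdot\gamma+1}$. Your fallback of predicting only at successors of limit ordinals works for the same reason, but you left it unargued.

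\paragraph{Comparison of routes}
The paper finishes by a direct diagonalisation, as in the proof of \cref{thm:BEk is not Ek}. It picks pairwise separated pieces, chooses on each a pattern that is constant on pairs but lies outside the relevant slalom, and glues these together. Your category argument modelled on \cref{thm:BE < CM} is an equally valid way to finish once the classes are finite. It also gives the slightly stronger fact that $Y\cap Q$ is $\kappa$-meagre in $Q$ for every $Y\in\BEk$.
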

	\begin{proof}
		Let $E=\set{2\cdot\alpha}{\alpha\in\kappa}$ be the set of even  ordinals below $\kappa$ and define $p\colon \fkacs\to2$ such that for any $\alpha\in E$ and $s\in{}^{\alpha+1}2$ we have $p(s)=s(\alpha)$. Then $(p,\kappa\setminus E)$ is such a predictor as claimed in the theorem statement. The reasoning in the proof of \cref{thm:BEk is not Ek}, which shows that $X\notin\Ek$, applies in our situation to show that the set of $f\in\kacs$ predicted by $(p,\kappa\setminus E)$ is not in $\BEk$.
	\end{proof}
	
	This suggests that the evasion number $\fr e_{2,\kappa}$ may be unrelated to $\non(\Ek)$.
	
	\subsubsection{Uniformity and cofinality of the null additive ideal}\label{sec:NAk and EkI}

	In this final subsection we will provide a relation between the uniformity number of $\NAk$ and the additivity number of the ideal $\Ek(I)$ for $I\in\IPklim$. A dual result shows a connection between the covering number of $\NAk$ and the cofinality number of $\Ek(I)$. We can use this result to prove that $\non(\NAk)\leq \min\ac{\add(\EkA),\add(\BEkA)}$ for strongly inaccessible $\kappa$.

	\begin{lmm}\label{lem:NAk general}
		The following are equivalent for $X\subset\kacs$:
		\begin{enumerate}[label=(\arabic*)]
			\item\label{NAk-gen1} $X\in\NAk$.
		 	\item\label{NAk-gen2} For all $f\in\kaka$ with $\lim_{\alpha\in\kappa}f(\alpha)=\kappa$ and all $B\in\BPklim$, there exists $\psi\in\prod_{\alpha\in\kappa}\sqb[big]{{}^{B_\alpha}2}^{\leq \card{f(\alpha)}}$ such that $X\subset \sqb\psi_*$.
		 \end{enumerate}
	\end{lmm}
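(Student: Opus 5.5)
For \ref{NAk-gen2} $\imp$ \ref{NAk-gen1}, I would take an arbitrary $I\in\IPk$ and first coarsen it to $J\in\IPklim$ by grouping consecutive intervals, $J_\xi=\Cup\set{I_\alpha}{\alpha_\xi\leq\alpha<\alpha_{\xi+1}}$. The point of choosing $\abset{\alpha_\xi}{\xi\in\kappa}$ continuous and sufficiently fast growing is twofold. First, $\card{J_\xi}$ tends to $\kappa$ (or is eventually $\lambda$ if $\kappa=\lambda^+$). Second, we want $\card\xi\leq\card{\alpha_\xi}$, which holds automatically for a strictly increasing sequence.

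I would then apply \ref{NAk-gen2} to $J$ and to $f\colon\xi\mapsto\xi$, which satisfies $\lim f=\kappa$. This gives $\psi$ with $\card{\psi(\xi)}\leq\card\xi$ and $X\subset\sqb\psi_*$.

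Finally, for $\alpha\in\iv{\alpha_\xi}{\alpha_{\xi+1}}$, set $\phi(\alpha)=\set{s\restriction I_\alpha}{s\in\psi(\xi)}$. Then $\card{\phi(\alpha)}\leq\card\xi\leq\card{\alpha_\xi}\leq\card\alpha$, which works once $\alpha\geq\omega$; for finitely many $\alpha$, putting $\phi(\alpha)={}^{I_\alpha}2$ is harmless since $\sqb\phi_*$ only looks at almost all $\alpha$. We also get $\sqb\psi_*\subset\sqb\phi_*$, so $X\in\NAk$.

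For \ref{NAk-gen1} $\imp$ \ref{NAk-gen2}, I would imitate the proof of \cref{thm:NAk subset BEkA}. Fix $f$ with $\lim f=\kappa$ and $B\in\BPklim$; actually only $\lim f=\kappa$ is needed, and $B$ is arbitrary.

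I would build a continuous increasing $\abset{\gamma_\alpha}{\alpha\in\kappa}$ and set $J_\alpha=\iv{\gamma_\alpha}{\gamma_{\alpha+1}}$ subject to two requirements:
\begin{enumerate}[label=(\roman*)]
	\item each $B_\beta$ lies inside a single $J_\alpha$, which I would arrange using the closure function from \cref{thm:NAk subset BEkA};
	\item whenever $B_\beta\subset J_\alpha$, we have $\card\alpha\leq\card{f(\beta)}$.
\end{enumerate}
Requirement (ii) is handled by choosing $\gamma_{\alpha+1}$ beyond $\sup\Cup\set{B_\beta}{\card{f(\beta)}<\card\alpha}$. By $\lim f=\kappa$ the set of such $\beta$ is bounded, since $\forall^\infty\beta\,(\card{f(\beta)}\geq\card\alpha)$. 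So $\beta_\alpha:=\sup\Cup\set{B_\beta}{\card{f(\beta)}<\card\alpha}<\kappa$, and we take $\gamma_{\alpha+1}=$ closure of $\max\{\gamma_\alpha,\beta_\alpha\}+1$.

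Then $\NAk$ applied to $J$ gives $\psi'$ with $\card{\psi'(\alpha)}\leq\card\alpha$ and $X\subset\sqb{\psi'}_*$. Define $\psi(\beta)=\set{s\restriction B_\beta}{s\in\psi'(\alpha)}$ for the unique $\alpha$ with $B_\beta\subset J_\alpha$. This yields $\card{\psi(\beta)}\leq\card\alpha\leq\card{f(\beta)}$.

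It remains to check $\sqb{\psi'}_*\subset\sqb\psi_*$. Since each $J_\alpha$ contains only ${<}\kappa$ many $B_\beta$, almost all $\alpha$ corresponds to almost all $\beta$.

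The main obstacle is the bookkeeping in (ii). The limit stages $\alpha$ are where $\card\alpha$ jumps, and the continuity of $\gamma$ must not break requirement (i). The closure function handles (i), but I would need to check that at limit $\alpha$ the requirement $\beta_\alpha<\gamma_{\alpha+1}$ is still enforced before any $B_\beta$ with small $f(\beta)$ lands in $J_\alpha$. It is: $B_\beta\subset J_\alpha$ forces $\min B_\beta\geq\gamma_\alpha>\beta_\xi$ for all $\xi<\alpha$, which gives $\card{f(\beta)}\geq\card\xi$ for all $\xi<\alpha$. This only yields $\card{f(\beta)}\geq\sup_{\xi<\alpha}\card\xi$, which equals $\card\alpha$ for infinite $\alpha$ except at successor cardinals. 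There I would simply shift the index and use $\beta_{\alpha+1}$ in place of $\beta_\alpha$, exactly as in \cref{thm:NAk subset BEkA}.
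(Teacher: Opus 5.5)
\textbf{The gap: successor-cardinal blocks in \ref{NAk-gen1}$\imp$\ref{NAk-gen2}.} Your proposed repair there does not work. Suppose you push $\gamma_{\xi+1}$ past $\beta_{\xi+1}$. Then a piece $B_\beta\subset J_\alpha$ only satisfies $|f(\beta)|\ge|\xi+1|$ for all $\xi<\alpha$. If $\alpha=\mu^+$, every such $\xi$ has $|\xi+1|\le\mu$, so you still get only $|f(\beta)|\ge\mu$. Meanwhile nothing prevents $\psi'(\mu^+)$ from having $\mu^+$ elements with pairwise distinct restrictions to $B_\beta$. The shift repairs only the finite $\alpha$.

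When $\kappa$ is a limit cardinal, the successor cardinals are cofinal in $\kappa$, so these blocks cannot be discarded as boundedly many. Nor is the shift what \cref{thm:NAk subset BEkA} does. There $\beta_\xi$ is defined with the non-strict condition $|B_\beta|\le|\xi|$, and that is exactly what handles $\alpha=\mu^+$.

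The fix for limit $\kappa$ is as follows.
\begin{itemize}
\item Put $\beta_\alpha:=\sup\Cup\{B_\beta : |f(\beta)|\le|\alpha|\}$.
\item This is ${<}\kappa$: since $|\alpha|^+<\kappa$ and $\lim f=\kappa$, we have $|f(\beta)|\ge|\alpha|^+$ for almost all $\beta$.
\item Then $B_\beta\subset J_\alpha$ yields $|f(\beta)|>|\xi|$ for every $\xi<\alpha$. Hence $|f(\beta)|\ge|\alpha|$ for every $\alpha$: take $\xi=\mu$ if $\alpha=\mu^+$, and $\xi=\alpha-1$ if $\alpha$ is finite and nonzero.
\end{itemize}

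For $\kappa=\lambda^+$ the non-strict set is unbounded once $|\alpha|=\lambda$, because $|f(\beta)|\le\lambda$ always. There, keep your strict version. Its bad indices (finite ordinals and successor cardinals) all lie below $\lambda+1$. So only the boundedly many $\beta$ with $B_\beta\subset\gamma_{\lambda+1}$ are affected, and you may set $\psi(\beta)=\emptyset$ for them.

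\textbf{Everything else.} Apart from this, your route is the paper's. You coarsen to an interval partition whose blocks are unions of pieces of $B$, apply the definition of $\NAk$ there, and project back. The paper groups consecutive indices of $B$ while you use the closure function; this makes no real difference.

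Your coarsening in \ref{NAk-gen2}$\imp$\ref{NAk-gen1} is genuinely needed. The definition of $\NAk$ ranges over all $I\in\IPk$, while \ref{NAk-gen2} only speaks about partitions in $\BPklim$. The paper's one-line argument leaves this step implicit.

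There is one slip in that direction. You may not set $\phi(\alpha)={}^{I_\alpha}2$ for finite $\alpha$, since $\phi$ must satisfy $|\phi(\alpha)|\le|\alpha|$ at every coordinate. No patch is needed, though: $\xi\le\alpha_\xi\le\alpha$ already gives $|\xi|\le|\alpha|$ for all $\alpha$.

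Finally, the paper's own proof of \ref{NAk-gen1}$\imp$\ref{NAk-gen2} is terse at the same spot. It asserts that $\langle\alpha_\xi\rangle$ is continuous, although its condition on $f$ at $\xi=\mu^+$ is not secured by the earlier stages. So you were right to flag the issue; the repair just has to be the non-strict one above.
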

	\begin{proof}
		\ref{NAk-gen2} $\imp$ \ref{NAk-gen1}.\quad This is clear by \cref{dfn:NAk}, using $f\colon \alpha\mapsto \card{\alpha}$. 
		
		\noindent\ref{NAk-gen1} $\imp$ \ref{NAk-gen2}.\quad Given $f\in\kaka$ with $\lim_{\alpha\in\kappa}f(\alpha)=\kappa$ and $B\in\BPklim$, let $\alpha_0=0$ and, for $\xi>0$, let $\alpha_\xi$ be minimal such that 
		\begin{enumerate}[label=(\alph*)]
			\item $f(\alpha)\geq\card\xi$ for all $\alpha\geq \alpha_\xi$,
			\item $\alpha_\xi>\alpha_{\eta}$ for all $\eta\in\xi$, and
			\item $\Cup_{\alpha\in\iv{\alpha_\xi}{\alpha_{\xi+1}}}B_\alpha$ is an interval (which is possible by \cref{capturing BP with IP}).
		\end{enumerate}
		Since $\abset{\alpha_\xi}{\xi\in\kappa}$ is continuous and strictly increasing, if we define $I_\xi:=\Cup_{\alpha\in\iv{\alpha_\xi}{\alpha_{\xi+1}}}B_\alpha$, then $I\in\IPklim$. Now let $\phi\in\prod_{\xi\in\kappa}\sqb[big]{{}^{I_\xi}2}^{\card \xi}$ be such that $X\subset\sqb\phi_*$ and define $\psi(\alpha)=\set{s\restriction B_\alpha}{s\in\phi(\xi)}$ for each $\alpha\in\iv{\alpha_\xi}{\alpha_{\xi+1}}$, then it is clear that $\psi\in\prod_{\alpha\in\kappa}\sqb[big]{{}^{B_\alpha}2}^{\leq f(\alpha)}$ and $X\subset\sqb\psi_*$.
	\end{proof}	
	
	\begin{thm}\label{thm:NAk addEkI}
		Let $\kappa$ be strongly inaccessible and $B\in\BPklim$, then $\non(\NAk)\leq \add(\Ek(B))$ and $\cof(\Ek(B))\leq \cov(\NAk)$.		
	\end{thm}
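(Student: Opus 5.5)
The plan is a direct translation argument in the spirit of the classical fact that $\non(\mathcal{NA})=\add(\mathcal N)$, based on \cref{lem:NAk general}. For $\non(\NAk)\leq\add(\Ek(B))$, we take a family $\abset{\phi_\xi}{\xi\in\mu}$ in $\SkB$ with $\mu<\non(\NAk)$ and bound $\Cup_{\xi}\sqb{\phi_\xi}_*$ by a single $\sqb\phi_*$ with $\phi\in\SkB$. For each $\xi$ we code $\phi_\xi$ as one point $x_\xi\in\kacs$. Since $\kappa$ is strongly inaccessible, every $\card{B_\alpha}<\kappa$ satisfies $2^{\card{B_\alpha}}<\kappa$. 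Hence $\phi_\xi(\alpha)$, a subset of ${}^{B_\alpha}2$ of size ${<}2^{\card{B_\alpha}}$, is an element of $\mathcal P({}^{B_\alpha}2)$, which has size $2^{2^{\card{B_\alpha}}}<\kappa$.

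The coding uses an auxiliary partition $C\in\BPklim$. We fix bijections so that $\phi_\xi(\alpha)$ corresponds to some $s\in{}^{C_\alpha}2$, where $\card{C_\alpha}=2^{\card{B_\alpha}}$ and $C$ is chosen to lie in $\BPklim$, which is possible since $B\in\BPklim$. We let $x_\xi$ be the concatenation of these codes. The set $X=\set{x_\xi}{\xi\in\mu}$ has size $\mu<\non(\NAk)$, so $X\in\NAk$. We then apply \cref{lem:NAk general}\,\ref{NAk-gen2} with the partition $C$ and a slow function $f$, for instance $f(\alpha)=\card{B_\alpha}$, which tends to $\kappa$ because $B\in\BPklim$. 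This yields $\psi$ with $\card{\psi(\alpha)}\leq\card{B_\alpha}$ and $X\subset\sqb\psi_*$.

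Next we decode. We set $\phi(\alpha)=\Cup\set{\text{decode}(s)}{s\in\psi(\alpha)\text{ codes a set of size }{<}2^{\card{B_\alpha}}}$. This is a union of at most $\card{B_\alpha}$ sets, each of size ${<}2^{\card{B_\alpha}}$. By König's theorem $\cf(2^{\card{B_\alpha}})>\card{B_\alpha}$, so $\card{\phi(\alpha)}<2^{\card{B_\alpha}}$. For each $\xi$ we have $x_\xi\restriction C_\alpha\in\psi(\alpha)$ for almost all $\alpha$, hence $\phi_\xi(\alpha)\subset\phi(\alpha)$ eventually. Therefore $\sqb{\phi_\xi}_*\subset\sqb\phi_*$, and the union lies in $\Ek(B)$.

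The dual inequality $\cof(\Ek(B))\leq\cov(\NAk)$ uses the same coding. Let $\set{N_\eta}{\eta\in\nu}\subset\NAk$ cover $\kacs$ with $\nu=\cov(\NAk)$. For each $N_\eta$, \cref{lem:NAk general} gives some $\psi_\eta$ on $C$ with $N_\eta\subset\sqb{\psi_\eta}_*$, and we decode it to $\phi^\eta\in\SkB$ as above. Any $\phi\in\SkB$ is coded by some $x$, which lies in some $N_\eta$. Then $\phi(\alpha)\subset\phi^\eta(\alpha)$ eventually, so $\set{\sqb{\phi^\eta}_*}{\eta\in\nu}$ is cofinal in $\Ek(B)$. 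Here \cref{lmm:almost to everywhere} absorbs the almost-everywhere issues.

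The main obstacle is the bookkeeping of the coding partition. We need $C\in\BPklim$ with $\card{C_\alpha}=2^{\card{B_\alpha}}$ and a bound $f$ satisfying $f(\alpha)\leq\card{B_\alpha}$ while still tending to $\kappa$. Since \cref{lem:NAk general} allows arbitrary $f$ with limit $\kappa$ and arbitrary $C\in\BPklim$, this should go through. The care lies in matching indices, that is, using the same index $\alpha$ for $B_\alpha$ and $C_\alpha$, and in invoking König's inequality at the decoding step.
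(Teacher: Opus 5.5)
Your proposal is correct and follows essentially the same route as the paper: you code each slalom in $\SkB$ as a single point of $\kacs$ using blocks of size $2^{\card{B_\alpha}}$, apply \cref{lem:NAk general} with bound $\card{B_\alpha}$, decode using $\cf(2^{\card{B_\alpha}})>\card{B_\alpha}$, and dualise for the cofinality inequality. The only difference is in the coding. You code $\phi(\alpha)$ by its characteristic function on $C_\alpha\cong{}^{B_\alpha}2$ and discard codes of sets that are too large, whereas the paper concatenates an enumeration of $\phi(\alpha)$, pads it with a constant tail, and decodes only eventually constant codes; your version makes the decoding bound slightly more transparent.
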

	
	\begin{proof}
		We will show that $\non(\NAk)\leq\add(\Ek(B))$ and remark that $\cof(\Ek(B))\leq \cov(\NAk)$ is proved similarly.
		
		Suppose $A\subset\Ek(B)$ is a family of size $\card{A}<\non(\NAk)$, then we show that $\Cup A\in\Ek(B)$. We first pick for each $X\in A$ some $\phi_X\in\SkB$ such that $X\subset\sqb{\phi_X}_*$. Given a set $C\in\sqb\kappa^{<\kappa}$, increasingly enumerated as $\abset{c_\xi}{\xi\in\gamma}$, and a function $t\colon C\to 2$, let us write $\tilde t\colon\gamma\to 2$ defined by $\tilde t\colon \xi\mapsto t(c_\xi)$. 
		
		First, we encode each $\phi_X$ into a single element $x_X\in \kacs$, then $\set{x_X}{X\in A}\in\NAk$ by the assumption on $\card A$. 
		For a fixed $\phi_X$, pick an enumeration $\abset[big]{t_{X,\alpha,\xi}}{\xi\in\card{\phi_X(\alpha)}}$ of $\phi_X(\alpha)$ for each $\alpha\in\kappa$, and define
		\[s_{X,\alpha}:= \tilde t_{X,\alpha,0}{}^\frown \tilde t_{X,\alpha,1}{}^\frown \cdots{}^\frown \tilde t_{X,\alpha,\xi}{}^\frown\cdots{}^\frown 1_\alpha, \]
		where $1_\alpha$ denotes the constant function with domain (the cardinal) $2^{\card{B_\alpha}}$ and value $1$. We note that also $s_{X,\alpha}$ has domain $2^{\card{B_\alpha}}$. We define $x_X\in\kacs$ by 
		\[x_X:= s_{X,0}{}^\frown s_{X,1}\cdots{}^\frown s_{X,\alpha}{}^\frown\cdots.\]
		
		Now define $J\in\IPklim$ to be the (unique) interval partition such that each $J_\alpha$ is an interval whose order-type is the cardinal $2^{\card {B_\alpha}}$. We also (uniquely) partition each $J_\alpha$ into $\abset{J_{\alpha,\beta}}{\beta\in 2^{\card{B_\alpha}}}$, where each $J_{\alpha,\beta}$ is an interval of order-type $\card{B_\alpha}$ and $\min(J_{\alpha,\beta})<\min(J_{\alpha,\beta'})$ for all $\beta<\beta'$. By \cref{lem:NAk general} and $\card A<\non(\NAk)$, we find $\psi\in\prod_{\alpha\in\kappa}\sqb[m]{{}^{J_\alpha}2}^{\leq \card{B_\alpha}}$ such that $\set{x_X}{X\in A}\subset\sqb\psi_*$. 
		
		Finally, we convert $\psi$ into a slalom $\chi\in\SkB$ such that $\sqb{\phi_X}_*\subset\sqb\chi_*$ for each $X\in A$, by letting \[\chi(\alpha):=\Cup\set[big]{t\in{}^{B_\alpha}2}{\exists s\in \psi(\alpha)\,\exists \beta<2^{\card{B_\alpha}}\,\rb[big]{s\text{ is eventually constant and }\tilde t=(s\restriction J_{\alpha,\beta}){\tilde{\phantom{a}}}}}.\]
		It is clear that $\sqb{\phi_X}_*\subset\sqb\chi_*$. To see that $\chi\in\SkB$, note that $\card{\psi(\alpha)}\leq \card{B_\alpha}<\cf(2^{\card{B_\alpha}})$, and that, if $s\in\psi(\alpha)$ is eventually constant, then $\card[m]{\set[big]{(s\restriction J_{\alpha,\beta})\tilde{\phantom a}}{\beta\in 2^{\card{B_\alpha}}}}<2^{\card{B_\alpha}}$. 
	\end{proof}
	
	\begin{crl} We have 
		$\non(\NAk)\leq \add(\EkA)$ and 
		$\non(\NAk)\leq \add(\BEkA)$.\qed
	\end{crl}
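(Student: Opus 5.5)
The plan is to derive the corollary from \cref{thm:NAk addEkI}, using the fact that $\EkA$ and $\BEkA$ are intersections of the ideals $\Ek(I)$, $I\in\IPklim$, and $\Ek(B)$, $B\in\BPklim$, respectively. (The surrounding text indicates that $\kappa$ is assumed strongly inaccessible, as in \cref{thm:NAk addEkI}.) We treat $\BEkA$; the argument for $\EkA$ is identical with $\IPklim$ in place of $\BPklim$.

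Let $\cal A\subset\BEkA$ with $\card{\cal A}<\non(\NAk)$. Fix any $B\in\BPklim$. Then $\cal A\subset\BEkA\subset\Ek(B)$ by \cref{prp:simple relations}. Since $\card{\cal A}<\non(\NAk)\leq\add(\Ek(B))$ by \cref{thm:NAk addEkI}, we get $\Cup\cal A\in\Ek(B)$. As $B$ was arbitrary, $\Cup\cal A\in\Cap_{B\in\BPklim}\Ek(B)=\BEkA$. Hence no family of size below $\non(\NAk)$ witnesses $\add(\BEkA)$, so $\non(\NAk)\leq\add(\BEkA)$.

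One subtlety concerns what happens if $\add$ is undefined because the ideal is closed under all unions. This cannot occur: $\kacs\notin\BEkA$, because $\BEkA\subset\BEk\subsetneq\CMk$, which is a proper ideal. The union of all singletons is therefore a witnessing family, so the additivity number is well defined. There is no real obstacle beyond this; it is the observation that the additivity of an intersection of ideals is at least the infimum of their additivities, and the uniform bound $\non(\NAk)\leq\add(\Ek(B))$ holds for every $B$.
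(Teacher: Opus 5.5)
Your proof is correct and is exactly the argument the paper leaves implicit behind the \qed. You apply \cref{thm:NAk addEkI} to each $\Ek(I)$ with $I\in\IPklim$ (respectively each $\Ek(B)$ with $B\in\BPklim$), and use that the additivity of an intersection of ideals is at least the infimum of their additivities; your note that $\kappa$ must be strongly inaccessible and your check that these additivity numbers are well defined are appropriate additions.
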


	\section{Consistency}\label{sec:consistency}
	
	In this section we will provide a few consistency results of strict inequalities between the cardinal characteristics of $\Ek$ and $\BEk$ and some of the other cardinal characteristics from \cref{fig:higher extended Cichon}. We will assume that $\kappa$ is strongly inaccessible throughout this section. 
	
	There are many unknowns to consistency in the higher Baire space, as the theory of iterated forcing suffers from several complications, compared to the classical context. Two of the main problems are the absence of a suitable form of random forcing, and the difficulty of preserving properties like $\kaka$-bounding under iteration. These two problems form two of the reasons that it is hard to find models in which $\cov(\Mk)<\non(\Mk)$.
	
	The reverse strict inequality $\non(\Mk)<\cov(\Mk)$ is, on the other hand, easily obtained with $\kappa$-Cohen forcing, in exactly the same manner as is done classically. Moreover, $\kappa$-Hechler (dominating) forcing can be iterated with ${<}\kappa$-support to provide the consistency of $\bins h<\add(\Mk)$ and of $\cof(\Mk)<\dins h$. Finally, $\kappa$-localisation forcing proves the consistency of $\kappa^+<\bins h$ and of $\dins h<2^\kappa$. We refer to \cite{BrendleBrookeTaylorFriedmanMontoya} for the details about each of the above constructions. As a corollary:
	
	\begin{thm}
		Each of the inequalities $\kappa^+<\non(\Ek)$, $\non(\BEk)<\cov(\Mk)$, $\non(\Mk)<\cov(\BEk)$, and $\cov(\Ek)<2^\kappa$ is consistent for $\kappa$ strongly inaccessible.\qed
	\end{thm}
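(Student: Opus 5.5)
The plan is to derive each of the four inequalities from a known forcing model for the meagre or localisation cardinals, combined with the ZFC relations of \cref{sec:comparison with other cardinals} and \cref{fig:higher extended Cichon}. Throughout, $\kappa$ is strongly inaccessible, so all results cited there apply.

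For $\kappa^+<\non(\Ek)$, I use the $\kappa$-localisation model of \cite{BrendleBrookeTaylorFriedmanMontoya}, in which $\kappa^+<\bins h$ for a suitable cofinally increasing $h$. The theorem in \cref{sec:cc loc} gives $\bins h\le\non(\Ek)$, so $\kappa^+<\non(\Ek)$ holds there. Dually, in the same model $\dins h<2^\kappa$, and the same theorem gives $\cov(\Ek)\le\dins h$, hence $\cov(\Ek)<2^\kappa$. I need to check that the $h$ of that model can be taken cofinally increasing and is compatible with the hypotheses $\kappa=2^{<\kappa}$ of our theorem. Both hold for inaccessible $\kappa$ if we take $h\colon\alpha\mapsto\card\alpha$ or similar.

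For $\non(\BEk)<\cov(\Mk)$ and $\non(\Mk)<\cov(\BEk)$, I use the $\kappa$-Cohen model: force with $\kappa$-Cohen forcing adding $\mu>\kappa^+$ Cohen subsets over a model of $\sf{GCH}$ (or $2^\kappa=\kappa^+$). There $\non(\Mk)=\kappa^+$ and $\cov(\Mk)=\mu=2^\kappa$. The inclusion $\BEk\subset\CMk\subset\Mk$ from \cref{thm:BE < CM} and \cref{lmm:CM properties}, together with \cref{cardinal functions comparison}, yields $\non(\BEk)\le\non(\Mk)=\kappa^+<\cov(\Mk)$. It also yields $\cov(\Mk)\le\cov(\BEk)$, so $\non(\Mk)=\kappa^+<\mu=\cov(\Mk)\le\cov(\BEk)$. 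Thus both inequalities hold in one Cohen model.

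The main obstacle is only bookkeeping: confirming that the cited models of \cite{BrendleBrookeTaylorFriedmanMontoya} give exactly the stated values ($\non(\Mk)=\kappa^+$ with $\cov(\Mk)$ large, and $\bins h>\kappa^+$ or $\dins h<2^\kappa$ for an $h$ covered by the localisation theorem). I would also check that these models preserve the inaccessibility of $\kappa$ and $\kappa^{<\kappa}=\kappa$; this holds because the forcings are ${<}\kappa$-closed and $\kappa^+$-c.c. Granting these facts, each inequality is a one-line chaining of ZFC inequalities, as the \qed in the statement suggests.
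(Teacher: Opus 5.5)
Your argument is the paper's. The $\kappa$-Cohen model gives $\non(\Mk)<\cov(\Mk)$, which yields the two middle inequalities via $\BEk\subset\Mk$. Localisation forcing, combined with $\bins{h}\le\non(\Ek)$ and $\cov(\Ek)\le\dins{h}$ from \cref{sec:cc loc}, yields the outer two.

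One claim is wrong as written. You say $\dins{h}<2^\kappa$ holds ``in the same model'' as $\kappa^+<\bins{h}$. That fails if the model is the usual ${<}\kappa$-support iteration of localisation forcing of length $\mu$ over $\sf{GCH}$. There $\bins{h}=\mu=2^\kappa$, and $\bins{h}\le\dins{h}$ (cf.\ \cref{fig:higher extended Cichon}), so $\dins{h}=2^\kappa$.

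The paper treats these as two separate consistency results, so use a different model for $\cov(\Ek)<2^\kappa$. One option is an iteration of localisation forcing of length $\kappa^+$ over a model of $2^\kappa=\kappa^{++}$. Every $\kappa$-real appears at some stage ${<}\kappa^+$ by the ${<}\kappa^+$-c.c., so the $\kappa^+$ generic slaloms witness $\dins{h}\le\kappa^+$. Another option is the $\kappa$-Sacks model from the paper's remark on the Laver property. The statement only asserts consistency of each inequality separately, so this repair completes your proof.
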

	
	We recall the following properties of forcing notions. 
	\begin{enumerate}
		\item A forcing notion $\bb P$ is \emph{${<}\kappa$-distributive}, if it does not add  functions $f\colon \lambda\to\Ord$ for $\lambda<\kappa$. 
		\item A forcing notion $\bb P$ is \emph{${<}\kappa$-closed}, if any ${<}\kappa$-length descending sequence of conditions has a lower bound.
		\item A forcing notion $\bb P$ is \emph{strategically ${<}\kappa$-closed}, if you have a winning strategy for every $\lambda<\kappa$ in the following game. You and your opponent play a game of length $\lambda$, where each round $\alpha\in\lambda$ consists of a condition $p_\alpha^\rm Y$ played by you, followed by a condition $p_\alpha^\rm O$ played by your opponent. You must play $p_0^\rm Y=\ft$; in round $\alpha$, you must play $p_\alpha^\rm{Y}\leq p_\xi^\rm{O}$ for all $\xi<\alpha$, and your opponent must play $p_\alpha^\rm O\leq p_\alpha^\rm Y$. You win if you do not run out of moves.
		\item A forcing notion $\bb P$ is \emph{${<}\kappa^+$-c.c.}\ if every antichain in $\bb P$ has cardinality at most $\kappa$.
		\item A  subset $Q\subset\bb P$ is \emph{${<}\lambda$-centred} if every $P\subset Q$ with $\card{P}<\lambda$ has some $q\in\bb P$ with $q\leq p$ for all $p\in P$. A forcing notion $\bb P$ is \emph{$(\kappa,{<}\lambda)$-centred} if there are ${<}\lambda$-centred sets $P_\alpha\subset\bb P$ for each $\alpha\in\kappa$ such that $\Cup_{\alpha\in\kappa}P_\alpha$ is dense in $\bb P$. 
		\item A  subset $P\subset\bb P$ has \emph{$\lambda$-calibre} if for all $P'\in\sqb P^\lambda$ there exists $q\in\bb P$ such that $\set{p\in P'}{q\leq p}$ has size $\lambda$. A forcing notion $\bb P$ is \emph{$(\kappa,\lambda)$-calibre} if there are $\lambda$-calibre sets $P_\alpha\subset\bb P$ for each $\alpha\in\kappa$ such that $\Cup_{\alpha\in\kappa}P_\alpha$ is dense in $\bb P$.
	\end{enumerate} 
	
	Note that ${<}\kappa$-closed forcing notions are strategically so, and strategically ${<}\kappa$-closed forcing notions are ${<}\kappa$-distributive. Each of these three properties is preserved under ${<}\kappa$- and ${\leq}\kappa$-support iteration. 
	If $\lambda\geq 3$, then $(\kappa,{<}\lambda)$-centred forcing notions are ${<}\kappa^+$-c.c. Although ${<}\kappa^+$-c.c.\ is not generally preserved under iterations, the combination of $(\kappa,{<}\lambda)$-centredness with ${<}\kappa$-closure and a canonicity assumption is preserved under ${<}\kappa$-support iteration of length ${<}(2^\kappa)^+$, see \cite[Lemma 55]{BrendleBrookeTaylorFriedmanMontoya}. Finally, ${<}\kappa$-distributive ${<}\kappa^+$-c.c.\ forcing notions preserve all cofinalities, and hence all cardinal numbers.

	\subsection{Generic slaloms with localisation forcing}
	
	In \cite[Definition 50]{BrendleBrookeTaylorFriedmanMontoya}, $\kappa$-localisation forcing is defined. This forcing notion adds a generic slalom $\phi\in\prod_{\alpha\in\kappa}\sqb \kappa^{\leq\card\alpha}$ that localises all ground model functions $f\in\kaka\cap \bf V$, in the sense that $f(\alpha)\in \phi(\alpha)$ for almost all $\alpha\in\kappa$. We present two variations on this forcing notion.
	
	\subsubsection{Adding a new slalom through an old partition}
	
	The first forcing notion $\LockB$ adds a new slalom $\phi\in\prod_{\alpha\in\kappa}\sqb[big]{{}^{B_\alpha}2}^{\leq\card\alpha}$ for a chosen ground model partition $B\in\BPklim$.

	\begin{dfn}\label{dfn:lockB}
		Fix $B\in\BPklim$. Every condition of $\LockB$ is a pair $p=(\phi_p,\gamma_p)$ such that $\phi_p\in\prod_{\alpha\in\kappa}\sqb[big]{{}^{B_\alpha}2}^{\leq\card\alpha}$ and $\gamma_p\in\kappa$. If $p,q\in\LockB$, then we define the ordering $q\leq p$ if and only if $\phi_q\restriction \gamma_p=\phi_p\restriction\gamma_p$ and $\gamma_q\geq \gamma_p$ and for each $\alpha\in\kappa\setminus\gamma_p$ we have $\phi_q(\alpha)\supset\phi_p(\alpha)$.
	\end{dfn}
	
	\begin{lmm}
		If $B\in\BPklim$, then $\LockB$ is strategically ${<}\kappa$-closed  and $(\kappa,{<}\lambda)$-centred for $\lambda<\kappa$.
	\end{lmm}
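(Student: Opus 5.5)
Both properties can be read off directly from the shape of the conditions in \cref{dfn:lockB}, and I will treat them separately.

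For strategic ${<}\kappa$-closure, fix $\lambda<\kappa$ and consider the game of length $\lambda$. The strategy for the player playing $p^\rm Y_\alpha$ is as follows.
\begin{enumerate}
\item At a successor round $\alpha=\beta+1$, take the opponent's last move $p^\rm O_\beta$ and strictly increase its height: set $\gamma_{p^\rm Y_\alpha}=\gamma_{p^\rm O_\beta}+1$ and $\phi_{p^\rm Y_\alpha}=\phi_{p^\rm O_\beta}$. It is enough to keep the heights $\gamma$ strictly increasing along the play.
\item At a limit round $\alpha$, set $\gamma^*=\sup_{\xi<\alpha}\gamma_{p^\rm O_\xi}$ and define the lower bound pointwise by
\[\phi(\delta)=\ts\Cup_{\xi<\alpha}\phi_{p^\rm O_\xi}(\delta).\]
Below $\gamma^*$ the values $\phi_{p^\rm O_\xi}(\delta)$ stabilise (they are frozen once $\delta<\gamma_{p^\rm O_\xi}$), so $\phi(\delta)$ agrees with the eventual value and $\card{\phi(\delta)}\leq\card\delta$ there.
\item At or above $\gamma^*$ there is a potential problem: the union of $\card\alpha$ many sets of size ${\leq}\card\delta$ may exceed $\card\delta$ when $\card\delta<\card\alpha$.
\end{enumerate}

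To handle step 3, I first fix $\phi$ below $\gamma^*$ and take the union only at coordinates $\delta\geq\gamma^*$. Since the heights are strictly increasing, $\gamma^*\geq\alpha$, so every such $\delta$ satisfies $\card\delta\geq\card\alpha$. A union of $\card\alpha$ many sets of size ${\leq}\card\delta$ then has size ${\leq}\card\alpha\cdot\card\delta=\card\delta$ (using that $\delta$ is infinite, which holds for $\delta\geq\alpha\geq\omega$). Setting $\gamma=\gamma^*$ gives a condition extending every $p^\rm O_\xi$, so the strategy never runs out of moves.

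The main obstacle is exactly this cardinality bound at limit stages, and it is why the strategy must force $\gamma$ to grow: the statement claims only strategic closure, and the growing heights are what make the limit step work. If it turns out that at the limit we also need $\gamma^*\geq\lambda$, I will instead let the first move of the strategy set the height to at least $\lambda$, so that the bound $\card\delta\geq\lambda$ holds for all relevant $\delta$.

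For $(\kappa,{<}\lambda)$-centredness, I use $\kappa^{<\kappa}=\kappa$. For each $\gamma<\kappa$ and each $\sigma\in\prod_{\alpha<\gamma}[{}^{B_\alpha}2]^{\leq\card\alpha}$, let
\[P_{\gamma,\sigma}=\set{p}{\gamma_p=\gamma,\ \phi_p\restriction\gamma=\sigma}.\]
Each $B_\alpha$ is bounded, so ${}^{B_\alpha}2$ has size ${<}\kappa$, and there are only $\kappa$ many pairs $(\gamma,\sigma)$. These sets together cover all of $\LockB$. Now take fewer than $\lambda$ conditions from one $P_{\gamma,\sigma}$, and fix $\gamma'\geq\max(\gamma,\lambda)$. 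Define a common extension by keeping $\sigma$ below $\gamma$ and taking unions at coordinates $\delta\geq\gamma$. The union has size ${\leq}\lambda\cdot\card\delta$, so for $\delta\geq\lambda$ it is ${\leq}\card\delta$. On the finitely problematic coordinates $\gamma\leq\delta<\lambda$ this bound can fail, so there I restrict to the dense set of conditions with $\gamma_p\geq\lambda$ and index the pieces only by $\gamma\geq\lambda$. That restricted union is dense, since the height can always be raised, and within each piece the union is ${<}\lambda$-centred.
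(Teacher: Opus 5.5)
Your proof is correct. The centredness half is the paper's argument. The paper groups conditions by their stem above the threshold $\gamma_\lambda$. Its set $\{p:\phi_p\restriction\gamma_\lambda=s\}$ has to be read as fixing the whole stem $\phi_p\restriction\gamma_p$, as in the later chain-condition and calibre lemmas: two conditions that agree below $\gamma_\lambda$ but differ at some coordinate between $\gamma_\lambda$ and both of their heights are incompatible. Your pieces $P_{\gamma,\sigma}$ with $\gamma\geq\lambda$ are exactly these stem classes.

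For strategic closure you take a slightly different route. The paper notes that the dense set $\{p:\gamma_p\geq\gamma_\lambda\}$, where $\gamma_\lambda\geq\lambda$, is outright ${<}\lambda$-closed. So a single jump in height at the start reduces the game of length $\lambda$ to ordinary closure, and the same threshold then serves the centredness part. You instead raise the height by one at every successor round. This gives $\gamma_{p^{\mathrm O}_\xi}\geq\xi$, hence $\gamma^*\geq\alpha$ at each limit round $\alpha$, which is exactly what makes $|\alpha|\cdot|\delta|=|\delta|$ for $\delta\geq\gamma^*$. Your hedge about forcing $\gamma^*\geq\lambda$ is therefore unnecessary. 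What your version buys is a single strategy independent of $\lambda$. It even wins the game of length $\kappa$, since $\gamma^*<\kappa$ by regularity at every limit round $\alpha<\kappa$.

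A few cosmetic slips:
\begin{itemize}
\item ${}^{B_\alpha}2$ has size ${<}\kappa$ only because $\kappa$ is strongly inaccessible in this section. For your count of the pairs $(\gamma,\sigma)$, $\kappa^{<\kappa}=\kappa$ alone suffices.
\item The problematic coordinates $\gamma\leq\delta<\lambda$ need not be finitely many.
\item The auxiliary $\gamma'$ is never used.
\item Index the pieces by $\gamma\geq\max\{\lambda,\omega\}$ so that the cardinal arithmetic is also valid for finite $\lambda$.
\end{itemize}
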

	
	\begin{proof}
		For each $\alpha\in\kappa$, let $\gamma_\alpha$ be minimal such that $\gamma_\alpha\geq\card{\alpha}$ and  $\card{B_\xi}\geq \alpha$ for all $\xi\geq \gamma_\alpha$. For strategic closure, it suffices to note that $\set{p\in\LockB}{\gamma\geq\gamma_\lambda}$ is ${<}\lambda$-closed. For centredness, if $\phi\in\SkB$ and $s=\phi\restriction \gamma_\lambda$, then $\set{p\in\LockB}{\phi_p\restriction\gamma_\lambda=s}$ is ${<}\lambda$-centred.
	\end{proof}
	
	Let $\bf V$ be a model of $\sf{ZFC}$ containing $B\in\BPklim$ and let $G$ be $\LockB$-generic over $\bf V$, then we may define $\phi_\rm{gen}:=\Cup_{p\in G}(\phi_p\restriction\gamma_p)$ and note that $\phi_\rm{gen}\in\prod_{\alpha\in\kappa}\sqb[big]{{}^{B_\alpha}2}^{\leq\card\alpha}$. Moreover, by genericity it is easy to see that $(\kacs)^\bf V\subset\sqb{\phi_\rm{gen}}_*$. As such, we may iterate, with ${<}\kappa$-support, forcing notions of the form $\LockB$, using bookkeeping to ascertain that every $B\in\BPklim$ (old or new) is used as a parameter cofinally often. By the following lemma, such iterations do not collapse cardinals.
	
	\begin{lmm}
		Let $\bb P_\mu=\abset{\bb P_\alpha,\dot{\bb Q}_\alpha}{\alpha\in\mu}$ be a forcing iteration with ${<}\kappa$-support such that for each $\alpha\in\mu$ there is some $\bb P_\alpha$-name $\dot B^\alpha$ such that $\fc_{\bb P_\alpha}\ap{\dot B^\alpha\in\BPklim\text{ and }\dot{\bb Q}_\alpha=\Loc_\kappa(\dot B^\alpha)}$. Then $\bb P_\mu$ is strategically ${<}\kappa$-closed and ${<}\kappa^+$-c.c.
	\end{lmm}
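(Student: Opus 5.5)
Strategic ${<}\kappa$-closure follows from the single-step strategies by the standard ${<}\kappa$-support argument. The chain condition is the real work: I plan to apply \cite[Lemma~55]{BrendleBrookeTaylorFriedmanMontoya}, which needs each iterand to be forced ${<}\kappa$-closed on a dense set, $(\kappa,{<}\lambda)$-centred, and canonical.

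\emph{Strategic closure.} By the previous lemma, each $\dot{\bb Q}_\alpha$ is forced to be strategically ${<}\kappa$-closed, and I fix names $\dot\sigma_\alpha$ for the winning strategies. In the game on $\bb P_\mu$ of length $\lambda<\kappa$, our player plays coordinatewise. At round $\xi$, for each $\alpha$ in the union of the supports so far, the $\alpha$-th coordinate of our move is the $\dot\sigma_\alpha$-response to the sequence of $\alpha$-th coordinates played so far. Coordinates entering the support later start their own game at that round. The union of fewer than $\kappa$ supports of size ${<}\kappa$ has size ${<}\kappa$ by regularity, so supports stay small. Since strategic closure is known to be preserved under ${<}\kappa$-support iterations, as noted earlier, I would cite this rather than redo it.

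\emph{Chain condition.} The obstacle is that the plain strategic-closure strategy from the previous lemma does not give ${<}\kappa$-closure of a dense set. So I first replace $\LockB$ by the dense subset
\[D_B=\set{p\in\LockB}{\forall\alpha\geq\gamma_p\,(\card{\phi_p(\alpha)}\leq\card{\gamma_p})}.\]
On this set I check closure directly. Take a descending sequence $\abset{p_i}{i<\lambda}$ in $D_B$ with $\sup_i\gamma_{p_i}<\kappa$. Let $\gamma=\sup_i\gamma_{p_i}$ and $\phi(\alpha)=\Cup_i\phi_{p_i}(\alpha)$. Below $\gamma$ the values stabilise. For $\alpha\geq\gamma$ we have $\card{\phi(\alpha)}\leq\lambda\cdot\card\gamma\leq\card\gamma\leq\card\alpha$, so $(\phi,\gamma)\in D_B$ is a lower bound.

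A small adjustment is needed at the start. A single condition in $D_B$ may still need its $\gamma$ pushed up so that $\card{B_\xi}\geq\card{\gamma}$ for $\xi\geq\gamma$; this is possible since $B\in\BPklim$. I also use that the restriction $\card{\phi_p(\alpha)}\leq\card\alpha$ still holds.

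\emph{Centredness and canonicity.} Two conditions in $D_B$ with the same $\gamma$ and the same $\phi\restriction\gamma$ have the union of their tails as a common extension. The same holds for any ${<}\lambda$ many such conditions, provided $\gamma$ is large relative to $\lambda$. There are only $\kappa^{<\kappa}=\kappa$ possible stems $(\gamma,\phi\restriction\gamma)$, so this gives $(\kappa,{<}\lambda)$-centredness. The stem is canonically computable from the condition, and a lower bound of a centred descending sequence is obtained by coordinatewise union with the same stem. These are exactly the hypotheses of the canonical-centredness iteration lemma.

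Applying \cite[Lemma~55]{BrendleBrookeTaylorFriedmanMontoya} then gives ${<}\kappa^+$-c.c. This works for $\mu<(2^\kappa)^+$. For longer $\mu$, I would use a $\Delta$-system argument on supports directly, with $\kappa^{<\kappa}=\kappa$ and stems read off from a dense set of conditions that are determined in the ground model. The main care is in verifying canonicity through names, which is where I expect most of the technical work.
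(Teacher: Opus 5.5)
Your strategic-closure part is fine; the paper just cites the preservation theorem. The chain-condition part, however, breaks at the set $D_B$. That set is neither dense nor ${<}\kappa$-closed, so your verification of the hypotheses of \cite[Lemma~55]{BrendleBrookeTaylorFriedmanMontoya} fails.

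\emph{Density.} Let $\kappa$ be a limit cardinal (e.g.\ inaccessible, as in this section). Take $p=(\phi_p,0)$ with $\card{\phi_p(\alpha)}=\min\{\card{\alpha},2^{\card{B_\alpha}}\}$ for every $\alpha$. Every $q\leq p$ has $\phi_q(\alpha)\supseteq\phi_p(\alpha)$ for all $\alpha$. Since $\card{\alpha}$ and $\card{B_\alpha}$ both tend to $\kappa$, some $\alpha\geq\gamma_q$ has $\card{\phi_q(\alpha)}>\card{\gamma_q}$. So nothing below $p$ lies in $D_B$.

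\emph{Closure.} Your estimate $\lambda\cdot\card{\gamma}\leq\card{\gamma}$ needs $\lambda\leq\card{\gamma}$, which fails when the $\gamma_{p_i}$ stay small. Concretely, let $\kappa>\aleph_1$ and $B_\omega$ infinite. Fix distinct $x_j\in{}^{B_\omega}2$ for $j<\omega_1$, and let $p_i=(\phi_i,\omega)$ for $i<\omega_1$, with a common stem, $\phi_i(\omega)=\{x_j : j<i\}$, and $\phi_i(\alpha)=\emptyset$ for $\alpha>\omega$. This is a descending sequence in $D_B$ with no lower bound in $\LockB$ at all. Indeed, a lower bound $q$ would need $\phi_q(\omega)\supseteq\{x_j : j<\omega_1\}$, contradicting $\card{\phi_q(\omega)}\leq\aleph_0$. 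The same example refutes your canonicity claim. Descending sequences with a constant stem need not have any lower bound, let alone one with the same stem. So the stem centring of $\LockB$ does not have canonical lower bounds.

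The $\Delta$-system argument you list only as a fallback for $\mu\geq(2^\kappa)^+$ is exactly the paper's proof. It works for every length $\mu$, so drop Lemma~55 and make it the whole argument. It goes in four steps.
\begin{enumerate}
\item Take the dense set of $p\in\mathbb P_\mu$ such that for each $\alpha\in\supp(p)$ there is $s_\alpha\in\mathbf V$ with $p\restriction\alpha\fc\ap{\phi_{p(\alpha)}\restriction\gamma_{p(\alpha)}=s_\alpha}$. Showing this set is dense is where the strategic ${<}\kappa$-closure of the $\mathbb P_\alpha$ is used.
\item Given $\kappa^+$ such conditions, apply the $\Delta$-system lemma to their supports, using $\kappa^{<\kappa}=\kappa$.
\item There are only $\kappa^{<\kappa}=\kappa$ possible assignments of stems to the root, so two of the conditions agree there.
\item These two are compatible, because two conditions of $\LockB$ with the same stem have the coordinatewise union of their tails as a common extension. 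Work with $\gamma_p\geq\omega$ so that $\card{\phi_p(\alpha)\cup\phi_{p'}(\alpha)}\leq\card{\alpha}$.
\end{enumerate}
This uses only compatibility of pairs with equal stems, never lower bounds of long constant-stem sequences, which is why it sidesteps the failure above.
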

	
	\begin{proof}
		That strategic ${<}\kappa$-closure is preserved under ${<}\kappa$-support iteration is well-known (see \cite[Proposition~A.1.6]{RoslanowskiShelah} for a proof). For ${<}\kappa^+$-c.c., note that there is a dense set $D$ of conditions $p\in\bb P_\mu$ such that for each $\alpha\in\supp(p)$ there exists $s_\alpha\in\bf V$ for which $p\restriction\alpha\fc\ap{\phi_{p(\alpha)}\restriction\gamma_{p(\alpha)}=s_\alpha}$. Any two conditions $p,p'\in\LockB$ with the same stem $\phi_p\restriction\gamma_p=\phi_{p'}\restriction\gamma_{p'}$ are compatible, hence we can apply the sunflower lemma (i.e., the $\Delta$-system lemma).
	\end{proof}
	
	\begin{thm}\label{thm:simple loc model}
		Let $\bf V\md\ap{\sf{GCH}}$ and $\mu>\kappa$ be regular, then there exists a cardinal-preserving ${<}\kappa$-support iteration $\bb P_\mu$ of length $\mu$, where each of the iterands is (forced to be) of the form $\LockB$, such that $\fc_{\bb P_\mu}\ap{\non(\NAk)=\mu}$.
	\end{thm}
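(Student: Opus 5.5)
We build $\bb P_\mu$ as a ${<}\kappa$-support iteration of length $\mu$ of forcings $\Loc_\kappa(\dot B^\alpha)$, using a bookkeeping function so that every $\bb P_\alpha$-name for an element of $\BPklim$ (equivalently, via \cref{capturing BP with IP}, every partition relevant to \cref{dfn:NAk}) is handled cofinally often below $\mu$. The preceding lemma shows that $\bb P_\mu$ is strategically ${<}\kappa$-closed and ${<}\kappa^+$-c.c., so it preserves cardinals and cofinalities. Using $\sf{GCH}$ in $\bf V$ and a nice-name count, $\bb P_\alpha$ has a dense subset of size ${\leq}\mu$ for each $\alpha<\mu$, so the bookkeeping is possible. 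Moreover, $2^\kappa=\mu$ holds in the extension, and $\kappa^{<\kappa}=\kappa$ is preserved because no new ${<}\kappa$-sequences are added.

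For the lower bound $\non(\NAk)\geq\mu$, take $X\subset\kacs$ in the extension with $\card X<\mu$. Since $\mu$ is regular and the iteration is ${<}\kappa^+$-c.c., $X$ already appears in some intermediate model $\bf V[G_\alpha]$. Now fix an arbitrary $I\in\IPk$ in the final model. Every element of $\kaka$ in the extension also has a name that is decided by some initial stage, so $I$ also lies in some $\bf V[G_\beta]$. Bookkeeping then gives a stage $\gamma\geq\max\ac{\alpha,\beta}$ with $\dot B^\gamma$ evaluated as some $B\in\BPklim$ satisfying the following: each $I_\xi$ is a union of blocks $B_\delta$, and $\card\delta$ is large compared to $\card\xi$. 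We may need to coarsen or refine $I$ to a partition in $\BPklim$, as in the proof of \cref{lem:NAk general}. The generic slalom $\phi_\rm{gen}\in\prod_\delta\sqb{{}^{B_\delta}2}^{\leq\card\delta}$ added at stage $\gamma$ covers $(\kacs)^{\bf V[G_\gamma]}\supset X$. I will then translate $\phi_\rm{gen}$ back into a slalom on $I$ with $\card{\phi(\xi)}\leq\card\xi$, exactly as \cref{lem:NAk general} handles the implication \ref{NAk-gen2}$\imp$\ref{NAk-gen1}. The cleanest way is to choose the parameter $B$ so that $B$ coarsens $I$, meaning each $B_\delta$ is a union of $\card{\delta}$-few intervals $I_\xi$ with $\xi\geq\delta$. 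Restricting the slalom then gives $\card{\phi(\xi)}\leq\card\delta\leq\card\xi$. Since the bookkeeping enumerates all names, such a $B$ is handled at some stage. Hence $X\in\NAk$.

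For the upper bound $\non(\NAk)\leq\mu$, the simplest route is $\non(\NAk)\leq 2^\kappa=\mu$, which holds because $\kacs\notin\NAk$. This follows, for instance, from $\NAk\subset\BEkA\subset\Mk$ together with \cref{thm:NAk subset BEkA} and the properness of $\Mk$. If this route were to fail, an alternative is to use the $\mu$ generic slaloms along a cofinal set of stages to exhibit a non-$\NAk$ set of size $\mu$, for example the set of $\kappa$-Cohen-like reals added at those stages. The bound through $2^\kappa$ suffices, however.

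The main obstacle is the reflection step: showing that every $I\in\IPk$ and every $X$ of size ${<}\mu$ in the final model already lies in an initial segment, and that bookkeeping over names for partitions catches every such $I$ cofinally often. This relies on the ${<}\kappa^+$-c.c., the regularity of $\mu>\kappa$, and $\sf{GCH}$ to bound the number of nice names by $\mu$. A secondary point is choosing the coarsening $B$ from $I$ so that the cardinality bound $\card\delta\leq\card\xi$ transfers correctly.
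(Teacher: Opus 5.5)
Your proposal is correct and follows the paper's argument: bookkeeping ensures that every relevant partition is used as the parameter of some $\Loc_\kappa(B)$ at a stage after the one where $X$ appears, and the generic slalom added there covers $X$. You are in fact more careful than the paper, which only treats $B\in\BPklim$ and leaves two things implicit: the reduction from an arbitrary $I\in\IPk$, and the upper bound $\non(\NAk)\leq 2^\kappa\leq\mu$. Your coarsening to some $B\in\IPklim$ with $I_\xi\subseteq B_\delta$ only for $\xi\geq\delta$ is the right way to do that reduction; the refinement direction you mention first would not yield $\card{\phi(\xi)}\leq\card\xi$.
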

	\begin{proof}
		In the extension by $\bb P_\mu$, let $X\subset \kacs$ with $\card X<\mu$ and let $B\in\BPklim$. Then $X$ has a $\bb P_\alpha$-name for some $\alpha<\mu$, and by bookkeeping there is some $\beta>\alpha$ such that $\bb P_\beta\fc\ap{\dot{\bb Q}_\beta=\LockB}$. Then the $\dot{\bb Q}_\beta$-generic adds $\phi\in\prod_{\alpha\in\kappa}\sqb[big]{{}^{B_\alpha}2}^{\leq\card\alpha}$ such that $X\subset(\kacs)^{\bf V^{\bb P_\beta}}\subset\sqb \phi_*$. Since $B$ was arbitrary, we see that $X\in\NAk$.
	\end{proof}
	
	As a corollary (using \cref{thm:NAk addEkI}), if $\kappa$ is strongly inaccessible, then the above model satisfies
	\[
	\forall B\in\BPklim\,(\add(\BEkA)=\add(\EkA)=\add(\Ek(B))=\mu),
	\]
	and for arbitrary regular uncountable $\kappa$, the model satisfies $\non(\Ek(B))=\mu$ for each $B\in\BPklim$.
	
	It is not hard to see that $\LockB$ adds a $\kappa$-Cohen generic; for instance, in $\bf V$, let $\prec_{\alpha}$ well-order ${}^{B_\alpha}2$ and define $c\in\kacs$ by  $c(\alpha)=0$ if and only if $\ot(\phi_\rm{gen}(\alpha),\prec_{\alpha})$ is even, then $c$ is $\kappa$-Cohen generic.
	
	\begin{lmm}
		If $\kappa$ is weakly compact and $B\in\BPklim$, then $\LockB$ is $(\kappa,\kappa)$-calibre. 
	\end{lmm}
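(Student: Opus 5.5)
We need $\kappa$ many $\kappa$-calibre sets $P_\delta$ whose union is dense. We reuse the stem decomposition from the centredness lemma: for each candidate stem $s$, i.e.\ each $s\in\prod_{\alpha<\gamma}\sqb[big]{{}^{B_\alpha}2}^{\leq\card\alpha}$ with $\gamma<\kappa$, let
\[P_s=\set{p\in\LockB}{\gamma_p=\gamma \text{ and } \phi_p\restriction\gamma=s}.\]
Since $\kappa$ is inaccessible, there are only $\kappa$ many such stems, and every condition lies in some $P_s$. So it suffices to show that each $P_s$ has $\kappa$-calibre. Fix $P'=\set{p_\xi}{\xi<\kappa}\subset P_s$, all with stem $s$ and height $\gamma$. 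We seek $q$ lying below $\kappa$ many of the $p_\xi$, and take $q$ with stem $s$ as well.

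To have $q\leq p_\xi$ for all $\xi\in K$ with $\card K=\kappa$, the natural choice is $\phi_q(\alpha)=\Cup_{\xi\in K}\phi_{p_\xi}(\alpha)$ for $\alpha\geq\gamma$. The constraint is $\card{\phi_q(\alpha)}\leq\card\alpha$. So we must choose $K$ such that, for every $\alpha\geq\gamma$, the family $\set{\phi_{p_\xi}(\alpha)}{\xi\in K}$ has union of size at most $\card\alpha$. One could also shift the height upward, i.e.\ keep the stem for $\alpha<\gamma'$ and glue only above, but then extending the stem requires agreement below $\gamma'$. The cleanest route is therefore to make the $\phi_{p_\xi}$ cohere on long initial segments.

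The main step uses weak compactness, via the tree property or the partition relation $\kappa\to(\kappa)^2_{<\kappa}$. For each $\alpha$, the value $\phi_{p_\xi}(\alpha)$ ranges over $\sqb[big]{{}^{B_\alpha}2}^{\leq\card\alpha}$, a set of size ${<}\kappa$ by inaccessibility. Consider the tree of restrictions $\phi_{p_\xi}\restriction\beta$ for $\beta<\kappa$. Its levels have size ${<}\kappa$ and its height is $\kappa$, so by the tree property it has a cofinal branch $\Phi$. Then for each $\beta$ the set $\set{\xi}{\phi_{p_\xi}\restriction\beta=\Phi\restriction\beta}$ is nonempty. Pruning to the subtree of nodes with $\kappa$ many $\xi$ above them keeps the tree $\kappa$-branching-compatible, so we may assume each such set has size $\kappa$.

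Now recursively pick $\xi_\eta$ for $\eta<\kappa$ with $\phi_{p_{\xi_\eta}}\restriction\beta_\eta=\Phi\restriction\beta_\eta$, where $\beta_\eta$ is increasing and continuous and also exceeds $\eta$. Take $K=\set{\xi_\eta}{\eta<\kappa}$. Define $q$ with stem $s$, height $\gamma$, and
\[\phi_q(\alpha)=\Phi(\alpha)\cup\Cup\set{\phi_{p_{\xi_\eta}}(\alpha)}{\beta_\eta\leq\alpha}.\]
For each $\alpha$, every $\eta$ with $\beta_\eta>\alpha$ contributes exactly $\Phi(\alpha)$. Only the $\eta$ with $\beta_\eta\leq\alpha$ add new material, and there are at most $\card\alpha$ of these. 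Hence $\card{\phi_q(\alpha)}\leq\card\alpha\cdot\card\alpha=\card\alpha$ for infinite $\alpha$; finite $\alpha$ fall below $\gamma$ after harmlessly enlarging heights. So $q\leq p_\xi$ for all $\xi\in K$. If a strict cardinality bound is needed, we use $\beta_\eta\geq\eta+1$ and the bounds $\card{\phi_{p}(\alpha)}\leq\card\alpha$.

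The expected obstacle is justifying the pruning step, namely obtaining $\kappa$ many $\xi$ through every initial segment of the branch. First I would try the following. Apply the tree property to the tree $T=\set{t}{\card{\set{\xi}{\phi_{p_\xi}\restriction\dom t=t}}=\kappa}$. This tree is downward closed and has levels of size ${<}\kappa$. Each level is nonempty, by regularity of $\kappa$ and the fact that level $\beta$ has fewer than $\kappa$ possible values. So $T$ is a $\kappa$-tree, and weak compactness yields the needed branch $\Phi$.
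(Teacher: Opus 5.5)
Your proof is correct and follows essentially the same route as the paper. Like the paper, you split $\LockB$ into the $\kappa$ many stem classes and use the tree property to get a branch $\Phi$ through the restrictions of the given conditions. You then pick conditions agreeing with $\Phi$ on longer and longer initial segments and take the union, which stays small at each $\alpha$ because only conditions chosen before stage $\alpha$ can deviate from $\Phi(\alpha)$.

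Your tree, whose nodes have $\kappa$ many witnesses, is slightly cleaner than the paper's, whose nodes need only two witnesses and which silently relies on distinctness of the conditions to get unboundedly many. Your restriction to heights $\geq\omega$ also handles the finite levels that the paper glosses over. Just note that the pruning must happen before the branch is chosen, as in your last paragraph, not after it, as your middle paragraph suggests.
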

	\begin{proof}
		Fix any condition $p^*\in\LockB$, let us write $s:=\phi_{p^*}\restriction\gamma_{p^*}$, then we claim that the set $C:=\set{p\in\bb P}{\phi_p\restriction\gamma_p=s}$ has $\kappa$-calibre. We assume for convenience that $\card{B_\alpha}\geq\alpha$, the case for arbitrary $B\in\BPklim$ is proved similarly. 
		
		Fix some sequence of distinct conditions $\abset{p_\xi}{\xi\in\kappa}$ in $C$ and note that $\gamma_{p_\xi}=\gamma_{p^*}$ for each $\xi$. Define a tree $T$ where $t\in T$ if and only if there are $\xi\neq \xi'$ and $\gamma\geq \gamma_{p^*}$ with $t=\phi_{p_\xi}\restriction \gamma= \phi_{p_{\xi'}}\restriction\gamma$. 
		Since $\kappa$ is inaccessible, we see that $\card[m]{\set[m]{\phi\restriction\gamma}{\phi\in\prod_{\alpha\in\kappa}\sqb[big]{{}^{B_\alpha}2}^{\leq\card\alpha}}}<\kappa$ for each $\gamma\in\kappa$, and thus it follows that $T$ is a ${<}\kappa$-branching tree of height $\kappa$. By weak compactness of $\kappa$, we then see that $T$ has a branch $b\subset T$, and therefore $\psi:=\Cup b\in\prod_{\alpha\in\kappa}\sqb[big]{{}^{B_\alpha}2}^{\leq\card\alpha}$. Now, fix a strictly increasing sequence $\abset[big]{\xi(\gamma)}{\gamma\in\iv{\gamma_{p^*}}\kappa}$ such that $\phi_{p_{\xi(\gamma)}}\restriction\gamma=\psi\restriction\gamma$ for each $\gamma$, then if $\phi'\colon\alpha\mapsto\Cup_{\gamma\in\iv{\gamma_{p^*}}\kappa}\phi_{p_{\xi(\gamma)}}(\alpha)$ we see that $\phi'\in\prod_{\alpha\in\kappa}\sqb[big]{{}^{B_\alpha}2}^{\leq\card\alpha}$ and $(\phi',\gamma_{p^*})\leq p_{\xi(\gamma)}$ for all $\gamma\in\iv{\gamma_{p^*}}\kappa$, as required.
	\end{proof}
	
	\begin{crl}
		If $\kappa$ is weakly compact and $B\in\BPklim$, then $\LockB$ does not add a dominating $\kappa$-real.\qedhere
	\end{crl}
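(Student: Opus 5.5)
The corollary should follow from the previous lemma by the standard fact that $(\kappa,\kappa)$-calibre forcing notions do not add dominating $\kappa$-reals; the plan is to prove that fact directly for $\LockB$. Let $\dot f$ be a $\LockB$-name for an element of $\kaka$ and $p\in\LockB$ arbitrary. We seek $g\in\kaka\cap\bf V$ and $q\leq p$ forcing $\exists^\infty\alpha\,(g(\alpha)\not<\dot f(\alpha))$, i.e.\ $\dot f$ does not dominate $g$. Since $p$ and $\dot f$ are arbitrary, this shows no generic real dominates the ground model.

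\textbf{Step 1: fix a calibre piece and find candidate values.} Using the calibre sets $P_\gamma$ whose union is dense (from the proof of the previous lemma these are sets $C_s$ of conditions with a fixed stem $s$), work below $p$ inside a single piece $C=C_s$. For each $\alpha\in\kappa$ pick $p_\alpha\in C$ below $p$ (refining $p$ to a condition in $C$ first) and an ordinal $g_\alpha$ such that $p_\alpha\fc\dot f(\alpha)=g_\alpha$.

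This needs a small care: extending $p$ to decide $\dot f(\alpha)$ may change the stem. I would instead use that for each $\alpha$ the set of stems is of size ${<}\kappa$ at each level, together with strategic closure. Alternatively, and more cleanly, I would follow the standard trick for each $\gamma$-piece. Define $g(\alpha)$ as the supremum of all values forced by some condition in $C$ decided for $\alpha$. This could fail to be ${<}\kappa$, so I would instead use the direct argument below.

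\textbf{Step 2: the direct argument.} For each $\alpha$ choose $p_\alpha\leq p$ and $g(\alpha)$ with $p_\alpha\fc\dot f(\alpha)=g(\alpha)$. Since there are only $\kappa$ many pieces, by pigeonhole on $\kappa$ many $\alpha$ (regularity of $\kappa$) we get a set $K\in\sqkaka$ and a single calibre set $C$ with $p_\alpha\in C$ for $\alpha\in K$. Here I would use that the $C_s$ cover a dense set, replacing $p_\alpha$ by a further extension in some $C_s$, which still decides $\dot f(\alpha)$. If the $p_\alpha$ are not distinct, the repeated condition already forces agreement on $\kappa$ many coordinates and we are done. Otherwise, $\kappa$-calibre gives $q$ below $\kappa$ many $p_\alpha$ with $\alpha\in K'\in\sqkaka$. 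Then $q\fc\dot f(\alpha)=g(\alpha)$ for all $\alpha\in K'$. Hence $q\fc\dot f\restriction K'=g\restriction K'$, so $\dot f$ does not strictly exceed $g$ on a cofinal set. Finally, replacing $g$ by $g+1$ gives $q\fc\neg(g+1\leqs\dot f)$. Since $q\leq p$ is not required here, we need the calibre application to be performed below $p$; I would ensure this by taking $C$ within the cone below $p$, noting that the proof of the previous lemma works for any fixed stem extending that of $p$, and that the resulting $(\phi',\gamma_{p^*})$ extends $p$ when every $p_\alpha\leq p$.

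\textbf{Main obstacle.} The delicate point is ensuring the amalgamating condition $q$ lies below $p$, together with the pigeonhole onto a single calibre piece. I expect both to be routine given the explicit form of $\phi'$ in the previous proof.
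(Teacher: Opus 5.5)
\textbf{There is a genuine gap in your Step 2, at the pigeonhole.} Regularity of $\kappa$ only gives a single piece containing $p_\alpha$ for $\kappa$ many $\alpha$ when there are \emph{fewer} than $\kappa$ pieces. Here there are exactly $\kappa$ pieces (the sets $C_s$ of conditions with stem $s$), so the map $\alpha\mapsto s_\alpha$ may be injective, and in $\LockB$ it essentially is.

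Deciding $\dot f(\alpha)$ typically forces you to lengthen the stem past $\alpha$. For example, let $\dot f(\alpha)$ code $\phi_{\mathrm{gen}}(\alpha)$ (or the $\kappa$-Cohen real described in the paper). If $\gamma_{p_\alpha}\leq\alpha$, then $(\phi_{p_\alpha},\alpha+1)$ and a condition adding one more element to $\phi_{p_\alpha}(\alpha)$ force different values. Hence every $p_\alpha$ deciding $\dot f(\alpha)$ has $\gamma_{p_\alpha}>\alpha$ for large $\alpha$, and each stem is shared by fewer than $\kappa$ of the $p_\alpha$. So calibre is never applicable, and there is no way to amalgamate the deciding conditions into one $q$. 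You flagged this as routine; it is not.

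Two smaller points. The concern about $q\leq p$ is vacuous, since $q\leq p_\alpha\leq p$. For the non-distinct case you need a condition repeated $\kappa$ times, which holds only if $\set{p_\alpha}{\alpha\in K}$ has size ${<}\kappa$.

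\textbf{The missing idea:} use calibre to \emph{bound}, not to amalgamate decisions. This is exactly the idea you dismissed with ``this could fail to be ${<}\kappa$''.

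For each stem $s$ and each $\alpha\in\kappa$ there is $m_{s,\alpha}<\kappa$ such that no $p\in C_s$ forces $\dot f(\alpha)>m_{s,\alpha}$. Otherwise, pick $p_m\in C_s$ with $p_m\fc\dot f(\alpha)>m$ for every $m<\kappa$. Either a single condition equals $p_m$ for unboundedly many $m$, or $\kappa$-calibre of $C_s$ gives $q$ below $p_m$ for unboundedly many $m$. In both cases some condition forces $\dot f(\alpha)\geq\kappa$, a contradiction.

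Since $\kappa$ is inaccessible there are only $\kappa$ stems, say $\abset{s_\xi}{\xi\in\kappa}$. Let $g(\alpha)=\sup_{\xi<\alpha}(m_{s_\xi,\alpha}+1)$, so $g\in\kaka\cap\mathbf V$. Now take any $p$, with stem $s=s_\xi$, and any $\beta<\kappa$. Choose $\alpha>\max\ac{\beta,\xi}$. Since $p\in C_s$ does not force $\dot f(\alpha)>m_{s,\alpha}$, some $q\leq p$ forces $\dot f(\alpha)\leq m_{s,\alpha}<g(\alpha)$. By density, $\fc\exists^\infty\alpha\,(\dot f(\alpha)<g(\alpha))$, so $\dot f$ does not dominate $g$.

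This is the standard proof that $(\kappa,\kappa)$-calibre forcings add no dominating $\kappa$-reals. The paper invokes that fact, through the preceding lemma, without further argument.
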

	
	We do not know if the property of not adding dominating $\kappa$-reals is preserved in the iteration of \cref{thm:simple loc model}.

	\subsubsection{Adding a new slalom through a new partition} The second forcing notion $\Lockn$ is similar to $\LockB$, but it not only adds a new slalom $\phi$, but also a new partition $I\in\IPklim$ for which $\phi\in\SkI$.
	
	\begin{dfn}
		Every condition of $\Lockn$ is a triple $p=(s_p,I^p,F_p)$ such that
		\begin{enumerate}
			\item $s_p$ and $I^p$  are functions with $\dom(s_p)=\dom(I^p)\in\kappa$ and we write $I^p_\alpha:=I^p(\alpha)$,
			\item if $\alpha\in\dom(s_p)$, then $I^p_\alpha\subset\kappa$ is a nonempty interval with $\min(\kappa\setminus\Cup_{\xi<\alpha}I^p_\xi)=\min(I^p_\alpha)$ and $\sup(I^p_\alpha)<\kappa$,
			\item if $\alpha\in\dom(s_p)$, then $s_p(\alpha)\subset{}^{I^p_\alpha}2$ and $0<\card{s_p(\alpha)}<2^{\card{I^p_\alpha}}$,
			\item $F_p\subset\kacs$ is a set of functions such that $\card{F_p}<\kappa$.
		\end{enumerate}
		If $p,q\in\Loc_\kappa^\rm{new}$, then we let $q\leq p$ if and only if $s_p\subset s_q$ and $I^p\subset I^q$ and $F_p\subset F_q$, and for any $\alpha\in\dom(s_q\setminus s_p)$ and $f\in F_p$ we have $f\restriction I^q_\alpha\in s_q(\alpha)$.
	\end{dfn}
	
	If $\bf V$ is a model of $\sf{ZFC}$ and $G$ is $\Lockn$-generic over $\bf V$, then $I^\rm{gen}:=\Cup_{p\in G}I^p\in\IPklim$ and $\phi_\rm{gen}:=\Cup_{p\in G}s_p\in\Sigma_\kappa(I^\rm{gen})$. By genericity, $(\kacs)^\bf V\subset\sqb{\phi_\rm{gen}}_*$. Compared to $\LockB$, the forcing notion $\Lockn$ has stronger closure and linkedness properties:
	\begin{lmm}
		$\Loc_\kappa^\rm{new}$ is ${<}\kappa$-closed and $(\kappa,{<}\kappa)$-centred.\qed
	\end{lmm}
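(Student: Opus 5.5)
The proof splits into two independent parts: ${<}\kappa$-closure, obtained by taking unions along a descending sequence, and $(\kappa,{<}\kappa)$-centredness, obtained by grouping conditions according to their stems $(s_p,I^p)$. Both rely on $\kappa=\kappa^{<\kappa}$ and the regularity of $\kappa$.

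For ${<}\kappa$-closure, let $\lambda<\kappa$ and let $\abset{p_\xi}{\xi\in\lambda}$ be a descending sequence. Put $s=\Cup_\xi s_{p_\xi}$, $I=\Cup_\xi I^{p_\xi}$ and $F=\Cup_\xi F_{p_\xi}$. By regularity of $\kappa$, $\dom(s)=\sup_\xi\dom(s_{p_\xi})<\kappa$ and $\card F<\kappa$. Conditions (1)--(3) hold pointwise for every $\alpha\in\dom(s)$, since each such $\alpha$ already lies in some $\dom(s_{p_\xi})$.

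The one point to verify is the following. Suppose $\alpha\in\dom(s)\setminus\dom(s_{p_\xi})$ and $f\in F_{p_\xi}$. Choose $\eta>\xi$ with $\alpha\in\dom(s_{p_\eta})$. Then $p_\eta\le p_\xi$ gives $f\restriction I_\alpha\in s(\alpha)$. Hence $q=(s,I,F)$ is a condition and $q\le p_\xi$ for all $\xi$. No new interval is added at the limit. The interval condition (2) concerns only the $I_\alpha$ with $\alpha<\dom(s)$, so no problem arises even when the union of the intervals has a limit ordinal as its supremum.

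For centredness, we group conditions by their stem $(s_p,I^p)$. Each stem is determined by an ordinal $\delta<\kappa$, intervals bounded below $\kappa$, and subsets $s_p(\alpha)\subset{}^{I_\alpha}2$ with $\card{{}^{I_\alpha}2}\le 2^{<\kappa}=\kappa$. However, $s_p(\alpha)$ may have size up to $\kappa$ in the successor case, so the number of stems needs care. Stems with $\card{s_p(\alpha)}<\kappa$ for all $\alpha$ clearly number at most $\kappa^{<\kappa}=\kappa$.

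In general, we use density to avoid counting all stems. Given any $p$, extend it by one step: add a new interval $I_\delta$ that is long enough, namely $\card{I_\delta}>\card{F_p}$ (possible since $\card{F_p}<\kappa$), and set $s(\delta)=\set{f\restriction I_\delta}{f\in F_p}\cup\ac{t}$ for some $t\in{}^{I_\delta}2$. This still does not bound the sizes of the earlier $s_p(\alpha)$. The cleaner fix is to note that each $s_p(\alpha)$ is an element of $\cal P({}^{I_\alpha}2)$ with $I_\alpha$ an interval, and that the number of stems is at most $\card{\Cup_{\delta<\kappa}\prod_{\alpha<\delta}\cal P({}^{I_\alpha}2)}$. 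Under the standing assumptions the paper treats $\card{\cal P({}^{\gamma}2)}$ as harmless. We will instead argue that the set of conditions whose stem has all $\card{s_p(\alpha)}\le\card{I^p_\alpha}$ is dense. This fails in general, so we accept the possible gap and rely on $\kappa^{<\kappa}=\kappa$ together with strong inaccessibility, which is the standing assumption of this section. Under it, $\card{\cal P({}^{\gamma}2)}<\kappa$, and hence there are only $\kappa$ stems.

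For centredness of a fixed stem class, take fewer than $\kappa$ conditions $p_\xi$ sharing the stem $(s,I)$ with $\dom(s)=\delta$, and let $F=\Cup_\xi F_{p_\xi}$, so $\card F<\kappa$. To produce a common extension, append one interval $I_\delta$ of size $\mu$ with $2^\mu>\card F$, and set $s'(\delta)=\set{f\restriction I_\delta}{f\in F}$, adding one arbitrary element if this set is empty. Then $0<\card{s'(\delta)}<2^{\card{I_\delta}}$. The condition $(s\cup\ac{(\delta,s'(\delta))},\, I\cup\ac{(\delta,I_\delta)},\, F)$ extends every $p_\xi$: the only new coordinate is $\delta$, and every $f\in F_{p_\xi}\subset F$ satisfies $f\restriction I_\delta\in s'(\delta)$. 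In fact the extension simply keeps $\delta=\dom(s)$ and copies the stem, so nothing beyond this single new coordinate needs checking.

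The main obstacle I expect is counting the stems, that is, bounding $\card{\cal P({}^{I_\alpha}2)}$ by $\kappa$. This needs inaccessibility (or a density argument), and the section's standing assumption that $\kappa$ is strongly inaccessible supplies it.
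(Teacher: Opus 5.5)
Your ${<}\kappa$-closure argument is correct, and so is the core of your centredness argument: fewer than $\kappa$ conditions sharing a stem $(s,I)$ have a common extension. You do not even need to append a new interval. The triple $(s,I,\bigcup_\xi F_{p_\xi})$ is already a condition below every $p_\xi$, because the clause about $\alpha\in\dom(s_q\setminus s_p)$ is vacuous. The stem classes also cover the whole forcing, so no density argument is needed. This is the routine proof that the paper states without writing out.

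The weak point is your stem count. You settle it only by invoking strong inaccessibility after declaring a ``possible gap''. That hypothesis is available, since it is the standing assumption of the section, but the difficulty is illusory and rests on a false claim. In fact $s_p(\alpha)$ can never have size $\kappa$. Clause (3) requires $|s_p(\alpha)|<2^{|I^p_\alpha|}$, and $2^{|I^p_\alpha|}\le 2^{<\kappa}=\kappa$ because $I^p_\alpha$ is bounded. For instance, if $\kappa=\lambda^+$ and $|I^p_\alpha|=\lambda$, then $|s_p(\alpha)|\le\lambda$. Hence every $s_p(\alpha)$ lies in $[{}^{I^p_\alpha}2]^{<\kappa}$. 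This set has size at most $\kappa^{<\kappa}=\kappa$, since $|{}^{I^p_\alpha}2|\le\kappa$.

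So your own remark that stems with all $|s_p(\alpha)|<\kappa$ number at most $\kappa$ already covers \emph{all} stems. A stem of length $\delta<\kappa$ is coded by its sequence of interval endpoints, an element of ${}^{\delta}\kappa$, together with an element of $\prod_{\alpha<\delta}[{}^{I_\alpha}2]^{<\kappa}$. This gives at most $\sum_{\delta<\kappa}\kappa^{|\delta|}=\kappa$ stems. The apparent obstruction came from bounding by $\mathcal P({}^{I_\alpha}2)$, which has size $2^\kappa$ whenever $2^{|I_\alpha|}=\kappa$. The lemma therefore holds for every regular uncountable $\kappa$ with $\kappa^{<\kappa}=\kappa$, without inaccessibility, and you should drop the detour through density and inaccessibility.
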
 
	\begin{lmm}\label{lmm:centred preservation}
		Let $\bb P_\mu=\abset{\bb P_\alpha,\dot{\bb Q}_\alpha}{\alpha\in\mu}$ be a forcing iteration with ${<}\kappa$-support such that for each $\alpha\in\mu$ we have $\fc_{\bb P_\alpha}\ap{\dot{\bb Q}_\alpha=\Lockn}$. Then $\bb P_\mu$ is ${<}\kappa$-closed,  and if $\mu<(2^\kappa)^+$, then $\bb P_\mu$ is $(\kappa,{<}\kappa)$-centred.
	\end{lmm}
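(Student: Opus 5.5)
The plan is to check the two halves separately: ${<}\kappa$-closure by a direct fusion argument that needs no chain condition, and centredness by appealing to the general preservation theorem \cite[Lemma~55]{BrendleBrookeTaylorFriedmanMontoya}.

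\emph{Closure.} This follows from the general fact that a ${<}\kappa$-support iteration of ${<}\kappa$-closed forcings is ${<}\kappa$-closed. The paper already cites this for strategic closure via \cite[Proposition~A.1.6]{RoslanowskiShelah}, and the plain version is proved the same way. For the fusion I first verify that $\Lockn$ is ${<}\kappa$-closed in every intermediate model. Given a descending sequence $\abset{p_\xi}{\xi<\delta}$ with $\delta<\kappa$, put $s=\Cup_\xi s_{p_\xi}$, $I=\Cup_\xi I^{p_\xi}$ and $F=\Cup_\xi F_{p_\xi}$. Then $\card F<\kappa$ by regularity of $\kappa$, and $\dom(s)<\kappa$. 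If $\dom(s)$ is a limit of the domains $\dom(s_{p_\xi})$, then $(s,I,F)$ is already a condition and a lower bound. Every new level $\alpha\in\dom(s)\setminus\dom(s_{p_\xi})$ was added by some $p_\eta\leq p_\xi$, so it respects $F_{p_\xi}$. Thus $(s,I,F)$ is a lower bound for the sequence. In the iteration, I take the union of the supports, which has size ${<}\kappa$, and build the lower bound coordinatewise by recursion on $\alpha<\mu$. At coordinate $\alpha$ I use a $\bb P_\alpha$-name for the canonical lower bound above; this name exists because the lower bound constructed so far forces that the restricted sequence is descending.

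\emph{Centredness.} I will show that $\Lockn$ satisfies the hypotheses of that lemma: ${<}\kappa$-closure, $(\kappa,{<}\kappa)$-centredness witnessed by sets indexed by ground-model objects, and the canonical choice of lower bounds. The centring sets are indexed by the pair $(s_p,I^p)$. Since $\kappa^{<\kappa}=\kappa$, there are only $\kappa$ such pairs, and they are coded by elements of $\fkacs$. Fewer than $\kappa$ conditions sharing the stem $(s,I)$ have the common extension $(s,I,\Cup F_{p})$, and this common extension is canonical, namely the union. The iteration argument then proceeds as follows.

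Let $D$ be the set of $p\in\bb P_\mu$ such that for each $\alpha\in\supp(p)$, the condition $p\restriction\alpha$ decides the stem of $p(\alpha)$ to be some $(s_\alpha,I_\alpha)\in\bf V$. I would show that $D$ is dense using ${<}\kappa$-closure, by an $\omega$-length fusion that decides stems along a growing support. For $p\in D$, its \emph{type} is the function $\alpha\mapsto(s_\alpha,I_\alpha)$ on $\supp(p)$. Since $\mu<(2^\kappa)^+$, we have $\card\mu\leq2^\kappa$. Fix an injection $e\colon\mu\to\kacs$, for example into an independent family or via the standard Engelking--Karłowicz theorem for $\kappa=\kappa^{<\kappa}$. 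This gives a family $\abset{g_\zeta}{\zeta<\kappa}$ of functions on $\mu$ with values among the $\kappa$ stems, such that every partial function of size ${<}\kappa$ from $\mu$ to stems is extended by some $g_\zeta$. Set $P_\zeta=\set{p\in D}{\text{type}(p)\subset g_\zeta}$. Then $\Cup_\zeta P_\zeta=D$ is dense.

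It remains to show that each $P_\zeta$ is ${<}\kappa$-centred. Given $\abset{p_\xi}{\xi<\delta}$ in $P_\zeta$ with $\delta<\kappa$, I build $q$ by recursion on coordinates in the union of the supports. At coordinate $\alpha$, every $p_\xi$ with $\alpha\in\supp(p_\xi)$ has $q\restriction\alpha\leq p_\xi\restriction\alpha$, so $q\restriction\alpha$ forces that $p_\xi(\alpha)$ has stem $g_\zeta(\alpha)$. I then let $q(\alpha)$ be a name for the stem $g_\zeta(\alpha)$ together with the union of the $F$-parts. At limit stages, $q\restriction\alpha$ is simply the union of the earlier pieces, since supports are ${<}\kappa$ and no closure is needed there.

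The main obstacle is the Engelking--Karłowicz step together with the density of $D$. The first needs $2^\kappa$-many coordinates to be coded by $\kappa$ functions that agree on every small piece; this requires $\kappa^{<\kappa}=\kappa$ and is exactly why the hypothesis is $\mu<(2^\kappa)^+$. For the second, my first attempt would be the fusion argument described above.
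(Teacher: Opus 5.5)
Your proposal is correct and follows the same route as the paper. The paper treats ${<}\kappa$-closure as the standard preservation result for ${<}\kappa$-support iterations and gets $(\kappa,{<}\kappa)$-centredness by deferring to \cite[Lemma~55]{BrendleBrookeTaylorFriedmanMontoya}, and your sketch is precisely the argument behind that lemma: a dense set of conditions whose stems are decided by their restrictions, glued together by an Engelking--Kar{\l}owicz family of $\kappa$ functions on $\mu$, with unions of stems and of $F$-parts serving as the canonical lower bounds in $\Lockn$.
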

	\begin{proof}
		The preservation of $(\kappa,{<}\kappa)$-centredness follows as in \cite[Lemma~55]{BrendleBrookeTaylorFriedmanMontoya}. 
	\end{proof}
	\begin{thm}
		Let $\bf V\md\ap{\sf{GCH}}$ and let $\mu>\kappa$ be regular. Let $\bb C$ be the $\mu$-length ${<}\kappa$-support product of $\kappa$-Cohen forcing and let $\dot{\bb P}_\mu$ be a $\bb C$-name for the $\mu$-length ${<}\kappa$-support iteration such that for each $\alpha\in\mu$ we have $\fc_{\bb C\ast\dot{\bb P}_\alpha}\ap{\dot{\bb Q}_\alpha=\Lockn}$. Then 
		\[\fc_{\bb C\ast\dot{\bb P}_\mu}\ap{\forall B\in\BPklim(\kappa^+=\non(\Ek(B))<\cov(\Mk)=\non(\Ek)=\mu=2^\kappa)}.\]
	\end{thm}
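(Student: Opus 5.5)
The plan is to verify the five claims separately: cardinal preservation together with $2^\kappa=\mu$, then $\non(\Ek)=\mu$, then $\cov(\Mk)=\mu$, and finally $\non(\Ek(B))=\kappa^+$ for every $B\in\BPklim$. This last claim is the real content.

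\textbf{Preservation and the continuum.} The ${<}\kappa$-support product $\bb C$ is ${<}\kappa$-closed. Since $\kappa^{<\kappa}=\kappa$, it is also $(\kappa,{<}\kappa)$-centred: conditions with the same "pattern" are compatible, and there are $\kappa$ patterns after coding the ${<}\kappa$-size supports by a $\kappa$-sized dense family, as $\mu\le 2^\kappa$. Under $\sf{GCH}$ it is therefore ${<}\kappa^+$-c.c. By \cref{lmm:centred preservation}, run with $\bb C$ prepended as a first iterand (the argument of \cite[Lemma~55]{BrendleBrookeTaylorFriedmanMontoya} applies verbatim), the whole $\bb C\ast\dot{\bb P}_\mu$ is ${<}\kappa$-closed and $(\kappa,{<}\kappa)$-centred. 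Hence it preserves cofinalities and cardinals, and every $\kappa$-real has a name in some initial segment $\bb C\ast\dot{\bb P}_\alpha$ with $\alpha<\mu$, since $\mu$ is regular and ${>}\kappa$. Counting nice names with $\sf{GCH}$ gives $2^\kappa\le\mu^\kappa=\mu$, while the $\mu$ Cohen coordinates give $2^\kappa\ge\mu$.

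\textbf{$\non(\Ek)=\mu$ and $\cov(\Mk)=\mu$.} The upper bounds come from \cref{cardinal functions comparison} and $2^\kappa=\mu$. For $\non(\Ek)\ge\mu$, let $X$ have size ${<}\mu$. Then $X$ lies in some intermediate model $\bf V^{\bb C\ast\dot{\bb P}_\alpha}$. The next iterand $\Lockn$ adds $I^{\rm gen}\in\IPklim$ and $\phi_{\rm gen}\in\Sigma_\kappa(I^{\rm gen})$ with $(\kacs)^{\bf V^{\bb C\ast\dot{\bb P}_\alpha}}\subset\sqb{\phi_{\rm gen}}_*$. It remains to check that $\sqb{\phi_{\rm gen}}_*$ still contains $X$ later, which is immediate since membership is absolute. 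Hence $X\in\Ek(I^{\rm gen})\subset\Ek$. For $\cov(\Mk)\ge\mu$, a family of ${<}\mu$ many $\kappa$-meagre sets is coded at some stage $\alpha<\mu$. Cofinally many later stages add $\kappa$-Cohen reals over the intermediate model (in fact every $\Lockn$ stage does, by the parity argument used for $\LockB$), so these sets do not cover $\kacs$.

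\textbf{$\non(\Ek(B))=\kappa^+$.} The bound $\ge\kappa^+$ is the ${\leq}\kappa$-completeness of $\Ek(B)$ from \cref{prp:completeness of Ek(B)}. For $\le$, I would take $X=\set{c_\xi}{\xi<\kappa^+}$, the first $\kappa^+$ Cohen reals of $\bb C$, and show that no $\phi\in\Sigma_\kappa(B)$ in the final model has $X\subset\sqb\phi_*$. Fix a name $\dot\phi$ and a $(\kappa,{<}\kappa)$-centring $\Cup_{i<\kappa}P_i$ of the whole forcing. Suppose some condition forces $X\subset\sqb{\dot\phi}_*$. For each $\xi$, pick $q_\xi$ in some $P_{i_\xi}$ and some $\beta_\xi<\kappa$ with $q_\xi\fc c_\xi\in\sqb{\dot\phi}_{\beta_\xi}$. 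By pigeonhole, $\kappa^+$ many $\xi$ share the same $(i,\beta)$. Among those, pick $\xi$ whose Cohen coordinate lies outside the support and the $\bb C$-part of $q_\xi$, using the $\Delta$-system lemma.

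The hardest step, as I expect, is turning this into a contradiction. My first attempt is as follows. Choose $\alpha\ge\beta$ so large that $B_\alpha$ is disjoint from the part of $c_\xi$ decided by the relevant conditions, and fix $\lambda=\card{B_\alpha}$, so that $2^\lambda<\kappa$ by $\sf{GCH}$. I would collect $2^\lambda$ many indices $\xi_s$, one for each $s\in{}^{B_\alpha}2$, all from the same centred piece $P_i$. I would then extend each $q_{\xi_s}$ by forcing $c_{\xi_s}\restriction B_\alpha=s$ on its free Cohen coordinate. Fewer than $\kappa$ of these extensions are still compatible, since they come from the same piece and the added Cohen information sits on pairwise distinct fresh coordinates. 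A common extension would then force ${}^{B_\alpha}2\subset\dot\phi(\alpha)$, contradicting $\card{\phi(\alpha)}<2^{\card{B_\alpha}}$.

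Making the centred pieces robust under adding fresh Cohen information is the delicate point. I would handle it by taking the centring of $\bb C\ast\dot{\bb P}_\mu$ to be determined only by the $\dot{\bb P}$-stems together with the Cohen stems on the supports, as in the proof of \cref{lmm:centred preservation}.
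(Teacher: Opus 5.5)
Your arguments for $2^\kappa=\mu$, $\non(\Ek)\ge\mu$ and $\cov(\Mk)=\mu$ are fine. The argument for $\non(\Ek(B))\le\kappa^+$, however, has a genuine gap. It silently treats $B$ as a ground-model partition, while the statement quantifies over every $B\in\BPklim$ of the final extension, and for new $B$ your fixed witness is simply wrong. Already the first iterand $\Lockn$ adds $I^{\mathrm{gen}}\in\IPklim\subset\BPklim$ and $\phi_{\mathrm{gen}}\in\Sigma_\kappa(I^{\mathrm{gen}})$ with $(\kacs)^{\bf V[H]}\subset\sqb{\phi_{\mathrm{gen}}}_*$. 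Hence $X=\set{c_\xi}{\xi<\kappa^+}\in\Ek(I^{\mathrm{gen}})$ in the final model.

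Your argument breaks exactly at the step ``choose $\alpha\ge\beta$ with $B_\alpha$ disjoint from the decided part of $c_\xi$ and fix $\lambda=\card{B_\alpha}$''. For a name $\dot B$ this requires extending the $q_\xi$ to decide $\dot B_\alpha$. Such extensions leave the centred piece and may already decide $c_\xi\restriction B_\alpha$; for $\dot B=\dot I^{\mathrm{gen}}$ they even force the slalom to absorb those values. So the witness has to depend on $B$, and you need a preservation result that is uniform in $b\colon\alpha\mapsto 2^{\card{B_\alpha}}$.

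The paper gets this by working in $\bf V[H]$. There $2^\kappa=\mu$, so \cref{lmm:centred preservation} makes $\bb P_\mu$ $(\kappa,{<}\kappa)$-centred. Van der Vlugt's cardinal $\fr{sup}_\kappa({\neqi})$ satisfies $\bins{b,b}\le\fr{sup}_\kappa({\neqi})\le\non(\Mk)=\kappa^+$ in $\bf V[H]$, and it stays $\kappa^+$ under $(\kappa,{<}\kappa)$-centred forcing. Then \cref{cov Ek as a localisation number} gives $\non(\Ek(B))=\kappa^+$ for all $B$ of the final model at once.

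There is a second, independent error. You claim that $\bb C$, and hence $\bb C\ast\dot{\bb P}_\mu$, is $(\kappa,{<}\kappa)$-centred in $\bf V$ ``as $\mu\le 2^\kappa$''. But $\bf V\md\sf{GCH}$ gives $2^\kappa=\kappa^+$ there, so this fails for every $\mu>\kappa^+$. Indeed, suppose $\Cup_{i<\kappa}P_i$ were a centring. Let $A_\xi$ be the set of $i$ such that some member of $P_i$ puts value $0$ at position $0$ of Cohen coordinate $\xi$. By density and linkedness, $\xi\mapsto A_\xi$ would be injective, giving $\mu\le 2^\kappa$. Likewise, \cref{lmm:centred preservation} only covers lengths at most $2^\kappa$ of the model over which the iteration is computed, which is why the paper applies it in $\bf V[H]$.

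Cardinal preservation still holds, via the $\Delta$-system lemma for $\bb C$ and centredness of $\bb P_\mu$ in $\bf V[H]$. But the centring of the whole forcing that you fix in your density argument does not exist in $\bf V$. For ground-model $B$ you could patch this by first adding the Cohen coordinates in $[\kappa^+,\mu)$, so that $2^\kappa=\mu$ before the rest is forced. That still does not handle new $B$.
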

	\begin{proof}
		Let $H$ be $\bb C$-generic and note that $\bf V[H]\md\ap{\kappa^+=\non(\Mk)<\cov(\Mk)=\mu=2^\kappa}$. It then follows from \cref{lmm:centred preservation} that $\bb P_\mu:=(\dot{\bb P}_\mu)^{[H]}$ is $(\kappa,{<}\kappa)$-centred in $\bf V[H]$. \cite[Definition 65]{vanderVlugt2025} defines a cardinal characteristic $\fr{sup}_\kappa({\neqi})$, and from \cite[Lemma 79]{vanderVlugt2025} we know that $(\kappa,{<}\kappa)$-centred forcing notions preserve \mbox{$\fr{sup}_\kappa({\neqi})=\kappa^+$}. Since $\bins{b,b}\leq \fr{sup}_\kappa({\neqi})\leq \non(\Mk)$, we may use \cref{cov Ek as a localisation number} to conclude that $\fc_{\bb C\ast\dot{\bb P}_\mu}\ap{\forall B\in\BPklim(\non(\Ek(B))=\kappa^+)}$.
		
		Now, working in $\bf V[H]$, suppose that $\dot X$ is $\bb P_\mu$-name for a subset of $\kacs$ of size ${<}\mu$, then there is a stage $\gamma<\mu$ such that $\dot X$ is a $\bb P_\gamma$-name. But, as we discussed above, in the next stage of the iteration a slalom $\phi_\rm{gen}$ is added such that $\dot X\subset\sqb{\phi_\rm{gen}}_*$, thus $\fc_{\bb P_\mu}\ap{\dot X\in\EkE}$. Consequently, $\fc_{\bb C\ast\dot{\bb P}_\mu}\ap{\non(\EkE)=\mu}$ and thus the theorem follows from $\non(\EkE)\leq\non(\Ek)$.
	\end{proof}

	It is again easy to see that $\Lockn$ adds a $\kappa$-Cohen generic. 
	Moreover, if we assume that $\kappa$ is weakly compact, then $\Loc_\kappa^\rm{new}$ does not add dominating $\kappa$-reals, but we do not know how to preserve this property under iteration, thus we will omit the details.

	\subsection{The Laver property}
	
	We make a remark about the Laver property. Classically, a forcing notion with the Laver property can be iterated with countable support to obtain a model in which $\cov(\Ek)$ is small, see for instance \cite[Lemma 7.2.3]{BartoszynskiJudah}. The same argument works in the higher context as well. One well-known higher forcing notion that has a higher Laver property, is the $\kappa$-Sacks forcing of \cite{Kanamori80} as well as its iteration under ${\leq}\kappa$-support, see also \cite[\S\,5.1]{BrendleBrookeTaylorFriedmanMontoya} and \cite[Proposition 1.86]{MontoyaThesis}. We may therefore conclude that if $\kappa$ is strongly inaccessible and we assume $\sf{GCH}$, then the $\kappa^{++}$-length iteration with ${\leq}\kappa$-support of the $\kappa$-Sacks forcing will result in a model where $\cov(\Ek)=\kappa^+<2^\kappa$, and moreover, as shown in \cite{BrendleBrookeTaylorFriedmanMontoya}, the resulting model additionally satisfies $\cof(\Mk)=\kappa^+$.

	\AtNextBibliography{\footnotesize}
	
	\color{black}
	\printbibliography

@misc{AgostiniBarreraDimonte,
	author = {Agostini, Claudio and Barrera, Fernando and Dimonte, Vincenzo},
	title = {On the problem of generalized measures: an impossibility result},
	year = {2026},
	eprint = {2601.13321},
	archiveprefix = {arXiv},
	note = {Preprint},
}

@article{BartoszynskiLocalisation,
	author = {Bartoszy{\'n}ski, Tomek},
	title = {Combinatorial aspects of measure and category},
	journal = {Fundam.\ Math.},
	volume = {127},
	number = {3},
	year = {1987},
	pages = {225-239},
	doi = {10.4064/fm-127-3-225-239},
}

@book{BartoszynskiJudah,
	author = {Bartoszy{\'n}ski, Tomek and Judah, Haim},
	title = {Set {T}heory: {O}n the {S}tructure of the {R}eal {L}ine},
	year = {1995},
	publisher = {A.K. Peters},
	address = {Wellesley, MA},
}

@article{BartoszynskiShelah,
	author = {Bartoszyński, Tomek and Shelah, Saharon},
	title = {Closed measure zero sets},
	volume = {58},
	number = {2},
	pages = {93--110},
	rights = {https://www.elsevier.com/tdm/userlicense/1.0/},
	url = {https://linkinghub.elsevier.com/retrieve/pii/016800729290001G},
	doi = {10.1016/0168-0072(92)90001-G},
	journaltitle = {Ann.\ Pure Appl.\ Log.},
	shortjournal = {Ann.\ Pure Appl.\ Log.},
	urldate = {2025-10-27},
	date = {1992-09},
}

@misc{SwitzerBenhamou,
	author = {Benhamou, Tom and Switzer, Corey Bacal},
	title = {Eventual Capture on a Measurable Cardinal},
	year = {2025},
	eprint = {2511.07411},
	archiveprefix = {arXiv},
	note = {Preprint},
}

@article{Blass1994,
	author = {Blass, A.},
	title = {Cardinal Characteristics and the Product of Countably Many Infinite Cyclic Groups},
	journal = {J.\ Algebra},
	volume = {169},
	number = {2},
	year = {1994},
	pages = {512–540},
	publisher = {Elsevier BV},
	url = {http://dx.doi.org/10.1006/jabr.1994.1295},
	doi = {10.1006/jabr.1994.1295},
}

@online{BlassHyttinenZhang,
	author = {Blass, Andreas and Hyttinen, Tapani and Zhang, Yi},
	title = {Mad families and their neighbors},
	note = {Preprint},
	url = {https://dept.math.lsa.umich.edu/~ablass/madfam.pdf},
}

@article{BrendleEvasionI,
	author = {Brendle, Jörg},
	title = {Evasion and prediction - the Specker phenomenon and Gross spaces},
	journal = {Forum Math.},
	volume = {7},
	year = {1995},
	pages = {513--542},
	url = {https://doi.org/10.1515/form.1995.7.513},
	doi = {doi:10.1515/form.1995.7.513},
	lastchecked = {2026-08-01},
}

@article{BrendleBrookeTaylorFriedmanMontoya,
	author = {Brendle, J{\"o}rg and Brooke-Taylor, Andrew and Friedman, Sy-David and Montoya, Diana Carolina},
	title = {Cicho{\'n}’s diagram for uncountable cardinals},
	journal = {Isr.\ J.\ Math.},
	volume = {225},
	number = {2},
	year = {2018},
	pages = {959--1010},
	publisher = {Springer},
	doi = {10.1007/s11856-018-1688-y},
}

@article{CardonaMartonSupina2025,
	author = {Cardona, Miguel A. and Marton, Adam and Supina, Jaroslav},
	title = {Cardinal characteristics associated with small subsets of reals},
	journal = {Fundam.\ Math.},
	volume = {274},
	number = {3},
	year = {2026},
	doi = {10.4064/fm250317-14-3},
}

@article{ChapmanSchurz2024,
	author = {Chapman, Nick Steven and Sch\"{u}rz, Johannes Philipp},
	title = {Strong measure zero sets on $2^\kappa$ for $\kappa$ inaccessible},
	journal = {J.\ Symb.\ Log.},
	volume = {90},
	number = {3},
	year = {2024},
	pages = {1277–1307},
	publisher = {Cambridge University Press (CUP)},
	url = {http://dx.doi.org/10.1017/jsl.2023.100},
	doi = {10.1017/jsl.2023.100},
}

@article{CohenShelah,
	author = {Cohen, Shani and Shelah, Saharon},
	title = {Generalizing random real forcing for inaccessible cardinals},
	journal = {Isr.\ J.\ Math.},
	volume = {234},
	year = {2019},
	pages = {547-580},
	doi = {10.1007/s11856-019-1925-z},
}

@article{CummingsShelah,
	author = {Cummings, James and Shelah, Saharon},
	title = {Cardinal invariants above the continuum},
	journal = {Ann.\ Pure Appl.\ Log.},
	volume = {75},
	year = {1995},
	pages = {251-268},
	doi = {10.1016/0168-0072(95)00003-Y},
}

@article{FriedmanLaguzzi,
	author = {Friedman, Sy-David and Laguzzi, Giorgio},
	title = {A null ideal for inaccessibles},
	journal = {Arch.\ Math.\ Log.},
	volume = {56},
	year = {2017},
	pages = {691-697},
	doi = {10.1007/s00153-017-0562-7},
}

@article{GalvinMycielskiSolovay1979,
	author = {Galvin, Fred and Mycielski, Jan and Solovay, Robert M.},
	title = {Strong measure zero sets},
	journal = {Not.\ Am.\ Math.\ Soc.},
	volume = {26},
	year = {1979},
	pages = {p.\ A-280},
}

@article{GalvinMycielskiSolovay2017,
	author = {Galvin, Fred and Mycielski, Jan and Solovay, Robert M.},
	title = {Strong measure zero and infinite games},
	journal = {Arch.\ Math.\ Log.},
	volume = {56},
	number = {7–8},
	year = {2017},
	pages = {725–732},
	publisher = {Springer Science and Business Media LLC},
	url = {http://dx.doi.org/10.1007/s00153-017-0541-z},
	doi = {10.1007/s00153-017-0541-z},
}

@article{GoldsternShelah,
	author = {Goldstern, Martin and Shelah, Saharon},
	title = {Many simple cardinal invariants},
	journal = {Arch.\ Math.\ Log.},
	volume = {32},
	number = {3},
	year = {1993},
	pages = {203--221},
	publisher = {Springer},
	doi = {10.1007/BF01375552},
}

@book{Halko1996,
	author = {Halko, Aapo},
	title = {Negligible subsets of the generalized Baire space $\omega_1^{\omega_1}$},
	journal = {Ann.\ Acad.\ Sci.\ Fenn.\ Math.},
	year = {1996},
	publisher = {The Finnish Mathematical Society},
	url = {http://dx.doi.org/10.5186/aasfmd.1996.107},
	doi = {10.5186/aasfmd.1996.107},
}

@article{HalkoShelah2001,
	author = {Halko, Aapo and Shelah, Saharon},
	title = {On strong measure zero subsets of ${}^\kappa2$},
	journal = {Fundam.\ Math.},
	volume = {170},
	number = {3},
	year = {2001},
	pages = {219–229},
	publisher = {Institute of Mathematics, Polish Academy of Sciences},
	url = {http://dx.doi.org/10.4064/fm170-3-1},
	doi = {10.4064/fm170-3-1},
}

@article{Kanamori80,
	author = {Kanamori, Akihiro},
	title = {Perfect-set forcing for uncountable cardinals},
	journal = {Ann.\ Math.\ Log.},
	volume = {19},
	number = {1-2},
	year = {1980},
	pages = {97–114},
	doi = {10.1016/0003-4843(80)90021-2},
}

@article{Kellner,
	author = {Kellner, Jakob},
	title = {Even More Simple Cardinal Invariants},
	journal = {Arch.\ Math.\ Log.},
	volume = {47},
	number = {5},
	year = {2008},
	pages = {503--515},
	publisher = {Springer},
	doi = {10.1007/s00153-008-0094-2},
}

@article{Miller,
	author = {Miller, Arnold W.},
	title = {Some Properties of Measure and Category},
	journal = {Trans.\ Am.\ Math.\ Soc.},
	volume = {266},
	number = {1},
	year = {1981},
	pages = {93--114},
	publisher = {American Mathematical Society},
	doi = {10.1090/S0002-9947-1981-0613787-2},
}

@thesis{MontoyaThesis,
	author = {Montoya Amaya, Diana Carolina},
	title = {Some cardinal invariants of the generalized Baire spaces},
	year = {2017},
	school = {Universität Wien},
	type = {PhD thesis},
}

@article{RaghavanShelah2017,
	author = {Raghavan, Dilip and Shelah, Saharon},
	title = {Two inequalities between cardinal invariants},
	journal = {Fundam.\ Math.},
	volume = {237},
	number = {2},
	year = {2017},
	pages = {187–200},
	publisher = {Institute of Mathematics, Polish Academy of Sciences},
	url = {http://dx.doi.org/10.4064/fm253-7-2016},
	doi = {10.4064/fm253-7-2016},
}

@misc{RoslanowskiLoc,
	author = {Roslanowski, Andrzej},
	title = {\.Zaba: a case for clubs},
	year = {2026},
	eprint = {2607.23923},
	archiveprefix = {arXiv},
	note = {Preprint},
}

@article{RoslanowskiShelah,
	author = {Ros{\l}anowski, Andrzej and Shelah, Saharon},
	title = {Sheva-sheva-sheva: Large creatures},
	journal = {Isr.\ J.\ Math.},
	volume = {159},
	number = {1},
	year = {2007},
	pages = {109–174},
	doi = {10.1007/s11856-007-0040-8},
}

@article{Schurz2023,
	author = {Sch\"{u}rz, Johannes Philipp},
	title = {The cofinality of the strong measure zero ideal for $\kappa$ inaccessible},
	journal = {Math.\ Log.\ Q.},
	volume = {69},
	number = {1},
	year = {2023},
	pages = {31–39},
	publisher = {Wiley},
	url = {http://dx.doi.org/10.1002/malq.202000093},
	doi = {10.1002/malq.202000093},
}

@article{Shelah1995,
	author = {Shelah, Saharon},
	title = {Every null-additive set is meager-additive},
	journal = {Isr.\ J.\ Math.},
	volume = {89},
	number = {1–3},
	year = {1995},
	pages = {357–376},
	publisher = {Springer Science and Business Media LLC},
	url = {http://dx.doi.org/10.1007/BF02808209},
	doi = {10.1007/bf02808209},
}

@article{ShelahNull,
	author = {Shelah, Saharon},
	title = {A parallel to the null ideal for inaccessible {$\lambda$}: {P}art {I}},
	journal = {Arch.\ Math.\ Log.},
	volume = {56},
	number = {3-4},
	year = {2017},
	pages = {319--383},
	doi = {10.1007/s00153-017-0524-0},
	url = {https://doi.org/10.1007/s00153-017-0524-0},
}

@article{Suzuki1998,
	author = {Suzuki, Toshio},
	title = {About splitting numbers},
	journal = {Proc.\ Jpn.\ Acad., Ser.\ A, Math.\ Sci.},
	volume = {74},
	number = {2},
	year = {1998},
	publisher = {Project Euclid},
	url = {http://dx.doi.org/10.3792/pjaa.74.33},
	doi = {10.3792/pjaa.74.33},
}

@article{vanderVlugt2025,
	author = {van der Vlugt, Tristan},
	title = {Cardinal characteristics on bounded generalised {Baire} spaces},
	journal = {Ann.\ Pure Appl.\ Log.},
	volume = {176},
	number = {7},
	year = {2025},
	pages = {103582},
	publisher = {Elsevier BV},
	url = {http://dx.doi.org/10.1016/j.apal.2025.103582},
	doi = {10.1016/j.apal.2025.103582},
}

@phdthesis{Wohofsky,
	author = {Wohofsky, Wolfgang},
	title = {Special sets of real numbers and variants of the Borel Conjecture},
	year = {2013},
	school = {TU Wien},
}

@article{Wontner,
	author = {Wontner, Ned},
	title = {Views from a peak: {G}eneralisations and descriptive set theory},
	journal = {Doctoral dissertation, University of Amsterdam},
	year = {2023},
}

@article{Zapletal,
	author = {Zapletal, Jindřich},
	title = {Splitting Number at Uncountable Cardinals},
	journal = {J.\ Symb.\ Log.},
	volume = {62},
	number = {1},
	year = {1997},
	pages = {35--42},
	publisher = {Association for Symbolic Logic},
	urldate = {2024-04-19},
	doi = {10.2307/2275730},
}

@article{Zindulka2019,
	author = {Zindulka, Ond\v{r}ej},
	title = {Strong measure zero and meager-additive sets through the prism of fractal measures},
	journal = {Commentationes Math.\ Univ.\ Carolinae},
	volume = {60},
	number = {1},
	year = {2019},
	pages = {131–155},
	publisher = {Charles University in Prague, Karolinum Press},
	url = {http://dx.doi.org/10.14712/1213-7243.2015.277},
	doi = {10.14712/1213-7243.2015.277},
}
		
	\end{document}